\newtheorem{theorem}{Theorem}[section]
\newtheorem{lemma}[theorem]{Lemma}
\newtheorem{proposition}[theorem]{Proposition}
\newtheorem{corollary}[theorem]{Corollary}
\theoremstyle{definition}
\newtheorem{definition}[theorem]{Definition}
\theoremstyle{remark}
\newtheorem{remark}[theorem]{Remark}
\numberwithin{equation}{section}
\newcommand {\A}{\mathcal{A}}
\newcommand {\bmo}{\mathrm{bmo}}
\newcommand {\BMO}{\mathrm{BMO}}
\newcommand {\C}{\mathbb C}
\newcommand {\Da}{\mathcal{J}}
\newcommand {\dom}{\mathrm{dom}}
\newcommand {\F}{\mathcal{F}}
\newcommand {\hchi}{\hat{\chi}}
\newcommand {\Hp}{\mathcal{H}^{p}_{FIO}(\Rn)}
\newcommand {\Hps}{\mathcal{H}^{p}_{FIO}}
\newcommand {\HT}{\mathcal{H}}
\newcommand {\ind}{\mathbf{1}}
\newcommand {\La}{\mathcal{L}}
\newcommand {\lb}{\langle}
\newcommand {\rb}{\rangle}
\newcommand {\loc}{\mathrm{loc}}
\newcommand {\N}{\mathbb N}
\newcommand {\ph}{\varphi}
\newcommand {\R}{\mathbb R}
\newcommand {\Ra}{\mathcal{R}}
\newcommand {\ran}{\mathrm{ran}}
\newcommand {\Rn}{\mathbb{R}^{n}}
\newcommand {\Sp}{S^{*}(\Rn)}
\newcommand {\Spp}{S^{*}_{+}(\Rn)}
\newcommand {\supp}{\mathrm{supp}}
\newcommand {\Sw}{\mathcal{S}}
\newcommand {\Tp}{T^{*}(\Rn)}
\newcommand {\ud}{\mathrm{d}}
\newcommand {\veps}{\varepsilon}
\newcommand {\w}{\omega}
\newcommand {\wh}{\widehat}
\newcommand {\wt}{\widetilde}
\newcommand {\Z}{\mathbb Z}
\DeclareFontFamily{U}{mathx}{\hyphenchar\font45}
\DeclareFontShape{U}{mathx}{m}{n}{
      <5> <6> <7> <8> <9> <10>
      <10.95> <12> <14.4> <17.28> <20.74> <24.88>
      mathx10
      }{}
\DeclareSymbolFont{mathx}{U}{mathx}{m}{n}
\DeclareMathAccent{\widecheck}{0}{mathx}{"71}
\begin{document}

\title[Off-singularity bounds and Hardy spaces]{Off-singularity bounds and Hardy spaces for Fourier integral operators}

\author{Andrew Hassell}
\address{Mathematical Sciences Institute\\ Australian National University\\Acton ACT 2601\\Australia}
\email{Andrew.Hassell@anu.edu.au}

\author{Pierre Portal}
\address{Mathematical Sciences Institute\\ Australian National University\\Acton ACT 2601\\Australia}
\email{Pierre.Portal@anu.edu.au}

\author{Jan Rozendaal}
\address{Mathematical Sciences Institute\\ Australian National University\\Acton ACT 2601\\Australia\\and Institute of Mathematics, Polish Academy of Sciences\\
ul.~\'{S}niadeckich 8\\
00-656 Warsaw\\
Poland}
\email{janrozendaalmath@gmail.com}

\subjclass[2010]{Primary 42B35. Secondary 42B30, 35S30, 58J40}

\thanks{This research was supported by grant DP160100941 of the Australian Research Council.}

\begin{abstract}
We define a scale of Hardy spaces $\Hp$, $p\in[1,\infty]$, that are invariant under suitable Fourier integral operators of order zero. This builds on work by Smith for $p=1$ \cite{Smith98a}. We also introduce a notion of off-singularity decay for kernels on the cosphere bundle of $\Rn$, and we combine this with wave packet transforms and tent spaces over the cosphere bundle to develop a full Hardy space theory for oscillatory integral operators. In the process we extend the known results about $L^{p}$-boundedness of Fourier integral operators, from local boundedness to global boundedness for a larger class of symbols.
\end{abstract}

\maketitle

\section{Introduction}

\subsection{Overview}

Hardy spaces have long played a role in the analysis of elliptic and parabolic equations. The purpose of this article is to develop a Hardy space theory suitable for the analysis of \emph{hyperbolic} equations. 

In recent years, harmonic analysis has developed beyond Calder\'on-Zygmund theory to treat parabolic and elliptic equations with rough coefficients. In this development, so-called adapted Hardy spaces take the place of the standard $L^{p}$-spaces. For instance, given a uniformly elliptic operator of the form $L=\text{div} A \nabla$ with $A \in L^{\infty}(\R^{n};\mathcal{L}(\C^{n}))$, the solution operators $(e^{tL})_{t \geq 0}$ to the equation $\partial_{t}u=Lu$ are bounded on $L^{p}(\R^{n})$ for a range of exponents $p_{-}(L)<p<p_{+}(L)$, where in general, the lower threshold $p_-$ is strictly larger than $1$, and the upper threshold $p_+$ is strictly less than infinity  
(see the memoir \cite{Auscher07}). 
On the other hand, $(e^{tL})_{t\geq0}$ is bounded on an appropriate Hardy space $H^{p}_{L}$ for all $1\leq p\leq \infty$, as is shown by extrapolating from $H^{2}_{L} = L^{2}(\Rn)$ in a manner similar to standard Calder\'on-Zygmund extrapolation. 
One can then prove appropriate Littlewood-Paley square function estimates to show that $H^{p}_{L} = L^{p}(\R^{n})$ for a range of values of $p$ (see \cite{Auscher07, HoLuMiMiYa11}).

It is illustrative to compare this development to the $L^{p}$-theory for hyperbolic equations. It is a classical fact that the solution operators $(\cos(t\sqrt{-\Delta}))_{t\in\R}$ and $(\sin(t\sqrt{-\Delta}))_{t\in\R}$ to the wave equation $\partial_{t}^{2}u=\Delta u$ are not bounded on $L^{p}(\Rn)$ for $n\geq2$ if $p\neq 2$ and $t\neq 0$. In fact, it was shown in \cite{Peral80, Miyachi80,Beals82} that these operators are bounded from $W^{s_{p},p}(\Rn)$ to $L^{p}(\Rn)$ for $p\in(1,\infty)$ and $s_{p}=(n-1)|\tfrac{1}{p}-\tfrac{1}{2}|$, and this exponent $s_p$ is sharp. These statements are in turn instances of a more general phenomenon concerning the class of Fourier integral operators. Indeed, a Fourier integral operator (FIO) of order zero, for which the underlying canonical relation is a local canonical graph and the Schwartz kernel is compactly supported, is bounded from $W^{s_{p},p}(\Rn)$ to $L^{p}(\Rn)$, as was shown by Seeger, Sogge and Stein in \cite{SeSoSt91}. The heart of their proof is to show that such an operator maps a suitable Sobolev space over the classical Hardy space $H^{1}$ to $L^{1}$; after that one simply uses duality and interpolation.

Although the work of Seeger, Sogge and Stein was a major breakthrough, it left open the question whether there is a natural subspace of $L^{1}(\Rn)$ that is invariant under Fourier integral operators of order zero. Since, under additional assumptions, the composition of two Fourier integral operators is again a Fourier integral operator, one would expect this to be the case. And indeed, in \cite{Smith98a} Hart Smith introduced a subspace of the local Hardy space $\HT^{1}(\Rn)$ that is invariant under Fourier integral operators of order zero. Moreover, this subspace contains all functions for which the fractional derivative of order $(n-1)/2$ also belongs to $\HT^{1}(\Rn)$, which in turn allowed Smith to recover the existing results on $L^{p}$-boundedness. 

Unfortunately, so far Smith's work has not had the same impact on the $L^{p}$-theory of oscillatory integral operators that the classical Hardy space has had on the theory of singular integral operators. One of the goals of this article is to change this status quo, by developing a full Hardy space theory for Fourier integral operators that involves techniques from modern harmonic analysis and is adapted to applications to wave equations with rough coefficients. 

We introduce a scale $\Hp$, $p\in[1,\infty]$, of spaces that are invariant under Fourier integral operators of order zero and satisfy the Sobolev embeddings
\begin{equation}\label{eq:Sobolevintro}
W^{r_{p},p}(\Rn)\subseteq \Hp\subseteq W^{-r_{p},p}(\Rn)
\end{equation}
for $1<p<\infty$ and $r_{p}=\frac{n-1}{2}|\frac{1}{p}-\frac{1}{2}|$. Moreover, $\HT^{1}_{FIO}(\Rn)$ coincides (up to a shift in differentiability) with Smith's invariant space from \cite{Smith98a}. In particular, we recover the known results on $L^{p}$-boundedness for Fourier integral operators, and in fact we extend them in several ways. First, it should be noted that having an invariant space allows one to use iterative constructions to construct parametrices for equations, whereas this is not possible using the results from \cite{SeSoSt91}. We also extend the results from both \cite{SeSoSt91} and \cite{Smith98a} by removing the assumption that the relevant Fourier integral operators have compactly supported Schwartz kernels (see \cite{IsRoSt18} for similar global results). In particular, this allows us to directly prove that the wave operators $(\cos(t\Delta))_{t\in\R}$ and $(\sin(t\Delta))_{t\in\R}$ are bounded on $\Hp$, thereby not only recovering the results of \cite{Peral80} but providing a subspace of $L^{p}(\Rn)$ on which the wave equation is well-posed. Finally, we extend the class of symbols from the Kohn-Nirenberg class $S^{0}$ to a class $S^{0}_{\frac{1}{2},\frac{1}{2},1}$ that is similar to H\"{o}rmander's $S^{0}_{\frac{1}{2},\frac{1}{2}}$ symbols, except that the symbols decay faster when differentiated in the radial direction for the fiber variable.

\subsection{Hardy spaces and tent spaces over the cosphere bundle}

We construct $\mathcal{H}^{p}_{FIO}(\R^{n})$, and prove boundedness of FIOs on this space, using \emph{tent spaces}, in a way that directly follows the pattern used in the recent developments of harmonic analysis beyond Calder\'on-Zygmund theory, originating in the work of Coifman-Meyer-Stein \cite{CoMeSt85}. The standard tent spaces $T^{p}(\R^{n})$ over $\R^n$ form a single interpolation scale that contain most of the usual function spaces of harmonic analysis \cite[Chapter IV, Section 6B]{Stein93}. They are highly useful in extending Calder\'on-Zygmund theory, because they appear to be somewhat universal, in the sense that they do not need to be adapted to a rough situation. When dealing with spaces such as the aforementioned $H^{p}_{L}$ spaces, one merely needs to change the embedding of $H^{p}_L$  into $T^{p}$, not $T^{p}$ itself. Moreover, the extrapolation theory of all the known rough generalisations of singular integral operators works directly on the entire $T^{p}$ scale. Only the embedding of  $H^{p}_{L}$ into $T^p$ produces restrictions in $p$ (see e.g. the memoir \cite{HoLuMiMiYa11}). 

This is why we construct $\mathcal{H}^{p}_{FIO}(\R^{n})$ as a subspace of an appropriate tent space. However, instead of the standard tent space $T^{p}(\R^{n})$, we use a tent space $T^{p}(\Sp)$ over the cosphere bundle $\Sp=\Rn\times S^{n-1}$ of $\R^n$. Let us describe this tent space in more detail. A fundamental idea, used by Smith in \cite{Smith98a} and crucial in our approach as well, is that FIOs acting on function spaces over $\R^{n}$ are more effectively understood when lifted to function spaces over the cosphere bundle $S^{*}(\R^{n})$. 
For this reason, we consider the tent space $T^{p}(\Sp)$ of functions $F$ defined on $\Sp \times (0,\infty)$ (which can be identified with phase space minus the zero section) such that
\[
\|F\|_{T^{p}(S^{*}(\R^{n}))}^{p} = 
\int_{S^{*}(\R^{n})} \Big(\int_{0}^{\infty}\fint_{B_{\sqrt{\sigma}}(x,\w)}|F(y,\nu,\sigma)|^{2}\ud y\ud \nu\frac{\ud \sigma}{\sigma}\Big)^{p/2} \ud x\ud\omega<\infty,
\]
for $p<\infty$. Here $B_{\sqrt{\sigma}}(x,\w)$ denotes a ball of radius $\sqrt{\sigma}$ with respect to a specific metric $d$ on $\Sp$ that is introduced in Section \ref{subsec:metric}. This metric arises naturally from contact geometry, and turns $\Sp$ into a doubling metric measure space over which tent space theory is well understood (see  \cite{Amenta14} and the references therein). 

The Hardy space $\mathcal{H}^{p}_{FIO}(\R^{n})$ is defined by lifting functions on $\R^n$ to functions on phase space via a \emph{wave packet transform}. Our wave packet transform $W$ is defined for $\sigma<1$, i.e.~in the high frequency regime, by 
\[
(Wf)(x, \omega, \sigma) = (\psi_{\omega, \sigma}(D)f)(x).
\]
Here $\psi_{\omega,\sigma}(D)$ is a Fourier multiplier with compact support in a region where $\xi$ satisfies $|\xi| \eqsim \sigma ^{-1}$ and  $|\hat{\xi}-\w|\eqsim \sigma^{\frac{1}{2}}$.  For $\sigma \geq 1$, we include `by hand' a copy of $f$ localized to low frequencies; in this regime, localization in the $\omega$ variable is not relevant. All of this is done in such a way that $W$ is an isometry on $L^2$.  The $\mathcal{H}^{p}_{FIO}(\R^{n})$ norm is then defined as 
\[
\| f \|_{\mathcal{H}^{p}_{FIO}(\R^{n})} :=   \| Wf\|_{T^{p}(S^{*}(\R^{n}))}. 
\]

This choice of wave packet transform effectively encodes a key technical aspect of the Seeger-Sogge-Stein proof from \cite{SeSoSt91}: the dyadic-parabolic decomposition. The idea, which goes back to Fefferman \cite{Fefferman73b}, is to decompose the standard dyadic annuli further, into narrow bands whose width is comparable to the square root of their length. This decomposition has proven to be an effective tool for the analysis of FIOs; for example, it yields an optimally sparse representation of FIOs as infinite matrices \cite{Candes-Demanet05}. It is also directly related to the distance $d$ on $\Sp$  mentioned above. In fact, the inverse Fourier transform of a wave packet $\psi_{\w, \sigma}$ is concentrated on the projection to $\R^n$ of a ball of radius $\sqrt{\sigma}$ in the $d$-metric, and this set is an anisotropic ball with different radii in directions parallel to and perpendicular to $\omega$. 

As in the general theory of tent spaces (see e.g. \cite{AuKrMoPo12}), boundedness of an integral operator follows from a decay property of its kernel. Here, we have to generalize the usual notion of off-diagonal decay to a notion of \emph{off-singularity decay}. This is because, while the kernel of a singular integral operator is only singular on the diagonal, the Schwartz kernel of an FIO $T$ has singularities determined by its \emph{canonical relation}. For an FIO $T$ associated with a homogeneous canonical transformation, which is the sort we consider here, this is a transformation $\hat \chi$ on the cosphere bundle, and it specifies how $T$ acts on the \emph{wavefront set} of a distribution (the wavefront set  of a distribution $u$ is a closed subset of $\Sp$ describing the location and direction of the singularities of $u$). That is, singularities `propagate', according to $\hat \chi$ under the action of an FIO, instead of remaining fixed as they are under a singular integral or pseudodifferential operator. By working over the cosphere bundle, adding this level of generalization barely affects the tent space theory. The off-singularity bounds, relative to $\hat \chi$, for a kernel $K_{\sigma, \tau}(x, \omega; y, \nu)$ acting on tent spaces over $\Sp$, are of the form 
\[
| K_{\sigma, \tau}(x, \omega; y, \nu)| \leq C \rho^{-n} \big(\max\big(\tfrac{\sigma}{\tau}, \tfrac{\tau}{\sigma}\big)\big)^{-N} \Big(1 + \frac{ d((x, \omega), \hat \chi(y, \nu))^2}{\rho} \Big)^{-N} 
\]
for $C,N\geq0$ and $\rho = \min(\sigma, \tau)$. This bears a remarkable resemblance to the off-diagonal estimates from the parabolic theory. From this point of view, FIOs, lifted via wave packet transforms, are analogous to a \emph{diffusion} on the cosphere bundle, with $\rho$, the inverse of frequency, playing the role of a time variable.

\subsection{Main results} The main results of this paper are:

\begin{itemize}
\item Theorem~\ref{thm:FIObddHardy}, which states that FIOs of order zero associated with a canonical graph are bounded on $\mathcal{H}^{p}_{FIO}(\R^{n})$ for all $p \in [1, \infty]$. The proof is divided into two parts: we first show that integral operators satisfying off-singularity bounds are bounded on the tent spaces $T^p(\Sp)$ (Theorem~\ref{thm:tentbounded}). We then show that FIOs, lifted to tent spaces via the wave packet transform, satisfy off-singularity bounds (Theorem~\ref{thm:offsingFIO}). 
\item Theorem~\ref{thm:Sobolev}, in which we prove the embeddings \eqref{eq:Sobolevintro}. 
\end{itemize}
Aside from these principal results, we prove fundamental properties of the scale of Hardy spaces for FIOs such as interpolation, duality and density results. We also obtain a similar molecular decomposition of $\HT^{1}_{FIO}(\Rn)$ as was obtained by Smith for his invariant space in \cite{Smith98a}, and we use it to argue that our $\HT^{1}_{FIO}(\Rn)$ is a Sobolev space over Smith's invariant space.

\subsection{Previous literature} 
The method of constructing appropriate Hardy spaces for extrapolation of $L^{2}$ bounds, then identifying said spaces through Littlewood-Paley estimates, has recently been successful in solving elliptic and parabolic PDE problems. The literature on the subject is already vast and expanding rapidly. We just mention a few representative results. Hardy spaces for differential forms on Riemannian manifolds are constructed in \cite{AuMcIRu08}, while their counterpart for elliptic operators with rough coefficients on $\R^{n}$ are introduced in \cite{Hofmann-Mayboroda09} (see also the earlier \cite{Duong-Yan05}). This is applied to elliptic boundary value problems in \cite{Auscher-Stahlhut16} (which gives a new approach, valid for systems, to the groundbreaking a priori estimates proved in \cite{HoKeMaPi15a} for non-symmetric equations), and to initial value problems for parabolic systems in \cite{AuMoPo15} (simultaneously extending Aronson's theory to systems, and Lions' theory to $L^p$). Beyond such results, which use the method in a direct fashion, there is a range of recent articles that exploit the same circle of ideas to go beyond Calder\'on-Zygmund theory through a combination of $T(b)$ arguments for the $L^2$ estimates, and generalised Hardy space extrapolation to extend the estimates to $L^p$. 
This includes recent breakthroughs in geometric measure theory (see e.g. the memoir \cite{HoMiMiMo17} as a starting point).

Wave packet transforms have a long tradition in harmonic and microlocal analysis (see \cite{Folland89, Cordoba-Fefferman78} and \cite[Chapter 3]{Martinez02}). They have also been used in the study of wave equations with rough coefficients by e.g.~Smith \cite{Smith98b,Smith06,Smith14} and Tataru \cite{Tataru99,Tataru00,Tataru01,Tataru02,MaMeTa08}. 
Our notion of off-singularity decay appears in the work of Smith \cite{Smith98a,Smith98b} and Geba and Tataru \cite{Geba-Tataru07}, and similar concepts have been used in the numerical analysis of wave equations \cite{Candes-Demanet03} and in the time-frequency analysis of Schr\"{o}dinger operators \cite{CoFaRo10}.

As already indicated, a major source of inspiration for us has been the visionary work of Hart Smith \cite{Smith98a}.  We find it remarkable that his impressive results have hardly been used or developed in any significant way since their publication 20 years ago. Even the author himself did not use it in his subsequent work on rough wave equations. It is our belief that Smith's work, suitably developed as in the present paper,  will furnish a powerful tool for tackling low regularity and nonlinear hyperbolic equations. We intend to develop these ideas further in future work.

\subsection{Organization of this paper} This article is organized as follows. Section \ref{sec:tentFIO} contains background knowledge for the rest of the article. In Section \ref{subsec:metric} we introduce the relevant metric structure on the cosphere bundle, and we study some of its properties. Section \ref{subsec:tent} contains the basics of tent space theory, including a distribution theory that is used in later sections. Section \ref{subsec:FIOs} contains some basics on Fourier integral operators and the $S^{0}_{\frac{1}{2},\frac{1}{2},1}$ symbol class. In Section \ref{sec:offsing} we introduce the notion of off-singularity decay, and we show that it implies boundedness on tent spaces. We also define so-called residual families; these play the same role for tent spaces that smoothing operators play for microlocal analysis on $\Rn$. Section \ref{sec:wavetransforms} deals with wave packet transforms, and in Section \ref{sec:offsingFIO} we show that the kernels of FIOs, lifted to phase space using wave packet transforms, satisfy off-singularity bounds.  This is the most technical part of the paper, involving delicate analysis and repeated integration by parts to obtain the required bounds. Armed with these powerful bounds, it becomes an easy matter to introduce the Hardy spaces and study their basic properties in Section \ref{sec:Hardy spaces}. Section \ref{sec:Sobolev} then contains the proof of the Sobolev embeddings, and Section \ref{sec:decomp} the molecular decomposition of $\HT^{1}_{FIO}(\Rn)$.

\subsection{Notation}

The natural numbers are $\N=\{1,2,\ldots\}$, and $\Z_{+}=\N\cup\{0\}$. Throughout this article we fix $n\in\N$ with $n\geq2$. The approach which we use can also be applied for $n=1$. However, in this case the theory is less involved and one simply recovers the classical $L^{p}$-spaces for $p\in(1,\infty)$, cf.~Theorem \ref{thm:Sobolev}.

For $1\leq i\leq n$ we denote the $i$-th standard basis vector of $\Rn$ by $e_{i}$. 
For $\xi,\eta\in\Rn$ we write $\lb\xi\rb=(1+|\xi|^{2})^{1/2}$ and $\lb \xi,\eta\rb=\xi\cdot\eta$, and if $\xi\neq 0$ then $\hat{\xi}=\xi/|\xi|$. We denote by $P_{\w}^{\perp}:\Rn\to\Rn$ the projection onto the hyperplane orthogonal to $\w\in S^{n-1}$. 

The spaces of Schwartz functions and tempered distributions are $\Sw(\Rn)$ and $\Sw'(\Rn)$, respectively. The duality between $f\in\Sw(\Rn)$ and $g\in\Sw'(\Rn)$ is denoted by $\lb f,g\rb$. The Fourier transform of $f\in\Sw'(\Rn)$ is denoted by $\F f$ or $\widehat{f}$, and the inverse Fourier transform by $\F^{-1}f$ or $\widecheck{f}$. If $f\in L^{1}(\Rn)$ then
\begin{align*}
\F f(\xi)=\int_{\Rn}e^{-i x\cdot \xi}f(x)\ud x\quad (\xi\in\Rn).
\end{align*}
We use multi-index notation, where $\partial^{\alpha}_{\xi}=\partial^{\alpha_{1}}_{\xi_{1}}\ldots\partial^{\alpha_{n}}_{\xi_{n}}$ and $\xi^{\alpha}=\xi_{1}^{\alpha_{1}}\ldots\xi_{n}^{\alpha_{n}}$ for $\xi=(\xi_{1},\ldots,\xi_{n})\in\Rn$ and $\alpha=(\alpha_{1},\ldots,\alpha_{n})\in\Z_{+}^{n}$. For a smooth function $\Phi\in C^{\infty}(V)$ on an open subset $V\subseteq\Rn\times\Rn$, we denote by $\partial^{2}_{x\xi}\Phi$ the mixed Hessian with respect to the first variable $x$ and the second variable $\xi$, and similarly for $\partial^{2}_{xx}\Phi$ and $\partial^{2}_{\xi\xi}\Phi$. For $m:\Rn\to\C$ a measurable function of temperate growth, $m(D)$ is the Fourier multiplier with symbol $m$. 

The volume of a measurable subset $B$ of a measure space $(\Omega,\mu)$ is $V(B)$. For an integrable $F:B\to\C$, we write
\[
\fint_{B}F(x)\ud\mu(x)=\frac{1}{V(B)}\int_{B}F(x)\ud\mu(x)
\]
if $V(B)<\infty$. If $B$ is a ball around $x\in\Rn$ with radius $r>0$, then $cB$ denotes the ball around $x$ with radius $cr$, for $c>0$.

Throughout, we let $\Upsilon:(0,\infty)\to(0,1]$ be given by
\begin{equation}\label{eq:upsilon}
\Upsilon(t):=\min(t,t^{-1})\quad(t>0).
\end{equation}
The H\"{o}lder conjugate of $p\in[1,\infty]$ is denoted by $p'$. The indicator function of a set $E$ is denoted by $\ind_{E}$. The space of continuous linear operators between Banach spaces $X$ and $Y$ is $\La(X,Y)$, and $\La(X):=\La(X,X)$. We write $f(s)\lesssim g(s)$ to indicate that $f(s)\leq Cg(s)$ for all $s$ and a constant $C\geq0$ independent of $s$, and similarly for $f(s)\gtrsim g(s)$ and $g(s)\eqsim f(s)$.

\section{Tent spaces and Fourier integral operators}\label{sec:tentFIO}

In this section we introduce a metric structure on the cosphere bundle over $\Rn$, and we collect some basics on tent spaces and on Fourier integral operators.

\subsection{A metric on the cosphere bundle}\label{subsec:metric}

Here we introduce the fundamental metric for this article. This metric arises naturally from a contact structure on the cosphere bundle, and the connection to contact geometry facilitates e.g.~the proof of Proposition \ref{prop:Lipschitz} below. However, in later sections we will only use that the resulting metric measure space is doubling, and in computations we will work with the equivalent analytic expressions in \eqref{eq:quasi-metric} and \eqref{eq:qm-equiv}. Hence readers unfamiliar with the basics of contact geometry can simply consider these analytic expressions, while keeping in mind that we work throughout on a doubling metric measure space. We also note that the same metric structure was considered in \cite{Geba-Tataru07}.

Let $\Tp$ be the cotangent bundle of $\Rn$, identified with $\Rn\times\Rn$ and endowed with the symplectic form $d\xi \cdot dx$ and the Liouville measure $\ud x\ud\xi$.  
Let $\Sp=\Rn\times S^{n-1}$ be the cosphere bundle over $\Rn$. We shall denote elements of $S^{n-1}$ by either $\hat \xi$, for $\xi \in \Rn \setminus \{ 0 \}$, or by $\omega$ or $\nu$. We also denote the standard Riemannian metric on $S^{n-1}$ by $g_{S^{n-1}}$, and the standard measure on $S^{n-1}$ by $\ud \omega$. 

The cosphere bundle $\Sp$ is a contact manifold with respect to the standard contact form $\alpha_{S^{n-1}}:= \hat \xi \cdot dx$. This form determines the contact structure, that is, the smooth distribution of codimension 1 subspaces of the tangent space $T (\Sp)$ given by the kernel of $\alpha_{S^{n-1}}$. The product metric $dx^2 + g_{S^{n-1}}$ and the contact form together determine a sub-Riemannian metric $d$ on $\Sp$, given by the formula 
\begin{equation}
d((x, \omega), (y, \nu)) = \inf_{\gamma} \int_0^1 |\gamma'(s)|\ud s
\label{d-defn}\end{equation}
for $(x,\w),(y,\nu)\in\Sp$. Here the infimum is taken over all piecewise $C^1$ curves $\gamma : [0,1] \to \Sp$ such that $\gamma(0) = (x, \omega)$, $\gamma(1) = (y, \nu)$, $\gamma$ is horizontal in the sense that $\alpha_{S^{n-1}}(\gamma'(s)) = 0$ for almost all $s \in [0,1]$, and $|\gamma'(s)|$ is the length of the velocity vector $\gamma'(s)$ with respect to the product metric $dx^2 + g_{S^{n-1}}$. We can also write 
(see \cite[Lemma 1.4.2]{Jost11})
\begin{equation}\label{eq:d2}
d((x, \omega), (y, \nu))^2 = \inf_{\gamma} \int_0^1 |\gamma'(s)|^2\ud s. 
\end{equation}
It is often more convenient to work with the quasi-metric $\wt{d}$ given by
\begin{equation}\label{eq:quasi-metric}
\wt{d}((x,\w),(y,\nu)):=\big(|\lb\w,x-y\rb|+|\lb\nu,x-y\rb|+|x-y|^{2}+|\w-\nu|^{2}\big)^{1/2}
\end{equation}
for $(x,\w),(y,\nu)\in\Sp$, or the expression
\begin{equation}\label{eq:qm-equiv}
\utilde{d}((x,\w),(y,\nu)):=\big(|\lb\w,x-y\rb|+|x-y|^{2}+|\w-\nu|^{2}\big)^{1/2}.
\end{equation}
The following lemma essentially already appeared as \cite[Lemma 7.1]{Geba-Tataru07}. 

\begin{lemma}\label{lem:metric}
The metric $d$ is equivalent to the quasi-metric $\wt{d}$, and to $\utilde{d}$. 
\end{lemma}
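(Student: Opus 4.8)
The plan is to prove the equivalence in two stages: first show $\wt d \eqsim \utilde d$, which is elementary, and then show $d \eqsim \utilde d$, which is the substantive part. For the first stage, note that $\utilde d \leq \wt d$ trivially, and conversely $|\lb \nu, x-y\rb| \leq |\lb \w, x-y\rb| + |\lb \w-\nu, x-y\rb| \leq |\lb\w,x-y\rb| + |\w-\nu|\,|x-y| \lesssim |\lb\w,x-y\rb| + |\w-\nu|^2 + |x-y|^2$ by Cauchy--Schwarz and Young's inequality, so $\wt d \lesssim \utilde d$. Hence it suffices to work with either of the analytic expressions, say $\utilde d$, in what follows.

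For the estimate $d \lesssim \utilde d$, I would construct an explicit horizontal curve joining $(x,\w)$ to $(y,\nu)$ whose length is controlled by $\utilde d((x,\w),(y,\nu))$. The natural approach is a three-stage path: first rotate $\w$ to $\nu$ while staying at the base point $x$ (this is horizontal since $\alpha_{S^{n-1}}(\gamma') = \hat\xi\cdot \dot x = 0$ when $\dot x = 0$, and costs length $\eqsim |\w - \nu|$ measured in $g_{S^{n-1}}$); second, translate $x$ to $y$ in the directions \emph{perpendicular} to $\nu$, which is horizontal (cost $\eqsim |P_\nu^\perp(x-y)| \leq |x-y|$); and third, translate in the direction of $\nu$ itself, which is \emph{not} horizontal, so this must be done using the standard sub-Riemannian trick of moving in a loop in the contact directions to pick up motion in the Reeb/characteristic direction. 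The length needed to achieve a translation of size $\delta = |\lb\nu, x-y\rb|$ along the forbidden direction is $\eqsim \delta^{1/2}$, which is exactly why the square appears in $\utilde d$; this reproduces the Heisenberg-group-type scaling. Summing the three contributions gives $d \lesssim |\w-\nu| + |x-y| + |\lb\nu,x-y\rb|^{1/2} \eqsim \utilde d$ (after replacing $|\lb\nu,x-y\rb|$ by $|\lb\w,x-y\rb|$ up to the already-established equivalence, or by noting the difference is lower order).

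For the reverse inequality $\utilde d \lesssim d$, I would argue that each of the three quantities $|\w-\nu|$, $|x-y|^2$, and $|\lb\w,x-y\rb|$ is controlled by $d^2$. The first two are immediate: projecting a horizontal curve to $S^{n-1}$ and to $\Rn$ respectively can only decrease length, and then one uses that Euclidean/round distance is bounded by path length (for $|x-y|^2 \lesssim d^2$ one may also invoke \eqref{eq:d2}). For the term $|\lb\w, x-y\rb|$ one uses the horizontality constraint directly: along any horizontal curve $\gamma(s) = (x(s),\w(s))$ we have $\w(s)\cdot x'(s) = 0$, so $\frac{d}{ds}\lb\w(s), x(s)\rb = \w'(s)\cdot x(s)$, whence $\lb\w,x\rb$ changes by at most $\int_0^1 |\w'(s)|\,|x(s)|\ud s$; combined with the control on the diameter of the $x$-projection and on $\int|\w'|$, and a short argument to handle the fact that $|x(s)|$ might be large (shift coordinates so one endpoint is near the origin, using $|x(s)-x|\leq d$), this yields $|\lb\w,x-y\rb| \lesssim d^2 + d\cdot d \lesssim d^2$.

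I expect the main obstacle to be the careful bookkeeping in the reverse direction for the pairing term $|\lb\w,x-y\rb|$: one must track how $\w(s)$ varies along the curve while estimating $\w'(s)\cdot(x(s)-x)$, and be careful that the relevant curve-length bounds are uniform. This is where the contact-geometric viewpoint helps conceptually (it identifies $\lb\w,x-y\rb$ as the natural ``area''/action integral picked up by a horizontal path, governed by the contact form $\alpha_{S^{n-1}}$), even though the final estimates are carried out by hand as indicated. Since this lemma is stated to essentially coincide with \cite[Lemma 7.1]{Geba-Tataru07}, I would also expect the proof to be kept brief, citing that reference for the detailed horizontal-curve construction and focusing the exposition on the parts needed in the sequel.
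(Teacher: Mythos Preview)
Your proposal is correct and follows essentially the same approach as the paper: first reduce $\wt d \eqsim \utilde d$ by the elementary inequality, then prove $d \gtrsim \utilde d$ by controlling $|x-y|$, $|\w-\nu|$ via projection and the pairing term via horizontality, and finally prove $d \lesssim \utilde d$ by constructing explicit horizontal paths using a loop to generate motion in the forbidden direction. The paper's handling of the pairing term is slightly slicker---it writes $\lb\w,x-y\rb = -\int_0^1(\hat\xi(0)-\hat\xi(s))\cdot x'(s)\,\ud s$ directly (using $\hat\xi(s)\cdot x'(s)=0$) and bounds by $\tfrac12\int|\gamma'|^2$, avoiding your coordinate shift---and it spells out the loop construction as an explicit five-step curve in the small-distance regime, but the ideas are the same.
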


\begin{proof} 
We first note that $\wt{d}$ is equivalent to $\utilde{d}$. To see this, simply observe that 
\[
|\lb\nu,x-y\rb|\leq |\lb\w,x-y\rb|+|\lb\w-\nu,x-y\rb|\leq|\lb\w,x-y\rb|+|x-y|^{2}+|\w-\nu|^{2}.
\] 
Hence it suffices to prove that 
\begin{equation}
d((x, \omega), (y, \nu))^2 \eqsim |\lb\w,x-y\rb|+|x-y|^{2}+|\w-\nu|^{2}
\label{eq:d-approx}\end{equation}
for all $(x,\w),(y,\nu)\in\Sp$.

To show that 
\begin{equation}\label{eq:dgtrsim}
d((x, \omega), (y, \nu))^2 \gtrsim |\lb\w,x-y\rb|+|x-y|^{2}+|\w-\nu|^{2},
\end{equation}
first note that $d((x, \omega), (y, \nu))^2 \geq |x-y|^{2}+|\w-\nu|^{2}$, since $d$ is bounded from below by the Riemannian distance with respect to $dx^2 + g_{S^{n-1}}$, which in turn is bounded from below by the Euclidean distance on $\R^{2n}$ restricted to $\Sp$. On the other hand, consider a piecewise $C^{1}$ horizontal curve $\gamma$ from $(x,\w)$ to $(y,\nu)$. Write $\gamma(s) = (x(s), \hat \xi(s))$ for $s \in [0,1]$. Since $\gamma$ is horizontal, we have $\hat \xi(s) \cdot x'(s) = 0$ for almost all $0\leq s\leq 1$. Then
\begin{align*}
|\lb\w,x-y\rb | &\leq \Big| \int_0^1  \omega \cdot x'(s)\ud s \Big|=\Big|  \int_0^1 (\hat \xi(0) - \hat \xi(s)) \cdot x'(s) \ud s \Big| \\
&= \Big| \int_0^1 \Big( \int_0^s \hat \xi'(t) \ud t \Big) x'(s) \ud s \Big|\leq \int_0^1 \int_0^1 |x'(s)| |\hat \xi'(t)| \ud t\ud s\\
&\leq \frac1{2} \int_0^1 \int_0^1 \big( |x'(s)|^2 +  |\hat \xi'(t)|^2  \big)\ud t\ud s= \frac1{2} \int_0^1 |\gamma'(s)|^2 \ud s. 
\end{align*}
Taking the infimum over all such $\gamma$, and using \eqref{eq:d2}, we obtain \eqref{eq:dgtrsim}. 

For the other inequality in \eqref{eq:d-approx}, by \eqref{eq:d2} we need only exhibit a particular piecewise $C^{1}$ horizontal curve $\gamma$ connecting $(x, \omega)$ and $(y, \nu)$ such that
\begin{equation}\label{eq:dsmaller}
\int_0^1 |\gamma'(s)|^2\ud s\lesssim |\lb\w,x-y\rb|+|x-y|^{2}+|\w-\nu|^{2}.
\end{equation}
This is straightforward when the right-hand side of \eqref{eq:dsmaller} is bounded away from zero by a fixed constant $c>0$, to be chosen later. Indeed, moving at constant speed in time intervals of equal size, first move from $\w$ to an $\w'\in S^{n-1}$ orthogonal to $y-x$ while keeping $x$ fixed, then move from $x$ to $y$ in a straight line while keeping $\w'$ fixed, and finally move from $\w'$ to $\nu$ while keeping $y$ fixed. 

On the other hand, suppose that the right-hand side of \eqref{eq:dsmaller} is smaller than $c$, say
\[
\veps^{2}:=|\lb\w,x-y\rb|+|x-y|^{2}+|\w-\nu|^{2}
\]
for some $\veps<c$. This implies in particular that $|x-y| \leq \veps$, that $|\omega - \nu| \leq \veps$ and that $|\lb\w,x-y\rb| \leq \veps^2$. 
Consider the following five moves:
\begin{itemize}
\item First, move from $\w$ to an $\w'\in S^{n-1}$ in the $xy\w$-plane with $|\w-\w'|=\veps$, while keeping $x$ fixed; 
\item Then move from $x$, while keeping $\w'$ fixed, in a straight line orthogonal to $\omega'$ such that $|\lb x'(s), \omega \rb| \eqsim \sin(\veps)$, to a point $z$ at a distance of at most $\veps^2/\sin(\veps)$ of $x$ and such that $\lb\w,y-z\rb= 0$; 
\item Then move from $\w'$ back to $\omega$ while keeping $z$ fixed; 
\item Next, move in a straight line from $z$ to $y$ while keeping $\w$ fixed. Note that the distance from $z$ to $y$ is at most $\veps + \veps^2/\sin(\veps)$, by the triangle inequality; 
\item Finally, move from $\omega$ to $\nu$ while keeping $x$ fixed, a distance of at most $\veps$. 
\end{itemize}
The second step is possible since $|\lb \omega, x-y \rb| \leq \veps^2$. By performing these moves at constant speed in equal time intervals in $[0,1]$, this procedure yields a horizontal curve $\gamma$ such that
\[
\int_0^1 |\gamma'(s)|^2\ud s\lesssim \veps^{2}+\tfrac{\veps^{4}}{\sin(\veps)^{2}}\lesssim \veps^{2},
\]
for $c>0$ sufficiently small. 
\end{proof}

\begin{remark}\label{rem:Hartsdistance}
It should be noted that the quasi-metric in \eqref{eq:quasi-metric} is closely related to the square root of the quasi-metric from \cite{Smith98a}, which is given by
\[
d^{*}((x,\w),(y,\nu)):=|\lb \w,x-y\rb|+|\lb \nu,x-y\rb|+\min(|x-y|,|x-y|^{2})+|\w-\nu|^{2}
\]
for $(x,\w),(y,\nu)\in\Sp$. Whether one works with $d^{*}$ or its square root is inconsequential; it merely affects the scaling in some definitions and proofs. On the other hand, the factor $\min(|x-y|,|x-y|^{2})$ only differs from $|x-y|^{2}$ at large distances, and it results in a quasi-metric that is homogeneous with respect to the volume of balls. More precisely, the volume of a ball of radius $\tau>0$ with respect to the square root of $d^{*}$ is approximately $\tau^{2n}$, whereas $d$ only has this property for small $\tau$ and it differs fundamentally on large scales (see Lemma \ref{lem:doubling} below). 

Nonetheless, $d$, $d^{*}$ and its square root all have the doubling property, and it is this property that is crucial for the tent space theory in this article. Moreover, $d$ is equivalent to the square root of $d^{*}$ at small scales, while large scales are less relevant for most of the theory. We work with the metric $d$ because we believe that it leads to a more natural and robust metric structure on $\Sp$. 
\end{remark}

Throughout, we let $B_{\tau}(x,\w)\subseteq\Sp$ be the open ball around $(x,\w)\in\Sp$ of radius $\tau>0$ with respect to $d$, and similarly for the balls $B_{\tau}(x)\subseteq\Rn$ and $B_{\tau}(\w)\subseteq S^{n-1}$ with respect to the standard Euclidean metric. Clearly, the volume $V(B_{\tau}(x,\w))$ of the ball $B_{\tau}(x,\w)$ depends only on $\tau$, and not on $(x,\w)$. 

The following lemma is straightforward to prove using e.g.~Lemma \ref{lem:metric}, keeping in mind that $S^{n-1}$ is compact.

\begin{lemma}\label{lem:doubling}
There exists a $C>0$ such that, for all $(x,\w)\in\Sp$, one has
\[
\frac{1}{C}\tau^{2n}\leq V(B_{\tau}(x,\w))\leq C\tau^{2n}
\]
if $\tau\in(0,1)$, and
\[
\frac{1}{C}\tau^{n}\leq V(B_{\tau}(x,\w))\leq C\tau^{n}
\]
if $\tau\geq 1$. In particular, 
\[
V(B_{c\tau}(x,\w))\leq C^{2}c^{2n}V(B_{\tau}(x,\w))
\]
for all $\tau>0$ and $c\geq1$, and $(\Sp,d,\ud x\ud\w)$ is a doubling metric measure space. 
\end{lemma}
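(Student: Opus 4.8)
The plan is to compute $V(B_{\tau}(x,\w))$ directly in the two regimes $\tau<1$ and $\tau\geq1$, using the equivalence of $d$ with the quasi-metric $\utilde{d}$ from Lemma \ref{lem:metric}, and then to deduce the doubling property as a formal consequence. Since $V(B_{\tau}(x,\w))$ is independent of $(x,\w)$ (by translation invariance in $x$ and rotation invariance of $S^{n-1}$), we may fix $\w=e_n$ and $x=0$. By Lemma \ref{lem:metric} it suffices to estimate the Liouville--type measure of the set $\{(y,\nu)\in\Sp : |\lb\w,y\rb| + |y|^2 + |\w-\nu|^2 < C\tau^2\}$ from above and below, for suitable constants.

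First I would treat the small-scale regime $\tau\in(0,1)$. Here $|\w-\nu|<C\tau$ confines $\nu$ to a spherical cap of radius $\eqsim\tau$, contributing a factor $\eqsim\tau^{n-1}$ to the measure. For the $y$-variable, write $y=(y',y_n)$ with $y'=P_{\w}^{\perp}y$; the constraints become $|y|^2\lesssim\tau^2$, which forces $|y'|\lesssim\tau$ (a factor $\eqsim\tau^{n-1}$), and then $|y_n|=|\lb\w,y\rb|\lesssim\tau^2$ (a factor $\eqsim\tau^2$). Multiplying, one gets $\tau^{n-1}\cdot\tau^{n-1}\cdot\tau^2=\tau^{2n}$, and the matching lower bound follows by intersecting with a slightly smaller coordinate box on which $\utilde{d}<c'\tau$ is guaranteed. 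This gives $V(B_{\tau}(x,\w))\eqsim\tau^{2n}$ for $\tau\in(0,1)$.

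Next, for the large-scale regime $\tau\geq1$, the key point is that $S^{n-1}$ is compact of diameter $\eqsim1$, so the constraint $|\w-\nu|^2<C\tau^2$ is vacuous once $\tau\gtrsim1$: the $\nu$-integral simply contributes the fixed volume $|S^{n-1}|\eqsim1$. For the $y$-variable one still has $|y'|\lesssim\tau$ (a factor $\eqsim\tau^{n-1}$) and $|y_n|\lesssim\tau^2$, but now this must be intersected with $|y|\lesssim\tau$, so effectively $|y_n|\lesssim\tau$ as well, giving a factor $\eqsim\tau$ in the $\w$-direction; the product is $\tau^{n-1}\cdot\tau=\tau^n$. (One should double-check the interplay: $|y'|^2+y_n^2<C\tau^2$ already forces $|y'|\lesssim\tau$, $|y_n|\lesssim\tau$, while $|y_n|\lesssim\tau^2$ is then implied since $\tau\leq\tau^2$; so the ball is comparable to a Euclidean ball of radius $\tau$ in $y$ times all of $S^{n-1}$.) The lower bound is obtained similarly. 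Hence $V(B_{\tau}(x,\w))\eqsim\tau^n$ for $\tau\geq1$.

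Finally, the doubling estimate $V(B_{c\tau}(x,\w))\leq C^2 c^{2n}V(B_{\tau}(x,\w))$ for $c\geq1$ is a formal consequence: if both $c\tau<1$ and $\tau<1$ one uses the $\tau^{2n}$ formula to get a factor $c^{2n}$; if both $\geq1$ one gets $c^n\leq c^{2n}$; and in the mixed case $\tau<1\leq c\tau$ one bounds $V(B_{c\tau})\eqsim(c\tau)^n\leq (c\tau)^{2n}= c^{2n}\tau^{2n}\cdot\tau^{-n}$—here I would instead argue via $V(B_{c\tau})\leq V(B_{c\tau})$ with $c\tau\geq1$ so $V(B_{c\tau})\eqsim(c\tau)^n$ and $V(B_{\tau})\eqsim\tau^{2n}$, and since $\tau<1$ we have $(c\tau)^n=c^n\tau^n\leq c^{2n}\tau^{2n}\cdot(c\tau)^{-n}\cdot(c\tau)^n$; more cleanly, $(c\tau)^n\leq c^{2n}\tau^{2n}$ fails for small $\tau$, so one notes instead $(c\tau)^n \le c^n \cdot 1 = c^n \le c^{2n}$ is not quite it either—the honest bound is $V(B_{c\tau})\eqsim (c\tau)^n \leq c^{2n}\tau^{2n}$ whenever $c\tau\ge 1>\tau$, because then $(c\tau)^n\le (c\tau)^{2n}=c^{2n}\tau^{2n}$. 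So all three cases give the claimed bound with constant $C^2$, and doubling of the metric measure space follows immediately. The only mild obstacle is bookkeeping the mixed regime in this last step and being careful that the constants in Lemma \ref{lem:metric} are absorbed correctly when passing between $d$ and $\utilde{d}$; the geometric content is entirely routine given compactness of $S^{n-1}$.
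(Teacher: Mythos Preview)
Your proposal is correct and follows exactly the approach the paper indicates: the paper omits the proof entirely, saying only that it is ``straightforward to prove using e.g.~Lemma~\ref{lem:metric}, keeping in mind that $S^{n-1}$ is compact,'' and your argument fills in precisely those details. The mixed-regime doubling computation is presented a bit hesitantly, but the final version you land on---$(c\tau)^n\le (c\tau)^{2n}=c^{2n}\tau^{2n}$ since $c\tau\ge 1$---is correct and closes the argument.
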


Recall that a conic subset of $\Tp$ is a subset $U\subseteq\Tp\setminus o$, where $o:=\Rn\times\{0\}$ is the zero section, that is closed under positive dilations in the fiber variables. The \emph{base} of a conic subset $U$ is the projection of $U$ to $\Sp$. 
A \emph{homogeneous canonical transformation} is a diffeomorphism between open conic subsets of $\Tp$ that preserves the symplectic form and commutes with positive dilations in the fiber variable, and a \emph{contact transformation} $\hchi$ on $\Sp$ is a diffeomorphism between open subsets of $\Sp$ such that the tangent map $T(\hchi)$ preserves the horizontal distribution, or equivalently, such that $\hchi^{*}\alpha_{S^{n-1}}=f\alpha_{S^{n-1}}$ for a nowhere vanishing function $f$ on $\Sp$. It is straightforward to check that each homogeneous canonical transformation $\chi$ on $\Tp$ induces a contact transformation $\hchi$ on $\Sp$ by projection. 

A consequence of the contact nature of the sub-Riemannian metric $d$ is the following proposition, similar to \cite[Lemma 2.3]{Smith98a}. One difference is that the constant of equivalence below depends only on the first derivatives of $\hchi$, whereas the proof of \cite[Lemma 2.3]{Smith98a} also uses bounds for its second derivatives. We write $|T_{(x,\w)}(\hchi)|$ for the operator norm, with respect to $dx^{2}+g_{S^{n-1}}$, of the tangent map $T_{(x,\w)}(\hat{\chi})$ of $\hat{\chi}$ at $(x,\w)\in\dom(\hat{\chi})$. 

\begin{proposition}\label{prop:Lipschitz}
Let $\hchi$ be a contact transformation on $\Sp$. Suppose that there exists a $C>0$ such that $|T_{(x, \omega)}(\hchi)|\leq C$ for all $(x,\w)\in\dom(\hat{\chi})$ and $|T_{(x, \omega)}(\hchi)^{-1}|\leq C$ for all $(x,\w)\in\ran(\hchi)$. Let $U\subseteq\dom(\hchi)$ be such that either $U=\dom(\hchi)=\ran(\hchi)=\Sp$, or $U$ is compact. Then $\hchi:U\to\hchi(U)$ is bi-Lipschitz with respect to $d$, and there exists a $C'>0$ depending only on $C$ and $U$ such that  
\begin{equation}
\frac{1}{C'}d((x, \omega), (y, \nu)) \leq d(\hchi(x, \omega), \hchi(y, \nu)) \leq C' d((x, \omega), (y, \nu))
\label{eq:biLip}\end{equation}
for all $(x,\w)\in U$.
\end{proposition}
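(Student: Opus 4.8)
The plan is to reduce the bi-Lipschitz estimate \eqref{eq:biLip} to a pointwise control on how $\hchi$ distorts the sub-Riemannian length functional defining $d$. The key observation is that, since $\hchi$ is a contact transformation, $T_{(x,\w)}(\hchi)$ maps the horizontal distribution $\ker\alpha_{S^{n-1}}$ at $(x,\w)$ onto the horizontal distribution at $\hchi(x,\w)$. Therefore, if $\gamma:[0,1]\to\Sp$ is any piecewise $C^{1}$ horizontal curve from $(x,\w)$ to $(y,\nu)$, then $\hchi\circ\gamma$ is again a piecewise $C^{1}$ horizontal curve, now from $\hchi(x,\w)$ to $\hchi(y,\nu)$, and its velocity satisfies $(\hchi\circ\gamma)'(s)=T_{\gamma(s)}(\hchi)\gamma'(s)$, so that $|(\hchi\circ\gamma)'(s)|\leq |T_{\gamma(s)}(\hchi)|\,|\gamma'(s)|\leq C|\gamma'(s)|$ with respect to the product metric $dx^{2}+g_{S^{n-1}}$. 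First I would make this precise, noting that the curve $\gamma$ realizing (or nearly realizing) the infimum in \eqref{d-defn} can be taken to have image inside a fixed compact neighbourhood when $U$ is compact — because $d(\hchi(x,\w),\hchi(y,\nu))$ and $d((x,\w),(y,\nu))$ are bounded on $U\times U$, near-geodesics stay in a bounded set, and on $\Sp$ with $d$ finite such near-geodesics lie in a compact set. In the global case $U=\dom(\hchi)=\ran(\hchi)=\Sp$ no such localization is needed.

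Next I would run the length comparison in both directions. Fix $(x,\w),(y,\nu)\in U$ and let $\gamma$ be a horizontal curve joining them with $\int_{0}^{1}|\gamma'(s)|\,\ud s\leq d((x,\w),(y,\nu))+\veps$. Then $\hchi\circ\gamma$ is a horizontal curve joining $\hchi(x,\w)$ and $\hchi(y,\nu)$ with length at most $C(d((x,\w),(y,\nu))+\veps)$, giving $d(\hchi(x,\w),\hchi(y,\nu))\leq C\,d((x,\w),(y,\nu))$ after letting $\veps\to0$. For the reverse inequality I would apply the same argument to $\hchi^{-1}$: given a near-geodesic $\eta$ from $\hchi(x,\w)$ to $\hchi(y,\nu)$, the curve $\hchi^{-1}\circ\eta$ is horizontal, joins $(x,\w)$ and $(y,\nu)$, and has length at most $C$ times that of $\eta$, using the hypothesis $|T_{(x,\w)}(\hchi)^{-1}|\leq C$ on $\ran(\hchi)$; hence $d((x,\w),(y,\nu))\leq C\,d(\hchi(x,\w),\hchi(y,\nu))$. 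Combining the two yields \eqref{eq:biLip} with $C'=C$ in the global case, and with a $C'$ depending also on $U$ in the compact case, where the localization step may cost an extra constant. One should also remark that $d$ is genuinely a metric (finite-valued), which follows from Lemma \ref{lem:metric} since $\wt d$ is finite-valued; this guarantees near-geodesics exist.

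The main obstacle is the localization issue in the compact case: I need to know that in evaluating $d((x,\w),(y,\nu))$ for $(x,\w),(y,\nu)$ in a compact set $U$, it suffices to consider horizontal curves lying in some slightly larger compact set $U'$ on which $|T(\hchi)|$ is controlled — a priori $\hchi$ is only defined on $\dom(\hchi)$, which need not be all of $\Sp$, so the images $\hchi\circ\gamma$ only make sense for $\gamma$ inside $\dom(\hchi)$. I would handle this by first using Lemma \ref{lem:metric} and the compactness of $U$ (together with $U\subseteq\dom(\hchi)$ open) to choose a compact $U'$ with $U\subseteq U'\subseteq\dom(\hchi)$ such that any curve leaving $U'$ has length bounded below by a constant exceeding $\sup_{U\times U}d$, so that near-geodesics between points of $U$ automatically stay in $U'$; the equivalence of $d$ with the explicit quasi-metric $\wt d$ makes these bounds elementary. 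A symmetric choice on the range side handles $\hchi^{-1}$. Everything else is a direct change-of-variables in the length integral and poses no difficulty.
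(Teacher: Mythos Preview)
Your treatment of the global case $U=\dom(\hchi)=\ran(\hchi)=\Sp$ is correct and matches the paper's argument exactly: $\hchi$ sends horizontal curves to horizontal curves, the length is distorted by at most the operator norm of the tangent map, and one takes the infimum.

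The gap lies in your localization step for the compact case. You propose to choose a compact $U'$ with $U\subseteq U'\subseteq\dom(\hchi)$ such that any horizontal curve starting in $U$ and leaving $U'$ has length exceeding $\sup_{(p,q)\in U\times U}d(p,q)$. But this forces $d(U,\Sp\setminus U')>\mathrm{diam}_{d}(U)$, and since $U'\subseteq\dom(\hchi)$ it in turn forces $\dom(\hchi)$ to contain the closed $\mathrm{diam}_{d}(U)$-neighbourhood of $U$. Nothing in the hypotheses guarantees this: $U$ may be a large compact set that nearly fills $\dom(\hchi)$, so that $d(U,\Sp\setminus\dom(\hchi))$ is tiny compared with $\mathrm{diam}_{d}(U)$. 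In that situation near-geodesics between distant points of $U$ can exit $\dom(\hchi)$, and you cannot compose them with $\hchi$. Your remark that near-geodesics lie in a compact subset of $\Sp$ is true but beside the point; the question is whether that compact set sits inside $\dom(\hchi)$.

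The paper's fix is simpler than your proposed localization and sidesteps the obstruction entirely. Since $U$ is compact and $\dom(\hchi)$ is open, there exists $\veps>0$ with $B_{\veps}(x,\w)\subseteq\dom(\hchi)$ for every $(x,\w)\in U$. For pairs with $d((x,\w),(y,\nu))<\veps$, near-geodesics stay inside $\dom(\hchi)$ and your curve argument gives $d(\hchi(x,\w),\hchi(y,\nu))\leq C\,d((x,\w),(y,\nu))$. For pairs with $d((x,\w),(y,\nu))\geq\veps$, the upper bound is automatic because $\hchi(U)$ is compact:
\[
d(\hchi(x,\w),\hchi(y,\nu))\leq\mathrm{diam}_{d}(\hchi(U))\leq\frac{\mathrm{diam}_{d}(\hchi(U))}{\veps}\,d((x,\w),(y,\nu)).
\]
The reverse inequality follows by the symmetric two-case argument applied to $\hchi^{-1}$ on the compact set $\hchi(U)\subseteq\ran(\hchi)$.
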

\begin{proof} 
If $U=\dom(\hchi)=\ran(\hchi)=\Sp$ then the proof is almost trivial. Composition with $\hchi$ sets up an isomorphism between the horizontal curves joining $(x, \omega)\in\Sp$ with $(y, \nu)\in\Sp$, and the horizontal curves joining $\hchi(x, \omega)$ with $\hchi(y, \nu)$. Then the assumed bound on the operator norm of the tangent map of $\hchi$ and its inverse, together with the definition \eqref{d-defn} of the distance $d$, immediately yields \eqref{eq:biLip}. 

If $\dom(\hchi)\neq \Sp$ then the same argument does not work, since a curve connecting points in the domain of $\hchi$ may leave the domain of $\hchi$. However, if $U$ is compact then there exists an $\veps>0$ such that $B_{\veps}(x,\w)\subseteq \dom(\hchi)$ for all $(x,\w)\in U$. Then the same argument as before does show that the second inequality in \eqref{eq:biLip} holds if $d((x,\w),(y,\nu))<\veps$. On the other hand, the second inequality is trivial if $d((x,\w),(y,\nu))\geq \veps$, since $\hat{\chi}(U)$ is compact. By symmetry, this suffices.
\end{proof}

From here on, unless indicated otherwise, a Lipschitz map on $\Sp$ is a map that is Lipschitz with respect to the metric $d$.

\subsection{Tent spaces}\label{subsec:tent}

For the basics of tent space theory, including many of the statements below, see e.g.~\cite{CoMeSt85, Amenta14}. The corresponding results for the parabolic tent spaces which we use follow from a straightforward change of variables (see also \cite{AuKrMoPo12}).

Let $\Spp:=\Sp\times(0,\infty)$, endowed with the measure $\ud x\ud\w\frac{\ud\sigma}{\sigma}$. Set
\[
\Gamma(x,\w):=\{(y,\nu,\sigma)\in\Spp\mid (y,\nu)\in B_{\sqrt{\sigma}}(x,\w)\}
\]
for $(x,\w)\in\Sp$, and $\Gamma(U):=\cup_{(x,\w)\in U}\Gamma(x,\w)$ for $U\subseteq\Sp$. Recall that $B_{\sqrt{\sigma}}(x,\w)$ is the open ball around $(x,\w)$ of radius $\sqrt{\sigma}$ with respect to the metric $d$ from the previous section. If $U$ is open, then the tent over $U$ is $T(U):=\Spp\setminus \Gamma(U^{c})$. Note that $(x,\w,\sigma)\in T(U)$ if and only if $d((x,\w),U^{c})\geq \sqrt{\sigma}$.

For $F:\Spp\to\C$ measurable and $(x,\w)\in\Sp$, set
\[
\A F(x,\w):=\Big(\int_{0}^{\infty}\fint_{B_{\sqrt{\sigma}}(x,\w)}|F(y,\nu,\sigma)|^{2}\ud y\ud \nu\frac{\ud \sigma}{\sigma}\Big)^{1/2}\in[0,\infty]
\]
and
\[
\mathcal{C}F(x,\w):=\sup_{B}\Big(\frac{1}{V(B)}\int_{T(B)}|F(y,\nu,\sigma)|^{2}\ud y\ud \nu\frac{\ud \sigma}{\sigma}\Big)^{1/2}\in[0,\infty],
\]
where the supremum is taken over all balls $B\subseteq \Sp$ containing $(x,\w)$. Then $\A F$ and $\mathcal{C}F$ are lower semi-continuous as maps from $\Sp$ to $[0,\infty]$, and in particular measurable.

\begin{definition}\label{def:tentspace}
For $p\in[1,\infty)$, the \emph{tent space} $T^{p}(\Sp)$ consists of all $F\in L^{2}_{\loc}(\Spp)$ such that $\A F\in L^{p}(\Sp)$, endowed with the norm
\[
\|F\|_{T^{p}(\Sp)}:=\|\A F\|_{L^{p}(\Sp)}.
\]
Also, $T^{\infty}(\Sp)$ consists of all $F\in L^{2}_{\loc}(\Spp)$ such that $\mathcal{C}F\in L^{\infty}(\Sp)$, with 
\[
\|F\|_{T^{\infty}(\Sp)}:=\|\mathcal{C}F\|_{L^{\infty}(\Sp)}.
\]
\end{definition}

For all $p\in[1,\infty]$, the tent space $T^{p}(\Sp)$ is a Banach space, and $T^{p}(\Sp)\cap T^{2}(\Sp)$ is dense in $T^{p}(\Sp)$ for $p<\infty$. Also, one has 
\[
T^{2}(\Sp)=L^{2}(\Spp):=L^{2}\big(\Spp,\ud x\ud\w\tfrac{\ud\sigma}{\sigma}\big)
\]
with equality of norms, as a straightforward application of Fubini's theorem shows. 

It is instructive to compare the `conical' square function in the $T^{p}(\Sp)$-norm to its `vertical' analogue. By \cite[Proposition 2.1 and Remark 2.2]{AuHoMa12}, one has 
\begin{equation}\label{eq:vertical}
\Big(\int_{\Sp}\Big(\int_{0}^{\infty}|F(x,\w,\sigma)|^{2}\frac{\ud\sigma}{\sigma}\Big)^{p/2}\ud x\ud\w\Big)^{1/p}\lesssim \|F\|_{T^{p}(\Sp)}
\end{equation}
for all $F\in T^{p}(\Sp)$ and $p\in[1,2]$, but the reverse inequality does not hold for $p<2$. On the other hand, the reverse inequality does hold for all $p\in[2,\infty)$, while \eqref{eq:vertical} itself fails for $p>2$. 

We include the following results on complex interpolation, duality and atomic decompositions of tent spaces for later use. In the setting of general spaces of homogeneous type, the following two lemmas can be found in \cite{Amenta14}.

\begin{lemma}\label{lem:tentint}
Let $p_{1},p_{2}\in[1,\infty]$ and $\theta\in[0,1]$, and let $p\in[1,\infty]$ be such that $\frac{1}{p}=\frac{1-\theta}{p_{1}}+\frac{\theta}{p_{2}}$. Then 
\[
[T^{p_{1}}(\Sp),T^{p_{2}}(\Sp)]_{\theta}=T^{p}(\Sp),
\]
with equivalent norms.
\end{lemma}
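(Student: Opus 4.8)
The plan is to reduce the statement to the known complex interpolation theorem for tent spaces over doubling metric measure spaces, due to Amenta \cite{Amenta14}, by normalizing away the parabolic scaling via the substitution $\sigma=t^{2}$ in the vertical variable.

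First I would introduce the ``unscaled'' tent spaces $\mathcal{T}^{p}(\Sp)$ over the doubling metric measure space $(\Sp,d,\ud x\ud\w)$, which is doubling by Lemma \ref{lem:doubling}. These are defined exactly as in Definition \ref{def:tentspace}, but with the Lusin area operator $\big(\int_{0}^{\infty}\fint_{B_{t}(x,\w)}|G(y,\nu,t)|^{2}\ud y\ud\nu\,\frac{\ud t}{t}\big)^{1/2}$ and the corresponding Carleson functional, the tent over a ball $B$ being $\{(x,\w,t)\mid d((x,\w),B^{c})\geq t\}$. I would then show that the map $\Phi$ given by $(\Phi F)(x,\w,t):=F(x,\w,t^{2})$ is, up to the multiplicative constant $\sqrt{2}$, an isometric isomorphism of $T^{p}(\Sp)$ onto $\mathcal{T}^{p}(\Sp)$ for every $p\in[1,\infty]$. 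For $p<\infty$ this is immediate from the substitution $\sigma=t^{2}$, $\frac{\ud\sigma}{\sigma}=\frac{2\,\ud t}{t}$, $B_{\sqrt{\sigma}}(x,\w)=B_{t}(x,\w)$; for $p=\infty$ one additionally observes that $(x,\w,\sigma)\in T(B)$ if and only if $(x,\w,\sqrt{\sigma})$ lies in the unscaled tent over $B$, so $\Phi$ carries $T(B)$ onto that tent and the Carleson functionals correspond up to the same constant. In particular $\Phi$ and $\Phi^{-1}$ are bounded on the common ambient space $L^{2}_{\loc}(\Spp)$, into which each $T^{p}(\Sp)$ embeds continuously; hence $(T^{p_{1}}(\Sp),T^{p_{2}}(\Sp))$ and $(\mathcal{T}^{p_{1}}(\Sp),\mathcal{T}^{p_{2}}(\Sp))$ are compatible couples and $\Phi$ is an isomorphism of couples.

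Next I would invoke Amenta's interpolation theorem: for a doubling metric measure space $X$ one has $[\mathcal{T}^{p_{1}}(X),\mathcal{T}^{p_{2}}(X)]_{\theta}=\mathcal{T}^{p}(X)$ with equivalent norms, for all $p_{1},p_{2},\theta,p$ as in the statement; this applies to $X=\Sp$ by Lemma \ref{lem:doubling}. Since the complex interpolation method is a functor on compatible couples, an isomorphism of couples induces an isomorphism of the corresponding interpolation spaces, with norm control depending only on the norms of $\Phi$ and $\Phi^{-1}$. Applying $\Phi^{-1}$ therefore transports $[\mathcal{T}^{p_{1}}(\Sp),\mathcal{T}^{p_{2}}(\Sp)]_{\theta}=\mathcal{T}^{p}(\Sp)$ back to $[T^{p_{1}}(\Sp),T^{p_{2}}(\Sp)]_{\theta}=T^{p}(\Sp)$, with equivalent norms, completing the proof. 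The only points demanding a little care are the endpoint $p_{2}=\infty$ (where one uses both the identification of $T(B)$ under $\Phi$ and the validity of Amenta's result up to and including the $T^{\infty}$ endpoint) and the check that all $T^{p}(\Sp)$ sit inside a common Hausdorff space so the complex method applies; I do not expect a genuine obstacle, since the substance of the lemma is already contained in \cite{Amenta14} once the parabolic scaling is removed.
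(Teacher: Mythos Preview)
Your proposal is correct and coincides with the paper's approach: the paper does not give a proof of this lemma but simply cites \cite{Amenta14} for the interpolation of tent spaces over doubling metric measure spaces, noting just before the statement that ``the corresponding results for the parabolic tent spaces which we use follow from a straightforward change of variables.'' Your argument spells out precisely that change of variables $\sigma=t^{2}$ and the functoriality of complex interpolation, so there is nothing to add.
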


\begin{lemma}\label{lem:tentdual}
Let $p\in[1,\infty)$. Then $T^{p'}(\Sp)=(T^{p}(\Sp))^{*}$, with equivalent norms, where the duality pairing is given by
\[
(F,G)\mapsto\int_{\Spp}F(x,\w,\sigma)\overline{G(x,\w,\sigma)}\ud x\ud \w\frac{\ud \sigma}{\sigma}
\]
for $F\in T^{p}(\Sp)$ and $G\in T^{p'}(\Sp)$.  
\end{lemma}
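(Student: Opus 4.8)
The plan is to deduce the statement from the duality theory for tent spaces over doubling metric measure spaces. By Lemma~\ref{lem:doubling} the space $(\Sp,d,\ud x\,\ud\w)$ is doubling, and the substitution $\sigma\mapsto\sigma^{2}$ turns the parabolic aperture $B_{\sqrt\sigma}$ into the standard one while preserving the scale-invariant measure $\tfrac{\ud\sigma}{\sigma}$ up to a harmless constant; after this change of variables one is exactly in the framework of \cite{Amenta14} (see also \cite{AuKrMoPo12} for the parabolic rescaling), from which the lemma follows. I nevertheless record the structure of the argument.

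The inclusion $T^{p'}(\Sp)\hookrightarrow(T^{p}(\Sp))^{*}$, together with the bound $\|G\|_{(T^{p}(\Sp))^{*}}\lesssim\|G\|_{T^{p'}(\Sp)}$, rests on the averaging identity
\[
\int_{\Sp}\int_{0}^{\infty}\fint_{B_{\sqrt\sigma}(x,\w)}H(y,\nu,\sigma)\,\ud y\,\ud\nu\,\frac{\ud\sigma}{\sigma}\,\ud x\,\ud\w=\int_{\Spp}H(y,\nu,\sigma)\,\ud y\,\ud\nu\,\frac{\ud\sigma}{\sigma},
\]
valid for every nonnegative measurable $H$ on $\Spp$ by Fubini, since $d$ is symmetric and $V(B_{\sqrt\sigma}(x,\w))$ depends only on $\sigma$. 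Applying this with $H=|F\overline{G}|$ and then Cauchy--Schwarz in $(y,\nu,\sigma)$ for fixed $(x,\w)$ gives $\big|\int_{\Spp}F\overline{G}\,\ud x\,\ud\w\tfrac{\ud\sigma}{\sigma}\big|\le\int_{\Sp}\A F\,\A G\,\ud x\,\ud\w$, and Hölder's inequality on $\Sp$ disposes of $p\in(1,\infty)$. For $p=1$ one uses instead the atomic decomposition of $T^{1}(\Sp)$: writing $F=\sum_{j}\lambda_{j}a_{j}$ with $\sum_{j}|\lambda_{j}|\lesssim\|F\|_{T^{1}(\Sp)}$ and each $a_{j}$ supported in a tent $T(B_{j})$ with $\|a_{j}\|_{L^{2}(\Spp)}\le V(B_{j})^{-1/2}$, Cauchy--Schwarz on $T(B_{j})$ gives $\big|\int a_{j}\overline{G}\big|\le V(B_{j})^{-1/2}\|G\|_{L^{2}(T(B_{j}))}\le\|G\|_{T^{\infty}(\Sp)}$, and one sums.

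For the reverse inclusion, observe that if $F\in L^{2}(\Spp)$ is supported in a tent $T(B)$ then $\A F$ is supported in $B$, so for $p\le2$ one has $\|F\|_{T^{p}(\Sp)}\le V(B)^{1/p-1/2}\|\A F\|_{L^{2}(\Sp)}\eqsim V(B)^{1/p-1/2}\|F\|_{L^{2}(\Spp)}$ by $T^{2}(\Sp)=L^{2}(\Spp)$. Exhausting $\Spp$ by an increasing family of tents $T(B_{k})$, a given $\ell\in(T^{p}(\Sp))^{*}$ is represented on each Hilbert space $L^{2}(T(B_{k}))$ by some $G_{k}$; these patch to an element $G\in L^{2}_{\loc}(\Spp)$, and the density of $T^{p}(\Sp)\cap T^{2}(\Sp)$ in $T^{p}(\Sp)$ then shows that $\ell$ acts by integration against $G$ on this dense class, with the constant $\|\ell\|$. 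For $p=1$ one concludes that $G\in T^{\infty}(\Sp)$ with $\|G\|_{T^{\infty}(\Sp)}\lesssim\|\ell\|$ by testing $\ell$ against the normalized atoms $\overline{G}\,\ind_{T(B)}$; the range $p\in(1,2)$ then follows by complex interpolation of duals, using Lemma~\ref{lem:tentint} and the reflexivity of $T^{2}(\Sp)$; and $p\in(2,\infty)$ follows from the reflexivity of the scale $\{T^{q}(\Sp):1<q<\infty\}$.

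I expect the genuine obstacle to be not a single computation but the two structural inputs imported from the general theory: the atomic decomposition of $T^{1}(\Sp)$ (itself proved by a stopping-time argument, and underlying both the endpoint estimates and, via interpolation, the whole scale) and the verification that the parabolically-scaled tent spaces over $(\Sp,d,\ud x\,\ud\w)$ coincide, with equivalent norms, with the standard tent spaces to which \cite{Amenta14} applies. Granting these, the remaining steps — the averaging identity, Cauchy--Schwarz, Hölder, and the reflexivity argument — are routine, and the lemma follows.
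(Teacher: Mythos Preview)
Your proposal is correct and matches the paper's own treatment: the paper does not prove this lemma at all but simply cites it as a known result from \cite{Amenta14} for tent spaces over doubling metric measure spaces, together with the observation (stated at the start of Section~\ref{subsec:tent}) that the parabolic rescaling $\sigma\mapsto\sigma^{2}$ reduces matters to the standard setting. Your sketch of the underlying argument is accurate and in fact more detailed than anything the paper provides.
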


A measurable function $A:\Spp\to\C$ is a \emph{$T^{1}(\Sp)$-atom} if there exists an open ball $B\subseteq\Sp$ such that $\supp(A)\subseteq T(B)$ and $\|A\|_{L^{2}(\Spp)}\leq V(B)^{-\frac{1}{2}}$.
The collection of $T^{1}(\Sp)$-atoms is a uniformly bounded subset of $T^{1}(\Sp)$.

\begin{lemma}\label{lem:atomictent}
There exists a $C>0$ such that the following holds. For all $F\in T^{1}(\Sp)$, there exists a sequence $(A_{k})_{k=1}^{\infty}$ of $T^{1}(\Sp)$-atoms and a sequence $(\alpha_{k})_{k=1}^{\infty}\subseteq \C$ such that $F=\sum_{k=1}^{\infty}\alpha_{k}A_{k}$ and
\[
\frac{1}{C}\|F\|_{T^{1}(\Sp)}\leq\sum_{k=1}^{\infty}|\alpha_{k}|\leq C\|F\|_{T^{1}(\Sp)}.
\]
If $F\in T^{1}(\Sp)\cap T^{p}(\Sp)$ for $p\in(1,\infty)$, then $\sum_{k=1}^{\infty}\alpha_{k}A_{k}$ also converges to $F$ in $T^{p}(\Sp)$. 

Let $R\in\La(T^{2}(\Sp))$ be such that $\|R(A)\|_{T^{1}(\Sp)}\leq C'$ for all $T^{1}(\Sp)$-atoms $A$ and a $C'\geq0$ independent of $A$. Then $R$ has a unique bounded extension from $T^{1}(\Sp)\cap T^{2}(\Sp)$ to $T^{1}(\Sp)$.
\end{lemma}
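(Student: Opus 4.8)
The plan is to prove the three assertions of Lemma~\ref{lem:atomictent} in order, relying on the standard tent space machinery for doubling metric measure spaces, which applies since $(\Sp,d,\ud x\ud\w)$ is doubling by Lemma~\ref{lem:doubling}. For the existence of the atomic decomposition, I would follow the classical argument of Coifman--Meyer--Stein adapted to spaces of homogeneous type (as in \cite{CoMeSt85,Amenta14}). Given $F\in T^{1}(\Sp)$, one forms the level sets $O_{k}:=\{(x,\w)\in\Sp\mid \mathcal{A}F(x,\w)>2^{k}\}$, which are open since $\mathcal{A}F$ is lower semi-continuous. One decomposes the difference of tents $T(O_{k})\setminus T(O_{k+1})$ using a Whitney-type covering of each $O_{k}$ by balls, setting $A_{k,j}$ to be (a normalized multiple of) $F$ restricted to the appropriate Whitney region, and $\alpha_{k,j}$ chosen so that each $A_{k,j}$ is an atom. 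The bound $\sum|\alpha_{k,j}|\eqsim\|F\|_{T^{1}(\Sp)}$ follows from the $L^{2}$-estimate relating $\int_{T(B)}|F|^{2}$ to $V(B)$ and $\mathcal{A}F$ restricted to $B$, together with the overlap control provided by the Whitney covering and doubling. The principal subtlety here is the bookkeeping: ensuring the regions cover $\Spp$ up to a null set (so $F=\sum\alpha_{k,j}A_{k,j}$ pointwise a.e.), and controlling the finite overlap of the Whitney balls using the doubling property — this is the most delicate part of the first assertion, though it is entirely standard.

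For the convergence in $T^{p}(\Sp)$ when $F\in T^{1}(\Sp)\cap T^{p}(\Sp)$ with $p\in(1,\infty)$, I would argue that the partial sums $F_{M}:=\sum_{k\leq M}\alpha_{k,j}A_{k,j}$ are supported in $T(O_{k_{0}}^{c})$-type sets (more precisely, $F-F_{M}$ is supported where $\mathcal{A}F$ is large or small according to the truncation), so that $\mathcal{A}(F-F_{M})\to 0$ pointwise and is dominated by $\mathcal{A}F\in L^{p}$; dominated convergence then gives $\|F-F_{M}\|_{T^{p}(\Sp)}\to0$. One must be slightly careful because the atoms are built from restrictions of $F$ and the bound on $\mathcal{A}$ of a restriction; here one uses that $\mathcal{A}(F\ind_{T(B)})\lesssim \mathcal{A}F$ pointwise, plus the uniform $T^{1}$-boundedness of atoms.

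For the final assertion about the operator $R$: since the atoms are uniformly bounded in $T^{1}(\Sp)$ and every $F\in T^{1}(\Sp)\cap T^{2}(\Sp)$ admits a decomposition $F=\sum\alpha_{k}A_{k}$ converging in both $T^{1}$ and $T^{2}$ (by the second assertion with $p=2$), one estimates
\[
\|R(F)\|_{T^{1}(\Sp)}\leq\sum_{k}|\alpha_{k}|\,\|R(A_{k})\|_{T^{1}(\Sp)}\leq C'\sum_{k}|\alpha_{k}|\leq CC'\|F\|_{T^{1}(\Sp)},
\]
where the interchange of $R$ with the sum is justified by $T^{2}$-continuity of $R$ and $T^{2}$-convergence of the partial sums. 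This shows $R$ is bounded from $T^{1}(\Sp)\cap T^{2}(\Sp)$ (with the $T^{1}$-norm) into $T^{1}(\Sp)$; since $T^{1}(\Sp)\cap T^{2}(\Sp)$ is dense in $T^{1}(\Sp)$, $R$ extends uniquely and boundedly to all of $T^{1}(\Sp)$. The only point requiring care is the justification of $R(\sum\alpha_{k}A_{k})=\sum\alpha_{k}R(A_{k})$ with convergence in $T^{1}(\Sp)$, which follows because the series converges in $T^{2}(\Sp)$, $R$ is $T^{2}$-bounded, and the $T^{1}$-partial sums of $\sum\alpha_k R(A_k)$ are Cauchy by the atom bound; the two limits must agree since both identify with $R(F)$ along a subsequence converging a.e. Overall I expect no serious obstacle: every ingredient is either cited (\cite{CoMeSt85,Amenta14}) or a routine consequence of doubling, and the main work is careful assembly rather than new ideas.
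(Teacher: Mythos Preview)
Your proposal is correct and follows essentially the same approach as the paper. The paper simply cites \cite{Russ07} and \cite{CaMcMo13} for the atomic decomposition and the $T^{p}$-convergence via dominated convergence, whereas you sketch the standard Coifman--Meyer--Stein level-set/Whitney construction explicitly; for the final assertion, your argument (use $T^{2}$-convergence of the decomposition to pass $R$ through the sum, then observe the partial sums of $\sum\alpha_{k}R(A_{k})$ are Cauchy in $T^{1}$) is exactly what the paper does.
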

\begin{proof}
The atomic decomposition itself, on general spaces of homogeneous type, can be found in \cite{Russ07}. The second statement follows from an application of the dominated convergence theorem in the proof (see e.g.~\cite[Theorem 3.6]{CaMcMo13}).

For the final statement, it suffices to show that $\|R(F)\|_{T^{1}(\Sp)}\lesssim \|F\|_{T^{1}(\Sp)}$ for all $F\in T^{1}(\Sp)\cap T^{2}(\Sp)$, since $T^{1}(\Sp)\cap T^{2}(\Sp)$ is dense in $T^{1}(\Sp)$. Let $(A_{k})_{k\in\N}$ and $(\alpha_{k})_{k\in\N}\subseteq\C$ be as in the first part of the lemma. Then $\sum_{k=1}^{\infty}\alpha_{k}R(A_{k})$ converges in $T^{2}(\Sp)$ to $R(F)$, by the second part of the lemma. Also, $(\sum_{k=1}^{m}\alpha_{k}R(A_{k}))_{m=1}^{\infty}$ is a Cauchy sequence in $T^{1}(\Sp)$. Hence $\sum_{k=1}^{\infty}\alpha_{k}R( A_{k})$ converges in $T^{1}(\Sp)$ as well, and $\|R(F)\|_{T^{1}(\Sp)}\lesssim \|F\|_{T^{1}(\Sp)}$. 
\end{proof}

\begin{remark}\label{rem:localatoms}
In Lemma \ref{lem:atomictent}, if $F(x,\w,\sigma)=0$ for all $(x,\w,\sigma)\in\Spp$ with $\sigma\geq1$, then each $A_{k}$ can be chosen to be associated with a ball of radius at most $2$, cf.~\cite[Theorem 3.6]{CaMcMo13}.
\end{remark}

We now introduce classes of test functions on $\Sp$ and $\Spp$, and the corresponding distributions. Let $\Da(\Sp)$ consist of those $f\in L^{\infty}(\Sp)$ such that $[(x,\w)\mapsto (1+|x|)^{N}f(x,\w)]\in L^{\infty}(\Sp)$ for all $N\geq0$, endowed with the topology generated by the corresponding weighted $L^{\infty}$-norms. Similarly, $\Da(\Spp)$ consists of all $F\in L^{\infty}(\Spp)$ such that 
\[
(x,\w,\sigma)\mapsto (1+|x|+\Upsilon(\sigma)^{-1})^{N}F(x,\w,\sigma)
\]
is an element of $L^{\infty}(\Spp)$ for all $N\geq0$, with the topology generated by the corresponding weighted $L^{\infty}$-norms. Here $\Upsilon$ is as in \eqref{eq:upsilon}. Let $\Da'(\Sp)$ be the space of continuous linear $g:\Da(\Sp)\to \C$, endowed with the topology induced by $\Da(\Sp)$, and similarly for $\Da'(\Spp)$. To avoid confusion, we denote the duality between $f\in\Da(\Sp)$ and $g\in \Da'(\Sp)$ by $\lb f,g\rb_{\Sp}$, and the duality between $F\in\Da(\Spp)$ and $G\in\Da'(\Spp)$ by $\lb F,G\rb_{\Spp}$. If $G\in L^{1}_{\loc}(\Spp)$ is such that 
\[
F\mapsto \int_{\Spp}F(x,\w,\sigma)\overline{G(x,\w,\sigma)}\ud x\ud\w\frac{\ud\sigma}{\sigma}
\]
defines an element of $\Da'(\Spp)$, then we write $G\in\Da'(\Spp)$. Note in particular that 
\[
L^{1}\big(\Spp,(1+|x|+\Upsilon(\sigma)^{-1})^{-N}\ud x\ud\w\frac{\ud\sigma}{\sigma}\big)\subseteq\Da'(\Spp)
\]
for all $N\geq0$.

The following lemma allows one to apply this distribution theory to tent spaces.

\begin{lemma}\label{lem:distributions}
For all $p\in[1,\infty]$ one has
\[
\Da(\Spp)\subseteq T^{p}(\Sp)\subseteq\Da'(\Spp)
\]
continuously, where the first embedding is dense if $p<\infty$.
\end{lemma}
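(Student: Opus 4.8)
The plan is to establish the three containments in $\Da(\Spp)\subseteq T^{p}(\Sp)\subseteq\Da'(\Spp)$ separately, reducing each to a weighted-integral estimate involving the weight $w(x,\sigma):=(1+|x|+\Upsilon(\sigma)^{-1})^{N}$ that defines $\Da(\Spp)$ and $\Da'(\Spp)$. The main structural point is the doubling bound from Lemma~\ref{lem:doubling}: $V(B_{\sqrt{\sigma}}(x,\w))\eqsim \sigma^{n}$ for $\sigma\in(0,1)$ and $\eqsim \sigma^{n/2}$ for $\sigma\geq1$, so every $\fint_{B_{\sqrt\sigma}}$ in the definition of $\A F$ is a genuine average whose normalizing constant is a power of $\Upsilon(\sigma)$. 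I would keep track of these powers carefully, since they are exactly what the weight $\Upsilon(\sigma)^{-1}$ in $\Da(\Spp)$ is designed to absorb.

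For the first embedding $\Da(\Spp)\subseteq T^{p}(\Sp)$, given $F\in\Da(\Spp)$ I would bound $|F(y,\nu,\sigma)|\lesssim w(y,\sigma)^{-1}$ for arbitrarily large $N$; since $(y,\nu)\in B_{\sqrt\sigma}(x,\w)$ forces $|x-y|\lesssim \max(\sqrt\sigma,\sigma)$ by Lemma~\ref{lem:metric}, one has $w(y,\sigma)\eqsim w(x,\sigma)$ up to the polynomial factor, so $\A F(x,\w)^{2}\lesssim \int_0^\infty (1+|x|+\Upsilon(\sigma)^{-1})^{-2N}\,\frac{\ud\sigma}{\sigma}$, which converges and decays like $(1+|x|)^{-2N+2}$ in $x$; taking $N$ large makes $\A F\in L^{p}(\Sp)$ for every $p$. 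For $p=\infty$ the same pointwise bound controls $\mathcal{C}F$ directly, splitting the supremum over balls $B$ of radius $\leq 1$ and radius $>1$ and using that $\int_{T(B)}w^{-2N}\lesssim V(B)$ after choosing $N$ large. Density for $p<\infty$ follows because $\Da(\Spp)$ contains (a dense enough family of) simple functions supported where $\sigma$ is bounded above and below and $x$ ranges over a ball, and such functions are dense in $T^{p}(\Sp)$; one can cite that $T^{p}\cap T^{2}$ is dense and approximate $T^{2}$-functions by truncations.

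For the second embedding $T^{p}(\Sp)\subseteq \Da'(\Spp)$, it suffices by the remark after the definition of $\Da'(\Spp)$ to show $T^{p}(\Sp)\subseteq L^{1}(\Spp, w^{-N}\,\ud x\ud\w\frac{\ud\sigma}{\sigma})$ for $N$ large, i.e. $\int_{\Spp}|F|\,w^{-N}\lesssim \|F\|_{T^p}$. By the vertical-versus-conical comparison \eqref{eq:vertical} (for $p\in[1,2]$) or by $T^{p}\subseteq T^{p}+T^{2}$-type interpolation together with Lemma~\ref{lem:tentdual}, the cleanest route is duality: pair $F$ against the candidate function $G=w^{-N}$ and check that $w^{-N}\in T^{p'}(\Sp)$ using the already-proved first embedding (since $w^{-N}\in\Da(\Spp)$ for $N$ large), so that $|\lb F,G\rb_{\Spp}|\lesssim \|F\|_{T^p}\|G\|_{T^{p'}}<\infty$; this simultaneously shows the pairing is well-defined and continuous, giving both the inclusion and its continuity. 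One must separately check the pairing lands in $\Da'(\Spp)$ as a topological statement, i.e. that testing against a bounded set in $\Da(\Spp)$ gives a bounded set of functionals, which again reduces to the first embedding applied uniformly.

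The main obstacle I anticipate is purely bookkeeping rather than conceptual: matching the exponents of $\Upsilon(\sigma)$ coming from the two regimes $\sigma<1$ and $\sigma\geq1$ in Lemma~\ref{lem:doubling} against the single weight $\Upsilon(\sigma)^{-1}$ appearing in the definitions of $\Da$ and $\Da'$, and making sure the $\ud\sigma/\sigma$ integral converges at both ends ($\sigma\to0$ is handled by $\Upsilon(\sigma)^{-1}=\sigma^{-1}\to\infty$ and $\sigma\to\infty$ by $\Upsilon(\sigma)^{-1}=\sigma\to\infty$, so large $N$ wins at both ends, but one has to verify there is no borderline case). A secondary subtlety is the $T^{\infty}$ endpoint, where the Carleson-type supremum over all balls must be estimated uniformly; here I would note that $T(B)$ is contained in $B\times(0,r(B)^{2})$ and that on this set $w^{-N}$ is integrable with $\int_{T(B)}w^{-N}\lesssim V(B)$ once $N>2n$, using Lemma~\ref{lem:doubling} to compare $V(B)$ with its radius. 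None of these steps is deep, but the paper's weight conventions need to be respected exactly, so I would state the key inequality $w(y,\sigma)\eqsim_N w(x,\sigma)$ for $(y,\nu)\in B_{\sqrt\sigma}(x,\w)$ as a short preliminary observation and then reuse it throughout.
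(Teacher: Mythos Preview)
Your proposal is correct and the overall strategy matches the paper's: reduce both embeddings to weighted estimates, use Lemmas~\ref{lem:metric} and~\ref{lem:doubling} to compare $w(y,\sigma)$ with $w(x,\sigma)$ on balls, and invoke tent-space duality (Lemma~\ref{lem:tentdual}) to pass between $T^{p}$ and $T^{p'}$. The density argument via compactly supported simple functions is also the same.

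There is one genuine structural difference at the endpoints. The paper proves $T^{1}(\Sp)\subseteq L^{1}\big(\Spp,w^{-N}\big)$ \emph{directly}, by inserting the tautology $1=\fint_{B_{\sqrt{\sigma}}(x,\w)}\ud y\ud\nu$, applying Fubini to swap the order of integration, and then Cauchy--Schwarz in $(x,\w,\sigma)$ to recover $\A G$; only afterwards does it deduce $\Da(\Spp)\subseteq T^{\infty}(\Sp)$ by duality. You do the reverse: you establish $\Da(\Spp)\subseteq T^{\infty}(\Sp)$ directly via a Carleson estimate $\int_{T(B)}w^{-2N}\lesssim V(B)$, and then get $T^{p}\subseteq\Da'$ for \emph{all} $p\in[1,\infty]$ in one stroke by pairing against $w^{-N}\in\Da\subseteq T^{p'}$. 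Your route is arguably more uniform in $p$, while the paper's Fubini trick for $p=1$ yields the slightly stronger statement \eqref{eq:weightedL1}, namely the explicit weighted-$L^{1}$ containment, which is reused later (e.g.\ in Proposition~\ref{prop:residualbounded}). If you want that stronger conclusion from your approach you would need to note that the tent-space pairing inequality actually controls $\int|FG|$ and not merely $\big|\int F\bar G\big|$; this is standard but worth making explicit, since your exposition momentarily conflates the two.
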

\begin{proof}
For $p<\infty$, $F\in\Da(\Spp)$ and $\alpha>0$ large enough one has, by Lemmas \ref{lem:metric} and \ref{lem:doubling},
\begin{align*}
&\int_{\Sp}\Big(\int_{0}^{\infty}\fint_{B_{\sqrt{\sigma}}(x,\w)}|F(y,\nu,\sigma)|^{2}\ud y\ud\nu\frac{\ud\sigma}{\sigma}\Big)^{\frac{p}{2}}\ud x\ud\w\\
&\lesssim \!\int_{\Sp}\!\Big(\!\int_{0}^{\infty}\!\int_{B_{\sqrt{\sigma}}(x,\w)}\!\frac{(1+|y|+\sqrt{\sigma})^{2\alpha/ p}}{\min(\sigma^{n},\sigma^{n/2})}|F(y,\nu,\sigma)|^{2}\ud y\ud\nu\frac{\ud\sigma}{\sigma}\!\Big)^{\frac{p}{2}}\!\frac{\ud x\ud\w}{(1+|x|)^{\alpha}}<\infty.
\end{align*}
Hence $\Da(\Spp)\subseteq T^{p}(\Sp)$. Moreover, the collection of $F\in L^{2}(\Spp)$ with compact support in $\Spp$ is dense in $T^{p}(\Sp)$ for $p<\infty$ (see \cite[Proposition 3.5]{Amenta14}). Since each such function can be approximated in $L^{2}(\Spp)$ by compactly supported simple functions, and the $T^{p}(\Sp)$-norm is equivalent to the $L^{2}(\Spp)$-norm on compact subsets of $\Spp$ (see \cite[Lemma 3.3]{Amenta14}), it follows that $\Da(\Spp)\subseteq T^{p}(\Sp)$ is dense for $p<\infty$. It now also follows from Lemma \ref{lem:tentdual} that $T^{p}(\Sp)\subseteq\Da'(\Spp)$ for all $p\in(1,\infty]$. 

Next, for $G\in T^{1}(\Sp)$ and $N$ large enough one has
\begin{align*}
&\int_{\Spp}(1+|x|+\Upsilon(\sigma)^{-1})^{-N}|G(x,\w,\sigma)|\ud x\ud\w\frac{\ud\sigma}{\sigma}\\
&=\int_{\Spp}\!(1+|x|+\Upsilon(\sigma)^{-1})^{-N}|G(x,\w,\sigma)|\fint_{B_{\sqrt{\sigma}}(x,\w)}\ud y\ud\nu\ud x\ud\w\frac{\ud\sigma}{\sigma}\\
&=\int_{\Sp}\int_{0}^{\infty}\fint_{B_{\sqrt{\sigma}}(y,\nu)}(1+|x|+\Upsilon(\sigma)^{-1})^{-N}|G(x,\w,\sigma)|\ud x\ud\w\frac{\ud\sigma}{\sigma}\ud y\ud\nu\\
&\lesssim \int_{\Sp}\Big(\int_{0}^{\infty}\fint_{B_{\sqrt{\sigma}}(y,\nu)}|G(x,\w,\sigma)|^{2}\ud x\ud \w\frac{\ud\sigma}{\sigma}\Big)^{1/2}\ud y\ud\nu=\|G\|_{T^{1}(\Sp)}.
\end{align*}
Hence 
\begin{equation}\label{eq:weightedL1}
T^{1}(\Sp)\subseteq L^{1}\big(\Spp,(1+|x|+\Upsilon(\sigma)^{-1})^{-N}\ud x\ud\w\frac{\ud\sigma}{\sigma}\big)\subseteq\Da'(\Spp).
\end{equation}
Finally, by Lemma \ref{lem:tentdual} one also obtains that $\Da(\Spp)\subseteq T^{\infty}(\Sp)$. 
\end{proof}

\subsection{Fourier integral operators}\label{subsec:FIOs}

In this section we collect some background material on oscillatory integrals and Fourier integral operators.

We first introduce suitable spaces of symbols. Let $a\in C^\infty(\R^{2n})$ and $\rho\in[\frac{1}{2},1]$. We say that $a$ is a symbol of order $m\in\R$ and type $(\rho, 1-\rho, 1)$ if, for all $\alpha,\beta\in\Z_{+}^{n}$ and $\gamma\in\Z_{+}$, there exists a $C_{\alpha,\beta,\gamma}\geq0$ such that
\begin{equation}
\big|\partial^{\alpha}_{x}\partial_{\eta}^{\beta}\lb\hat{\eta},\nabla_{\eta}\rb^{\gamma}a(x,\eta) \big|\leq C_{\alpha, \beta, \gamma} \lb \eta\rb^{m + (1 - \rho)|\alpha| - \rho|\beta| - \gamma}
\label{symbol-est}\end{equation}
for all $x,\eta\in\Rn$ with $\eta\neq 0$. We denote the space of all symbols of order $m$ and type $(\rho,1-\rho,1)$ by $S^m_{\rho, 1-\rho, 1}(\R^{2n})$. It is a Frech\'et space with the seminorms given by the minimal constants $C_{\alpha,\beta,\gamma}$ in \eqref{symbol-est}. Note that, for $\rho=1$, this is just the classical Kohn--Nirenberg class $S^{m}(\R^{2n})$. For $1/2 \leq \rho < 1$, it is almost the H\"ormander class $S^{m}_{\rho,1-\rho}(\R^{2n})$, but there is extra decay when differentiating in the radial direction in $\eta$. The \emph{cone support} of $a$ is the conic set in $\R^{2n}$ generated by $\supp(a)\setminus o$, and the base of the cone support, i.e.~the projection of the cone support of $a$ onto $\R^{n}\times S^{n}$, is denoted by $\text{base}(a)$.

Also, for $N\in\N$, $S^{m}_{\rho,1-\rho,1}(\R^{2n}\times\R^{N})$ consists of all $a\in C^{\infty}(\R^{2n}\times\R^{N})$ such that,
for all $\alpha,\delta\in\Z_{+}^{n}$, $\beta\in\Z_{+}^{N}$ and $\gamma\in\Z_{+}$, there exists a $C_{\alpha,\beta,\gamma,\delta}\geq0$ such that
\[
\big|\partial^{\alpha}_{x}\partial_{y}^{\delta}\partial_{\theta}^{\beta}\lb\hat{\theta},\nabla_{\theta}\rb^{\gamma}a(x,y,\theta) \big|\leq C_{\alpha, \beta, \gamma,\delta} \lb \theta\rb^{m + (1 - \rho)(|\alpha|+|\delta|) - \rho|\beta| - \gamma}
\]
for all $x,y\in\Rn$ and $\theta\in\R^{N}$ with $\theta\neq 0$. The cone support of such an $a$ is the conic set in $\R^{2n}\times\R^{N}$ generated by $\{(x,y,\theta)\in \supp(a)\mid \theta\neq0\}$, and the base  of the cone support is the projection of the cone support onto $\R^{2n}\times S^{N}$.

We now define the specific Fourier integral operators that we will work with for most of the article. Throughout, for $\veps>0$, we say that a set $\widetilde{U}\subseteq\Sp$ is an \emph{$\veps$-neighborhood} of a set $U\subseteq\Sp$ if $B_{\veps}(x,\w)\subseteq \widetilde{U}$ for all $(x,\w)\in U$.
 
\begin{definition}\label{def:operator}
Let $a\in L^{\infty}(\R^{2n})$ and $\Phi:V\to\R$ be given, where $V\subseteq\R^{2n}$ is an open conic neighborhood of the cone support of $a$. Set
\begin{equation}
Tf(x):=\int_{\Rn}e^{i\Phi(x,\eta)}a(x,\eta)\wh{f}(\eta)\ud\eta
\label{eq:normal-oscint}\end{equation}
for $f\in\Sw(\Rn)$ and $x\in \Rn$. We call $T$ a \emph{normal oscillatory integral operator} of order $m\in\R$ and type $(\rho, 1-\rho, 1)$, for $\rho\in[\frac{1}{2},1]$, with phase function $\Phi$ and symbol $a$, if
\begin{enumerate}
\item\label{it:phase1} $\Phi\in C^{\infty}(V)$ and $(x,\eta)\mapsto \Phi(x,\eta)$ is homogeneous of degree $1$ in the $\eta$-variable;
\item\label{it:phase2} $\sup_{(x,\eta)\in V}|\partial_{x}^{\alpha}\partial_{\eta}^{\beta}\Phi(x,\hat{\eta})|<\infty$ for all $\alpha,\beta\in\Z_{+}^{n}$ with $|\alpha|+|\beta|\geq 2$;
\item\label{it:phase3} $\inf_{(x,\eta)\in V}| \det \partial^2_{x \eta} \Phi (x,\eta)|>0$;
\item\label{it:phase4} The map $(\nabla_{\eta}\Phi(x,\eta),\eta)\mapsto (x,\nabla_{x}\Phi(x,\eta))$ defines a homogeneous canonical transformation $\chi$ with domain $\dom(\chi)=\{(\nabla_{\eta}\Phi(x,\eta),\eta)\mid (x,\eta)\in V\}$, and $\dom(\chi)\cap \Sp$ is an $\veps$-neighborhood of $\{(\nabla_{\eta}\Phi(x,\eta),\eta)\mid (x,\eta)\in \text{base}(a)\}$ for some $\veps>0$;
\item\label{it:symbol} $a\in S^{m}_{\rho,1-\rho,1}(\R^{2n})$. 
\end{enumerate}
\end{definition}

\begin{remark}
For general Fourier integral operators, as defined below, one considers local versions of these assumptions; the point here is that they should hold uniformly on the domain of $\Phi$. This will allow us to obtain results for Fourier integral operators whose Schwartz kernel is not compactly supported. 

The main point of condition \eqref{it:phase4} is that the map $(\nabla_{\eta}\Phi(x,\eta),\eta)\mapsto (x,\nabla_{x}\Phi(x,\eta))$ is well defined and bijective from its domain to its range. If this is true, then it is automatically a homogeneous canonical transformation. And if either $\Sp\subseteq\dom(\chi)$ or $\text{base}(a)$ is compact, then $\dom(\chi)\cap \Sp$ is an $\veps$-neighborhood of $\{(\nabla_{\eta}\Phi(x,\eta),\eta)\mid (x,\eta)\in \text{base}(a)\}$.

For $n\geq3$, the global inverse function theorem (see \cite[Theorems 6.2.4 and 6.2.8]{Krantz-Parks13}) shows that \eqref{it:phase4} in fact follows from \eqref{it:phase1}, \eqref{it:phase2} and \eqref{it:phase3} if $\dom(\Phi)=\Rn\times (\Rn\setminus \{0\})$, and that one then has $\dom(\chi)=\ran(\chi)=\Tp\setminus o$. Here the condition $n\geq3$ ensures that $\Rn\setminus \{0\}$ is simply connected when solving the equation $\xi=\nabla_{x}\Phi(x,\eta)$ for $\eta$.
\end{remark}

As noted in Section \ref{subsec:metric}, the map $\chi$ in \eqref{it:phase4} induces a contact transformation $\hchi$ on $\Sp$: 
\begin{equation}
\hchi(\nabla_{\eta}\Phi(x,\eta),\eta):=\big(x,\tfrac{\nabla_{x}\Phi(x,\eta)}{|\nabla_{x}\Phi(x,\eta)|}\big)
\label{hatchi}\end{equation}
for $(x,\eta)\in V$ with $|\eta|=1$. We call $\hchi$ the contact transformation induced by $\Phi$. 

\begin{lemma}\label{lem:philip}
Let $\Phi:V\to\R$ satisfy \eqref{it:phase1} -- \eqref{it:phase4} in Definition \ref{def:operator}, let $\hchi$ be the contact transformation induced by $\Phi$, and let $U\subseteq\dom(\hchi)$. Suppose that either $U=\dom(\hchi)=\ran(\hchi)=\Sp$, or $U$ is compact. Then $\hchi:U\to\hchi(U)$ is bi-Lipschitz.
\end{lemma}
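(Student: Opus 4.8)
The plan is to deduce Lemma \ref{lem:philip} directly from Proposition \ref{prop:Lipschitz}. Since Proposition \ref{prop:Lipschitz} already establishes that a contact transformation on $\Sp$ is bi-Lipschitz (on $\Sp$, or on a compact subset of its domain) provided that the operator norms of its tangent map and of the inverse tangent map are uniformly bounded, the only thing left to verify is that the contact transformation $\hchi$ induced by a phase function $\Phi$ satisfying \eqref{it:phase1}--\eqref{it:phase4} satisfies this uniform bound hypothesis. So the content of the proof is: \emph{the estimates \eqref{it:phase2} and \eqref{it:phase3} on $\Phi$, together with the homogeneity \eqref{it:phase1}, force $|T_{(x,\w)}(\hchi)|$ and $|T_{(x,\w)}(\hchi)^{-1}|$ to be bounded uniformly on $\dom(\hchi)$ and $\ran(\hchi)$ respectively.}

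First I would fix the parametrization. By \eqref{hatchi}, $\hchi$ is expressed in the graph coordinates $(\nabla_\eta\Phi(x,\eta),\eta)\mapsto(x,\widehat{\nabla_x\Phi(x,\eta)})$ with $|\eta|=1$; equivalently, using the variables $(x,\w)$ with $\w=\hat\eta$ as local coordinates on a piece of $\Sp$, one has a smooth map $(y,\w)\mapsto(z,\theta)$ where $y=\nabla_\eta\Phi(x,\eta)$, $\w=\eta$, $z=x$, $\theta=\widehat{\nabla_x\Phi(x,\eta)}$. The key point is that the first derivatives of this map, in any of these coordinate systems, are built out of the second derivatives $\partial^2_{xx}\Phi$, $\partial^2_{x\eta}\Phi$, $\partial^2_{\eta\eta}\Phi$ evaluated at $|\eta|=1$ (after using homogeneity to reduce to the sphere), divided by quantities controlled below by $\inf|\det\partial^2_{x\eta}\Phi|>0$ from \eqref{it:phase3}. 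Indeed, inverting the relation $y=\nabla_\eta\Phi(x,\eta)$ to solve for $x$ in terms of $(y,\eta)$ involves $(\partial^2_{x\eta}\Phi)^{-1}$, whose norm is bounded by a standard cofactor estimate using \eqref{it:phase2} for the numerator and \eqref{it:phase3} for the denominator; and differentiating $\theta=\widehat{\nabla_x\Phi}$ brings in $\partial^2_{xx}\Phi$, $\partial^2_{x\eta}\Phi$ and the chain rule, again bounded by \eqref{it:phase2}, \eqref{it:phase3} (note that $|\nabla_x\Phi|\eqsim|\eta|=1$ on the sphere by \eqref{it:phase2} and \eqref{it:phase3}, keeping $|\nabla_x\Phi|$ bounded away from $0$, so the normalization map $v\mapsto\hat v$ is smooth with bounded derivative there). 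The same reasoning applied to $\hchi^{-1}$ — whose phase-function description is symmetric, with $\Phi$ replaced by its Legendre-type conjugate, and which again satisfies the analogues of \eqref{it:phase1}--\eqref{it:phase4} — gives the uniform bound on $|T_{(x,\w)}(\hchi)^{-1}|$. One must be a little careful that the tangent map is measured with respect to the product metric $dx^2+g_{S^{n-1}}$ on $\Sp$, but since we only move along the unit sphere in $\eta$ (using homogeneity), the Euclidean and spherical metrics are comparable with universal constants on the relevant region, so this changes nothing.

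Having established $|T_{(x,\w)}(\hchi)|\leq C$ on $\dom(\hchi)$ and $|T_{(x,\w)}(\hchi)^{-1}|\leq C$ on $\ran(\hchi)$ with $C$ depending only on the constants in \eqref{it:phase2}, \eqref{it:phase3}, the hypotheses of Proposition \ref{prop:Lipschitz} are met for any $U$ which is either all of $\Sp$ (in the case $\dom(\hchi)=\ran(\hchi)=\Sp$) or compact, and the conclusion that $\hchi:U\to\hchi(U)$ is bi-Lipschitz follows immediately. I expect the main obstacle to be purely bookkeeping: carefully tracking which matrix of second derivatives of $\Phi$ appears in each entry of the Jacobian of $\hchi$ (and of $\hchi^{-1}$) in the graph coordinates, and checking that every denominator that appears is controlled from below — either by $\inf|\det\partial^2_{x\eta}\Phi|>0$ or by the lower bound $|\nabla_x\Phi|\gtrsim|\eta|$ on the cone. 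There is no genuine analytic difficulty; it is an application of the inverse function theorem with quantitative bounds, exactly of the type already used in the remark following Definition \ref{def:operator}, combined with Proposition \ref{prop:Lipschitz}.
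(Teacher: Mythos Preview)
Your proposal is correct and follows essentially the same approach as the paper: reduce to Proposition~\ref{prop:Lipschitz} by showing that $T(\hchi)$ and $T(\hchi)^{-1}$ are uniformly bounded, which in turn comes from the implicit function theorem and the bounds \eqref{it:phase2}, \eqref{it:phase3} on the second derivatives of $\Phi$ (together with the observation $|\nabla_x\Phi(x,\hat\eta)|\eqsim 1$). The paper carries this out by writing the Jacobian $\partial(x,\xi)/\partial(y,\eta)$ explicitly as a product of $2\times 2$ block matrices involving $(\partial^2_{x\eta}\Phi)^{-1}$, $\partial^2_{xx}\Phi$, and $\partial^2_{\eta\eta}\Phi$, and then notes that the Jacobian of $\chi^{-1}$ is handled ``in the exact same way'' by the symmetric implicit relation --- so your detour through a Legendre-type conjugate phase for $\hchi^{-1}$, while workable, is more machinery than needed.
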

\begin{proof} 
By Proposition \ref{prop:Lipschitz}, we only have to show that $\hchi$ and $\hchi^{-1}$ have uniformly bounded first derivatives. We first claim that $|\nabla_{x}\Phi(x, \hat\eta)|$ is uniformly bounded in $(x,\eta) \in V$, both from above and away from zero. Indeed, the homogeneity of $\Phi$ yields 
\[
\nabla_{x}  \Phi(x, \hat\eta)  = \partial^2_{x \eta} \Phi(x,\hat{\eta})\hat{\eta},
\] 
and the claim then follows from the bounds in \eqref{it:phase2} and \eqref{it:phase3} of Definition \ref{def:operator}. Combined with \eqref{it:phase4}, this also shows that $\hchi$ is well-defined and bijective from its domain to its range.

By \eqref{hatchi}, the claim reduces the proof to showing that $\chi$ and $\chi^{-1}$ have uniformly bounded first derivatives on $\Sp$. To this end, we express the derivatives of $\chi$ using the implicit function theorem. Set 
\[
F((y, \eta),(x, \xi)) :=  \begin{pmatrix} y - \nabla_{\eta}\Phi(x, \eta) \\ \xi - \nabla_{x}\Phi(x, \eta) \end{pmatrix}
\]
for $(y,\eta),(x,\xi)\in \Tp$ with $(x,\eta)\in V$. Then $\chi$ is defined implicitly by $(x,\xi) = \chi(y, \eta) \Leftrightarrow F((y, \eta), (x, \xi)) = 0$. Hence
\[
\frac{\partial(x, \xi)}{\partial(y, \eta)} = \begin{pmatrix} \partial_{\eta x}^{2}\Phi & 0 \\  \partial^{2}_{xx}\Phi & -I \end{pmatrix}^{-1} \begin{pmatrix} I & -\partial^{2}_{\eta\eta}\Phi \\ 0 & -\partial^{2}_{x\eta}\Phi \end{pmatrix}= \begin{pmatrix} (\partial_{x\eta}^{2}\Phi)^{-1} & 0 \\ (\partial^{2}_{x\eta}\Phi)^{-1}\partial^{2}_{xx}\Phi & -I \end{pmatrix}  \begin{pmatrix} I & -\partial_{\eta\eta}^{2}\Phi \\ 0 & -\partial_{x\eta}^{2}\Phi \end{pmatrix}.
\]
The first derivatives of $\chi$ are therefore bounded in terms of the second derivatives of $\Phi$ and the lower bound for $\partial_{x\eta}^{2}\Phi$ from \eqref{it:phase3}. The first derivatives of $\chi^{-1}$ are estimated in the exact same way.
\end{proof}

We now collect some basics on the invariant theory of Fourier integral operators, as developed by H\"{o}rmander in \cite{Hormander71}. For most of the results in later sections it suffices to consider the normal oscillatory integral operators defined above, and readers only interested in those operators may skip the rest of this section.

In H\"{o}rmander's theory, there is a class of Fourier integral operators associated with certain conic Lagrangian submanifolds of $T^*(X) \times T^*(Y)$, where $X$ and $Y$ are manifolds. 
In this article we shall only be concerned with the local theory of these operators, and with the case where $\dim X = \dim Y$, so we may assume that $X = Y = \R^n$. For $N\in\N$, a \emph{Langrangian submanifold} of $T^{*}(\R^{N})$ is a submanifold of dimension $N$ on which the natural symplectic form on $T^{*}(\R^{N})$ vanishes. A \emph{homogeneous canonical relation} is a submanifold $\Gamma\subseteq(\Tp\setminus o)\times(\Tp\setminus o)$ which is closed in $T^{*}(\R^{2n})\setminus o$ and is invariant under positive dilations in the fiber variables, such that
\[
\Gamma':=\{(x,\xi,y,-\eta)\mid (x,\xi,y,\eta)\in\Gamma\}
\]
is a Lagrangian submanifold of $T^{*}(\R^{2n})$. One says that $\Gamma$ is a \emph{local canonical graph} if it is locally the graph of a homogeneous canonical transformation. A \emph{local parametrization} of $\Gamma$ near a point $(x_0, \xi_0, y_0, \eta_0) \in \Gamma$ is a smooth function $\Psi:U\to\R$, where $U\subseteq \R^{2n}\times\R^{N}$ is an open conic neighborhood of $(x_{0},y_{0},\theta_{0})$ for some $N\in\N$ and $\theta_{0}\in\R^{N}$, such that
\begin{itemize}
\item $\Psi$ is homogeneous of degree one in the $\theta$-variable;
\item the differentials  $d(\partial_{\theta_j} \Psi)$, $j\in\{1,\ldots,N\}$, are linearly independent near $(x_0, y_0, \theta_0)$;
\item $\nabla_\theta \Psi(x_0, y_0, \theta_0) = 0$, $\nabla_x \Psi(x_0, y_0, \theta_0) = \xi_0$ and $\nabla_y \Psi(x_0, y_0, \theta_0) = \eta_0$;
\item locally near $(x_0, y_0, \theta_0)$, one has
$$
\Gamma' = \{ (x, \nabla_x \Psi(x,y,\theta), y, \nabla_y \Psi(x,y,\theta)) \mid \nabla_\theta \Psi(x,y,\theta) = 0 \}.
$$
\end{itemize}
A \emph{smoothing operator} is an operator $R:\Sw(\Rn)\to\Sw(\Rn)$ with Schwartz kernel in $\mathcal{S}(\R^{2n})$. Let $\rho\in[\frac{1}{2},1]$. Then a \emph{Fourier integral operator} of order $m$ and type $(\rho, 1-\rho, 1)$ associated with a canonical relation $\Gamma$ is an operator $T:\Sw(\Rn)\to\Sw'(\Rn)$ for which there exists a smoothing operator $R$ such that the Schwartz kernel of $T-R$ is a locally finite sum of oscillatory integrals of the form 
\begin{equation}\label{eq:oscillatory}
(x,y)\mapsto (2\pi)^{-(n+N)/2} \int_{\R^N} e^{i\Psi(x, y, \theta)} a(x, y, \theta) \ud\theta,
\end{equation}
where $\Psi$ is a local parametrization of $\Gamma$ defined on a neighbourhood of the cone support of $a$, and where $a\in S^{m + (n-N)/2}_{\rho, 1-\rho, 1}(\R^{2n}\times\R^{N})$. The integral in \eqref{eq:oscillatory} is a formal expression which need not be absolutely convergent; for a proper definition one can use smooth cutoffs, integrate by parts and take a limit (see e.g.~\cite[Section 0.5]{Sogge17}).

We now explain the link with our normal oscillatory integral operators. First, let $T$ be a normal oscillatory integral operator as in \eqref{eq:normal-oscint}, with associated homogeneous canonical transformation $\chi$. Then the phase function $(x,y,\xi)\mapsto \Psi(x, y, \xi) := \Phi(x, \xi) - y \cdot \xi$, associated with the Schwartz kernel of $T$, parametrizes the relation $\{ \chi(y, \xi),(y, \xi)) \mid (y, \xi) \in \dom(\chi) \}$. This relation, the graph of $\chi$, is a homogeneous canonical relation. Hence each normal oscillatory integral operator is a Fourier integral operator. 

The following proposition provides a partial converse.

\begin{proposition}\label{prop:FIOnormal}
Let $\rho \in (1/2, 1]$, and let $T$ be a Fourier integral operator of order $0$ and type $(\rho, 1-\rho,1)$ associated with a local canonical graph, with compactly supported Schwartz kernel. Then there exist a smoothing operator $R$, an $m\in\N$ and sequences $(T_{j,1})_{j=1}^{m}$ and $(T_{j,2})_{j=1}^{m}$ of normal oscillatory integral operators of order $0$ and type $(\rho,1-\rho,1)$ such that 
\[
T=\sum_{j=1}^{m}T_{j,1}T_{j,2}+R.
\]
Moreover, for all $j\in\{1,\ldots,m\}$ and $k\in\{1,2\}$, the Schwartz kernel of $T_{j,k}$ is compactly supported, and the symbol $a_{j,k}$ of $T_{j,k}$ is such that $a_{j,k}(x,\eta)=0$ for all $(x,\eta)\in\R^{2n}$ with $|\eta|<1$. 
\end{proposition}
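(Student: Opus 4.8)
The plan is to reduce a general FIO associated with a local canonical graph to a sum of products of normal oscillatory integral operators by exploiting the standard microlocal factorization of such operators. First I would use a partition of unity, subordinate to a finite cover of the compact Schwartz kernel, to write $T - R_0$ (with $R_0$ smoothing) as a finite sum of operators whose kernels are single oscillatory integrals of the form \eqref{eq:oscillatory} with $a\in S^{-N/2}_{\rho,1-\rho,1}(\R^{2n}\times\R^{N})$, each supported near a point of $\Gamma$. By the local canonical graph hypothesis, near each such point $\Gamma$ is the graph of a homogeneous canonical transformation, and it is a classical fact (see H\"ormander \cite{Hormander71}) that one can then locally find a nondegenerate phase with $N=n$ parametrizing $\Gamma$ in the form $\Psi(x,y,\xi)=\Phi(x,\xi)-y\cdot\xi$. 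After a reduction of the number of phase variables (stationary phase in the extra $N-n$ variables, staying inside the $S^m_{\rho,1-\rho,1}$ classes because $\rho>1/2$), each local piece becomes, up to a smoothing error, an operator of the form \eqref{eq:normal-oscint} whose phase satisfies \eqref{it:phase1}--\eqref{it:phase3}. However, the resulting $\Phi$ need only be defined on a small conic neighborhood $V$, so $T$ restricted there is a \emph{localized} normal oscillatory integral operator rather than a global one.

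The key difficulty is that Definition \ref{def:operator} demands the phase estimates \eqref{it:phase2}, \eqref{it:phase3} and the global injectivity \eqref{it:phase4} to hold \emph{uniformly} on the full domain of $\Phi$, whereas the microlocal construction only produces them locally. To handle this, I would argue that since the Schwartz kernel of $T$ is compactly supported, we can choose, for each local piece, a cutoff $\chi_j(\eta)$ that is homogeneous of degree $0$, supported in a slightly larger cone than $\mathrm{base}(a_j)$, and identically $1$ on $\mathrm{base}(a_j)$; then one modifies the locally-defined phase $\Phi_j$ outside the support of $\chi_j$ to a globally defined phase $\wt\Phi_j$ on $\Rn\times(\Rn\setminus\{0\})$ satisfying \eqref{it:phase1}--\eqref{it:phase4} uniformly and agreeing with $\Phi_j$ on a conic neighborhood of $\mathrm{base}(a_j)$. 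The point is that the modification only affects the operator outside the cone support of the symbol, so it changes nothing; the existence of such a global extension follows because one only needs the Hessian bounds and the determinant lower bound on a compact set in the sphere variable and can interpolate to, say, $\Phi(x,\eta)=x\cdot\eta$ away from it. One also needs to arrange that the symbol vanishes for $|\eta|<1$: this is immediate by inserting a radial cutoff $(1-\zeta(\eta))$ with $\zeta$ supported in $\{|\eta|\le 1\}$, and absorbing the difference (a compactly supported, hence smoothing, operator) into $R$.

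The final step is the factorization $T=\sum T_{j,1}T_{j,2}+R$. The cheapest route is to write each normal oscillatory integral operator $T_j$ as $T_j = T_j \circ P + R_j'$, where $P$ is a pseudodifferential operator of order $0$ with symbol equal to $1$ on the relevant frequency cone and vanishing for $|\eta|<1$, so that $P$ is itself a normal oscillatory integral operator of order $0$ and type $(\rho,1-\rho,1)$ with phase $x\cdot\eta$, and $R_j'$ is smoothing because $(1-P)$ is a smoothing operator on the cone support of the symbol of $T_j$. Then $T_{j,1}:=T_j$, $T_{j,2}:=P$; both have compactly supported Schwartz kernels after a further harmless spatial cutoff (the kernel of $T_j$ already is; for $P$ one localizes the kernel near the diagonal, the tails being smoothing), and both symbols can be taken to vanish for $|\eta|<1$. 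Collecting all smoothing errors into a single $R$ completes the argument. I expect the main obstacle to be the careful bookkeeping in the global phase extension — verifying that the modified $\wt\Phi_j$ genuinely satisfies the uniform determinant lower bound \eqref{it:phase3} and the global injectivity in \eqref{it:phase4} while leaving the operator unchanged on the cone support of the symbol — rather than the factorization itself, which is routine once the normal form is in hand.
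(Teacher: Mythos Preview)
Your proposal has a genuine gap at the step where you claim it is ``a classical fact'' that near any point of a local canonical graph $\Gamma$ one can find a phase of the form $\Psi(x,y,\xi)=\Phi(x,\xi)-y\cdot\xi$. This specific form --- which is exactly what Definition~\ref{def:operator} requires --- is available if and only if the projection $\Gamma\ni(x,\xi,y,\eta)\mapsto(x,\eta)$ is a local diffeomorphism, and that does \emph{not} follow from the local canonical graph hypothesis alone: the latter guarantees that $(y,\eta)$ and $(x,\xi)$ are each coordinates on $\Gamma$, but says nothing about the mixed pair $(x,\eta)$. The paper confronts precisely this obstruction. It first performs a change of $y$-coordinates, invoking \cite[Theorem~3.1.3]{Hormander71}, so that $\eta$ becomes a coordinate on the Lagrangian $L=\chi^{-1}(T^*_{x_0}\Rn)$; only then can one use $(x,\eta)$ as coordinates on $\Gamma$ and construct $\Phi$. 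That coordinate change is itself a normal oscillatory integral operator and is taken as the factor $T_{j,2}$, while $T_{j,1}$ is the operator now expressible in normal form. Your trivial factorization $T_j=T_j\circ P$ with $P$ a pseudodifferential cutoff presupposes that $T_j$ is already in normal form, which is exactly the step that fails in general.

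A secondary point: the global extension of $\Phi_j$ that you propose is unnecessary. Definition~\ref{def:operator} only requires $\Phi$ to be defined on an open conic neighbourhood $V$ of the cone support of the symbol, with the bounds in \eqref{it:phase2}--\eqref{it:phase3} uniform on $V$ and the $\veps$-neighbourhood condition in \eqref{it:phase4}; when the cone support of the symbol has compact base, all of this follows from continuity after possibly shrinking $V$. There is no need to interpolate the phase to $x\cdot\eta$ at infinity, and indeed such an interpolation would not in general preserve the determinant lower bound \eqref{it:phase3}.
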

\begin{proof}
By assumption, there exists a smoothing operator $R_{1}$ such that the Schwartz kernel of $T-R_{1}$ is a finite sum of oscillatory integral expressions as in \eqref{eq:oscillatory}, where each amplitude $a$ is an $S^{(n-N)/2}_{\rho,1-\rho,1}(\R^{2n}\times\R^{N})$ symbol such that the cone support of $a$ has a compact base. 
So without loss generality we may assume that the Schwartz kernel of $T$ is as in \eqref{eq:oscillatory}, where the $(x,y)$-support of $a$ is compact. Let $\Gamma$ denote the canonical relation of $T$. We may also assume that the projection $\hat{\Gamma}$ of $\Gamma$ to $\Sp\times\Sp$ is compact, and that $\Gamma$ is the graph of a homogeneous canonical transformation $\chi$.

We first find a change of $y$ coordinates that will allow us to construct a  phase function $\Phi$ satisfying properties \eqref{it:phase1} -- \eqref{it:phase4}.  
Fix a point $(x_0, \xi_0, y_0, \eta_0) \in \Gamma$. Then  
\[
L := \{ (y, \eta) \in \Tp \mid \exists \xi \text{ s.t.~}(x_0, \xi, y, \eta) \in \Gamma \}
\]
is a Lagrangian submanifold of $\Tp$, being the image under $\chi^{-1}$ of the Lagrangian submanifold $T^*_{x_0}(\R^n)\subseteq \Tp$. Since $q_0 :=(y_{0},\eta_{0})\in L$, it follows from \cite[Theorem 3.1.3]{Hormander71} that one can make a change of $y$ coordinates so that the dual $\eta$ coordinate on $\Tp$ is a coordinate on $L$, locally near $q_0$. This coordinate change is itself a normal oscillatory integral operator. In fact, if the coordinate change is $y \mapsto \tilde y = F(y)$, defined on a small neighbourhood $V$ of a compact set $K$, then it is implemented by the operator with Schwartz kernel
$$
(\tilde{y},y)\mapsto (2\pi)^{-n} \int e^{i(F^{-1}(\tilde y) - y) \cdot \eta} \phi(F^{-1}(\tilde y)) \, d\eta
$$
where $\phi\in C^{\infty}_{c}(\Rn)$ is supported on $F(V)$ and identically equal to one on $F(K)$, and we interpret $\phi(F^{-1}(x)) = 0$ if $x \notin F(V)$. This is a normal oscillatory integral operator, and we can cut off the symbol in a neighbourhood of $\eta = 0$ as this only changes the kernel by a smoothing term. This gives us the $T_{j,2}$ factor in the statement of the proposition.

Abusing notation somewhat, we continue to use $y$ (instead of $\tilde y$) to denote these new coordinates, and $\eta$ for the new dual coordinates.  Therefore, both $\xi$ and $\eta$ furnish coordinates on $L$, implying that  the Jacobian $\partial \eta/ \partial \xi$ is nonzero on $L$, which may be identified with $\Gamma \cap \{ x = x_0\}$. By continuity, the Jacobian $\partial \eta/ \partial \xi$ is nonzero on $L = \Gamma \cap \{ x = x_1\}$ for $x_1$ in a neighbourhood of $x_0$. By the assumption that $\Gamma$ is a local canonical graph, $(x, \xi)$ form local coordinates on $\Gamma$ near $(x_0, \xi_0, y_0, \eta_0)$. Because of the Jacobian property just shown,  we can use $(x, \eta)$ instead of $(x, \xi)$ as local coordinates on $\Gamma$. Thus we can write the other coordinates locally as functions of $(x, \eta)$: that is, $y = Y(x, \eta)$ and $\xi = \Xi(x, \eta)$. Then, as shown in 
\cite[Theorem 21.2.18]{Hormander07}, the function
$$
\Psi(x, y, \eta) = (Y(x, \eta) - y) \cdot \eta = \Phi(x, \eta) - y \cdot \eta, \quad \Phi(x, \eta) := Y(x, \eta) \cdot \eta
$$
locally parametrizes $\Gamma$. To verify this, one uses identities derived from the Lagrangian nature of $\Gamma'$, which implies that 
\begin{equation}
d\eta \cdot dY = d\Xi \cdot dx.
\label{symp}\end{equation}

The results in \cite{Hormander71} about invariance under change of phase function show that, at least for $\rho > 1/2$, the original FIO can be written in terms of this new phase function. That is, its kernel takes the form 
\begin{equation}\label{eq:oscillatory2}
(x,y)\mapsto (2\pi)^{-n} \int_{\R^N} e^{i(\Phi(x, \eta) - y \cdot \eta)} \tilde a(x, y, \eta) \ud\eta
\end{equation}
with $\tilde a \in S^{0}_{\rho,1-\rho,1}(\R^{2n}\times\R^{n})$. This is not quite of the right form, as the amplitude depends on both $x$ and $y$. However, as is well known, up to a smooth kernel $R_{2}$, this oscillatory integral is equal to a similar expression with $\tilde a$ replaced by a function of $x$ and $\xi$ alone (and compactly supported in $x$). This is done essentially by expanding $\tilde a$ in a Taylor series about the stationary point $y = d_\eta \Phi(x, \eta)$, expressing $y - d_\eta \Phi$ as the $\eta$-derivative of the exponential, and then integrating by parts. The kernel $(x,y)\mapsto R_{2}(x,y)$ is no longer compactly supported in $y$. However, integrating by parts in $\eta$ shows that $R_{2}$, together with all its partial derivatives in $x$ and $y$, is rapidly decreasing as $|x-y|$ tends to infinity. Together with the compact support in $x$, this shows that $R_{2}$ has a kernel that is a Schwartz function, and therefore $R_{2}$ is a smoothing operator. In a similar way, up to another smoothing operator we can assume that the amplitude $\tilde a$ is supported away from $\eta = 0$. The operator so obtained is the $T_{j,1}$ in the statement of the proposition. 

The phase function $\Phi(x, \eta) - y \cdot \eta$ now has the correct form as in \eqref{eq:normal-oscint}. Moreover, $\Phi$ has the properties \eqref{it:phase1} -- \eqref{it:phase4}.  These are all clear except for \eqref{it:phase3}. To see this property, we note that writing out the two-form identity \eqref{symp} in local coordinates $(x, \eta)$  shows, by equating the coefficient of $d\eta_i d\eta_j$ on both sides, 
$$
\frac{\partial Y_i}{\partial \eta_j} = \frac{\partial Y_j}{\partial \eta_i}.   
$$
We then find that 
$$
\sum_i \eta_i \frac{\partial Y_i}{\partial \eta_j} = \sum_i \eta_i \frac{\partial Y_j}{\partial \eta_i} = 0
$$
using the homogeneity of degree zero of the functions $Y_j$. It follows that
$$
d_{\eta_j} \Phi = Y_j + \sum_i \eta_i \frac{\partial Y_i}{\partial \eta_j} = Y_j, \text{ and  therefore } d_{x_i \eta_j} \Phi = d_{x_i} Y_j.
$$
Therefore \eqref{it:phase3} follows from the observation that the Jacobian $\partial Y/ \partial x$ is nonzero,  which is due the fact that both $(x, \eta)$ and $(y, \eta)$ furnish coordinates locally on $\Gamma$, together with the assumed compactness of $\hat \Gamma$. 
\end{proof}

\begin{remark}
In \cite{Hormander71}, H\"ormander only considered symbols of type $(\rho, 1 - \rho)$, for $\rho > 1/2$. We have adjusted the definition in the obvious way to treat
our anisotropic symbols of type $(\rho, 1- \rho, 1)$, and it is straightforward to adapt the arguments of H\"{o}rmander's paper to this class for $\rho > 1/2$. From the point of view of wave packet transforms, as in Section~\ref{sec:offsingFIO}, the case $\rho = 1/2$ is very natural to consider. However, it is not clear to the authors whether the statements in \cite{Hormander71} are also valid for symbols of type $(\frac{1}{2},\frac{1}{2},1)$. As this point is not relevant to the rest of this article, we do not pursue this question further. For most readers, the classical case where $\rho = 1$ will be sufficient. 

Also, H\"ormander defined FIOs acting on half-densities, which facilitates defining an invariant
principal symbol. We ignore the half-density factors here. 
\end{remark}

\section{Off-singularity bounds}\label{sec:offsing}

Throughout this article we work with operator families which act on functions on the cosphere bundle and whose kernels decay in a suitable sense off singular sets. In this section we introduce this off-singularity decay and we study some of its basic properties, including the boundedness on tent spaces of associated operators.

\subsection{Definitions and basic properties}\label{subsec:defoffsing}

The notion of off-singularity decay is defined using maps $\hchi$ on $\Sp$. In the rest of this article $\hchi$ generally arises by projection from a map $\chi$ on $\Tp$, but for the results in this section this is not relevant. Recall from \eqref{eq:upsilon} the definition of $\Upsilon$.

\begin{definition}\label{def:pointwise offsing} 
Let $(T_{\sigma,\tau})_{\sigma,\tau>0}$ be a collection of operators such that each $T_{\sigma,\tau}:\Da(\Sp)\to\Da'(\Sp)$ has kernel $K_{\sigma,\tau}:\Sp\times\Sp\to\C$, and let $U\subseteq\Sp$, $\hchi:U\to\Sp$ and $N,C\geq0$ be given. Suppose that 
\[
((x,\w,\sigma),(y,\nu,\tau))\mapsto K_{\sigma,\tau}((x,\w),(y,\nu))
\]
is measurable on $\Spp\times\Spp$, and that $K_{\sigma,\tau}((x,\w),(y,\nu))=0$ if $(x,\w)\notin\hchi(U)$ or $(y,\nu)\notin U$. We say that $(T_{\sigma,\tau})_{\sigma,\tau>0}$ satisfies \emph{off-singularity bounds of type $(\hchi,N,C)$} if for all $\sigma,\tau>0$, $(x,\w)\in\hchi(U)$ and $(y,\nu)\in U$,
\[
|K_{\sigma,\tau}((x,\w),(y,\nu))|\leq C\rho^{-n}\Upsilon(\tfrac{\sigma}{\tau})^{N}(1+\rho^{-1}d((x,\w),\hchi(y,\nu))^{2})^{-N},
\]
where $\rho:=\min(\sigma,\tau)$. We denote by $OS(\hchi,N,C)$ the class of operator families satisfying off-singularity bounds of type $(\hchi,N,C)$.
\end{definition}

For the boundedness results in the next subsection, it is more convenient to work with a notion of $L^{2}$ off-singularity decay, similar to $(L^{2},L^{2})$ off-diagonal decay (see e.g.~\cite{Auscher-Martell07}).

\begin{proposition}\label{prop:offsing}
There exists an $M=M(n)\geq0$ such that the following holds. Let $\hchi:U\to\Sp$ be injective and such that $\hchi^{-1}$ is Lipschitz with constant $C_{\hchi^{-1}}\geq0$, where $U\subseteq\Sp$. Let $N>M$, $C\geq0$, and $(T_{\sigma,\tau})_{\sigma,\tau>0}\in OS(\hchi,N,C)$. Then there exists a $C'=C'(n,N,M,C)$ such that, for all measurable $E\subseteq \hchi(U)$ and $F\subseteq U$ and all $\sigma>0$, one has
\[
\|\ind_{E}T_{\sigma,\tau}\ind_{F}\|_{\La(L^{2}(\Sp))}\leq C'\Upsilon(\tfrac{\sigma}{\tau})^{N}(1+\min(\sigma,\tau)^{-1}d(E,\hchi(F))^{2})^{-(N-M)}.
\]
Moreover, 
\[
(T_{\sigma,\tau}^{*})_{\sigma,\tau>0}\in OS(\hchi^{-1},N, C\max(C_{\hchi^{-1}}^{2N},1)).
\]
\end{proposition}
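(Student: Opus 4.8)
The plan is to deduce the membership of the adjoint family in $OS(\hchi^{-1},N,C\max(C_{\hchi^{-1}}^{2N},1))$ directly from the pointwise hypothesis, using only that $\hchi$ is injective with Lipschitz inverse and that $\Upsilon$ and $\rho=\min(\sigma,\tau)$ are symmetric in $\sigma$ and $\tau$. First I would record that, for each $\sigma,\tau>0$, the operator $T_{\sigma,\tau}^{*}$ has kernel $K_{\sigma,\tau}^{*}((x,\w),(y,\nu))=\overline{K_{\sigma,\tau}((y,\nu),(x,\w))}$. Measurability of $((x,\w,\sigma),(y,\nu,\tau))\mapsto K_{\sigma,\tau}^{*}((x,\w),(y,\nu))$ on $\Spp\times\Spp$ is inherited from that of $K_{\sigma,\tau}$ by composing with the continuous interchange of the two phase-space variables. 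Since $K_{\sigma,\tau}$ vanishes unless $(y,\nu)\in\hchi(U)$ and $(x,\w)\in U$, the kernel $K_{\sigma,\tau}^{*}$ vanishes unless its source variable lies in $\hchi(U)$ and its target variable lies in $U=\hchi^{-1}(\hchi(U))$, which is exactly the vanishing condition demanded of a member of $OS(\hchi^{-1},N,\cdot)$ with domain $\hchi(U)$ and map $\hchi^{-1}\colon\hchi(U)\to\Sp$ (well defined by injectivity of $\hchi$).

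Next I would insert the assumed bound. For $(x,\w)\in U$ and $(y,\nu)\in\hchi(U)$, the off-singularity bound for $(T_{\sigma,\tau})_{\sigma,\tau>0}\in OS(\hchi,N,C)$ (Definition~\ref{def:pointwise offsing}) gives
\[
|K_{\sigma,\tau}^{*}((x,\w),(y,\nu))|=|K_{\sigma,\tau}((y,\nu),(x,\w))|\leq C\rho^{-n}\Upsilon(\tfrac{\sigma}{\tau})^{N}\big(1+\rho^{-1}d((y,\nu),\hchi(x,\w))^{2}\big)^{-N},
\]
where $\rho=\min(\sigma,\tau)$; the factors $\rho^{-n}$ and $\Upsilon(\sigma/\tau)^{N}$ are already of the required form for $OS(\hchi^{-1},N,\cdot)$ at the same indices, since both are symmetric in $\sigma,\tau$. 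It then remains to replace the distance $d((y,\nu),\hchi(x,\w))$ by $d((x,\w),\hchi^{-1}(y,\nu))$. As $(x,\w)\in U$ we have $(x,\w)=\hchi^{-1}(\hchi(x,\w))$, and as $(y,\nu)\in\hchi(U)$ the point $\hchi^{-1}(y,\nu)$ is defined, so the Lipschitz bound on $\hchi^{-1}$ yields
\[
d((x,\w),\hchi^{-1}(y,\nu))=d\big(\hchi^{-1}(\hchi(x,\w)),\hchi^{-1}(y,\nu)\big)\leq C_{\hchi^{-1}}\,d(\hchi(x,\w),(y,\nu)),
\]
equivalently $d(\hchi(x,\w),(y,\nu))\geq C_{\hchi^{-1}}^{-1}d((x,\w),\hchi^{-1}(y,\nu))$.

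Feeding this into the polynomial weight, I would use the elementary inequality $1+at\geq\min(1,a)(1+t)$, valid for $a>0$ and $t\geq0$. With $a=C_{\hchi^{-1}}^{-2}$ and $t=\rho^{-1}d((x,\w),\hchi^{-1}(y,\nu))^{2}$ this gives
\[
1+\rho^{-1}d(\hchi(x,\w),(y,\nu))^{2}\geq 1+C_{\hchi^{-1}}^{-2}\rho^{-1}d((x,\w),\hchi^{-1}(y,\nu))^{2}\geq \min(1,C_{\hchi^{-1}}^{-2})\big(1+\rho^{-1}d((x,\w),\hchi^{-1}(y,\nu))^{2}\big),
\]
and raising to the power $-N$ produces the factor $\max(1,C_{\hchi^{-1}}^{2N})=\max(C_{\hchi^{-1}}^{2N},1)$. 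Combining the three displays,
\[
|K_{\sigma,\tau}^{*}((x,\w),(y,\nu))|\leq C\max(C_{\hchi^{-1}}^{2N},1)\,\rho^{-n}\Upsilon(\tfrac{\sigma}{\tau})^{N}\big(1+\rho^{-1}d((x,\w),\hchi^{-1}(y,\nu))^{2}\big)^{-N}
\]
for all $(x,\w)\in U$, $(y,\nu)\in\hchi(U)$ and $\sigma,\tau>0$, which is precisely the assertion that $(T_{\sigma,\tau}^{*})_{\sigma,\tau>0}\in OS(\hchi^{-1},N,C\max(C_{\hchi^{-1}}^{2N},1))$.

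I do not expect a genuine obstacle in this part: the only points requiring care are the bookkeeping of supports and domains so that the conclusion is correctly phrased against $\hchi^{-1}$ with domain $\hchi(U)$, and the implicit case comparison $C_{\hchi^{-1}}\leq 1$ versus $C_{\hchi^{-1}}>1$ in the $\min$/$\max$ above, which is exactly what yields the stated constant.
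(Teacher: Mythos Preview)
Your argument for the ``Moreover'' part is correct and matches the paper's approach: the paper simply notes that the adjoint kernel is the transpose $\wt{K}_{\sigma,\tau}((x,\w),(y,\nu))=K_{\sigma,\tau}((y,\nu),(x,\w))$, and your use of the Lipschitz bound on $\hchi^{-1}$ to convert $d((y,\nu),\hchi(x,\w))$ into $d((x,\w),\hchi^{-1}(y,\nu))$ with the stated constant is exactly what is implicitly required to get the announced constant $C\max(C_{\hchi^{-1}}^{2N},1)$.

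However, you have not addressed the first and main assertion of the proposition at all: the $L^{2}$ operator-norm bound
\[
\|\ind_{E}T_{\sigma,\tau}\ind_{F}\|_{\La(L^{2}(\Sp))}\leq C'\Upsilon(\tfrac{\sigma}{\tau})^{N}(1+\rho^{-1}d(E,\hchi(F))^{2})^{-(N-M)}.
\]
This is the substantive part of the result, and it does not follow from the pointwise kernel bound by a one-line Schur estimate, because the kernel is only of size $\rho^{-n}$ on a region whose measure is not a priori controlled. The paper's argument proceeds by covering $\Sp$ with balls $B_{k}$ of radius $\sqrt{\rho}$ (using the doubling property of $(\Sp,d)$ via Coifman--Weiss to bound overlaps), decomposing the source into dyadic annuli $C_{j,k}=\hchi^{-1}(2^{j+1}B_{k}\setminus 2^{j}B_{k})$, applying Schur on each piece $\ind_{B_{k}\cap E}T_{\sigma,\tau}\ind_{C_{j,k}\cap F}$, and then summing in $j$ and $k$. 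The Lipschitz bound on $\hchi^{-1}$ is used to control $V(C_{j,k})$, and the choice $M>n+\log_{2}(L)$ (with $L$ the doubling overlap constant) makes both sums converge. You need to supply this covering-plus-Schur argument; nothing in your proposal touches it.
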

\begin{proof}
First note that, since $(\Sp,d)$ is a space of homogeneous type, by \cite[Lemma III.1.1]{Coifman-Weiss71} there exists an $L=L(n)\in\N$ such that, for all $r>0$ and $j\in\N$, each ball $B\subseteq\Sp$ with radius $r$ contains at most $L^{j}$ points $((x_{k},\w_{k}))_{k=1}^{L^{j}}$ such that $d((x_{k},\w_{k}),(x_{l},\w_{l}))>r/2^{j}$ for $k\neq l$. Now let $M\in\N$ be such that $M>n+\log_{2}(L)$, fix $\sigma,\tau>0$ and set $\rho:=\min(\sigma,\tau)$, and let $((x_{k},\w_{k}))_{k\in \N}\subseteq\Sp$ be such that $d((x_{k},\w_{k}),(x_{l},\w_{l}))>\sqrt{\rho}/2$ for $k\neq l$ and such that $\Sp=\cup_{k=1}^{\infty}B_{\sqrt{\rho}}(x_{k},\w_{k})$. For $k\in \N$, set $B_{k}:=B_{\sqrt{\rho}}(x_{k},\w_{k})\cap \hchi(U)$, $C_{1,k}:=\hchi^{-1}(4B_{k})$, and $C_{j,k}:=\hchi^{-1}(2^{j+1}B_{k}\setminus 2^{j}B_{k})$ for $j\geq2$. Using Schur's test, the assumption that $\hchi^{-1}$ is Lipschitz, and Lemma \ref{lem:doubling}, one has
\begin{align*}
&\|\ind_{B_{k}\cap E}T_{\sigma,\tau}\ind_{C_{j,k}\cap F}\|_{\La(L^{2}(\Sp))}\\
&\leq C\Upsilon(\tfrac{\sigma}{\tau})^{N}\frac{\sqrt{V(B_{k})V(C_{j,k})}}{\rho^{n}(1+\rho^{-1}d(B_{k},2^{j+1}B_{k}\setminus 2^{j}B_{k})^{2})^{M}(1+\rho^{-1}d(E,\hchi(F))^{2})^{N-M}}\\
&\lesssim 2^{j(n-2M)}\Upsilon(\tfrac{\sigma}{\tau})^{N}(1+\rho^{-1}d(E,\hchi(F))^{2})^{-(N-M)}
\end{align*}
for $k\in\N$ and $j\geq2$, and similarly 
\[
\|\ind_{B_{k}\cap E}T_{\sigma,\tau}\ind_{C_{1,k}\cap F}\|_{\La(L^{2}(\Sp))}\lesssim 2^{n-2M}\Upsilon(\tfrac{\sigma}{\tau})^{N}(1+\rho^{-1}d(E,\hchi(F))^{2})^{-(N-M)}.
\]
Let $f\in L^{2}(\Sp)$. Then
\begin{align*}
&\Upsilon(\tfrac{\sigma}{\tau})^{-2N}(1+\rho^{-1}d(E,\hchi(F))^{2})^{2(N-M)}\|\ind_{E}T_{\sigma,\tau}\ind_{F}f\|_{L^{2}(\Sp)}^{2}\\
&\leq \Upsilon(\tfrac{\sigma}{\tau})^{-2N}(1+\rho^{-1}d(E,\hchi(F))^{2})^{2(N-M)}\sum_{k=1}^{\infty}\|\ind_{B_{k}\cap E}T_{\sigma,\tau}\ind_{F}f\|_{L^{2}(\Sp)}^{2}\\
&\leq\sum_{k=1}^{\infty}\Big(\Upsilon(\tfrac{\sigma}{\tau})^{-N}(1+\rho^{-1}d(E,\hchi(F))^{2})^{N-M}\sum_{j=1}^{\infty}\|\ind_{B_{k}\cap E}T_{\sigma,\tau}\ind_{C_{j,k}\cap F}f\|_{L^{2}(\Sp)}\Big)^{2}\\
&\lesssim \sum_{k=1}^{\infty}\Big(\sum_{j=1}^{\infty}2^{j(n-2M)}\|\ind_{C_{j,k}\cap F}f\|_{L^{2}(\Sp)}\Big)^{2}\\
&\leq \sum_{k=1}^{\infty}\Big(\sum_{j=1}^{\infty}2^{j(n-2M)}\Big)\Big(\sum_{j=1}^{\infty}2^{j(n-2M)}\|\ind_{C_{j,k}\cap F}f\|_{L^{2}(\Sp)}^{2}\Big)\\
&\lesssim \sum_{j=1}^{\infty}2^{j(n-2M)}\sum_{k=1}^{\infty}\int_{\Sp}\ind_{C_{j,k}\cap F}(x,\w)|f(x,\w)|^{2}\ud x\ud\w\\
&\leq \sum_{j=1}^{\infty}2^{j(n-2M)}L^{j+2}\|\ind_{F}f\|_{L^{2}(\Sp)}^{2}\lesssim \|\ind_{F}f\|_{L^{2}(\Sp)}^{2}.
\end{align*}
This proves the first statement. The second statement follows by observing that, if $K_{\sigma,\tau}:\Sp\times\Sp\to\C$ is the kernel of $T_{\sigma,\tau}$, then the kernel $\wt{K}_{\sigma,\tau}$ of $T_{\sigma,\tau}^{*}$ is given by 
\[
\wt{K}_{\sigma,\tau}((x,\w),(y,\nu))=K_{\sigma,\tau}((y,\nu),(x,\w))
\]
for $(x,\w),(y,\nu)\in\Sp$.
\end{proof}

We will also consider residual families of operators, defined as follows.

\begin{definition}\label{def:residual}
Let $(T_{\sigma,\tau})_{\sigma,\tau>0}$ be a collection of operators such that each $T_{\sigma,\tau}:\Da(\Sp)\to\Da'(\Sp)$ has kernel $K_{\sigma,\tau}:\Sp\times\Sp\to\C$. Suppose that 
\[
((x,\w,\sigma),(y,\nu,\tau))\mapsto K_{\sigma,\tau}((x,\w),(y,\nu))
\]
is measurable on $\Spp\times\Spp$. We say that $(T_{\sigma,\tau})_{\sigma,\tau>0}$ is \emph{residual} if for each $N\geq0$ there exists a $C_{N}\geq0$ such that, for all $\sigma,\tau>0$ and $(x,\w),(y,\nu)\in\Sp$,
\begin{equation}\label{eq:residual}
|K_{\sigma,\tau}((x,\w),(y,\nu))|\leq C_{N}(1+|x|+|y|+\Upsilon(\sigma)^{-1}+\Upsilon(\tau)^{-1})^{-N}.
\end{equation}
We denote by $\mathcal{R}$ the class of residual families $(T_{\sigma,\tau})_{\sigma,\tau>0}$.
\end{definition}

The following lemma is a version of Proposition \ref{prop:offsing} for residual families. 

\begin{lemma}\label{lem:residualadjoint}
Let $(T_{\sigma,\tau})_{\sigma,\tau>0}\in \Ra$. Then $\sup_{\sigma,\tau>0}\|T_{\sigma,\tau}\|_{L^{2}(\Sp)}<\infty$ and $(T_{\sigma,\tau}^{*})_{\sigma,\tau>0}\in\Ra$.
\end{lemma}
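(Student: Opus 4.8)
The plan is to verify the two claims directly from the defining bound \eqref{eq:residual}, since both are essentially consequences of the rapid decay in the spatial and scale variables.

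\textbf{$L^{2}$-boundedness.} First I would fix $\sigma,\tau>0$ and apply Schur's test to the kernel $K_{\sigma,\tau}$ on $L^{2}(\Sp)=L^{2}(\Rn\times S^{n-1})$. Using \eqref{eq:residual} with $N$ large (say $N=2n+1$), we have
\[
|K_{\sigma,\tau}((x,\w),(y,\nu))|\leq C_{N}(1+|x|+|y|)^{-N},
\]
dropping the $\Upsilon$-factors since they only help. Integrating in $(y,\nu)$ over $\Sp$: since $S^{n-1}$ has finite measure, $\int_{\Sp}(1+|x|+|y|)^{-N}\ud y\ud\nu\lesssim \int_{\Rn}(1+|y|)^{-N}\ud y<\infty$, uniformly in $(x,\w)$ and in $\sigma,\tau$. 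By symmetry the same bound holds for the integral in $(x,\w)$. Schur's test then gives $\|T_{\sigma,\tau}\|_{\La(L^{2}(\Sp))}\leq C_{N}\int_{\Rn}(1+|y|)^{-N}\ud y$, a bound independent of $\sigma,\tau$, so $\sup_{\sigma,\tau>0}\|T_{\sigma,\tau}\|_{\La(L^{2}(\Sp))}<\infty$. One should first note that this also shows each $T_{\sigma,\tau}$, a priori only defined from $\Da(\Sp)$ to $\Da'(\Sp)$, extends to a bounded operator on $L^{2}(\Sp)$, so that taking the adjoint makes sense.

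\textbf{The adjoint is residual.} As in the proof of Proposition \ref{prop:offsing}, the kernel of $T_{\sigma,\tau}^{*}$ is $\wt{K}_{\sigma,\tau}((x,\w),(y,\nu))=\overline{K_{\sigma,\tau}((y,\nu),(x,\w))}$. The right-hand side of \eqref{eq:residual} is symmetric under interchanging $(x,\w,\sigma)\leftrightarrow(y,\nu,\tau)$ — indeed $(1+|x|+|y|+\Upsilon(\sigma)^{-1}+\Upsilon(\tau)^{-1})^{-N}$ is manifestly invariant — so $\wt{K}_{\sigma,\tau}$ satisfies exactly the same pointwise bound \eqref{eq:residual} with the same constants $C_{N}$. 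Measurability of $((x,\w,\sigma),(y,\nu,\tau))\mapsto\wt{K}_{\sigma,\tau}((x,\w),(y,\nu))$ on $\Spp\times\Spp$ follows from that of $K$ by composing with the coordinate swap. Hence $(T_{\sigma,\tau}^{*})_{\sigma,\tau>0}\in\Ra$.

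There is no real obstacle here; the only point requiring a word of care is the implicit claim that each $T_{\sigma,\tau}^{*}$ (as an operator $\Da(\Sp)\to\Da'(\Sp)$) has $\wt{K}_{\sigma,\tau}$ as its kernel, which is immediate once one knows $T_{\sigma,\tau}$ is given by integration against $K_{\sigma,\tau}$ — and this in turn follows from the $L^{2}$-boundedness established in the first step, together with the fact that integration against $K_{\sigma,\tau}$ defines an element of $\Da'(\Sp)$ for each input in $\Da(\Sp)$ by the weighted-$L^{1}$ bound already used. So the whole argument is a short application of Schur's test plus the symmetry of the bound.
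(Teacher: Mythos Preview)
Your proposal is correct and follows exactly the same approach as the paper: Schur's test for the uniform $L^{2}$-bound, and the symmetry of the residual estimate \eqref{eq:residual} in $(x,\w)$ and $(y,\nu)$ for the adjoint. The paper's proof is a two-line remark to this effect; your version simply fills in the routine details.
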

\begin{proof}
The first statement follows from Schur's test, and the second statement from the fact that \eqref{eq:residual} is symmetric in $(x,\w)$ and $(y,\nu)$.
\end{proof}

\subsection{Boundedness on tent spaces}

In this subsection we obtain boundedness results on tent spaces for families of operators satisfying off-singularity bounds, and for residual families. The relevant operators are given by
\begin{equation}\label{eq:defR}
RF(x,\w,\sigma):=\int_{0}^{\infty}T_{\sigma,\tau}F(\cdot,\cdot,\tau)(x,\w)\frac{\ud\tau}{\tau}\quad((x,\w,\sigma)\in\Spp)
\end{equation}
for suitable operator families $(T_{\sigma,\tau})_{\sigma,\tau>0}$ and functions $F$ on $\Spp$.

\begin{remark}\label{rem:p=infty}
In this section, and in later sections as well, we consider the boundedness of operators $R$ as in \eqref{eq:defR} on the tent spaces $T^{p}(\Sp)$ from Definition \ref{def:tentspace}, for all $p\in[1,\infty]$. The relevant operators are initially defined on $T^{2}(\Sp)$, and $R:T^{2}(\Sp)\cap T^{p}(\Sp)\to T^{p}(\Sp)$ is bounded for all $p\in[1,\infty]$. Since $T^{2}(\Sp)\cap T^{\infty}(\Sp)\subseteq T^{\infty}(\Sp)$ is not dense for $p<\infty$, a bounded extension of $R$ to $T^{\infty}(\Sp)$ is not unique. Throughout, the extension that we consider is given by the adjoint action $\lb RF,G\rb_{\Spp}=\lb F,R^{*}G\rb_{\Spp}$ for $F\in T^{\infty}(\Sp)$ and $G\in T^{1}(\Sp)$.
\end{remark}

We first consider the simpler case of the boundedness of residual families.

\begin{proposition}\label{prop:residualbounded}
Let $(T_{\sigma,\tau})_{\sigma,\tau>0}\in\mathcal{R}$. For $N\geq0$ and $F\in L^{1}\big(\Spp,(1+|x|+\Upsilon(\sigma)^{-1})^{-N}\ud x\ud\w\frac{\ud\sigma}{\sigma}\big)$, let $RF$ be as in \eqref{eq:defR}. Then $RF\in \Da(\Spp)$. In particular, $R\in \La(T^{p}(\Sp))$ for all $p\in[1,\infty]$.
\end{proposition}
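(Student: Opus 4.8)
\textbf{Proof plan for Proposition \ref{prop:residualbounded}.}

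The plan is to show directly that $RF \in \Da(\Spp)$ by establishing the required weighted $L^{\infty}$ bounds, and then to invoke Lemma \ref{lem:distributions} to conclude that $R$ maps into $T^{p}(\Sp)$ for every $p$. First I would fix $N\geq 0$ and take $F \in L^{1}\big(\Spp,(1+|x|+\Upsilon(\sigma)^{-1})^{-N}\ud x\ud\w\frac{\ud\sigma}{\sigma}\big)$; write $M_{N} := \|F\|$ for this weighted $L^{1}$-norm. The goal is to show that for each $N'\geq 0$ there is a constant $C_{N'}$, depending on $N$, $N'$ and the family $(T_{\sigma,\tau})$, such that $(1+|x|+\Upsilon(\sigma)^{-1})^{N'}|RF(x,\w,\sigma)| \leq C_{N'} M_{N}$ for all $(x,\w,\sigma)\in\Spp$.

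The key estimate is a pointwise bound on the kernel integral. From the definition \eqref{eq:defR},
\[
|RF(x,\w,\sigma)| \leq \int_{0}^{\infty}\int_{\Sp}|K_{\sigma,\tau}((x,\w),(y,\nu))|\,|F(y,\nu,\tau)|\ud y\ud\nu\frac{\ud\tau}{\tau}.
\]
By the residual bound \eqref{eq:residual}, applied with a sufficiently large exponent $L$ to be chosen as a function of $N$ and $N'$, one has $|K_{\sigma,\tau}((x,\w),(y,\nu))| \leq C_{L}(1+|x|+|y|+\Upsilon(\sigma)^{-1}+\Upsilon(\tau)^{-1})^{-L}$. The main arithmetic step is to split this factor: for $L \geq N + N'$ one can absorb $(1+|x|+\Upsilon(\sigma)^{-1})^{N'}$ using the part $(1+|x|+\Upsilon(\sigma)^{-1})^{-N'}$, and one is left with an extra factor $(1+|y|+\Upsilon(\tau)^{-1})^{-N}$ to pair against the weight in the definition of $M_{N}$, plus a further polynomial margin $(1+|y|+\Upsilon(\tau)^{-1})^{-1}$ or so, which, together with the compactness of $S^{n-1}$, makes the $\ud\nu$ integral converge and makes the remaining $\ud y\,\frac{\ud\tau}{\tau}$ integral finite (one should be slightly careful about the $\Upsilon(\tau)^{-1}$ weight near $\tau = 0$ and $\tau = \infty$, but $\Upsilon(\tau)^{-1} = \max(\tau,\tau^{-1})$ grows at both ends, so a single power of it suffices to kill the logarithmic divergence of $\ud\tau/\tau$ after one more power is used). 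Carrying out this bookkeeping gives $(1+|x|+\Upsilon(\sigma)^{-1})^{N'}|RF(x,\w,\sigma)| \lesssim_{N,N'} M_{N}$, which is exactly membership in $\Da(\Spp)$.

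For the final assertion, given $p\in[1,\infty]$ and $F\in T^{p}(\Sp)$, the embedding $T^{p}(\Sp)\subseteq\Da'(\Spp)$ together with the containments in the proof of Lemma \ref{lem:distributions} — in particular \eqref{eq:weightedL1} and the duality embedding $T^{p}(\Sp)\subseteq L^{1}\big(\Spp,(1+|x|+\Upsilon(\sigma)^{-1})^{-N}\ud x\ud\w\frac{\ud\sigma}{\sigma}\big)$ for $N$ large — shows that $F$ lies in the weighted $L^{1}$ space on which $R$ is defined, so $RF\in\Da(\Spp)\subseteq T^{p}(\Sp)$, with the norm controlled via the $\Da(\Spp)\hookrightarrow T^{p}(\Sp)$ embedding of Lemma \ref{lem:distributions}; for $p=\infty$ this is the extension described in Remark \ref{rem:p=infty}, and boundedness follows since $\Da(\Spp)\subseteq T^{\infty}(\Sp)$ continuously. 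The only mildly delicate point is the choice of exponents so that every integral converges; there is no genuine obstacle, since the residual decay is faster than any polynomial. The main thing to get right is simply matching the weight $(1+|x|+\Upsilon(\sigma)^{-1})^{-N}$ appearing in the hypothesis on $F$ against the symmetric kernel decay, so that both the target weight in $x,\sigma$ and the integrability in $y,\nu,\tau$ are produced simultaneously.
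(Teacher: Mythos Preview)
Your argument for the first assertion, that $RF\in\Da(\Spp)$ whenever $F$ lies in the weighted $L^{1}$ space, is essentially the paper's proof. One simplification: no ``extra polynomial margin'' is needed. Using the residual bound \eqref{eq:residual} with exponent $N+N'$ and the elementary inequality
\[
(1+|x|+\Upsilon(\sigma)^{-1})^{N'}(1+|x|+|y|+\Upsilon(\sigma)^{-1}+\Upsilon(\tau)^{-1})^{-(N+N')}\leq (1+|y|+\Upsilon(\tau)^{-1})^{-N},
\]
the remaining integral is \emph{exactly} the weighted $L^{1}$ norm $M_{N}$ of $F$, which is finite by hypothesis; nothing further is required to make the $\ud y\,\ud\nu\,\ud\tau/\tau$ integral converge.

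The gap is in your treatment of $R\in\La(T^{p}(\Sp))$ for $p>1$. You invoke ``the duality embedding $T^{p}(\Sp)\subseteq L^{1}\big(\Spp,(1+|x|+\Upsilon(\sigma)^{-1})^{-N}\ud x\ud\w\tfrac{\ud\sigma}{\sigma}\big)$'', citing \eqref{eq:weightedL1} and Lemma~\ref{lem:distributions}. But \eqref{eq:weightedL1} is only the case $p=1$, and for $p>1$ the proof of Lemma~\ref{lem:distributions} establishes $T^{p}(\Sp)\subseteq\Da'(\Spp)$ via duality with $\Da(\Spp)\subseteq T^{p'}(\Sp)$, \emph{not} an embedding into any weighted $L^{1}$ space. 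Whether $T^{p}(\Sp)$ embeds continuously into this weighted $L^{1}$ for $p>1$ (in particular for $p=\infty$) is a separate fact that would require its own proof---e.g.\ a Carleson-box tiling argument---and you have not supplied one.

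The paper avoids this issue entirely. After obtaining $R\in\La(T^{1}(\Sp))$ from the first part and \eqref{eq:weightedL1}, it observes that $R^{*}$ is given by the family $(T^{*}_{\tau,\sigma})_{\sigma,\tau>0}$, which is again residual by Lemma~\ref{lem:residualadjoint}. Hence $R^{*}\in\La(T^{1}(\Sp))$ by the same argument, so $R\in\La(T^{\infty}(\Sp))$ by the duality of Lemma~\ref{lem:tentdual}, and then $R\in\La(T^{p}(\Sp))$ for all $p$ by the interpolation of Lemma~\ref{lem:tentint}. This also makes the $p=\infty$ extension of Remark~\ref{rem:p=infty} automatic rather than something to be reconciled after the fact.
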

\begin{proof}
For $\sigma,\tau>0$, let $K_{\sigma,\tau}$ be the kernel of $T_{\sigma,\tau}$. Then for all $F\in L^{1}\big(\Spp,(1+|x|+\Upsilon(\sigma)^{-1})^{-N}\ud x\ud\w\frac{\ud\sigma}{\sigma}\big)$, $M\geq 0$ and $(x,\w,\sigma)\in\Spp$ one has
\begin{align*}
&(1+|x|+\Upsilon(\sigma)^{-1})^{M}|RF(x,\w,\sigma)|\\
&\leq \int_{\Spp}(1+|x|+\Upsilon(\sigma)^{-1})^{M}|K_{\sigma,\tau}((x,\w),(y,\nu))F(y,\nu,\tau)|\ud y\ud\nu\frac{\ud\tau}{\tau}\\
&\lesssim \int_{\Spp}(1+|y|+\Upsilon(\tau)^{-1})^{-N}|F(y,\nu,\tau)|\ud y\ud\nu\frac{\ud\tau}{\tau}<\infty.
\end{align*}
This proves the first statement. For the second statement, recall from Lemma \ref{lem:distributions} and \eqref{eq:weightedL1} that 
\[
\Da(\Spp)\subseteq T^{1}(\Sp)\subseteq L^{1}\big(\Spp,(1+|x|+\Upsilon(\sigma)^{-1})^{-M}\ud x\ud\w\frac{\ud\sigma}{\sigma}\big)
\]
for $M\geq0$ large enough. Hence $R\in\La(T^{1}(\Sp))$, by what we have already shown. Next, note that 
\[
R^{*}F(x,\w,\sigma)=\int_{0}^{\infty}T_{\tau,\sigma}^{*}F(\cdot,\cdot,\tau)(x,\w)\frac{\ud\tau}{\tau}
\]
for $F\in L^{1}\big(\Spp,(1+|x|+\Upsilon(\sigma)^{-1})^{-N}\ud x\ud\w\frac{\ud\sigma}{\sigma}\big)$. By Lemma \ref{lem:residualadjoint} and by what we have already shown, one has $R^{*}\in \La(T^{1}(\Sp))$. Now Lemma \ref{lem:tentdual} implies that $R: T^{\infty}(\Sp)\to T^{\infty}(\Sp)$ is well-defined and bounded. Finally, Lemma \ref{lem:tentint} concludes the proof.
\end{proof}

Next, we state and prove our main theorem on the connection between off-singularity bounds and boundedness on the tent spaces $T^{p}(\Sp)$, $p\in[1,\infty]$.

\begin{theorem}\label{thm:tentbounded}
There exists an $N>0$ such that the following holds. Let $\hchi:U\to\hchi(U)\subseteq\Sp$ be bi-Lipschitz, where $U\subseteq\Sp$, and let $C\geq0$ and $(T_{\sigma,\tau})_{\sigma,\tau>0}\in OS(\hchi,N,C)$. For $F\in T^{2}(\Spp)$, let $RF$ be as in \eqref{eq:defR}. Then $R\in\La(T^{p}(\Sp))$ for all $p\in[1,\infty]$.  
\end{theorem}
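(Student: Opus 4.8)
The plan is to reduce the $T^p$ boundedness of $R$ to three pieces: an $L^2$ estimate (which is the case $p=2$), an atomic/$T^1$ estimate, and a $T^\infty$ estimate, and then interpolate using Lemma~\ref{lem:tentint}. The starting point is Proposition~\ref{prop:offsing}: since $\hchi$ is bi-Lipschitz, $\hchi^{-1}$ is Lipschitz, so for $N$ large enough (larger than the dimensional constant $M=M(n)$ from that proposition) the family $(T_{\sigma,\tau})$ satisfies $L^2$ off-singularity bounds
\[
\|\ind_{E}T_{\sigma,\tau}\ind_{F}\|_{\La(L^2(\Sp))}\lesssim \Upsilon(\tfrac{\sigma}{\tau})^{N}\big(1+\min(\sigma,\tau)^{-1}d(E,\hchi(F))^{2}\big)^{-(N-M)},
\]
and moreover $(T_{\sigma,\tau}^{*})\in OS(\hchi^{-1},N,C')$, so the same estimates hold for the adjoints. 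Taking $E=F=\Sp$ gives $\|T_{\sigma,\tau}\|_{\La(L^2(\Sp))}\lesssim \Upsilon(\sigma/\tau)^{N}$, and since $\int_0^\infty \Upsilon(\sigma/\tau)^{N}\,\tfrac{\ud\tau}{\tau}<\infty$ (with $N\geq 1$), a Minkowski/Schur argument in the $\tau$ variable against the $L^2(\Sp)$ norm, combined with $T^2(\Sp)=L^2(\Spp)$ (Fubini), yields $R\in\La(T^2(\Sp))$. This handles $p=2$.

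For $p=1$, by Lemma~\ref{lem:atomictent} it suffices to show $\|RA\|_{T^1(\Sp)}\lesssim 1$ uniformly over $T^1(\Sp)$-atoms $A$; the bounded extension to all of $T^1(\Sp)$ then follows from the last statement of that lemma (using that $R\in\La(T^2(\Sp))$). So fix an atom $A$ supported in a tent $T(B)$ over a ball $B=B_{r}(x_0,\w_0)$, with $\|A\|_{L^2(\Spp)}\leq V(B)^{-1/2}$. Because $A$ is supported in $\tau\leq r^2$, the function $RF$ involves only $\tau\leq r^2$, and its output is concentrated (in a quantitative, off-singularity sense) near $\hchi(B)$ — whose radius is comparable to $r$ since $\hchi$ is bi-Lipschitz. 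The standard tent-space argument (as in \cite{AuKrMoPo12,AuHoMa12}) is to split $\Sp = \bigcup_{j\geq 0}C_j$, where $C_0$ is a fixed dilate of $\hchi(B)$ and $C_j$, $j\geq1$, are dyadic annuli around it, estimate $\|\ind_{C_j}RA\|_{T^1(\Sp)}$ by $\|\ind_{C_j}RA\|_{T^2(\Sp)}\,V(2^{j}\hchi(B))^{1/2}$ via Cauchy--Schwarz, and bound $\|\ind_{C_j}RA\|_{T^2(\Sp)}=\|\ind_{C_j}RA\|_{L^2(\Spp)}$ by integrating the $L^2$ off-singularity estimate above in $\sigma$ and $\tau$ against $\|A(\cdot,\cdot,\tau)\|_{L^2(\Sp)}$. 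The geometric decay $2^{-j(N-M)}$ coming from the distance factor $d(C_j,\hchi(B))^2\gtrsim 2^{2j}r^2$, when $N-M$ exceeds $n$ (the doubling exponent for small balls; recall Lemma~\ref{lem:doubling}), beats the volume growth $V(2^j\hchi(B))^{1/2}\lesssim 2^{jn}V(B)^{1/2}$, giving a summable series and hence $\|RA\|_{T^1(\Sp)}\lesssim 1$. One subtlety to check: the volume of balls in $\Sp$ behaves like $\tau^{2n}$ for $\tau<1$ but $\tau^n$ for $\tau\geq 1$, so the doubling constant for balls of radius $\leq 2$ (the relevant scale, by Remark~\ref{rem:localatoms}) is uniform, and this is the exponent one must beat; taking $N$ large enough handles it uniformly.

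For $p=\infty$, by the convention in Remark~\ref{rem:p=infty} we define $R$ on $T^\infty(\Sp)$ by duality, $\lb RF,G\rb_{\Spp}=\lb F,R^*G\rb_{\Spp}$, where $R^*$ has kernel family $(T^*_{\tau,\sigma})$ with the $\tau,\sigma$ roles swapped. Since $(T^*_{\sigma,\tau})\in OS(\hchi^{-1},N,C')$ and $\hchi^{-1}$ is again bi-Lipschitz, the $p=1$ argument applies verbatim to $R^*$, giving $R^*\in\La(T^1(\Sp))$, and then Lemma~\ref{lem:tentdual} yields $R\in\La(T^\infty(\Sp))$ with the stated extension. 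Finally, complex interpolation (Lemma~\ref{lem:tentint}) between $T^1$, $T^2$, $T^\infty$ gives boundedness on $T^p(\Sp)$ for every $p\in[1,\infty]$. The main obstacle is the $T^1$ estimate: one must be careful that the off-singularity decay is measured through $\hchi$ (singularities propagate), so the output atom lives near $\hchi(B)$ rather than $B$, and one needs the bi-Lipschitz property precisely to control the size of $\hchi(B)$ and the metric relationship between the annuli $C_j$ and the preimages $\hchi^{-1}(C_j)$; getting the bookkeeping of scales (especially the crossover at radius $1$ in the volume growth) right, and verifying that the $\tau$-integration of $\Upsilon(\sigma/\tau)^N$ does not destroy the gain, is where the real work lies.
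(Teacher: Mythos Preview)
Your overall architecture matches the paper's proof exactly: establish $p=2$ from the uniform $L^{2}$ bound plus a Schur-type argument in $\tau$, prove $p=1$ by testing on atoms via Lemma~\ref{lem:atomictent}, obtain $p=\infty$ by applying the $p=1$ result to $R^{*}$ (using the second part of Proposition~\ref{prop:offsing} and that $\hchi^{-1}$ is bi-Lipschitz), and interpolate. The $p=2$, $p=\infty$ and interpolation steps are essentially identical to the paper's.

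There is, however, a genuine gap in your $p=1$ step. You propose to decompose the \emph{base} $\Sp=\bigcup_{j}C_{j}$ into dyadic annuli around $\hchi(B)$ and then claim
\[
\|\ind_{C_{j}}RA\|_{T^{1}(\Sp)}\lesssim V(2^{j}\hchi(B))^{1/2}\,\|\ind_{C_{j}}RA\|_{T^{2}(\Sp)}
\]
``via Cauchy--Schwarz.'' This inequality is not available for tent spaces: a function on $\Spp$ supported in a \emph{cylinder} $C_{j}\times(0,\infty)$ has no a priori $T^{1}$ bound in terms of its $L^{2}$ norm, because the conical averages $\A(\ind_{C_{j}}RA)(x,\w)$ can be nonzero for $(x,\w)$ arbitrarily far from $C_{j}$ once $\sigma$ is large. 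What \emph{does} have this property is a function supported in a \emph{tent} $T(B')$, which is then (after $L^{2}$-normalization) a $T^{1}$-atom. The paper exploits exactly this: it writes $RA=\sum_{k\geq1}A_{k}$ with $A_{k}:=\ind_{T(2^{k+1}B)\setminus T(2^{k}B)}RA$, so each $A_{k}$ is supported in the tent $T(2^{k+1}B)$, and then shows $\|A_{k}\|_{L^{2}(\Spp)}\lesssim 2^{-k\veps}V(2^{k+1}B)^{-1/2}$. The $L^{2}$ estimate requires splitting the $\sigma$-integral at $\sigma\sim 2^{2k}r^{2}$: for $\sigma\lesssim 2^{2k}r^{2}$ the tent condition forces $(x,\w)\in 2^{k+1}B\setminus 2^{k-1}B$, and the off-singularity decay from Proposition~\ref{prop:offsing} applies with $d(2^{k+1}B\setminus 2^{k-1}B,\hchi(B_{A}\cap U))\gtrsim 2^{k}r$; for $\sigma\gtrsim 2^{2k}r^{2}$ one uses only the $\Upsilon(\sigma/\tau)^{N}$ factor with $\tau\leq r^{2}$. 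This two-regime split is precisely the ``bookkeeping of scales'' you allude to at the end, but it is not captured by a single Cauchy--Schwarz step on base-space annuli. Replacing your annular decomposition by the tent decomposition (and choosing, as the paper does, $N>M+\tfrac{n}{2}$ rather than $N>M+n$; the doubling exponent from Lemma~\ref{lem:doubling} is $2n$, and the paper's estimate yields the exponent $2N-2M-n$ after accounting for $V(2^{k+1}B)/V(B)$) closes the argument.
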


It follows from the proof that $\|R\|_{\La(T^{p}(\Sp))}$ is bounded by a constant which depends only on $n$, $N$, $C$, and on the Lipschitz constants of $\hchi$ and $\hchi^{-1}$. 

\begin{proof}
The proof is similar to that of \cite[Theorem 4.9]{AuMcIRu08}. Throughout, let $M=M(n)$ be as in Proposition \ref{prop:offsing}, and fix $N>M+\frac{n}{2}$ and $\delta\in(0,N-M-\frac{n}{2})$. For $\sigma,\tau>0$, set $\overline{T}_{\sigma,\tau}:=\Upsilon(\frac{\sigma}{\tau})^{-N}T_{\sigma,\tau}$. 

We first consider the case where $p=2$. Let $F\in T^{2}(\Sp)=L^{2}(\Sp)$. By Proposition \ref{prop:offsing}, one has $\sup_{\sigma,\tau>0}\|\overline{T}_{\sigma,\tau}\|_{\La(L^{2}(\Sp))}<\infty$. Hence 
\begin{align*}
&\|R(F)\|_{L^{2}(\Spp)}^{2}=\int_{\Spp}\Big|\int_{0}^{\infty}\Upsilon(\tfrac{\sigma}{\tau})^{N}\overline{T}_{\sigma,\tau}F(\cdot,\cdot,\tau)(x,\w)\frac{\ud\tau}{\tau}\Big|^{2}\ud x\ud\w\frac{\ud \sigma}{\sigma}\\
&\leq \int_{\Spp}\Big(\int_{0}^{\infty}\Upsilon(\tfrac{\sigma}{\tau})^{\delta}\frac{\ud\tau}{\tau}\Big)\Big(\int_{0}^{\infty}\Upsilon(\tfrac{\sigma}{\tau})^{2N-\delta}|\overline{T}_{\sigma,\tau}F(\cdot,\cdot,\tau)(x,\w)|^{2}\frac{\ud\tau}{\tau}\Big)\ud x\ud\w\frac{\ud \sigma}{\sigma}\\
&\lesssim \int_{0}^{\infty}\int_{0}^{\infty}\Upsilon(\tfrac{\sigma}{\tau})^{2N-\delta}\|\overline{T}_{\sigma,\tau}F(\cdot,\cdot,\tau)\|_{L^{2}(\Sp)}^{2}\frac{\ud \tau}{\tau}\frac{\ud \sigma}{\sigma}\\
&\lesssim \int_{0}^{\infty}\Big(\int_{0}^{\infty}\Upsilon(\tfrac{\sigma}{\tau})^{2N-\delta}\frac{\ud \sigma}{\sigma}\Big)\|F(\cdot,\cdot,\tau)\|_{L^{2}(\Sp)}^{2}\frac{\ud \tau}{\tau}\lesssim \|F\|_{L^{2}(\Spp)}^{2},
\end{align*}
for implicit constants independent of $F$. In particular, $RF(x,\w)$ is well defined and finite for almost all $(x,\w)\in\Sp$, and $R\in\La(T^{2}(\Sp))$.

Next, we show that $R\in \La(T^{1}(\Sp))$. By Lemma \ref{lem:atomictent}, it suffices to prove that 
\begin{equation}\label{eq:unifboundatom}
\sup_{A}\|R(A)\|_{T^{1}(\Sp)}<\infty,
\end{equation}
where the supremum is taken over all $T^{1}(\Sp)$-atoms. Let $A$ be such an atom, associated with a ball $B_{A}\subseteq\Sp$. Since $\hchi$ is Lipschitz, there exists a ball $B\subseteq\Sp$ such that $\hchi(B_{A}\cap U)\subseteq B$ and $V(B)\lesssim V(B_{A})$. Set $A_{1}:=\ind_{T(4B)}R(A)$ and, for $k\geq2$, $A_{k}:=\ind_{T(2^{k+1}B)\setminus T(2^{k}B)}R(A)$. We show that there exist $C',\veps>0$, independent of $A$, such that $\|A_{k}\|_{L^{2}(\Spp)}\leq C'2^{-k\veps}V(2^{k+1}B)^{-1/2}$ for all $k\in\N$. Then \eqref{eq:unifboundatom} follows from the fact that $A=\sum_{k=1}^{\infty}A_{k}$, that $\frac{1}{C'}2^{k\veps}A_{k}$ is a $T^{1}(\Sp)$-atom associated with $2^{k+1}B$ for all $k\in\N$, and that the collection of $T^{1}(\Sp)$-atoms is uniformly bounded in $T^{1}(\Sp)$.

For $k=1$, since we have just shown that $R\in\La(L^{2}(\Spp))$, one obtains from Lemma \ref{lem:doubling} that
\[
\|A_{1}\|_{L^{2}(\Spp)}\lesssim \|A\|_{L^{2}(\Spp)}\leq V(B_{A})^{-1/2}\lesssim V(B)^{-1/2}\lesssim V(4B)^{-1/2}.
\] 
Next, let $k\geq 2$, and let $r>0$ be the radius of $B$. Then
\[
A_{k}(x,\w,\sigma)=(\ind_{T(2^{k+1}B)\setminus T(2^{k}B)}R(A))(x,\w,\sigma)=\int_{0}^{r^{2}}\Upsilon(\tfrac{\sigma}{\tau})^{N}\overline{T}_{\sigma,\tau}A(\cdot,\cdot,\tau)(x,\w)\frac{\ud\tau}{\tau}
\]
for $(x,\w,\sigma)\in T(2^{k+1}B)\setminus T(2^{k}B)$, and $A_{k}(x,\w,\sigma)=0$ otherwise. For $(x,\w,\sigma)\in T(2^{k+1}B)\setminus T(2^{k}B)$, the Cauchy-Schwarz inequality yields
\begin{align*}
|A_{k}(x,\w,\sigma)|^{2}&\leq \Big(\int_{0}^{\infty}\Upsilon(\tfrac{\sigma}{\tau})^{\delta}\frac{\ud\tau}{\tau}\Big)\Big(\int_{0}^{r^{2}}\Upsilon(\tfrac{\sigma}{\tau})^{2N-\delta}|\overline{T}_{\sigma,\tau}A(\cdot,\cdot,\tau)|^{2}(x,\w)\frac{\ud\tau}{\tau}\Big)\\
&\lesssim \int_{0}^{r^{2}}\Upsilon(\tfrac{\sigma}{\tau})^{2N-\delta}|\overline{T}_{\sigma,\tau}A(\cdot,\cdot,\tau)|^{2}(x,\w)\frac{\ud\tau}{\tau}.
\end{align*}
Moreover, for $(x,\w,\sigma)\in T(2^{k+1}B)\setminus T(2^{k}B)$ one has $\sigma\leq 2^{2k+2}r^{2}$ and $(x,\w)\in 2^{k+1}B$, and if $\sigma\leq 2^{2k-2}r^{2}$ then in fact $(x,\w)\in 2^{k+1}B\setminus 2^{k-1}B$, where we use that $d(2^{k-1}B,(2^{k}B)^{c})\geq 2^{k-1}r$. Hence
\begin{align}
&\|A_{k}\|_{L^{2}(\Sp)}^{2}=\int_{\Spp}|A_{k}(x,\w,\sigma)|^{2}\ud x\ud \w\frac{\ud \sigma}{\sigma}\nonumber\\
&\lesssim\label{eq:bigest} \int_{0}^{2^{2k-2}r^{2}}\!\int_{0}^{r^{2}}\Upsilon(\tfrac{\sigma}{\tau})^{2N-\delta}\|\ind_{2^{k+1}B\setminus 2^{k-1}B}\overline{T}_{\sigma,\tau}A(\cdot,\cdot,\tau)\|_{L^{2}(\Sp)}^{2}\frac{\ud\tau}{\tau}\frac{\ud \sigma}{\sigma}\\
&+\int_{2^{2k-2}r^{2}}^{2^{2k+2}r^{2}}\!\int_{0}^{r^{2}}\Upsilon(\tfrac{\sigma}{\tau})^{2N-\delta}\|\overline{T}_{\sigma,\tau}A(\cdot,\cdot,\tau)\|_{L^{2}(\Sp))}^{2}\frac{\ud\tau}{\tau}\frac{\ud \sigma}{\sigma}\nonumber.
\end{align}
For the final term in \eqref{eq:bigest}, recall that $\sup_{\sigma,\tau>0}\|\overline{T}_{\sigma,\tau}\|_{\La(L^{2}(\Sp))}<\infty$, by Proposition \ref{prop:offsing}. Hence Lemma \ref{lem:doubling} yields
\begin{align*}
&\int_{2^{2k-2}r^{2}}^{2^{2k+2}r^{2}}\!\int_{0}^{r^{2}}\Upsilon(\tfrac{\sigma}{\tau})^{2N-\delta}\|\overline{T}_{\sigma,\tau}A(\cdot,\cdot,\tau)\|_{L^{2}(\Sp))}^{2}\frac{\ud\tau}{\tau}\frac{\ud \sigma}{\sigma}\\
&\lesssim \int_{0}^{r^{2}}\Big(\frac{\tau}{2^{2k}r^{2}}\Big)^{2N-\delta}\|A(\cdot,\cdot,\tau)\|_{L^{2}(\Sp)}^{2}\frac{\ud\tau}{\tau}\leq 2^{-2k(2N-\delta)}V(B_{A})^{-1}\\
&\lesssim 2^{-2k(2N-\delta)}V(B)^{-1}\lesssim 2^{-2k(2N-\delta-n)}V(2^{k+1}B)^{-1}.
\end{align*}
This suffices, since $2N-\delta-n>0$. 

For the middle term in \eqref{eq:bigest}, note that $2N-\delta>1$. Proposition \ref{prop:offsing} yields, for $\tau\in(0,r^{2})$,
\begin{align*}
&\int_{0}^{2^{2k-2}r^{2}}\Upsilon(\tfrac{\sigma}{\tau})^{2N-\delta}\|\ind_{2^{k+1}B\setminus 2^{k-1}B}\overline{T}_{\sigma,\tau}\ind_{B_{A}\cap U}\|_{\La(L^{2}(\Sp))}^{2}\frac{\ud\sigma}{\sigma}\\
&\lesssim \int_{0}^{2^{2k-2}r^{2}}\Upsilon(\tfrac{\sigma}{\tau})^{2N-\delta}(1+\sigma^{-1}d(2^{k+1}B\setminus 2^{k-1}B,B)^{2})^{-2(N-M)}\frac{\ud\sigma}{\sigma}\\
&\lesssim \int_{0}^{\tau}\frac{\sigma}{\tau}\Big(\frac{\tau}{2^{2k}r^{2}}\Big)^{2(N-M)}\frac{\ud\sigma}{\sigma}+\int_{\tau}^{2^{2k}r^{2}}\Big(\frac{\tau}{\sigma}\Big)^{2(N-M)-\delta}\Big(\frac{\sigma}{2^{2k}r^{2}}\Big)^{2(N-M)-2\delta}\frac{\ud\sigma}{\sigma}\\
&\lesssim \Big(\frac{\tau}{2^{2k}r^{2}}\Big)^{2(N-M)}+\Big(\frac{\tau}{2^{2k}r^{2}}\Big)^{2(N-M)-2\delta}\lesssim 2^{-2k(2N-2M-2\delta)}.
\end{align*}  
Also, recall that the kernel $K_{\sigma,\tau}$ of $\overline{T}_{\sigma,\tau}$ satisfies $K_{\sigma,\tau}((x,\w),(y,\nu))=0$ for $(y,\nu)\notin U$, and that $\supp(A)\subseteq T(B_{A})$. Hence, using Lemma \ref{lem:doubling} again,
\begin{align*}
&\int_{0}^{2^{2k-2}r^{2}}\int_{0}^{r^{2}}\Upsilon(\tfrac{\sigma}{\tau})^{2N-\delta}\|\ind_{2^{k+1}B\setminus 2^{k-1}B}\overline{T}_{\sigma,\tau}A(\cdot,\cdot,\tau)\|_{L^{2}(\Sp)}^{2}\frac{\ud\tau}{\tau}\frac{\ud \sigma}{\sigma}\\
&\lesssim 2^{-2k(2N-2M-2\delta)}\int_{0}^{r^{2}}\|A(\cdot,\cdot,\tau)\|_{L^{2}(\Sp)}^{2}\frac{\ud\tau}{\tau}\leq2^{-2k(2N-2M-2\delta)}V(B_{A})^{-1}\\
&\lesssim 2^{-2k(2N-2M-2\delta)}V(B)^{-1}\lesssim 2^{-2k(2N-2M-2\delta-n)}V(2^{k+1}B)^{-1}.
\end{align*}
Since $2N-2M-n-2\delta>0$, this proves \eqref{eq:unifboundatom} and concludes the proof for $p=1$.

Next, recall that 
\begin{equation}\label{eq:adjoint}
R^{*}F(x,\w,\tau)=\int_{0}^{\infty}T_{\tau,\sigma}^{*}F(\cdot,\cdot,\sigma)(x,\w)\frac{\ud \sigma}{\sigma}
\end{equation}
for $F\in T^{1}(\Sp)\cap T^{2}(\Sp)$ and $(x,\w,\tau)\in\Spp$. Hence, by Proposition \ref{prop:offsing} and by what we have already shown, $R^{*}\in\La(T^{1}(\Sp))$. Now Lemma \ref{lem:tentdual} yields $R\in \La(T^{\infty}(\Sp))$, and Lemma \ref{lem:tentint} shows that $R\in\La(T^{p}(\Sp))$ for all $p\in(1,\infty)$ as well. 
\end{proof}

\section{Wave packet transforms}\label{sec:wavetransforms}

In this section we introduce the wave packet transforms that will be used throughout this article, and we derive some of their basic properties. 

\subsection{Wave packets}\label{subsec:wave packets}

We first introduce the specific wave packets which we will work with. Related wave packets were considered in \cite{Smith98a}, \cite{Candes-Demanet03} and \cite{Geba-Tataru07}.

Let $\ph\in\Sw(\Rn)$ be real-valued and such that $\ph(\zeta)=1$ if $|\zeta|\leq \frac{1}{2}$, and $\ph(\zeta)=0$ if $|\zeta|\geq2$. For $\w\in S^{n-1}$, $\sigma>0$ and $\zeta\in\Rn\setminus\{0\}$, set $\ph_{\w,\sigma}(\zeta):=c_{\sigma}\ph\big(\tfrac{\hat{\zeta}-\w}{\sqrt{\sigma}}\big)$, where $c_{\sigma}:=\big(\int_{S^{n-1}}\ph\big(\tfrac{e_{1}-\nu}{\sqrt{\sigma}}\big)^{2}\ud\nu\big)^{-1/2}$. Also set $\ph_{\w,\sigma}(0):=0$. Next, let $\Psi\in\Sw(\Rn)$ be real-valued and radial, with $\supp(\Psi)\subseteq\{\zeta\in\Rn\mid |\zeta|\in[\frac{1}{2},2]\}$, and such that 
\[
\int_{0}^{\infty}\Psi(\sigma\zeta)^{2}\frac{\ud \sigma}{\sigma}=1\quad(\zeta\neq0).
\]
Set $\Psi_{\sigma}(\zeta):=\Psi(\sigma\zeta)$ for $\sigma>0$ and $\zeta\in\Rn$. Finally, for $\w\in S^{n-1}$ and $\sigma>0$, set $\psi_{\w,\sigma}:=\Psi_{\sigma}\ph_{\w,\sigma}$. The following lemma collects some basic estimates for these wave packets.

\begin{lemma}\label{lem:packetbounds}
For all $\w\in S^{n-1}$ and $\sigma>0$ one has $\psi_{\w,\sigma}\in C^{\infty}_{c}(\Rn)$. Each $\zeta\in\supp(\psi_{\w,\sigma})$ satisfies $\frac{1}{2}\sigma^{-1}\leq |\zeta|\leq 2\sigma^{-1}$ and $|\hat{\zeta}-\w|\leq 2\sqrt{\sigma}$. Moreover,
\[
\int_{0}^{\infty}\int_{S^{n-1}}\psi_{\w,\sigma}(\zeta)^{2}\ud\w\frac{\ud\sigma}{\sigma}=1
\]
for all $\zeta\in\Rn\setminus\{0\}$. For all $\alpha\in\Z_{+}^{n}$ and $\beta\in\Z_{+}$ there exists a constant $C=C(\alpha,\beta)\geq0$ such that
\begin{equation}\label{eq:boundspsi}
|\lb\w,\nabla_{\zeta}\rb^{\beta}\partial_{\zeta}^{\alpha}\psi_{\w,\sigma}(\zeta)|\leq C\sigma^{-\frac{n-1}{4}+\frac{|\alpha|}{2}+\beta}
\end{equation}
for all $\w\in S^{n-1}$, $\sigma>0$ and $\zeta\in\Rn$. Also, for each $N\geq0$ there exists a $C_{N}\geq0$ such that, for all $\w\in S^{n-1}$, $\sigma>0$ and $x\in\Rn$,
\begin{equation}\label{eq:boundspsiinverse}
|\F^{-1}(\psi_{\w,\sigma})(x)|\leq C_{N}\sigma^{-\frac{3n+1}{4}}(1+\sigma^{-1}|x|^{2}+\sigma^{-2}\lb\w,x\rb^{2})^{-N}.
\end{equation}
In particular, $\{\sigma^{\frac{n-1}{4}}\F^{-1}(\psi_{\w,\sigma})\mid \w\in S^{n-1},\sigma>0\}\subseteq L^{1}(\Rn)$ is uniformly bounded.
\end{lemma}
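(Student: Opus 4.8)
The plan is to take the assertions of the lemma in turn. Regularity, the two support statements and the reproducing formula are bookkeeping; the derivative bounds \eqref{eq:boundspsi} carry the real content, and everything else, including the final $L^{1}$ claim, is extracted from them. Throughout I would work in the high-frequency regime $\sigma\lesssim1$, which is the one relevant to the wave packet transform. Since $\Psi$ is Schwartz and supported in $\{\tfrac12\le|\zeta|\le2\}$, the rescaling $\Psi_{\sigma}$ belongs to $C^{\infty}_{c}(\Rn)$ with support in $\{\tfrac12\sigma^{-1}\le|\zeta|\le2\sigma^{-1}\}$, which is bounded away from the origin; there $\zeta\mapsto\hat\zeta$ is smooth, so $\ph_{\w,\sigma}$ is smooth on a neighbourhood of $\supp\Psi_{\sigma}$ and hence $\psi_{\w,\sigma}=\Psi_{\sigma}\ph_{\w,\sigma}\in C^{\infty}_{c}(\Rn)$, while the support statements follow from those of $\Psi$ and $\ph$. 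For the normalizing constant I would observe that the integrand defining $c_{\sigma}$ is $\eqsim1$ on the cap $\{|e_{1}-\nu|\le\tfrac12\sqrt\sigma\}$ and supported in $\{|e_{1}-\nu|\le2\sqrt\sigma\}$, both of surface measure $\eqsim\sigma^{(n-1)/2}$, so $c_{\sigma}\eqsim\sigma^{-(n-1)/4}$. The reproducing identity is then immediate: $\Psi_{\sigma}(\zeta)^{2}$ does not depend on $\w$, rotation invariance of the measure on $S^{n-1}$ gives $\int_{S^{n-1}}\ph((\hat\zeta-\w)/\sqrt\sigma)^{2}\ud\w=\int_{S^{n-1}}\ph((e_{1}-\nu)/\sqrt\sigma)^{2}\ud\nu=c_{\sigma}^{-2}$, so the $\w$-integral of $\psi_{\w,\sigma}(\zeta)^{2}$ equals $\Psi(\sigma\zeta)^{2}$, and the $\sigma$-integral equals $1$ by the Calder\'{o}n condition on $\Psi$.

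For \eqref{eq:boundspsi} I would expand $\partial_{\zeta}^{\alpha}\lb\w,\nabla_{\zeta}\rb^{\beta}\psi_{\w,\sigma}$ by the Leibniz rule and estimate the derivatives of the two factors separately on $\supp\psi_{\w,\sigma}$. Derivatives of $\Psi_{\sigma}$ are harmless, since $\partial_{\zeta}^{\gamma}\Psi_{\sigma}=\sigma^{|\gamma|}(\partial^{\gamma}\Psi)(\sigma\zeta)$ is $O(\sigma^{|\gamma|})\le O(\sigma^{|\gamma|/2})$ for $\sigma\le1$, and a radial derivative of $\Psi_{\sigma}$ is no worse. The delicate factor is the angular cutoff $\ph((\hat\zeta-\w)/\sqrt\sigma)$, which is homogeneous of degree $0$ in $\zeta$; writing $\zeta=r\hat\zeta$ with $r=|\zeta|\eqsim\sigma^{-1}$ on the support, it depends only on $\hat\zeta$, lives on the cap $\{|\hat\zeta-\w|\le2\sqrt\sigma\}$, and satisfies $|\nabla_{S^{n-1}}^{k}[\ph((\hat\zeta-\w)/\sqrt\sigma)]|\lesssim\sigma^{-k/2}$. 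A generic $\partial_{\zeta_{j}}=\hat\zeta_{j}\partial_{r}+r^{-1}(\text{angular})$ applied to it produces, as it is $r$-independent, a factor $r^{-1}\eqsim\sigma$ together with one angular derivative costing $\sigma^{-1/2}$, a net $\sigma^{1/2}$ per derivative — this is the $|\alpha|/2$ in \eqref{eq:boundspsi}. A radial derivative $\lb\w,\nabla_{\zeta}\rb=\lb\w,\hat\zeta\rb\partial_{r}+r^{-1}\lb P_{\hat\zeta}^{\perp}\w,\nabla_{S^{n-1}}\rb$ kills the $\partial_{r}$ part, and in the remaining term $|P_{\hat\zeta}^{\perp}\w|=|P_{\hat\zeta}^{\perp}(\w-\hat\zeta)|\le|\w-\hat\zeta|\lesssim\sqrt\sigma$ on the support exactly compensates the $\sigma^{-1/2}$ cost of the angular derivative, a net $\sigma$ per radial derivative — this is the $\beta$ in \eqref{eq:boundspsi}. \emph{The main obstacle} is organizing this for higher-order mixed derivatives: each further $\partial_{r}$ adds one power of $r^{-1}\eqsim\sigma$ with $O(1)$ coefficients, while each further angular derivative, whether it lands on $\ph((\hat\zeta-\w)/\sqrt\sigma)$ or on a factor $P_{\hat\zeta}^{\perp}\w$, still costs only $\sigma^{-1/2}$, and every radial derivative remains accompanied by an extra $|P_{\hat\zeta}^{\perp}\w|\lesssim\sqrt\sigma$; assembling this induction against the Leibniz expansion and the bound $c_{\sigma}\eqsim\sigma^{-(n-1)/4}$ yields \eqref{eq:boundspsi}, and this is the only genuinely technical point.

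Finally I would deduce \eqref{eq:boundspsiinverse} and the concluding $L^{1}$ claim. The case $\alpha=\beta=0$ of \eqref{eq:boundspsi} gives $\|\psi_{\w,\sigma}\|_{\infty}\lesssim\sigma^{-(n-1)/4}$, while $\supp\psi_{\w,\sigma}$ is an anisotropic region of radial extent $\eqsim\sigma^{-1}$ with $n-1$ transverse extents each $\eqsim\sigma^{-1/2}$, hence of volume $\eqsim\sigma^{-(n+1)/2}$; the crude bound $|\F^{-1}(\psi_{\w,\sigma})(x)|\le(2\pi)^{-n}\|\psi_{\w,\sigma}\|_{L^{1}(\Rn)}\lesssim\sigma^{-(3n+1)/4}$ is \eqref{eq:boundspsiinverse} with $N=0$. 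For the decay I would integrate by parts in $\F^{-1}(\psi_{\w,\sigma})(x)=(2\pi)^{-n}\int_{\Rn}e^{ix\cdot\zeta}\psi_{\w,\sigma}(\zeta)\ud\zeta$: moving $\partial_{\zeta}^{\alpha}$ from $e^{ix\cdot\zeta}$ onto $\psi_{\w,\sigma}$ gives $|x^{\alpha}\F^{-1}(\psi_{\w,\sigma})(x)|\lesssim\sigma^{-(3n+1)/4+|\alpha|/2}$ by \eqref{eq:boundspsi}, and moving $\lb\w,\nabla_{\zeta}\rb^{\beta}$ gives $|\lb\w,x\rb^{\beta}\F^{-1}(\psi_{\w,\sigma})(x)|\lesssim\sigma^{-(3n+1)/4+\beta}$; combining these with the crude bound, with $|\alpha|$ and $\beta$ taken of order $2N$, produces the weight $(1+\sigma^{-1}|x|^{2}+\sigma^{-2}\lb\w,x\rb^{2})^{-N}$. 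For the last statement I would integrate \eqref{eq:boundspsiinverse} with $N>n/2$: splitting $x=\lb\w,x\rb\w+P_{\w}^{\perp}x$ and substituting $P_{\w}^{\perp}x=\sqrt\sigma\,u$, $\lb\w,x\rb=\sigma v$ shows $\int_{\Rn}(1+\sigma^{-1}|x|^{2}+\sigma^{-2}\lb\w,x\rb^{2})^{-N}\ud x\eqsim\sigma^{(n+1)/2}\int(1+|u|^{2}+v^{2})^{-N}\ud u\,\ud v\eqsim\sigma^{(n+1)/2}$, whence $\|\F^{-1}(\psi_{\w,\sigma})\|_{L^{1}(\Rn)}\lesssim\sigma^{-(3n+1)/4+(n+1)/2}=\sigma^{-(n-1)/4}$, i.e.\ $\{\sigma^{(n-1)/4}\F^{-1}(\psi_{\w,\sigma})\mid\w\in S^{n-1},\sigma>0\}$ is bounded in $L^{1}(\Rn)$ uniformly.
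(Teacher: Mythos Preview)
Your proposal is correct and follows essentially the same route as the paper. The only cosmetic difference is in the derivation of \eqref{eq:boundspsi}: where the paper writes $\lb\w,\nabla_{\zeta}\rb=\lb\w-\hat{\zeta},\nabla_{\zeta}\rb+\lb\hat{\zeta},\nabla_{\zeta}\rb$ and invokes Euler's identity for the homogeneous-of-degree-zero factor $\ph_{\w,\sigma}$, you pass to polar coordinates and use $r$-independence together with $|P_{\hat{\zeta}}^{\perp}\w|\le|\w-\hat{\zeta}|\lesssim\sqrt{\sigma}$; these are the same mechanism. Your restriction to $\sigma\lesssim1$ is appropriate --- the estimate $c_{\sigma}\eqsim\sigma^{-(n-1)/4}$ (and hence \eqref{eq:boundspsi} itself) only holds in that regime, and the paper's proof tacitly uses this as well, consistent with the fact that the packets $\psi_{\w,\sigma}$ enter the wave packet transform only for $\sigma<1$.
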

\begin{proof}
The first two statements follow from the support properties of the Schwartz functions $\ph$ and $\Psi$. Next, for each $\zeta\neq0$ one has
\begin{align*}
\int_{0}^{\infty}\int_{S^{n-1}}\psi_{\w,\sigma}(\zeta)^{2}\ud\w\frac{\ud\sigma}{\sigma}=\int_{0}^{\infty}\Psi(\sigma \zeta)^{2}\frac{\int_{S^{n-1}}\ph\big(\frac{\hat{\zeta}-\w}{\sqrt{\sigma}}\big)^{2}\ud\w}{\int_{S^{n-1}}\ph\big(\frac{e_{1}-\nu}{\sqrt{\sigma}}\big)^{2}\ud\nu}\frac{\ud\sigma}{\sigma}=1.
\end{align*}
Next, set $E_{\sigma}:=\{\nu\in S^{n-1}\mid |e_{1}-\nu|\leq \frac{\sqrt{\sigma}}{2}\}$ and $F_{\sigma}:=\{\nu\in S^{n-1}\mid |e_{1}-\nu|\leq 2\sqrt{\sigma}\}$ for $\sigma>0$. Then 
\begin{equation}\label{eq:csigma}
\sigma^{\frac{n-1}{2}}\eqsim \int_{E_{\sigma}}\ud\nu\leq \int_{S^{n-1}}\ph\big(\tfrac{e_{1}-\nu}{\sqrt{\sigma}}\big)^{2}\ud\nu\lesssim\int_{F_{\sigma}}\ud\nu\eqsim \sigma^{\frac{n-1}{2}}.
\end{equation} 
Now, for \eqref{eq:boundspsi}, first consider the derivatives of $\Psi_{\sigma}$. One has
\begin{equation}\label{eq:boundsPsi}
|\lb\w,\nabla_{\zeta}\rb^{\beta_{1}}\partial_{\zeta}^{\alpha_{1}}\Psi_{\sigma}(\zeta)|=\sigma^{|\alpha_{1}|+\beta_{1}}|(\lb\w,\nabla_{\zeta}\rb^{\beta_{1}}\partial_{\zeta}^{\alpha_{1}}\Psi)(\sigma\zeta)|\lesssim \sigma^{|\alpha_{1}|+\beta_{1}}
\end{equation}
for all $\alpha_{1}\in\Z_{+}^{n}$ and $\beta_{1}\in\Z_{+}$ and an implicit constant independent of $\w\in S^{n-1}$, $\sigma>0$ and $\zeta\in\Rn$. On the other hand, for the derivatives of $\ph_{\w,\sigma}$, note that $\lb\w,\nabla_{\zeta}\rb=\lb\w-\hat{\zeta},\nabla_{\zeta}\rb+\lb\hat{\zeta},\nabla_{\zeta}\rb$. Since $\ph_{\w,\sigma}$ is positively homogeneous of degree $0$, one has $\lb\hat{\zeta},\nabla_{\zeta}\rb\ph_{\w,\sigma}(\zeta)=0$ for all $\zeta\neq0$. Also, $|\w-\hat{\zeta}|\leq 2\sqrt{\sigma}$ and $|\zeta|^{-1}\leq2\sigma$ for all $\zeta\in\supp(\psi_{\w,\sigma})$. Hence, using \eqref{eq:csigma}, for all $\alpha_{2}\in\Z_{+}^{n}$ and $\beta_{2}\in\Z_{+}$ one obtains
\begin{align*}
|\lb\w,\nabla_{\zeta}\rb^{\beta_{2}}\partial_{\zeta}^{\alpha_{2}}\ph_{\w,\sigma}(\zeta)|&=c_{\sigma}\sigma^{-\frac{|\alpha_{2}|+\beta_{2}}{2}}\big|\big(\lb\w-\hat{\zeta},\nabla_{\zeta}\rb^{\beta_{2}}\partial_{\zeta}^{\alpha_{2}}\ph\big)\big(\tfrac{\hat{\zeta}-\w}{\sqrt{\sigma}}\big)\big|\,|\zeta|^{-|\alpha_{2}|-\beta_{2}}\\
&\lesssim \sigma^{-\frac{n-1}{4}+\frac{|\alpha_{2}|}{2}+\beta_{2}}.
\end{align*}
Combined with \eqref{eq:boundsPsi}, this proves \eqref{eq:boundspsi}.

Finally, for \eqref{eq:boundspsiinverse}, fix $N\in\Z_{+}$, $\w\in S^{n-1}$ and $\sigma>0$ and set
\[
w(x):=1+\sigma^{-1}|x|^{2}+\sigma^{-2}\lb\w,x\rb^{2}\quad(x\in\Rn).
\]
Consider the self-adjoint differential operator
\[
L:=w(x)^{-1}(1-\sigma^{-1}\Delta_{\zeta}-\sigma^{-2}\lb\w,\nabla_{\zeta}\rb).
\]
By writing $e^{ix\cdot\zeta}=L^{N}(e^{ix\cdot\zeta})$ and integrating by parts, one expresses 
\[
\F^{-1}(\psi_{\w,\sigma})(x)=(2\pi)^{-n}\int_{\Rn}e^{i x\cdot\zeta}\psi_{\w,\sigma}(\zeta)\ud\zeta
\]
as a finite linear combination of terms of the form
\begin{equation}\label{eq:boundsinverseterms}
w(x)^{-N}\sigma^{-\frac{|\alpha|}{2}-\beta}\int_{\supp(\psi_{\w,\sigma})}\!e^{ix\cdot\zeta}\lb\w,\nabla_{\zeta}\rb^{\beta}\partial_{\zeta}^{\alpha}\psi_{\w,\sigma}(\zeta)\ud\zeta
\end{equation}
for $\alpha\in\Z_{+}^{n}$ and $\beta\in\Z_{+}$ with $|\alpha|+\beta\leq 2N$. By the support properties of $\psi_{\w,\sigma}$, $V(\supp(\psi_{\w,\sigma}))\lesssim \sigma^{-\frac{n+1}{2}}$. Now \eqref{eq:boundspsi} allows one to bound the absolute value of each term as in \eqref{eq:boundsinverseterms} by a constant multiple of $w(x)^{-N}\sigma^{-\frac{3n+1}{4}}$, which proves \eqref{eq:boundspsiinverse}. The final statement follows from these bounds by applying a suitable anisotropic substitution.
\end{proof}

Next, set
\[
r(\zeta):=\Big(\int_{1}^{\infty}\Psi_{\sigma}(\zeta)^{2}\frac{\ud\sigma}{\sigma}\Big)^{1/2}\quad(\zeta\neq0)
\]
and $r(0):=1$. It is straightforward to prove that $r\in C^{\infty}_{c}(\Rn)$, using that $r$ is radial, by showing that all derivatives of $r$ vanish where $r(\zeta)=0$.

\subsection{Wave packet transforms}\label{subsec:wavepackettransforms}

For $f\in \Sw'(\Rn)$, $(x,\w)\in\Sp$ and $\sigma>0$, set
\[
W_{\sigma}f(x,\w):=\begin{cases}\psi_{\w,\sigma}(D)f(x)&\text{if }\sigma\in(0,1),\\
V(S^{n-1})^{-1/2}\ind_{[1,e]}(\sigma)r(D)f(x)&\text{if }\sigma\geq1.\end{cases}
\]
Note that $W_{\sigma}f$ is well-defined, since $\psi_{\w,\sigma},r\in \Sw(\Rn)$. The following proposition contains some basic mapping properties of these wave packet transforms.

\begin{proposition}\label{prop:bddwavetransforms}
The following maps are continuous for all $\sigma>0$:
\begin{enumerate}
\item\label{it:wave1} $W_{\sigma}:L^{2}(\Rn)\to L^{2}(\Sp)$;
\item\label{it:wave2} $W_{\sigma}:\Sw(\Rn)\to \Da(\Sp)$;
\item\label{it:wave3} $W_{\sigma}^{*}:\Da(\Sp)\to \Sw(\Rn)$;
\item\label{it:wave4} $W_{\sigma}:\Sw'(\Rn)\to \Da'(\Sp)$.
\end{enumerate}
\end{proposition}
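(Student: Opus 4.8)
The plan is to handle the four statements in turn, reducing each to the pointwise wave packet estimates of Lemma~\ref{lem:packetbounds} together with the compactness of $S^{n-1}$.

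For \eqref{it:wave1} I would, when $\sigma\in(0,1)$, write $\|W_{\sigma}f\|_{L^{2}(\Sp)}^{2}=\int_{S^{n-1}}\|\psi_{\w,\sigma}(D)f\|_{L^{2}(\Rn)}^{2}\,\ud\w$ and apply Plancherel to rewrite this as $(2\pi)^{-n}\int_{\Rn}\big(\int_{S^{n-1}}|\psi_{\w,\sigma}(\zeta)|^{2}\ud\w\big)|\wh{f}(\zeta)|^{2}\ud\zeta$. The inner integral is bounded uniformly in $\zeta$: by \eqref{eq:boundspsi} with $\alpha=0$ and $\beta=0$ one has $|\psi_{\w,\sigma}(\zeta)|\lesssim\sigma^{-(n-1)/4}$, while the support condition $|\hat\zeta-\w|\leq2\sqrt{\sigma}$ from Lemma~\ref{lem:packetbounds} confines $\w$ to a subset of $S^{n-1}$ of measure $\eqsim\sigma^{(n-1)/2}$. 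For $\sigma\geq1$, $W_{\sigma}f$ does not depend on $\w$ and is a multiple of $r(D)f$, so $\|W_{\sigma}f\|_{L^{2}(\Sp)}=\ind_{[1,e]}(\sigma)\|r(D)f\|_{L^{2}(\Rn)}\leq\|f\|_{L^{2}(\Rn)}$ since $\|r\|_{\infty}\leq1$.

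For \eqref{it:wave2} and \eqref{it:wave3} the key observation is that, for each fixed $\sigma\in(0,1)$, the family $\{\psi_{\w,\sigma}\mid\w\in S^{n-1}\}$ is a bounded subset of $C^{\infty}_{c}(\Rn)$ --- all of these functions are supported in the fixed annulus $\{\tfrac{1}{2}\sigma^{-1}\leq|\zeta|\leq2\sigma^{-1}\}$, with all derivatives bounded by constants depending only on $\sigma$, by \eqref{eq:boundspsi} --- so, by continuity of the Fourier transform, $\{\F^{-1}(\psi_{\w,\sigma})\mid\w\in S^{n-1}\}$ is a bounded subset of $\Sw(\Rn)$; concretely, combining with \eqref{eq:boundspsiinverse}, for every $\alpha\in\Z_{+}^{n}$ and $N\geq0$ there is a constant (depending on $\alpha,N,\sigma$) such that $|\partial_{x}^{\alpha}\F^{-1}(\psi_{\w,\sigma})(x)|\lesssim(1+|x|)^{-N}$ uniformly in $\w$. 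For \eqref{it:wave2} I would then write $W_{\sigma}f(x,\w)=(\F^{-1}(\psi_{\w,\sigma})\ast f)(x)$ when $\sigma<1$ and bound $(1+|x|)^{N}|W_{\sigma}f(x,\w)|$ by inserting $(1+|x|)^{N}\leq(1+|x-y|)^{N}(1+|y|)^{N}$ and playing the rapid decay of $\F^{-1}(\psi_{\w,\sigma})$ off against a Schwartz seminorm of $f$; this yields a bound on the relevant weighted $L^{\infty}$-norm of $W_{\sigma}f$ by a Schwartz seminorm of $f$, uniformly in $(x,\w)$, hence continuity $\Sw(\Rn)\to\Da(\Sp)$ (the case $\sigma\geq1$ is immediate, $W_{\sigma}f$ being a multiple of $r(D)f\in\Sw(\Rn)$). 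For \eqref{it:wave3}, since $\psi_{\w,\sigma}$ is real-valued the multiplier $\psi_{\w,\sigma}(D)$ is self-adjoint on $L^{2}(\Rn)$, so computing the $L^{2}$-adjoint of \eqref{it:wave1} gives, for $\sigma<1$,
\[
W_{\sigma}^{*}g(x)=\int_{S^{n-1}}\psi_{\w,\sigma}(D)g(\cdot,\w)(x)\,\ud\w=\int_{S^{n-1}}\int_{\Rn}\F^{-1}(\psi_{\w,\sigma})(x-y)g(y,\w)\,\ud y\,\ud\w,
\]
and a multiple of $r(D)\big(\int_{S^{n-1}}g(\cdot,\w)\,\ud\w\big)$ when $\sigma\geq1$. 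Differentiating under the integral sign --- legitimate by the uniform rapid decay of $\partial_{x}^{\alpha}\F^{-1}(\psi_{\w,\sigma})$ just recorded --- and then estimating exactly as in \eqref{it:wave2}, using that $g\in\Da(\Sp)$ decays rapidly in $x$ and that the $\w$-integration runs over the compact set $S^{n-1}$, shows $|\partial_{x}^{\alpha}W_{\sigma}^{*}g(x)|\lesssim(1+|x|)^{-N}$ times a $\Da(\Sp)$-seminorm of $g$ for all $\alpha,N$, i.e.\ $W_{\sigma}^{*}:\Da(\Sp)\to\Sw(\Rn)$ continuously.

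Finally, I would deduce \eqref{it:wave4} from \eqref{it:wave3} by duality: extend $W_{\sigma}$ to $\Sw'(\Rn)$ as the transpose of $W_{\sigma}^{*}:\Da(\Sp)\to\Sw(\Rn)$, which is then continuous $\Sw'(\Rn)\to\Da'(\Sp)$, and check consistency with \eqref{it:wave1} via the adjoint relation $\lb W_{\sigma}f,\phi\rb_{L^{2}(\Sp)}=\lb f,W_{\sigma}^{*}\phi\rb_{L^{2}(\Rn)}$ for $f\in L^{2}(\Rn)$ and with \eqref{it:wave2} via $\Sw(\Rn)\hookrightarrow\Sw'(\Rn)$ (taking care of the sesquilinear conventions so the pairings match). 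Since all the quantitative content is already in Lemma~\ref{lem:packetbounds}, the proposition is essentially soft; the one place demanding genuine (if routine) care is the smoothness assertion in \eqref{it:wave3} --- $g$ is merely an $L^{\infty}$-function of $(x,\w)$ with no regularity, and it is convolution against the Schwartz kernel $\F^{-1}(\psi_{\w,\sigma})$, together with averaging over the \emph{compact} manifold $S^{n-1}$, that produces a Schwartz function --- the remainder being bookkeeping with weighted $L^{\infty}$- and Schwartz seminorms and, in \eqref{it:wave4}, with the conjugation conventions.
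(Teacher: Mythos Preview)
Your proposal is correct and follows essentially the same approach as the paper: the paper's proof is a terse three-line argument that \eqref{it:wave1}--\eqref{it:wave3} follow from the uniform boundedness of the Schwartz seminorms of $\psi_{\w,\sigma}$ in $\w$, writing out the same convolution formula for $W_{\sigma}^{*}g$ that you give, and derives \eqref{it:wave4} from the duality identity $\lb W_{\sigma}f,g\rb_{\Sp}=\lb f,W_{\sigma}^{*}g\rb$. The one cosmetic difference is that the paper treats $W_{\sigma}$ as already defined on all of $\Sw'(\Rn)$ (via $\psi_{\w,\sigma}(D)f$, which makes sense for any tempered distribution) and simply verifies this identity to get continuity, whereas you frame \eqref{it:wave4} as defining $W_{\sigma}$ on $\Sw'(\Rn)$ by transposition and then checking consistency; your consistency check handles this, so the two arguments are equivalent.
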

\begin{proof}
\eqref{it:wave1} and \eqref{it:wave2} follow directly from the fact that each of the Schwartz seminorms of $\psi_{\w,\sigma}$ is uniformly bounded in $\w\in S^{n-1}$. The same holds for \eqref{it:wave3}, upon observing in addition that, for $\sigma\in(0,1)$, one has
\[
\lb x\rb^{N}\partial_{x}^{\alpha}W_{\sigma}^{*}g(x)=\int_{S^{n-1}}\lb x\rb^{N}(\partial_{x}^{\alpha}\widecheck{\psi_{\w,\sigma}})\ast g(\cdot,\w)(x)\ud\w
\]
for all $N\geq0$, $\alpha\in \Z_{+}^{n}$, $g\in \Da(\Sp)$ and $x\in\Rn$. A similar identity holds for $\sigma\geq1$. For \eqref{it:wave4}, note that
\begin{equation}\label{eq:duality}
\begin{aligned}
\lb W_{\sigma}f,g\rb_{\Sp}&=\int_{\Sp}W_{\sigma}f(x)\overline{g(x,\w)}\ud x\ud\w\\
&=\int_{S^{n-1}}\lb f,\psi_{\w,\sigma}(D)g(\cdot,\w)\rb\ud\w=\lb f,W_{\sigma}^{*}g\rb
\end{aligned}
\end{equation}
for all $f\in\Sw'(\Rn)$ and $g\in \Da(\Sp)$ if $\sigma\in(0,1)$, and similarly for $\sigma\geq1$.
\end{proof}

We now combine the collection $(W_{\sigma})_{\sigma>0}$ into a single transform $W$. For $f\in \Sw'(\Rn)$ and $(x,\w,\sigma)\in\Spp$, set 
\[
Wf(x,\w,\sigma):=(W_{\sigma}f)(x,\w).
\]
This transform has similar mapping properties. 

\begin{proposition}\label{prop:bddwavetransform}
The following statements hold:
\begin{enumerate}
\item\label{it:W1} $W:L^{2}(\Rn)\to L^{2}(\Spp)$ is an isometry;
\item\label{it:W2} $W:\Sw(\Rn)\to \Da(\Spp)$ is continuous;
\item\label{it:W3} $W^{*}:\Da(\Spp)\to \Sw(\Rn)$ is continuous;
\item\label{it:W4} $W:\Sw'(\Rn)\to \Da'(\Spp)$ is continuous.
\end{enumerate}
\end{proposition}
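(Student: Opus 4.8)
The plan is to deduce all four statements from the corresponding single-$\sigma$ statements in Proposition~\ref{prop:bddwavetransforms}, the resolution of identity in Lemma~\ref{lem:packetbounds} and the definition of $r$, and the wave packet bounds \eqref{eq:boundspsi}--\eqref{eq:boundspsiinverse}; the only genuinely new ingredient is uniform control of the dependence on $\sigma$ as $\sigma\to0$. For \eqref{it:W1}, I would compute $\|Wf\|_{L^2(\Spp)}^2$ by splitting the $\sigma$-integral at $\sigma=1$. On $(0,1)$, Fubini and Plancherel turn $\int_0^1\int_{\Sp}|\psi_{\w,\sigma}(D)f(x)|^2\ud x\ud\w\frac{\ud\sigma}{\sigma}$ into $(2\pi)^{-n}\int_{\Rn}|\wh f(\zeta)|^2\big(\int_0^1\int_{S^{n-1}}\psi_{\w,\sigma}(\zeta)^2\ud\w\frac{\ud\sigma}{\sigma}\big)\ud\zeta$, and by Lemma~\ref{lem:packetbounds} together with the definition of $r$ the inner integral equals $1-r(\zeta)^2$. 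On $[1,\infty)$, the $\w$-integral over $S^{n-1}$ cancels the factor $V(S^{n-1})^{-1}$, the $\sigma$-integral over $[1,e]$ contributes $1$, and what remains is $\|r(D)f\|_{L^2(\Rn)}^2=(2\pi)^{-n}\int_{\Rn}|\wh f(\zeta)|^2 r(\zeta)^2\ud\zeta$. Adding the two pieces yields $\|Wf\|_{L^2(\Spp)}^2=(2\pi)^{-n}\|\wh f\|_{L^2(\Rn)}^2=\|f\|_{L^2(\Rn)}^2$.

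For \eqref{it:W2}, I would show that for $f\in\Sw(\Rn)$ and every $N\geq0$ the quantity $(1+|x|+\Upsilon(\sigma)^{-1})^N|Wf(x,\w,\sigma)|$ is bounded uniformly in $(x,\w,\sigma)$ by a Schwartz seminorm of $f$ (note that $\Da(\Spp)$ imposes no conditions on derivatives). For $\sigma\geq1$, $Wf$ vanishes for $\sigma>e$ and otherwise equals a fixed multiple of $r(D)f\in\Sw(\Rn)$ while $\Upsilon(\sigma)^{-1}$ stays bounded, so this case is immediate from Proposition~\ref{prop:bddwavetransforms}\eqref{it:wave2}. For $\sigma\in(0,1)$ one has $\Upsilon(\sigma)^{-1}=\sigma^{-1}$, so the extra point is decay in $\sigma$: writing $Wf(x,\w,\sigma)=\psi_{\w,\sigma}(D)f(x)$ on the Fourier side and integrating by parts in $\zeta$ against $e^{ix\cdot\zeta}$ to extract powers of $x$ reduces matters to bounding $\int_{|\zeta|\eqsim\sigma^{-1}}|\partial_\zeta^\beta(\psi_{\w,\sigma}\wh f)(\zeta)|\ud\zeta$; by \eqref{eq:boundspsi} the $\zeta$-derivatives of $\psi_{\w,\sigma}$ cost only negative powers of $\sigma$, the shell has measure $\eqsim\sigma^{-n}$, and there $|\partial^\gamma\wh f(\zeta)|\lesssim\lb\zeta\rb^{-K}\lesssim\sigma^K$ for arbitrarily large $K$, so taking $K$ large gives $|x^\alpha Wf(x,\w,\sigma)|\lesssim\sigma^M$ for every $\alpha,M$, uniformly in $\w$, with constant a seminorm of $f$.

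For \eqref{it:W3}, recall that for $G\in\Da(\Spp)$
\[
W^*G=\int_0^\infty W_\sigma^*G(\cdot,\cdot,\sigma)\frac{\ud\sigma}{\sigma},
\]
and split at $\sigma=1$. For $\sigma\in[1,e]$ the integrand is a fixed multiple of $r(D)\!\int_{S^{n-1}}G(\cdot,\w,\sigma)\ud\w$; since $r\in C^\infty_c(\Rn)$ has $\widecheck{r}\in\Sw(\Rn)$, convolving $\widecheck{r}$ with the rapidly (in $x$) decaying bounded function $\int_{S^{n-1}}G(\cdot,\w,\sigma)\ud\w$ produces a Schwartz function with seminorms controlled by the $\Da(\Spp)$-seminorms of $G$, uniformly over the compact $\sigma$-range. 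For $\sigma\in(0,1)$ the integrand is $\int_{S^{n-1}}\widecheck{\psi_{\w,\sigma}}\ast G(\cdot,\w,\sigma)\ud\w$; estimating $\lb x\rb^N|\partial_x^\alpha(\widecheck{\psi_{\w,\sigma}}\ast G(\cdot,\w,\sigma))(x)|$ by $\lb x\rb^N\lesssim\lb x-y\rb^N\lb y\rb^N$, using the $x$-derivative analogue of \eqref{eq:boundspsiinverse} (which, after the anisotropic substitution, costs finitely many negative powers of $\sigma$ depending only on $\alpha,N,n$) together with $|G(y,\w,\sigma)|\lesssim(1+|y|+\sigma^{-1})^{-M}\leq\sigma^{M/2}(1+|y|)^{-M/2}$ from $G\in\Da(\Spp)$, yields a bound of the form $\sigma^{M/2-L}$ times a $\Da(\Spp)$-seminorm of $G$ with $L=L(\alpha,N,n)$. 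Taking $M$ large makes $\sigma^{M/2-L}$ integrable over $(0,1)$ against $\frac{\ud\sigma}{\sigma}$, so the integral converges in $\Sw(\Rn)$ and depends continuously on $G$. This balancing of the $\sigma\to0$ blow-up of $W_\sigma^*$ against the decay built into the weight $\Upsilon(\sigma)^{-1}$ of $\Da(\Spp)$ is the step I expect to be the main obstacle.

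Finally, \eqref{it:W4} follows from \eqref{it:W3} by transposition, just as \eqref{it:wave4} follows from \eqref{it:wave3} in Proposition~\ref{prop:bddwavetransforms}: for $f\in\Sw'(\Rn)$ one defines $Wf\in\Da'(\Spp)$ through $\lb G,Wf\rb_{\Spp}:=\lb W^*G,f\rb$ for $G\in\Da(\Spp)$, with $W^*G\in\Sw(\Rn)$ by \eqref{it:W3}; this is consistent with the pointwise definition $Wf(x,\w,\sigma)=W_\sigma f(x,\w)$, which for $f\in\Sw'(\Rn)$ is a smooth function of at most polynomial growth and hence lies in the weighted $L^1$-subspace of $\Da'(\Spp)$ from \eqref{eq:weightedL1}; and continuity of $W:\Sw'(\Rn)\to\Da'(\Spp)$ is immediate from the continuity of $W^*$ into $\Sw(\Rn)$.
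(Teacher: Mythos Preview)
Your proposal is correct, and for parts \eqref{it:W1}, \eqref{it:W2} and \eqref{it:W4} your argument is essentially identical to the paper's: the same Plancherel computation with the resolution of identity for the isometry, the same Fourier-side integration by parts against $e^{ix\cdot\zeta}$ exploiting the rapid decay of $\wh f$ for the $\Da(\Spp)$ bound, and the same duality argument for $\Sw'\to\Da'$.

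For part \eqref{it:W3} you take a genuinely different route on the high-frequency piece $\sigma\in(0,1)$. You work on the physical side, estimating Schwartz seminorms of $W_\sigma^*G(\cdot,\cdot,\sigma)$ via the convolution kernel bounds \eqref{eq:boundspsiinverse} (and their $x$-derivative analogues), and then absorb the resulting negative powers of $\sigma$ using the $\Upsilon(\sigma)^{-1}$-decay built into the weight defining $\Da(\Spp)$. The paper instead shows directly that $\F(W^*F)\in\Sw(\Rn)$: since $\psi_{\w,\sigma}$ is supported where $|\zeta|\eqsim\sigma^{-1}$, one has immediately $|\partial_\zeta^\alpha(\psi_{\w,\sigma}\F(F(\cdot,\w,\sigma)))(\zeta)|\lesssim \sigma^{N+1}\eqsim\sigma\lb\zeta\rb^{-N}$, which is already $\ud\sigma/\sigma$-integrable with arbitrary $\lb\zeta\rb$-decay, with no balancing needed. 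The paper's Fourier-side argument is shorter and avoids tracking the exponent $L(\alpha,N,n)$ arising from the anisotropic kernel estimate; your approach has the minor virtue of never leaving the space side and of making explicit that the $\Da(\Spp)$ weight is exactly what compensates the blow-up of $W_\sigma^*$ as $\sigma\to0$.
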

\begin{proof}
By Lemma \ref{lem:packetbounds}, for all $f\in L^{2}(\Rn)$ one has
\begin{align*}
&\|Wf\|_{L^{2}(\Spp)}^{2}=\int_{\Spp}|Wf(x,\w,\sigma)|^{2}\ud x\ud\w\frac{\ud\sigma}{\sigma}\\
&=\int_{0}^{1}\int_{\Sp}|\psi_{\w,\sigma}(D)f(x)|^{2}\ud x\ud\w\frac{\ud\sigma}{\sigma}+\frac{1}{V(S^{n-1})}\int_{1}^{e}\int_{\Sp}|r(D)f(x)|^{2}\ud x\ud\w\frac{\ud\sigma}{\sigma}\\
&=(2\pi)^{-n}\int_{0}^{1}\int_{S^{n-1}}\int_{\Rn}|\psi_{\w,\sigma}(\zeta)\wh{f}(\zeta)|^{2}\ud \zeta\ud\w\frac{\ud\sigma}{\sigma}+(2\pi)^{-n}\int_{\Rn}|r(\zeta)\wh{f}(\zeta)|^{2}\ud\zeta\\
&=(2\pi)^{-n}\int_{\Rn}\Big(\int_{0}^{\infty}\Psi_{\sigma}(\zeta)^{2}\frac{\ud\sigma}{\sigma}\Big)|\wh{f}(\zeta)|^{2}\ud\zeta= \|f\|^{2}_{L^{2}(\Rn)}.
\end{align*}

Next, suppose that $f\in \Sw(\Rn)$. Since $Wf(x,\w,\sigma)=0$ if $\sigma>e$, and because $r\in \Sw(\Rn)$, to prove that $Wf\in\Da(\Spp)$ it suffices to show that for all $N\in\N$ there exists a $C_{N}\geq0$ such that
\[
(1+|x|+\sigma^{-1})^{N}|Wf(x,\w,\sigma)|\leq C_{N}
\]
for all $(x,\w,\sigma)\in\Spp$ with $\sigma<1$. We first consider the case where $|x|\leq 1$. Recall that $V(\supp(\psi_{\w,\sigma}))\lesssim \sigma^{-\frac{n+1}{2}}$, by Lemma \ref{lem:packetbounds}. Another application of that lemma yields
\begin{align*}
&(1+|x|+\sigma^{-1})^{N}|Wf(x,\w,\sigma)|\lesssim \sigma^{-N}|\psi_{\w,\sigma}(D)f(x)|\\
&=\sigma^{-N}(2\pi)^{-n}\Big|\int_{\Rn}e^{ix\cdot\zeta}\psi_{\w,\sigma}(\zeta)\wh{f}(\zeta)\ud\zeta\Big|\\
&\lesssim \sigma^{-N}\int_{\supp(\psi_{\w,\sigma})}\sigma^{-\frac{n-1}{4}}|\zeta|^{-(N+\frac{n-1}{4}+\frac{n+1}{2})}\ud\zeta\lesssim 1,
\end{align*}
where we used that $\wh{f}\in\Sw(\Rn)$. In the same manner, for $|x|\geq1$ we integrate by parts and use Lemma \ref{lem:packetbounds} to write
\begin{align*}
&(1+|x|+\sigma^{-1})^{N}|Wf(x,\w,\sigma)|\lesssim |x|^{2N}\sigma^{-N}\Big|\int_{\Rn}e^{ix\cdot\zeta}\psi_{\w,\sigma}(\zeta)\wh{f}(\zeta)\ud\zeta\Big|\\
&=\sigma^{-N}\Big|\int_{\Rn}e^{ix\cdot\zeta}\Delta_{\zeta}^{N}(\psi_{\w,\sigma}\wh{f}\,)(\zeta)\ud\zeta\Big|\leq \sigma^{-N}\int_{\supp(\psi_{\w,\sigma})}|\Delta_{\zeta}^{N}(\psi_{\w,\sigma}\wh{f}\,)(\zeta)|\ud\zeta\\
&\lesssim \sigma^{-N-\frac{n-1}{4}}\int_{\supp(\psi_{\w,\sigma})}|\zeta|^{-(N+\frac{n-1}{4}+\frac{n+1}{2})}\ud\zeta\lesssim 1.
\end{align*}
This proves \eqref{it:W2}.

For \eqref{it:W3}, let $F\in \Da(\Spp)$ and note that
\begin{align*}
&W^{*}F(x)=\int_{0}^{\infty}W_{\sigma}^{*}F(\cdot,\cdot,\sigma)(x)\frac{\ud\sigma}{\sigma}\\
&=\!\int_{0}^{1}\int_{S^{n-1}}\psi_{\w,\sigma}(D)F(\cdot,\w,\sigma)(x)\ud\w\frac{\ud\sigma}{\sigma}+\int_{1}^{e}\int_{S^{n-1}}r(D)F(\cdot,\w,\sigma)(x)\frac{\ud\w}{V(S^{n-1})^{\frac{1}{2}}}\frac{\ud\sigma}{\sigma}
\end{align*}
for all $x\in\Rn$. We show that $\F(W^{*}F)\in\Sw(\Rn)$. To this end, first note that $\F(F(\cdot,\w,\sigma))\in C^{\infty}(\Rn)$ for all $\w\in S^{n-1}$ and $\sigma>0$, and that $\F(F(\cdot,\w,\sigma))$ and all its derivatives are bounded uniformly in $\w$ and $\sigma$. It follows that $\F(r(D)F(\cdot,\w,\sigma))\in\Sw(\Rn)$, with Schwartz seminorms which are uniformly bounded in $\w$ and $\sigma$. Hence 
\[
\F\Big(\int_{1}^{e}\int_{S^{n-1}}r(D)F(\cdot,\w,\sigma)\frac{\ud\w}{V(S^{n-1})^{\frac{1}{2}}}\frac{\ud\sigma}{\sigma}\Big)\in\Sw(\Rn).
\]
Similarly, Lemma \ref{lem:packetbounds} and the decay assumption on $F$ imply that for each $N\in\N$ and $\alpha\in\Z_{+}^{n}$ one has
\[
|\partial_{\zeta}^{\alpha}(\psi_{\w,\sigma}\F(F(\cdot,\w,\sigma)))(\zeta)|\lesssim \sigma^{N+1}\eqsim \sigma\lb\zeta\rb^{-N},
\]
for an implicit constant independent of $\w\in S^{n-1}$, $\sigma\in(0,1)$ and $\zeta\in\supp(\psi_{\w,\sigma})$. It follows that $\F(\psi_{\w,\sigma}(D)F(\cdot,\w,\sigma))\in\Sw(\Rn)$ and 
\[
\F\Big(\int_{0}^{1}\int_{S^{n-1}}\psi_{\w,\sigma}(D)F(\cdot,\w,\sigma)\ud\w\frac{\ud\sigma}{\sigma}\Big)\in\Sw(\Rn),
\]
as required. 

Finally, for \eqref{it:W4} one shows as in \eqref{eq:duality} that $\lb Wf,F\rb_{\Spp}=\lb f,W^{*}F\rb$ for all $f\in\Sw'(\Rn)$ and $F\in \Da(\Spp)$.
\end{proof}

It follows from Proposition \ref{prop:bddwavetransforms} that we may extend $W^{*}$ to a continuous map $W^{*}:\Da'(\Spp)\to \Sw'(\Rn)$ by $\lb W^{*}F,f\rb:=\lb F,Wf\rb_{\Spp}$ for $F\in\Da'(\Spp)$ and $f\in\Sw(\Rn)$, and then
\begin{equation}\label{eq:identity}
W^{*}Wf=f\quad(f\in\Sw'(\Rn)).
\end{equation}

\begin{remark}\label{rem:othertransforms}
Wave packet transforms closely related to $W$ were considered in \cite{Smith98a} and \cite{Geba-Tataru07}. There are some differences between those transforms and the one in this article, such as the use of slightly different wave packets. Moreover, wave packet transforms were only used implicitly in \cite{Smith98a}, through a reproducing formula, whereas one of the goals of this article is to explicitly combine wave packet transforms with function spaces on phase space to study Fourier integral operators. On the other hand, the wave packet transform in \cite{Geba-Tataru07} is an isometry between $L^{2}(\Rn)$ and $L^{2}(\Tp)$, where $\Tp$ is equipped with the Liouville measure $\ud x\ud\xi$, as well as a continuous map between $\Sw(\Rn)$ and $\Sw(\Tp)=\Sw(\R^{2n})$. For the $L^{2}$-theory of Fourier integral operators such mapping properties are natural, but when using square function norms and tent spaces for the $L^{p}$-theory it appears to be more natural to identify $\Tp\setminus o$ with $\Spp=\Sp\times(0,\infty)$ and to equip the latter with the measure $\ud x\ud\w\frac{\ud\sigma}{\sigma}$. This then leads one to consider a wave packet transform with mapping properties as in Proposition \ref{prop:bddwavetransform}. 
\end{remark}

\section{Off-singularity bounds for Fourier integral operators}\label{sec:offsingFIO}

In this section we obtain off-singularity bounds for Fourier integral operators, using the wave packet transforms from the previous section.

\subsection{The main theorem and its corollaries}

Throughout this section, fix $\wt{\ph}$, $\wt{\Psi}$ and $\wt{r}$ with the same properties as the functions $\ph$, $\Psi$ and $r$ from the previous section. More precisely, $\wt{\ph}\in\Sw(\Rn)$ is real-valued and such that $\wt{\ph}(\zeta)=1$ if $|\zeta|\leq \frac{1}{2}$, and $\wt{\ph}(\zeta)=0$ if $|\zeta|\geq2$. Also, $\wt{\Psi}\in\Sw(\Rn)$ is real-valued and radial, with $\supp(\wt{\Psi})\subseteq\{\zeta\in\Rn\mid |\zeta|\in[\frac{1}{2},2]\}$, and such that 
\[
\int_{0}^{\infty}\wt{\Psi}(\sigma\zeta)^{2}\frac{\ud \sigma}{\sigma}=1\quad(\zeta\neq0).
\]
Let
\[
\wt{r}(\zeta):=\Big(\int_{1}^{\infty}\wt{\Psi}(\sigma\zeta)^{2}\frac{\ud\sigma}{\sigma}\Big)^{1/2}\quad(\zeta\neq0)
\]
and $\wt{r}(0):=1$. For $\w\in S^{n-1}$, $\sigma>0$ and $\zeta\in\Rn$, set $\wt{\ph}_{\w,\sigma}(\zeta):=\wt{c}_{\sigma}\wt{\ph}\big(\tfrac{\hat{\zeta}-\w}{\sqrt{\sigma}}\big)$ if $\zeta\neq0$, where $\wt{c}_{\sigma}:=\big(\int_{S^{n-1}}\wt{\ph}\big(\tfrac{e_{1}-\nu}{\sqrt{\sigma}}\big)^{2}\ud\nu\big)^{-1/2}$, and $\wt{\ph}_{\w,\sigma}(0):=0$. Also set $\wt{\Psi}_{\sigma}(\zeta):=\wt{\Psi}(\sigma\zeta)$ and $\wt{\psi}_{\w,\sigma}:=\wt{\Psi}_{\sigma}\wt{\ph}_{\w,\sigma}$. Next, for $f\in \Sw'(\Rn)$ and $(y,\nu,\tau)\in\Spp$, set 
\[
V_{\tau}f(y,\nu):=\begin{cases}
\wt{\psi}_{\nu,\tau}(D)f(y)&\text{if }\tau\in(0,1),\\
V(S^{n-1})^{-1/2}\ind_{[1,e]}(\tau)\wt{r}(D)f(y)&\text{if }\tau\geq1,
\end{cases}
\]
and $Vf(y,\nu,\tau):=V_{\tau}f(y,\nu)$. 

Recall from Definition \ref{def:operator} the definition of a normal oscillatory integral operator. The main technical result of this section is as follows. 

\begin{theorem}\label{thm:offsingFIO}
Let $T$ be a normal oscillatory integral operator of order $0$ and type $(\frac{1}{2},\frac{1}{2},1)$ with symbol $a$ and phase function $\Phi$. Let $\hchi$ be the contact transformation induced by $\Phi$, and suppose that $\hchi$ is bi-Lipschitz. Suppose also that either $(z,\theta)\mapsto \Phi(z,\theta)$ is linear in $\theta$, or that there exists an $\veps>0$ such that $a(z,\theta)=0$ for all $(z,\theta)\in \R^{2n}$ with $|\theta|<\veps$. For $\sigma,\tau>0$ let $K_{\sigma,\tau}$ be the kernel of $W_{\sigma}TV_{\tau}^{*}$, and set $\rho:=\min(\sigma,\tau)$. Then for each $N\geq0$ there exists a $C\geq0$ such that, for all $(x,\w),(y,\nu)\in\Sp$, one has
\begin{equation}\label{eq:mainestimate1}
|K_{\sigma,\tau}((x,\w),(y,\nu))|\leq C\Upsilon(\tfrac{\sigma}{\tau})^{N}\rho^{-n}\big(1+\rho^{-1}d((x,\w),\hchi(y,\nu))^2 \big)^{-N}
\end{equation}
if $(y,\nu)\in\dom(\hchi)$, and 
\[
|K_{\sigma,\tau}((x,\w),(y,\nu))|\leq C \big(1 + \rho^{-1} d((x, \omega), \hchi(A))^2 \big)^{-N}  \big(1 + \rho^{-1} d((y, \nu), A)^2 \big)^{-N}
\]
if $(y,\nu)\notin\dom(\hchi)$ or $(x,\w)\notin\ran(\hchi)$. Here $A:=\{(\nabla_{\theta}\Phi(z,\theta),\theta)\mid (z,\theta)\in\mathrm{base}(a)\}$.
\end{theorem}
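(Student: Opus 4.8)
The plan is to write $W_\sigma T V_\tau^*$ as an oscillatory integral and estimate its kernel by repeated integration by parts, treating the high-frequency regime $\sigma,\tau<1$ as the main case and disposing of $\sigma\ge 1$ or $\tau\ge 1$ by the rapidly-decreasing properties of $\wt r$ and $r$ together with the fact that composing a smoothing factor with $T$ produces a residual family. In the main regime, the kernel is, up to constants,
\[
K_{\sigma,\tau}((x,\w),(y,\nu))=\int_{\Rn}\int_{\Rn}\int_{\Rn}e^{i(x\cdot\xi-\eta\cdot z+\Phi(z,\eta)-y\cdot\eta)}\,\psi_{\w,\sigma}(\xi)\,a(z,\eta)\,\overline{\wt\psi_{\nu,\tau}(\eta)}\,\ud\xi\,\ud z\,\ud\eta.
\]
Wait — more precisely, since $V_\tau^*$ convolves with $\widecheck{\wt\psi_{\nu,\tau}}$ and $W_\sigma$ applies $\psi_{\w,\sigma}(D)$, the composite kernel is obtained by inserting the symbol $a$ and phase $\Phi$ between these two wave packet multipliers; one first does the $z$-integration only where it interacts with $\eta$, reducing to a double integral in $\xi$ and $\eta$ after exploiting that $\psi_{\w,\sigma}(\xi)$ forces $|\xi|\eqsim\sigma^{-1}$ and $|\hat\xi-\w|\lesssim\sqrt\sigma$, and similarly for $\eta$. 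The key geometric input is that on the supports, $\hchi(y,\nu)$ is (essentially) $\big(\nabla_\eta\Phi(y,\eta),\widehat{\nabla_z\Phi}\big)$ evaluated near the relevant frequencies, so $d((x,\w),\hchi(y,\nu))^2$ is comparable, by Lemma~\ref{lem:metric}, to $|\langle\w,x-\nabla_\eta\Phi\rangle|+|x-\nabla_\eta\Phi|^2+|\w-\widehat{\nabla_z\Phi}|^2$.

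The core of the argument is then a stationary-phase / non-stationary-phase analysis. First I would extract the scale factor $\Upsilon(\sigma/\tau)^N$: on the supports one has $|\xi|\eqsim\sigma^{-1}$ and $|\eta|\eqsim\tau^{-1}$, and $\Phi$ is homogeneous of degree one, so by integrating by parts in the radial direction of $\eta$ (using the extra radial decay built into the type $(\tfrac12,\tfrac12,1)$ symbol class via the estimate~\eqref{symbol-est} with $\gamma$ large, and the bound~\eqref{eq:boundspsi} for $\wt\psi_{\nu,\tau}$) one gains powers of $\min(\sigma,\tau)/\max(\sigma,\tau)$; this is exactly where the anisotropic symbol class is used, and it is the reason the radial derivative is controlled separately. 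Then, with $\rho=\min(\sigma,\tau)$ fixed, I would set up the vector fields adapted to the anisotropic ball: the operator $\rho^{-1}\langle\w,\nabla_\xi\rangle$ together with $\rho^{-1/2}$ times the tangential gradient $P_\w^\perp\nabla_\xi$, and their $\eta$-counterparts, with denominators built from $x-\nabla_\eta\Phi(z,\eta)$, the radial and tangential components of $x-\nabla_\eta\Phi$, and $\w-\widehat{\nabla_z\Phi}$. Each integration by parts costs a derivative on $a$ and on the wave packets — controlled by \eqref{symbol-est} and \eqref{eq:boundspsi} — and gains a factor $(1+\rho^{-1}d((x,\w),\hchi(y,\nu))^2)^{-1}$; iterating $N$ times, together with the volume bound $V(\supp\psi_{\w,\sigma})\lesssim\sigma^{-(n+1)/2}$ from Lemma~\ref{lem:packetbounds} and the analogue for $\wt\psi_{\nu,\tau}$, yields the claimed $\rho^{-n}$ prefactor and the decay in~\eqref{eq:mainestimate1}. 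The off-domain estimate is handled by the same machinery but truncating: when $(y,\nu)\notin\dom(\hchi)$ one localizes $\eta$ away from $\mathrm{base}(a)$, so the symbol $a(z,\eta)$ either vanishes or the phase $z\cdot\eta$-integration produces rapid decay in $\mathrm{dist}((y,\nu),A)$; symmetrically for $(x,\w)\notin\ran(\hchi)$ one integrates by parts in $\xi$ to extract decay in $\mathrm{dist}((x,\w),\hchi(A))$, using that $\psi_{\w,\sigma}$ is supported where $\xi$ lies in a narrow cone around $\w$.

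The main obstacle — and the most delicate bookkeeping — is ensuring the integrations by parts in $\xi$ and in $\eta$ are genuinely compatible: the $\xi$-integration by parts must not destroy the $\eta$-decay already obtained, and the non-degeneracy $\inf|\det\partial^2_{x\eta}\Phi|>0$ together with the bi-Lipschitz hypothesis on $\hchi$ must be invoked to guarantee that the phase $x\cdot\xi - z\cdot\eta + \Phi(z,\eta) - y\cdot\eta$ has a gradient in $(\xi,\eta)$ that is bounded below by the anisotropic distance to the singularity, uniformly in all parameters. Making the denominators in the integration-by-parts identities smooth (so that differentiating them does not produce new singularities) requires the usual trick of replacing $|x-\nabla_\eta\Phi|$ by $\langle x-\nabla_\eta\Phi\rangle$-type regularizations and carefully tracking which regime ($\sigma\lessgtr\tau$) one is in, since the roles of $\xi$ and $\eta$ are not symmetric under the map $\hchi$. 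I expect that once the vector fields and their symbol classes are set up correctly, each individual estimate is routine, but organizing the induction on $N$ so that all constants remain uniform in $(x,\w,y,\nu,\sigma,\tau)$ is where the real work lies.
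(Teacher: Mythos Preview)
Your high-level plan (write the kernel as an oscillatory integral and integrate by parts) is right, but the implementation has a structural gap. The kernel is a \emph{triple} integral in $(\zeta,z,\theta)$ with phase $\Pi=(x-z)\cdot\zeta+\Phi(z,\theta)-y\cdot\theta$ (your displayed phase has $-\eta\cdot z$ where it should be $-\zeta\cdot z$), and you cannot ``reduce to a double integral in $\xi$ and $\eta$'': the symbol $a(z,\theta)$ lives in $S^{0}_{1/2,1/2,1}$, so $z$-derivatives \emph{grow} like $|\theta|^{1/2}$, and stationary phase in $z$ is not available. More importantly, integration by parts in $z$ is exactly what produces the two ingredients you need and cannot get from $\nabla_\zeta,\nabla_\theta$ alone. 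First, the scale separation $\Upsilon(\sigma/\tau)^{N}$: $\nabla_z\Pi=\nabla_z\Phi(z,\theta)-\zeta$, and on the supports $|\nabla_z\Phi|\eqsim|\theta|\eqsim\tau^{-1}$ while $|\zeta|\eqsim\sigma^{-1}$, so when the scales differ this gradient is large and each $z$-integration by parts gains $\Upsilon(\sigma/\tau)$. Your proposed mechanism (radial $\hat\theta\cdot\nabla_\theta$) gives $\Phi(z,\hat\theta)-\hat\theta\cdot y$, which is $O(1)$ and carries no information about $\sigma/\tau$. Second, the angular term $|\w-\widehat{\nabla_z\Phi}|$ in the distance: this is controlled via an operator built from $P_\w^\perp\nabla_z$ with denominator $|P_\w^\perp(\nabla_z\Phi(z,\hat\theta)-\zeta/|\theta|)|$, again a $z$-derivative. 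Your $\nabla_\xi$ gives only $x-z$ and your $\nabla_\eta$ gives only $\nabla_\theta\Phi-y$; neither produces $\nabla_z\Phi$.

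The paper actually uses six differential operators $D_1,\ldots,D_6$ (two in $\zeta$, one in $\theta$, two in $z$, one radial in $(\zeta,\theta)$ jointly), and the distance $d((x,\w),\hchi(y,\nu))$ is bounded via the intermediate point $(z,\widehat{\nabla_z\Phi(z,\hat\theta)})$ using the triangle inequality and the bi-Lipschitz hypothesis on $\hchi^{-1}$. A further subtlety you have not anticipated: one must first split the amplitude via a cutoff in $|\widehat{\nabla_z\Phi}-\w|$, because the anisotropic operators (the $\w\cdot\nabla_\zeta$ and $P_\w^\perp\nabla_z$ pair) only behave well when $\widehat{\nabla_z\Phi}$ is close to $\w$; on the complementary piece one uses isotropic operators and exploits that the distance is already $\gtrsim 1$. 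The proof then breaks into six cases according to whether $\sigma\eqsim\tau$ or not, whether $\max(\sigma,\tau)\ge 1$, and whether $(y,\nu)\in\dom(\hchi)$. Your off-domain sketch (``localize $\eta$ away from $\mathrm{base}(a)$'') is also off: $(y,\nu)\notin\dom(\hchi)$ does not constrain $\theta$, but rather forces $d((y,\nu),A)\gtrsim 1$, and one then shows this distance is dominated by the $D_2$-denominator $|\nabla_\theta\Phi(z,\theta)-y|$.
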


In the next section we will mostly use the following corollary of Theorem \ref{thm:offsingFIO}, concerning the boundedness of normal oscillatory integral operators, conjugated with wave packet transforms, on the tent spaces $T^{p}(\Sp)$, $p\in[1,\infty]$.

\begin{corollary}\label{cor:normalbounds}
Let $T$ be a normal oscillatory integral operator of order $0$ and type $(\frac{1}{2},\frac{1}{2},1)$ with symbol $a$ and phase function $\Phi$. Let $\hchi$ be the contact transformation induced by $\Phi$, and suppose that $\dom(\hchi)=\ran(\hchi)=\Sp$. Suppose also that either $(z,\theta)\mapsto \Phi(z,\theta)$ is linear in $\theta$, or there exists an $\veps>0$ such that $a(z,\theta)=0$ for all $(z,\theta)\in \R^{2n}$ with $|\theta|<\veps$. Then for each $N\geq0$ there exists a $C\geq0$ such that
\[
(W_{\sigma}TV_{\tau}^{*})_{\sigma,\tau>0}\in OS(\hchi,N,C).
\]
In particular, $WTV^{*}\in \La(T^{p}(\Sp))$ for all $p\in[1,\infty]$.
\end{corollary}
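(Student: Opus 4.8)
The first assertion of the corollary is the global specialisation of Theorem~\ref{thm:offsingFIO}, and the ``in particular'' will then follow at once from Theorem~\ref{thm:tentbounded}. Here is the plan.

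\emph{Step 1.} First I would check that $\hchi$ is \emph{globally} bi-Lipschitz with respect to $d$. Since $T$ is a normal oscillatory integral operator, its phase $\Phi$ satisfies \eqref{it:phase1}--\eqref{it:phase4} of Definition~\ref{def:operator}, and by hypothesis $\dom(\hchi)=\ran(\hchi)=\Sp$; hence Lemma~\ref{lem:philip}, applied with $U=\dom(\hchi)=\ran(\hchi)=\Sp$, gives that $\hchi\colon\Sp\to\Sp$ is bi-Lipschitz. This simultaneously verifies the bi-Lipschitz hypothesis of Theorem~\ref{thm:offsingFIO} and that of Theorem~\ref{thm:tentbounded}.

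\emph{Step 2.} Next I would extract the off-singularity bounds. Because $\dom(\hchi)=\ran(\hchi)=\Sp$, the exceptional alternative ``$(y,\nu)\notin\dom(\hchi)$ or $(x,\w)\notin\ran(\hchi)$'' in Theorem~\ref{thm:offsingFIO} never occurs, so for each $N\geq0$ that theorem produces a $C\geq0$ for which the kernel $K_{\sigma,\tau}$ of $W_{\sigma}TV_{\tau}^{*}$ satisfies \eqref{eq:mainestimate1} for \emph{all} $(x,\w),(y,\nu)\in\Sp$ and all $\sigma,\tau>0$. Joint measurability of $((x,\w,\sigma),(y,\nu,\tau))\mapsto K_{\sigma,\tau}((x,\w),(y,\nu))$ follows from the explicit oscillatory-integral description of $W_{\sigma}TV_{\tau}^{*}$ together with the mapping properties of $W_{\sigma}$ and $V_{\tau}$. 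Taking $U=\Sp$ (so $\hchi(U)=\Sp$ and the support condition in Definition~\ref{def:pointwise offsing} is vacuous), this is precisely the statement $(W_{\sigma}TV_{\tau}^{*})_{\sigma,\tau>0}\in OS(\hchi,N,C)$, i.e.\ the first conclusion of the corollary; in particular it holds for the threshold exponent $N>0$ supplied by Theorem~\ref{thm:tentbounded}.

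\emph{Step 3.} Finally I would identify $WTV^{*}$ with the tent space operator $R$ of \eqref{eq:defR} and invoke Theorem~\ref{thm:tentbounded}. Since $\wt{\ph},\wt{\Psi},\wt{r}$ have the same properties as $\ph,\Psi,r$, the analogues for $V$ and $V^{*}$ of Propositions~\ref{prop:bddwavetransforms} and \ref{prop:bddwavetransform} hold; unwinding the definitions on a dense subspace of $T^{2}(\Sp)=L^{2}(\Spp)$, for instance $\Da(\Spp)$, gives
\[
WTV^{*}F(x,\w,\sigma)=W_{\sigma}T\Big(\int_{0}^{\infty}V_{\tau}^{*}F(\cdot,\cdot,\tau)\,\tfrac{\ud\tau}{\tau}\Big)(x,\w)=\int_{0}^{\infty}\big(W_{\sigma}TV_{\tau}^{*}\big)F(\cdot,\cdot,\tau)(x,\w)\,\tfrac{\ud\tau}{\tau},
\]
which is exactly the operator $R$ with $T_{\sigma,\tau}=W_{\sigma}TV_{\tau}^{*}$ (this formula may be taken as the definition of $WTV^{*}$). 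Combining Steps~1 and 2 with Theorem~\ref{thm:tentbounded} then yields $R=WTV^{*}\in\La(T^{p}(\Sp))$ for every $p\in[1,\infty]$, the endpoint $p=\infty$ being interpreted through the adjoint action as in Remark~\ref{rem:p=infty}; for $p<\infty$ this operator is the unique bounded extension by density of the naive composition wherever the latter makes sense.

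I expect the only genuine work to lie outside this argument, in Theorem~\ref{thm:offsingFIO} itself, whose proof needs delicate stationary-phase estimates and repeated integration by parts. Within the corollary the sole points requiring care are the passage from the \emph{local} to the \emph{global} bi-Lipschitz property of $\hchi$ (handled by Lemma~\ref{lem:philip} under $\dom(\hchi)=\ran(\hchi)=\Sp$) and the verification that the identification $WTV^{*}=R$ is independent of the chosen dense subspace and extends correctly to the non-reflexive endpoint $p=\infty$; both are routine given the distribution theory of Section~\ref{subsec:tent} and the wave packet calculus of Section~\ref{sec:wavetransforms}.
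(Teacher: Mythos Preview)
Your proposal is correct and follows essentially the same route as the paper: Lemma~\ref{lem:philip} for the global bi-Lipschitz property of $\hchi$, Theorem~\ref{thm:offsingFIO} (with the exceptional case vacuous since $\dom(\hchi)=\ran(\hchi)=\Sp$) for the off-singularity bounds, and Theorem~\ref{thm:tentbounded} applied to the identity $WTV^{*}F(x,\w,\sigma)=\int_{0}^{\infty}W_{\sigma}TV_{\tau}^{*}F(\cdot,\cdot,\tau)(x,\w)\,\tfrac{\ud\tau}{\tau}$ for the tent space boundedness. Your additional remarks on measurability, density, and the $p=\infty$ endpoint are all appropriate and in line with the paper's treatment.
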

\begin{proof}
Firstly, by writing out the kernel $K_{\sigma,\tau}$ of $W_{\sigma}TV_{\tau}^{*}$ (see e.g.~\eqref{eq:kernelrep} below), one sees that $((x,\w,\sigma),(y,\nu,\tau))\mapsto K_{\sigma,\tau}((x,\w),(y,\nu))$ is measurable on $\Spp\times\Spp$. Moreover, by Lemma \ref{lem:philip}, $\hchi:\Sp\to\Sp$ is bi-Lipschitz. Hence \eqref{eq:mainestimate1} holds for all $(y,\nu)\in\Sp$, which yields the first statement. The second statement now follows directly from Theorem \ref{thm:tentbounded}, since
\[
WTV^{*}F(x,\w,\sigma)=\int_{0}^{\infty}W_{\sigma}TV^{*}_{\tau}F(\cdot,\cdot,\tau)(x,\w)\frac{\ud\tau}{\tau}
\]
for all $F\in \Da(\Spp)$ and $(x,\w,\sigma)\in\Spp$.
\end{proof}

For our next corollary we need a proposition on the boundedness of smoothing operators, conjugated with wave packet transforms, on tent spaces.

\begin{proposition}\label{prop:smoothingbounds}
Let $R$ be a smoothing operator. Then $(W_{\sigma}RV_{\tau}^{*})_{\sigma,\tau>0}\in\Ra$ and $WRV^{*}\in\La(T^{p}(\Sp))$ for all $p\in[1,\infty]$.
\end{proposition}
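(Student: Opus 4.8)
The plan is to show that the family $(W_{\sigma}RV_{\tau}^{*})_{\sigma,\tau>0}$ is residual in the sense of Definition~\ref{def:residual}; the boundedness $WRV^{*}\in\La(T^{p}(\Sp))$ for all $p\in[1,\infty]$ will then follow immediately from Proposition~\ref{prop:residualbounded}, since
\[
WRV^{*}F(x,\w,\sigma)=\int_{0}^{\infty}W_{\sigma}RV_{\tau}^{*}F(\cdot,\cdot,\tau)(x,\w)\tfrac{\ud\tau}{\tau}\qquad(F\in\Da(\Spp))
\]
is precisely of the form \eqref{eq:defR}. By Proposition~\ref{prop:bddwavetransforms} the operator $W_{\sigma}RV_{\tau}^{*}$ maps $\Da(\Sp)$ into $\Sw(\Rn)\subseteq\Da'(\Sp)$; writing $K_{R}\in\Sw(\R^{2n})$ for the Schwartz kernel of $R$ and using the convolution formulas from the proof of Proposition~\ref{prop:bddwavetransforms}, I would record that for $\sigma,\tau\in(0,1)$ its kernel equals
\[
K_{\sigma,\tau}((x,\w),(y,\nu))=\int_{\Rn}\int_{\Rn}\widecheck{\psi_{\w,\sigma}}(x-z)\,K_{R}(z,z')\,\widecheck{\wt{\psi}_{\nu,\tau}}(z'-y)\,\ud z\,\ud z'
\]
(using that $\wt\psi_{\nu,\tau}$ is real-valued), with the evident modification --- replace $\psi_{\w,\sigma}$ by $r$, resp.\ $\wt\psi_{\nu,\tau}$ by $\wt r$, and insert the factor $\ind_{[1,e]}$ --- when $\sigma\ge1$, resp.\ $\tau\ge1$. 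This formula also gives the joint measurability required in Definition~\ref{def:residual}. Since $W_{\sigma}=0$ for $\sigma>e$ and the $V_{\tau}^{*}$-factor vanishes for $\tau>e$, the family is zero unless $\sigma,\tau\le e$, so \eqref{eq:residual} is trivial outside that range and the real work is to establish it for $\sigma,\tau\in(0,e]$.

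The crux is to extract arbitrary positive powers of $\sigma$ and $\tau$ when these are small, and I would do this by trading the high-frequency localization of the wave packets against the smoothness of $K_{R}$. For fixed $M\in\N$ set $\psi_{\w,\sigma}^{(M)}:=\lb\cdot\rb^{-2M}\psi_{\w,\sigma}$, and similarly $\wt\psi_{\nu,\tau}^{(M)}$. Since $\psi_{\w,\sigma}^{(M)}$ has the same support as $\psi_{\w,\sigma}$ --- on which $|\zeta|\eqsim\sigma^{-1}$ and $|\lb\w,\nabla_{\zeta}\rb^{\beta}\partial_{\zeta}^{\alpha}\lb\zeta\rb^{-2M}|\lesssim\lb\zeta\rb^{-2M-\beta-|\alpha|}\eqsim\sigma^{2M+\beta+|\alpha|}$ --- the Leibniz rule together with \eqref{eq:boundspsi} shows that $\psi_{\w,\sigma}^{(M)}$ obeys the bounds of Lemma~\ref{lem:packetbounds} with an extra gain of $\sigma^{2M}$, and re-running the integration-by-parts argument behind \eqref{eq:boundspsiinverse} then gives, for every $N$,
\[
|\widecheck{\psi_{\w,\sigma}^{(M)}}(u)|\lesssim_{M,N}\sigma^{2M-\frac{3n+1}{4}}(1+|u|)^{-N}
\]
uniformly in $\w\in S^{n-1}$ and $\sigma\in(0,1)$, and similarly for $\wt\psi_{\nu,\tau}^{(M)}$. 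Using the identity $\psi_{\w,\sigma}(D)=\psi_{\w,\sigma}^{(M)}(D)(1-\Delta)^{M}$ and its analogue in the second variable, and integrating by parts $2M$ times in the kernel formula, I obtain
\[
K_{\sigma,\tau}((x,\w),(y,\nu))=\int_{\Rn}\int_{\Rn}\widecheck{\psi_{\w,\sigma}^{(M)}}(x-z)\,\big[(1-\Delta_{z})^{M}(1-\Delta_{z'})^{M}K_{R}\big](z,z')\,\widecheck{\wt\psi_{\nu,\tau}^{(M)}}(z'-y)\,\ud z\,\ud z',
\]
where the middle factor is again a Schwartz function on $\R^{2n}$.

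To finish I would combine the three factors: bounding the middle one by $C_{L}(1+|z|)^{-L}(1+|z'|)^{-L}$ for large $L$, using Peetre's inequality $(1+|x-z|)^{-N}\lesssim(1+|x|)^{-N}(1+|z|)^{N}$ (and likewise for $y,z'$), and performing the $(z,z')$-integral, which factors, to get, for $L>N+n$ and $\sigma,\tau\in(0,1)$,
\[
|K_{\sigma,\tau}((x,\w),(y,\nu))|\lesssim_{M,N}\sigma^{2M-\frac{3n+1}{4}}\,\tau^{2M-\frac{3n+1}{4}}\,(1+|x|)^{-N}(1+|y|)^{-N}.
\]
Given the exponent demanded in \eqref{eq:residual}, I would take $M$ large enough that $2M-\frac{3n+1}{4}$ exceeds it; since $\Upsilon(\sigma)^{-1}=\sigma^{-1}$ for $\sigma<1$ and $(1+a)^{-N}(1+b)^{-N}(1+c)^{-N}(1+d)^{-N}\le(1+a+b+c+d)^{-N}$, this is exactly \eqref{eq:residual} for $\sigma,\tau<1$. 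When $\sigma$ and/or $\tau$ lies in $[1,e]$ the corresponding wave packet is the fixed radial multiplier $r$ (resp.\ $\wt r$), so no gain in that variable is needed --- indeed $\Upsilon(\sigma)^{-1}\le e$ there --- and the same argument, applying the $\lb\cdot\rb^{-2M}$ trick only to whichever factors (if any) are genuinely high-frequency, closes the estimate; for $\sigma>e$ or $\tau>e$ the kernel vanishes. This shows $(W_{\sigma}RV_{\tau}^{*})_{\sigma,\tau>0}\in\Ra$, and Proposition~\ref{prop:residualbounded} then gives the claim.

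I expect the only genuine obstacle to be the extraction of the $\Upsilon(\sigma)^{N}\Upsilon(\tau)^{N}$ decay; this is the familiar almost-orthogonality between pieces localized at frequency $\eqsim\sigma^{-1}$ and a smoothing kernel, made quantitative by the factorization $\psi_{\w,\sigma}(D)=\psi_{\w,\sigma}^{(M)}(D)(1-\Delta)^{M}$. The decay in $x$ and $y$ and the handling of the low-frequency range $\sigma,\tau\ge1$ are routine by comparison.
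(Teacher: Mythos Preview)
Your proof is correct, but it proceeds differently from the paper's. The paper writes the kernel as an oscillatory integral over $\R^{4n}$,
\[
K_{\sigma,\tau}((x,\w),(y,\nu))=(2\pi)^{-2n}\int_{\R^{4n}}e^{i((x-z)\cdot\zeta+(u-y)\cdot\theta)}\psi_{\w,\sigma}(\zeta)K_{R}(z,u)\wt\psi_{\nu,\tau}(\theta)\,\ud\theta\,\ud u\,\ud\zeta\,\ud z,
\]
and then integrates by parts against four operators $D_{1},\ldots,D_{4}$ (built from $\Delta_{z}$, $\Delta_{u}$, $\Delta_{\zeta}$, $\Delta_{\theta}$) that each leave the exponential invariant; powers of $|\zeta|^{-1}\eqsim\sigma$ and $|\theta|^{-1}\eqsim\tau$ then fall out directly, together with decay in $|x-z|$ and $|u-y|$. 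This is a simplified rehearsal of the integration-by-parts scheme used in the proof of Theorem~\ref{thm:offsingFIO}. You instead stay in the spatial representation and use the factorization $\psi_{\w,\sigma}(D)=\psi^{(M)}_{\w,\sigma}(D)(1-\Delta)^{M}$ to shift derivatives onto the Schwartz kernel $K_{R}$, converting the high-frequency localization into the gain $\sigma^{2M}$ via Lemma~\ref{lem:packetbounds}. Your route is slightly more elementary and makes the almost-orthogonality mechanism transparent; the paper's route has the virtue of previewing, in miniature, the machinery needed for the harder FIO estimate.
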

\begin{proof}
For $\sigma,\tau>0$ let $K_{\sigma,\tau}$ be the kernel of $W_{\sigma}RV_{\tau}^{*}$, and fix $N\in\N$. The measurability condition is straightforward to check, and since $K_{\sigma,\tau}=0$ if $\sigma>e$ or $\tau>e$, we only have to show that 
\begin{equation}\label{eq:mainresidual}
|K_{\sigma,\tau}((x,\w),(y,\nu))|\lesssim (1+|x|+|y|+\sigma^{-1}+\tau^{-1})^{-N}
\end{equation}
for $(x,\w),(y,\nu)\in\Sp$. Let $K\in \Sw(\Rn\times\Rn)$ be the kernel of $R$. 

Suppose that $\sigma,\tau<1$, with the proof being similar if $\max(\sigma,\tau)\in[1,e]$. Then 
\[
K_{\sigma,\tau}((x,\w),(y,\nu))=\frac{1}{(2\pi)^{2n}}\int_{\R^{4n}}\!e^{i((x-z)\cdot\zeta+(u-y)\cdot\theta)}\psi_{\w,\sigma}(\zeta)K(z,u)\wt{\psi}_{\nu,\tau}(\theta)\ud\theta\ud u\ud \zeta \ud z
\]
for all $(x,\w),(y,\nu)\in\Sp$. Consider the following differential operators:
\begin{align*}
D_{1}:=-|\zeta|^{-2}\Delta_{z}&,\quad D_{2}:=-|\theta|^{-2}\Delta_{u},\\
D_{3}:=(1+|x-z|^{2})^{-1}(1-\Delta_{\zeta})&,\quad D_{4}:=(1+|u-y|^{2})^{-1}(1-\Delta_{\theta}).
\end{align*}
Note that each of these leaves the exponential in the representation for $K_{\sigma,\tau}$ invariant. Now, first integrate by parts sufficiently many times with respect to $D_{1}$ and $D_{2}$, and then with respect to $D_{3}$ and $D_{4}$, and use the properties of $\psi_{\w,\sigma}$ and $\wt{\psi}_{\nu,\tau}$ from Lemma \ref{lem:packetbounds} and the assumption that $K\in\Sw(\R^{2n})$, to write
\begin{align*}
&|K_{\sigma,\tau}((x,\w),(y,\nu))|\\
&\lesssim (\sigma\tau)^{-\frac{n-1}{4}}\int_{\R^{4n}}\big((1+|x-z|+|z|+|u-y|+|u|)|\zeta||\theta|\big)^{-M}\ud u\ud z\ud\theta\ud \zeta\\
&\lesssim (\sigma\tau)^{M-\frac{n-1}{4}}(1+|x|+|y|)^{4n-M}\int_{\supp(\psi_{\w,\sigma})}\int_{\supp(\wt{\psi}_{\nu,\tau})}\ud\theta\ud \zeta\\
&\lesssim (1+|x|+|y|+\sigma^{-1}+\tau^{-1})^{-N}
\end{align*}
for $M\geq0$ large enough, where we used Lemma \ref{lem:packetbounds} to see that $V(\supp(\psi_{\w,\sigma}))\lesssim \sigma^{\frac{n+1}{2}}$ and $V(\supp(\wt{\psi}_{\nu,\tau}))\lesssim \tau^{\frac{n+1}{2}}$.

We have now shown that $(W_{\sigma}RV_{\tau}^{*})_{\sigma,\tau>0}\in\Ra$. By Proposition \ref{prop:residualbounded}, this in turn implies that $WRV^{*}\in\La(T^{p}(\Sp))$ for all $p\in[1,\infty]$.
\end{proof}

\begin{corollary}\label{cor:FIObounds}
Let $T$ be a Fourier integral operator of order $0$ and type $(\rho,1-\rho,1)$, for $\rho\in(\frac{1}{2},1]$, associated with a local canonical graph. Suppose that the Schwartz kernel of $T$ has compact support. Then $WTV^{*}\in \La(T^{p}(\Sp))$ for all $p\in[1,\infty]$.
\end{corollary}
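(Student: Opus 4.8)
The plan is to reduce Corollary~\ref{cor:FIObounds} to the case of normal oscillatory integral operators already treated in Corollary~\ref{cor:normalbounds}, together with the residual estimate of Proposition~\ref{prop:smoothingbounds}, via the structural decomposition in Proposition~\ref{prop:FIOnormal}. First I would invoke Proposition~\ref{prop:FIOnormal}: since $\rho\in(\tfrac12,1]$ and $T$ is an FIO of order $0$ and type $(\rho,1-\rho,1)$ associated with a local canonical graph with compactly supported Schwartz kernel, there exist a smoothing operator $R$, an $m\in\N$, and normal oscillatory integral operators $(T_{j,1})_{j=1}^{m}$, $(T_{j,2})_{j=1}^{m}$ of order $0$ and type $(\rho,1-\rho,1)$ with $T=\sum_{j=1}^{m}T_{j,1}T_{j,2}+R$, where each $T_{j,k}$ has compactly supported Schwartz kernel and symbol $a_{j,k}$ vanishing for $|\eta|<1$.

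Next I would insert the wave packet transforms. Using the reproducing identity $W^{*}W=I=V^{*}V$ on $\Sw'(\Rn)$ from \eqref{eq:identity} (applied to $V$, which has the same properties as $W$ by the discussion preceding Remark~\ref{rem:othertransforms}, so $V^{*}V=I$ as well), write
\[
WTV^{*}=\sum_{j=1}^{m}(WT_{j,1}V^{*})(VT_{j,2}V^{*})+WRV^{*}
\]
by inserting $V^{*}V$ between $T_{j,1}$ and $T_{j,2}$. Then each factor $WT_{j,k}V^{*}$ is covered by Corollary~\ref{cor:normalbounds}: the hypothesis that $a_{j,k}(z,\theta)=0$ for $|\theta|<1$ is exactly the support condition needed there, and one must check that the induced contact transformation $\hchi_{j,k}$ satisfies $\dom(\hchi_{j,k})=\ran(\hchi_{j,k})=\Sp$. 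This last point requires a small argument — a priori the $T_{j,k}$ produced by Proposition~\ref{prop:FIOnormal} are built from phase functions $\Phi$ defined only on an open conic set, not all of $\R^{n}\times(\R^{n}\setminus\{0\})$ — but one can arrange, by the remark following Definition~\ref{def:operator} and the compactness of $\mathrm{base}(a_{j,k})$, to extend or modify $\Phi$ (changing the operator only by a smoothing term, which is absorbed into $R$) so that the global hypothesis holds; alternatively one re-runs the proof of Corollary~\ref{cor:normalbounds} using the local version \eqref{eq:mainestimate1} together with the second estimate of Theorem~\ref{thm:offsingFIO} for points outside $\dom(\hchi_{j,k})$, which gives membership in a sum of $OS(\hchi_{j,k},N,C)$ and residual classes, hence boundedness on $T^{p}(\Sp)$ by Theorems~\ref{thm:tentbounded} and Proposition~\ref{prop:residualbounded}.

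Finally I would assemble the pieces: by Corollary~\ref{cor:normalbounds} each $WT_{j,1}V^{*}$ and $VT_{j,2}V^{*}$ lies in $\La(T^{p}(\Sp))$ for all $p\in[1,\infty]$, hence so does each composition $(WT_{j,1}V^{*})(VT_{j,2}V^{*})$; by Proposition~\ref{prop:smoothingbounds}, $WRV^{*}\in\La(T^{p}(\Sp))$ for all $p\in[1,\infty]$; summing the $m+1$ bounded operators gives $WTV^{*}\in\La(T^{p}(\Sp))$. The main obstacle is the bookkeeping in the second paragraph: ensuring that the factors $T_{j,k}$ extracted from Proposition~\ref{prop:FIOnormal} genuinely satisfy the hypotheses of Corollary~\ref{cor:normalbounds} — in particular the globality $\dom(\hchi)=\ran(\hchi)=\Sp$ and the bi-Lipschitz property of $\hchi$ — since Proposition~\ref{prop:FIOnormal} only guarantees compactly supported kernels and a locally defined phase. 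One resolves this either by the extension trick (harmless modulo smoothing operators, which are already allowed) or by noting that compactness of the cone support base makes the local estimates of Theorem~\ref{thm:offsingFIO} suffice to place $W_{\sigma}T_{j,k}V_{\tau}^{*}$ in a finite sum of off-singularity and residual classes, to which Theorem~\ref{thm:tentbounded} and Proposition~\ref{prop:residualbounded} apply directly.
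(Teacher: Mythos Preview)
Your proposal is correct and follows essentially the same route as the paper: decompose via Proposition~\ref{prop:FIOnormal}, handle the smoothing remainder by Proposition~\ref{prop:smoothingbounds}, and insert a reproducing identity between the factors (the paper uses $W^{*}W$ rather than your $V^{*}V$, an immaterial difference). For the locality issue you flag, the paper takes precisely your option (b): using Lemma~\ref{lem:philip} for bi-Lipschitzness on the compact base, it sets $S_{j,k}:=\ind_{\ran(\hchi_{j,k})}T_{j,k}\ind_{\dom(\hchi_{j,k})}$ and applies the two estimates of Theorem~\ref{thm:offsingFIO} to show $(W_{\sigma}S_{j,k}V_{\tau}^{*})\in OS(\hchi_{j,k},N,C)$ and $(W_{\sigma}(T_{j,k}-S_{j,k})V_{\tau}^{*})\in\mathcal{R}$; your option (a), a global extension of $\Phi$, is not attempted and would be harder to justify in general.
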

\begin{proof}
Fix $p\in[1,\infty]$. Let $R$ be a smoothing operator and let $T_{1,1},\ldots, T_{m,1}$ and $T_{1,2},\ldots, T_{m,2}$, for some $m\in\N$, be normal oscillatory integral operators of order $0$ and type $(\rho,1-\rho,1)$ such that $T=\sum_{j=1}^{m}T_{j,1}T_{j,2}+R$, as in Proposition \ref{prop:FIOnormal}. By Proposition \ref{prop:smoothingbounds}, one has $WRV^{*}\in \La(T^{p}(\Sp))$. Hence it suffices to show that $WT_{j,1}T_{j,2}V^{*}\in \La(T^{p}(\Sp))$ for all $1\leq j\leq m$. And, by using that $W$ is an isometry to write 
\[
WT_{j,1}T_{j,2}V^{*}=(WT_{j,1}W^{*})(WT_{j,2}V^{*}),
\]
it suffices to show that $WT_{j,k}W^{*}\in\La(T^{p}(\Sp))$ for all $j\in\{1,\ldots,m\}$ and $k\in\{1,2\}$. 

Fix $j\in\{1,\ldots,m\}$ and $k\in\{1,2\}$, and let $a_{j,k}$ and $\Phi_{j,k}$ be the symbol and phase function of $T_{j,k}$, respectively. Note that the cone support of $a_{j,k}$ has a compact base, since the Schwartz kernel of $T_{j,k}$ has compact support. Hence 
\[
A_{j,k}:=\{(\nabla_{\theta}\Phi_{j,k}(z,\theta),\theta)\mid (z,\theta)\in\text{base}(a_{j,k})\}\subseteq\dom(\hchi_{j,k})
\]
is compact as well, where $\hchi_{j,k}$ is the contact transformation induced by $\Phi_{j,k}$. By Lemma \ref{lem:philip}, we may now assume that $\hchi_{j,k}$ is bi-Lipschitz. Also, one has $a_{j,k}(z,\theta)=0$ for $(z,\theta)\in\R^{2n}$ with $|\theta|<1$, by Proposition \ref{prop:FIOnormal}. Now set $S_{j,k}:=\ind_{\ran(\hchi_{j,k})}T_{j,k}\ind_{\dom(\hchi_{j,k})}$. It follows from Theorem \ref{thm:offsingFIO} that, for all $N\geq0$, there exists a $C\geq0$ such that 
\[
(W_{\sigma}S_{j,k}V_{\tau}^{*})_{\sigma,\tau>0}\in OS(\hchi_{j,k},N,C).
\]
Similarly, one has 
\[
(W_{\sigma}(T_{j,k}-S_{j,k})V_{\tau}^{*})_{\sigma,\tau>0}\in \mathcal{R},
\]
where we used that $A_{j,k}$ and $\hchi_{j,k}(A_{j,k})$ are compact, and that $\dom(\hchi_{j,k})$ and $\ran(\hchi_{j,k})$ are $\epsilon$-neighborhoods of $A_{j,k}$ and $\hchi_{j,k}(A_{j,k})$, respectively, for some $\veps>0$. Finally, Theorem \ref{thm:tentbounded} and Proposition \ref{prop:residualbounded} conclude the proof.
\end{proof}

\subsection{Proof of Theorem \ref{thm:offsingFIO}}\label{subsec:proof}

The proof is somewhat similar in spirit to that of Proposition \ref{prop:smoothingbounds}, in that we will integrate by parts repeatedly with respect to several differential operators to introduce suitable decay factors. However, for a normal oscillatory integral operator the analysis is much more delicate. In particular, apart from having to split into several cases when $\dom(\hchi)\neq \Sp$ or $\ran(\hchi)\neq \Sp$, one has to take care when integrating by parts using anisotropic differential operators, to ensure that the corresponding anisotropic weights do not blow up after application of these operators.

\subsubsection{Preliminary work}

We assume that $a$ is compactly supported in the first variable, to ensure that the relevant integrals converge absolutely. The bounds which we obtain depend only on $n$, $N$, $\ph$, $\Psi$, the Lipschitz constants of $\hchi$ and $\hchi^{-1}$, finitely many of the $S^{0}_{\frac{1}{2},\frac{1}{2},1}$ seminorms of $a$, and on the following bounds for $\Phi$:
\begin{equation}\label{eq:phibounds}
\sup_{(z,\theta)\in \dom(\Phi)}|\partial_{z}^{\alpha}\partial_{\theta}^{\beta}\Phi(z,\hat{\theta})|<\infty
\end{equation}
for $\alpha,\beta\in\Z_{+}^{n}$ with $|\alpha|+|\beta|\geq 2$, and
\begin{equation}\label{eq:phibounds2}
\inf_{(z,\theta)\in \dom(\Phi)}|\text{det}(\partial^{2}_{z\theta}\Phi(z,\theta))|>0.
\end{equation}
For general $a$ one then multiplies by smooth cut-offs and applies the dominated convergence theorem to a suitable expression for $K_{\sigma,\tau}$.

Fix $x,y\in\Rn$, $\w,\nu\in S^{n-1}$ and $\sigma,\tau\in(0,e]$, where the restriction on $\sigma$ and $\tau$ is allowed since $K_{\sigma,\tau}=0$ otherwise. Let $\rho\in C^{\infty}(\R^{2n})$, $|\rho|\leq 1$, have bounded derivatives and be such that $\rho(z,\theta)=1$ if $\big|\tfrac{\nabla_{z}\Phi(z,\hat{\theta})}{|\nabla_{z}\Phi(z,\hat{\theta})|}-\w\big|\leq \tfrac{1}{2}$ or $|\theta|\leq \tfrac{1}{4}$, and $\rho(z,\theta)= 0$ if $\big|\tfrac{\nabla_{z}\Phi(z,\hat{\theta})}{|\nabla_{z}\Phi(z,\hat{\theta})|}-\w\big|\geq 1$ and $|\theta|\geq\frac{1}{2}$. We may also suppose that $\rho$ is homogeneous of degree zero in the $\theta$-variable for $|\theta|\geq\tfrac{1}{2}$. For $z,\zeta,\theta\in\Rn$, set 
\[
a_{1}(z,\zeta,\theta):=(\sigma\tau)^{\frac{n-1}{4}}\psi_{\w,\sigma}(\zeta)\rho(z,\theta)a(z,\theta)\wt{\psi}_{\nu,\tau}(\theta)
\]
and 
\[
a_{2}(z,\zeta,\theta):=(\sigma\tau)^{\frac{n-1}{4}}\psi_{\w,\sigma}(\zeta)(1-\rho(z,\theta))a(z,\theta)\wt{\psi}_{\nu,\tau}(\theta)
\]
if $\sigma,\tau<1$. For $\sigma\in[1,e]$ one replaces $\psi_{\w,\sigma}$ by $V(S^{n-1})^{-1/2}r$, and for $\tau\in[1,e]$ one replaces $\wt{\psi}_{\nu,\tau}$ by $V(S^{n-1})^{-1/2}\wt{r}$. Note that $\rho$ can be chosen such that its derivatives are bounded independently of $\w$, and these bounds depend on $\Phi$ only through \eqref{eq:phibounds} and \eqref{eq:phibounds2}. Hence it follows from the properties of $\psi_{\w,\sigma}$ and $\wt{\psi}_{\nu,\tau}$ from Lemma \ref{lem:packetbounds} that, for all $j\in\{1,2\}$, $\alpha,\beta,\delta\in\Z_{+}^{n}$ and $\gamma,\veps\in\Z_{+}$, the quantity
\begin{equation}\label{eq:Shalf}
\sup_{z,\zeta\neq0,\theta\neq0} \lb\theta\rb^{-\frac{1}{2}|\alpha|+\frac{1}{2}|\beta|+ \gamma}\lb \zeta\rb^{\frac{1}{2}|\delta|+\veps}\big|\partial^{\alpha}_{z}\partial_{\theta}^{\beta}\lb\hat{\theta},\nabla_{\theta}\rb^{\gamma}\partial_{\zeta}^{\delta}\lb\hat{\zeta},\nabla_{\zeta}\rb^{\veps}a_{j}(z,\zeta,\theta)|
\end{equation}
is bounded in terms of the $S^{0}_{\frac{1}{2},\frac{1}{2},1}$ seminorms of $a$, and \eqref{eq:phibounds} and \eqref{eq:phibounds2}. Let $T_{j}$ be the normal oscillatory integral operator with phase $\Phi$ and symbol $(\sigma\tau)^{-\frac{n-1}{4}}a_{j}$, and let $K^{j}_{\sigma,\tau}$ be the Schwartz kernel of $T_{j}$. It then suffices to obtain the required bounds with $K_{\sigma,\tau}((x,\w),(y,\nu))$ replaced by $K^{j}_{\sigma,\tau}(x,y)$, in terms of \eqref{eq:phibounds} -- \eqref{eq:Shalf}. 

Throughout, we will use the following representation for $j=1,2$:
\begin{equation}\label{eq:kernelrep}
K_{\sigma,\tau}^{j}(x,y)=\frac{(\sigma\tau)^{-\frac{n-1}{4}}}{(2\pi)^{n}}\int_{\R^{3n}}e^{i((x-z)\cdot\zeta+\Phi(z,\theta)-y\cdot\theta)}a_{j}(z,\zeta,\theta)\ud\theta\ud z\ud\zeta.
\end{equation}
We use differential operators adapted to the phase function in this representation:
\[
\Pi(z,\zeta,\theta):=(x-z)\cdot\zeta+\Phi(z,\theta)-y\cdot\theta
\]
for $\zeta\in\Rn$ and $(z,\theta)\in \supp(a)\setminus o$. These operators are as follows:
\begin{align*}
D_1 &:= \big( 1 + \tau^{-1} |x-z|^2 \big)^{-1} \big( 1 -i \tau^{-\frac{1}{2}} (x-z) \cdot \tau^{-\frac{1}{2}} \nabla_\zeta \big) , 
\\
D_2 &:= \big( 1 + \tau^{-1} |\nabla_{\theta}\Phi(z, \theta) - y|^2 \big)^{-1} \big( 1 -i\tau^{-\frac{1}{2}} (\nabla_{\theta}\Phi(z, \theta) - y) \cdot \tau^{-\frac{1}{2}} \nabla_\theta \big) , 
\\
D_3 &:= \big( 1 + \tau^{-1} | P^\perp_{\w} (\nabla_{z}\Phi(z, \hat\theta) - \tfrac{\zeta}{|\theta|} )|^2 \big)^{-1} \big( 1 -i\frac{\tau^{-\frac{3}{2}}}{|\theta|}  P^\perp_{\w}\big(\nabla_{z}\Phi(z, \hat\theta) - \tfrac{\zeta}{|\theta|}\big)  \cdot \tau^{\frac{1}{2}} \nabla_z \big),\\
D_4 &:= \big( 1 + \tau^{-2} \lb \w,x-z\rb^2 \big)^{-1} \big( 1 -i\tau^{-1}\lb \w,x-z\rb   \tau^{-1}\w\cdot \nabla_{\zeta}\big) , \\
D_5 &:= \big( 1 + (\tau|\theta|)^{-2} \Pi(z,\zeta,\theta)^2  \big)^{-1} \big( 1-i(\tau|\theta|)^{-1} \Pi(z,\zeta,\theta) \tau^{-1} ( \hat \theta \cdot \nabla_\theta + |\theta|^{-1} \zeta \cdot \nabla_\zeta) \big),\\
D_6 &:= \big( 1 + \tau |\nabla_{z}\Phi(z, \theta) - \zeta |^2 \big)^{-1} \big( 1 -i\tau^{\frac{1}{2}} (\nabla_{z}\Phi(z, \theta) - \zeta)  \cdot \tau^{\frac{1}{2}} \nabla_z \big).
\end{align*}
Note that $D_{j}(e^{i\Pi})=e^{i\Pi}$ for $j\in\{1,\ldots,6\}$. We can write these differential operators in the following form: 
\begin{align*}
D_1 &= f_{1,1}^2 -i f_{1,1} f_{1,2}\cdot\tau^{-1/2} \nabla_\zeta, \\
D_2 &= f_{2,1}^2 -i f_{2,1} f_{2,2}\cdot\tau^{-1/2} \nabla_\theta, \\
D_3 &= f_{3,1}^2 -i f_{3,1} f_{3,2}\cdot  \tau^{1/2} P_{\w}^{\perp}\nabla_z, \\
D_4 &= f_{4,1}^2 -i f_{4,1} f_{4,2}\, \tau^{-1} \w\cdot\nabla_{\zeta}, \\
D_5 &= f_{5,1}^2 -i f_{5,1} f_{5,2}\,\tau^{-1} (\hat \theta \cdot \nabla_\theta + |\theta|^{-1} \zeta \cdot \nabla_\zeta),\\
D_6 &= f_{6,1}^2 -i f_{6,1} f_{6,2}\cdot\tau^{1/2} \nabla_z,
\end{align*}
where each $f_{j, 1}$ is the square root of the first factor in parentheses in the definition of $D_j$. For example, 
\[
f_{3,1} =f_{3,1}(z,\zeta,\theta)= \big( 1 + \tau^{-1} \big| P^\perp_{\w} \big(\nabla_{z}\Phi(z, \hat\theta) - \tfrac{\zeta}{|\theta|} \big)   \big|^2 \big)^{-1/2}
\]
and
\[
f_{3,2} =f_{3,2}(z,\zeta,\theta)= \frac{\tau^{-3/2}|\theta|^{-1}P^\perp_{\w}\big(\nabla_{z}\Phi(z, \hat\theta) - \tfrac{\zeta}{|\theta|}\big)}{\big( 1 + \tau^{-1} \big| P^\perp_{\w} \big(\nabla_{z}\Phi(z, \hat\theta) - \tfrac{\zeta}{|\theta|} \big)\big|^2 \big)^{1/2}},
\]
and similarly for other $j\in\{1,\ldots,6
\}$.

\subsubsection{Technical lemma}

For the rest of the proof, let $c,c'\in(0,1)$ be such that, if either $\sigma<c\tau$ or $\sigma>\frac{1}{c}\tau$, then
\begin{equation}\label{eq:defc}
|\nabla_{z}\Phi(z,\theta)-\zeta|\geq c'\max(\sigma^{-1},\tau^{-1})
\end{equation}
for all $(z,\zeta,\theta)\in\supp(a_{1})\cup\supp(a_{2})$ with $\theta\neq0$. Such $c$ and $c'$ exist because of the bounds on $\Phi$ and the support properties of $\psi_{\w,\sigma}$ and $\wt{\psi}_{\nu,\tau}$. We will not use the specific properties of $c$ and $c'$ for now, but we note that $c$ and $c'$ can be chosen dependent only on the bounds for $\Phi$ from \eqref{eq:phibounds} and \eqref{eq:phibounds2}. 

We now present a lemma that will allow us to deal with terms that arise by integrating by parts. Set $\w_{1}:=\w$, and let $\w_{2},\ldots, \w_{n}\in S^{n-1}$ be such that $\{\w_{1},\w_{2},\ldots,\w_{n}\}$ is a basis of $\Rn$. The lemma considers the following differential operators, for $2\leq k\leq n$ and $1\leq l\leq n$:
\begin{equation}\label{eq:operators1}
\tau^{-\frac{1}{2}}\w_{k}\cdot \nabla_{\zeta},\quad\tau^{-1}\w\cdot\nabla_{\zeta},\quad\tau^{-\frac{1}{2}}\w_{l}\cdot\nabla_{\theta},\quad\tau^{\frac{1}{2}}\w_{k}\cdot\nabla_{z},\quad \tau^{-1}(\hat{\theta}\cdot\nabla_{\theta}+|\theta|^{-1}\zeta\cdot\nabla_{\zeta}),
\end{equation}
as well as 
\begin{equation}\label{eq:operators2}
\tau^{-\frac{1}{2}}\w_{l}\cdot\nabla_{\zeta},\quad\tau^{-\frac{1}{2}}\w_{l}\cdot\nabla_{\theta},\quad\tau^{\frac{1}{2}}\w_{l}\cdot\nabla_{z}
\end{equation}
and
\begin{equation}\label{eq:operators3}
\sigma^{-\frac{1}{2}}\w_{l}\cdot\nabla_{\zeta},\quad\tau^{-\frac{1}{2}}\w_{l}\cdot\nabla_{\theta},\quad\tau^{\frac{1}{2}}\w_{l}\cdot\nabla_{z}.
\end{equation}
Throughout, we omit the dependence on $(z,\zeta,\theta)$ in the functions $f_{j,1}$ and $f_{j,2}$.

\begin{lemma}\label{lem:fij}
For each $M\in\N$ there exists a constant $C'\geq0$, dependent on $M$, $a$ and $\Phi$, such that the following assertions hold for all $(z,\zeta,\theta)\in\supp(a_{1})\cup\supp(a_{2})$ with $\theta\neq0$. 
\begin{enumerate}[(1)]
\item\label{it:fij1} Suppose that $c\tau\leq \sigma\leq\frac{1}{c}\tau$ and $\sigma,\tau<1$, and let $L_{1},\ldots, L_{M}$ be operators as in \eqref{eq:operators1}.\,Then $|L_{1}\ldots L_{M}(a_{1})|+|L_{1}\ldots L_{M}(a_{2})|\leq C'$ and $|f_{j,2}|+|L_{1}\ldots L_{M}(f_{j,2})|\leq C'$ for $j\in\{1,2,3,4\}$. Let $1\leq m\leq M$ and $f\in\{f_{1,1},f_{2,1},f_{3,1},f_{4,1}\}$. Then $L_{m}(f)=gf$ for some function $g$ such that $|g|+|L_{1}\ldots L_{M}(g)|\leq C'$. Moreover, $|f_{5,2}|\leq C'$ and $(\hat{\theta}\cdot\nabla_{\theta}+|\theta|^{-1}\zeta\cdot\nabla_{\zeta})f_{5,1}=(\hat{\theta}\cdot\nabla_{\theta}+|\theta|^{-1}\zeta\cdot\nabla_{\zeta})f_{5,2}=0$.
\item\label{it:fij2} Suppose that $\sigma\leq\frac{1}{c}\tau$ and $\sigma,\tau<1$, and let $L_{1},\ldots, L_{M}$ be operators as in \eqref{eq:operators2}. Then $|L_{1}\ldots L_{M}(a_{1})|+|L_{1}\ldots L_{M}(a_{2})|\leq C'$ and $|f_{j,2}|+|L_{1}\ldots L_{M}(f_{j,2})|\leq C'$ for $j\in\{1,2,3,6\}$. Let $1\leq m\leq M$ and $f\in \{f_{1,1},f_{2,1},f_{3,1},f_{6,1}\}$. Then $L_{m}(f)=gf$ for some function $g$ such that $|g|+|L_{1}\ldots L_{M}(g)|\leq C'$. 
\item\label{it:fij3} Suppose that $\sigma\leq\frac{1}{c}\tau$ and $\max(\sigma,\tau)\in[1,e]$, and let $L_{1},\ldots, L_{M}$ be operators as in \eqref{eq:operators2}. Then $|L_{1}\ldots L_{M}(a_{1})|+|L_{1}\ldots L_{M}(a_{2})|\leq C'$ and $|L_{1}\ldots L_{M}(f_{j,2})|\leq C'$ for $j\in\{1,2,6\}$. Let $1\leq m\leq M$ and $f\in \{f_{1,1},f_{2,1},f_{6,1}\}$. Then $L_{m}(f)=gf$ for some function $g$ such that $|g|+|L_{1}\ldots L_{M}(g)|\leq C'$.
\item\label{it:fij4} Suppose that $\sigma>\frac{1}{c}\tau$, and let $L_{1},\ldots, L_{M}$ be operators as in \eqref{eq:operators3}. Then $|L_{1}\ldots L_{M}(a_{1})|+|L_{1}\ldots L_{M}(a_{2})|\leq C'$ and $|L_{1}\ldots L_{M}(f_{j,2})|\leq C'$ for $j\in\{1,2,6\}$. Let $1\leq m\leq M$ and $f\in \{f_{1,1},f_{2,1},f_{6,1}\}$. Then $L_{m}(f)=gf$ for some function $g$ such that $|g|+|L_{1}\ldots L_{M}(g)|\leq C'$.
\end{enumerate}
\end{lemma}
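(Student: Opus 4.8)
\textbf{Proof plan for Lemma \ref{lem:fij}.}

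The plan is to verify each of the four statements by the same mechanism: every quantity appearing in the $D_j$ operators is built out of $\psi_{\w,\sigma}$, $\wt\psi_{\nu,\tau}$ (or $r$, $\wt r$), the amplitude $a$ and derivatives of $\Phi$, and the claim is that applying the specified family of first-order operators in \eqref{eq:operators1}--\eqref{eq:operators3} to these quantities neither produces growth in $(z,\zeta,\theta)$ nor destroys the $\lesssim 1$ bounds, because each operator carries exactly the right power of $\sigma$ or $\tau$ to compensate the derivative. First I would record the elementary scaling facts: on $\supp(\psi_{\w,\sigma})$ one has $|\zeta|\eqsim\sigma^{-1}$ and $|\hat\zeta-\w|\lesssim\sqrt\sigma$, on $\supp(\wt\psi_{\nu,\tau})$ one has $|\theta|\eqsim\tau^{-1}$ and $|\hat\theta-\nu|\lesssim\sqrt\tau$, and (from the support of $\rho$) on $\supp(a_1)\cup\supp(a_2)$ the quantity $\big|\tfrac{\nabla_z\Phi(z,\hat\theta)}{|\nabla_z\Phi(z,\hat\theta)|}-\w\big|\lesssim 1$; combined with $|\nabla_z\Phi(z,\hat\theta)|\eqsim 1$ (which follows from \eqref{eq:phibounds}, \eqref{eq:phibounds2} and the homogeneity identity $\nabla_z\Phi(z,\hat\theta)=\partial^2_{z\theta}\Phi(z,\hat\theta)\hat\theta$, as in the proof of Lemma \ref{lem:philip}), this gives $|P^\perp_\w(\nabla_z\Phi(z,\hat\theta)-\tfrac\zeta{|\theta|})|\lesssim \sqrt\sigma+\sqrt\tau+\tfrac{|\nabla_z\Phi(z,\hat\theta)|\,|\theta|\,|\hat\theta-\nu|}{\ldots}$ — more simply, in the regime $\sigma\eqsim\tau$ this combination is $\lesssim\sqrt\tau$, so $f_{3,1}\eqsim 1$ and $|f_{3,2}|\lesssim\tau^{-3/2}|\theta|^{-1}\cdot\sqrt\tau\lesssim 1$ since $|\theta|^{-1}\eqsim\tau$.

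Next I would treat the $f_{j,1}$ factors uniformly. Each $f_{j,1}=(1+g_j)^{-1/2}$ where $g_j$ is a nonnegative smooth function of $(z,\zeta,\theta)$, so $L_m(f_{j,1})=-\tfrac12 f_{j,1}^3 L_m(g_j)=(f_{j,1}^2)\cdot(-\tfrac12 f_{j,1}L_m(g_j))\cdot f_{j,1}^{-1}$; cleaner is to write $L_m(f_{j,1})= g f_{j,1}$ with $g=-\tfrac12 f_{j,1}^2 L_m(g_j)$, so the task reduces to bounding $g$ and its iterated derivatives $L_1\cdots L_M(g)$. Since $f_{j,1}^2\le 1$ and $f_{j,1}^2 g_j\le 1$ (as $g_j\ge 0$), any product of $f_{j,1}^2$ with $L$-derivatives of $g_j$ is controlled as soon as the $L$-derivatives of $g_j$ are $\lesssim$ (a power of) $1+g_j$; and $g_j$ is, in each case, a sum of terms each of which is a product of a $\sigma^{-1}$- or $\tau^{-1}$-weighted linear expression with a bounded derivative of $\Phi$ — e.g. $g_1=\tau^{-1}|x-z|^2$ and the operator $\tau^{-1/2}\w_k\cdot\nabla_\zeta$ annihilates it, $g_3$ involves $\tau^{-1}|P^\perp_\w(\cdots)|^2$ and the operators $\tau^{1/2}\w_k\cdot\nabla_z$, $\tau^{-1/2}\w_k\cdot\nabla_\zeta$ each produce bounded factors by \eqref{eq:phibounds} and the $|\theta|^{-1}\eqsim\tau$ scaling. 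The key observation making the bookkeeping close is that the operators in \eqref{eq:operators1} are precisely the ones orthogonal to the ``radial'' directions that would cost powers of $\sigma^{-1}$ or $\tau^{-1}$: differentiating in $\w_k\cdot\nabla_\zeta$ ($k\ge 2$, so tangential to the sphere) with weight $\tau^{-1/2}\eqsim\sigma^{-1/2}$, or in $\w\cdot\nabla_\zeta$ with weight $\tau^{-1}\eqsim\sigma^{-1}$, exactly matches the anisotropic estimate \eqref{eq:Shalf} for $a_j$ and the analogous behaviour of $\Phi(z,\hat\theta)$; the last operator $\tau^{-1}(\hat\theta\cdot\nabla_\theta+|\theta|^{-1}\zeta\cdot\nabla_\zeta)$ is the Euler-type field which annihilates $f_{5,1},f_{5,2}$ by homogeneity of degree $0$ (after dividing $\Pi$ by $|\theta|$) and acts on $a_j$ with the gain $\lb\theta\rb^{-1}$ from \eqref{eq:Shalf}. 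For $a_1,a_2$ themselves, \eqref{eq:Shalf} directly gives $|L_1\cdots L_M(a_j)|\lesssim 1$ once one checks that each $L_m$ is a combination of $\partial_z,\partial_\theta,\partial_\zeta$ with coefficients of the correct homogeneity (note $\hat\theta\cdot\nabla_\theta=\lb\hat\theta,\nabla_\theta\rb$ and $\zeta\cdot\nabla_\zeta$ on $\supp(\psi_{\w,\sigma})$ has $|\zeta|\eqsim\sigma^{-1}$, compensating $|\theta|^{-1}\eqsim\tau$ against $\tau^{-1}$).

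For parts \eqref{it:fij2}, \eqref{it:fij3}, \eqref{it:fij4} the argument is structurally identical but one drops $f_{3},f_{4},f_{5}$ (the operators $D_3,D_4,D_5$ are not used when $\sigma$ is not comparable to $\tau$) and keeps $f_{1},f_{2},f_{6}$; the only new point is that the weight in front of $\w_l\cdot\nabla_\zeta$ is $\tau^{-1/2}$ in \eqref{eq:operators2} versus $\sigma^{-1/2}$ in \eqref{eq:operators3}, and one must check that in the regime $\sigma\le\tfrac1c\tau$ (part \eqref{it:fij2}) the weight $\tau^{-1/2}\ge\sigma^{-1/2}/\sqrt c$ is still large enough to control $\partial_\zeta\psi_{\w,\sigma}$, which costs $\sigma^{-1/2}$ by Lemma \ref{lem:packetbounds} — so $\tau^{-1/2}\partial_\zeta$ costs $\tau^{-1/2}\sigma^{-1/2}\lesssim\sigma^{-1}$, and this is absorbed because in \eqref{it:fij2} we no longer demand a bound uniform against powers of $\sigma^{-1}$ in the $\zeta$-variable for those factors where it is not needed; conversely in part \eqref{it:fij4} where $\sigma>\tfrac1c\tau$ one uses $\sigma^{-1/2}$ precisely to match $\partial_\zeta\psi_{\w,\sigma}$ exactly. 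For $f_{6,1}=(1+\tau|\nabla_z\Phi(z,\theta)-\zeta|^2)^{-1/2}$ one uses \eqref{eq:defc}: when $\sigma,\tau$ are far apart, $|\nabla_z\Phi(z,\theta)-\zeta|\gtrsim\max(\sigma^{-1},\tau^{-1})$, and this only helps (it keeps $f_{6,1}$ small) while its derivatives via $\tau^{1/2}\w_l\cdot\nabla_z$ and $\tau^{-1/2}\w_l\cdot\nabla_\zeta$ produce $\tau^{1/2}\cdot|\partial^2\Phi|$ and $\tau^{-1/2}\cdot 1$, both bounded, so again $L_m(f_{6,1})=g f_{6,1}$ with $g$ and its $L$-derivatives bounded after multiplying by the harmless small factor $f_{6,1}^2(1+\tau|\nabla_z\Phi-\zeta|^2)\le 1$ when needed.

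\textbf{Main obstacle.} The genuinely delicate point is the simultaneous bookkeeping in \eqref{it:fij1}: one must verify that every one of the five operators in \eqref{eq:operators1}, applied in any order up to $M$ times, to the first factors $f_{j,1}$ ($j=1,2,3,4$), produces only factors $g$ that are themselves stable under the same operators — i.e. the class of ``good multipliers'' $\{g:|g|+|L_1\cdots L_M(g)|\lesssim 1\}$ must be shown closed under the relevant operations and to contain $f_{j,1}^2$, $f_{j,1}^2 g_j$, and all $L$-derivatives of $g_j$. This requires a short but careful induction: expanding $L_{m_1}\cdots L_{m_k}(f_{j,1}^{-1})$ by Faà di Bruno gives sums of products $f_{j,1}^{-(1+2r)}\prod L(\cdots)(g_j)$, and one must show each such product, after being multiplied back by the $f_{j,1}^{2r}$ that appear when this feeds into $L(f_{j,1})=g f_{j,1}$, stays bounded; the nonnegativity $g_j\ge 0$ (so $f_{j,1}^{2}(1+g_j)=1$ and hence $f_{j,1}^{2}g_j\le 1$, $f_{j,1}^{2}g_j^{1/2}\le 1$, etc.) is what makes this go through, exactly as in \eqref{eq:Shalf}-type arguments. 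I expect this to be routine once the ``good multiplier'' class is set up, but it is the step where one could easily lose a power of $\sigma^{-1}$ by applying, say, $\tau^{-1/2}\w\cdot\nabla_\zeta$ (radial, for $l=1$ one uses the weight-$\tau^{-1}$ operator instead, not the weight-$\tau^{-1/2}$ one — note \eqref{eq:operators1} lists $\tau^{-1/2}\w_k\cdot\nabla_\zeta$ only for $k\ge 2$ and separately $\tau^{-1}\w\cdot\nabla_\zeta$) — so the precise matching of each operator to its weight must be tracked with care.
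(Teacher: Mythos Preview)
Your approach is the same as the paper's---direct inductive computation, tracking the scaling of each operator against each factor---and your Fa\`a di Bruno setup $L_m(f_{j,1})=g f_{j,1}$ with $g=-\tfrac12 f_{j,1}^2 L_m(g_j)$ is exactly the right bookkeeping device. However, several concrete points are wrong or missing, and two of them are essential.

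First, you do not identify the two structural annihilations that make part \eqref{it:fij1} work. The restriction $k\geq 2$ on $\tau^{1/2}\w_k\cdot\nabla_z$ is there so that $\w_k\perp\w$ and hence $\w_k\cdot\nabla_z\lb\w,x-z\rb=0$: this operator \emph{annihilates} $f_{4,1}$ and $f_{4,2}$. Without this, applying a generic $\tau^{1/2}\partial_z$ to $g_4=\tau^{-2}\lb\w,x-z\rb^2$ would produce $g\sim \tau^{-1/2}$, which is unbounded. You flag the $k\geq 2$ restriction only for $\nabla_\zeta$, not for $\nabla_z$, and it is the latter that is critical. Similarly, the projection $P_\w^\perp$ in $f_{3,1},f_{3,2}$ is there precisely so that $\tau^{-1}\w\cdot\nabla_\zeta$ annihilates them; you do not note this. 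The paper singles out both of these facts explicitly.

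Second, your treatment of parts \eqref{it:fij2}--\eqref{it:fij4} has errors. You say one ``drops $f_3$'' in part \eqref{it:fij2}, but the statement includes $j=3$ there. Your scaling discussion in part \eqref{it:fij2} is muddled: $\partial_\zeta\psi_{\w,\sigma}$ \emph{gains} $\sigma^{1/2}$ (by \eqref{eq:boundspsi}), so $\tau^{-1/2}\partial_\zeta$ applied to $a_j$ yields a factor $(\sigma/\tau)^{1/2}\lesssim 1$ since $\sigma\leq\tfrac1c\tau$---there is no unbounded factor to ``absorb''. For part \eqref{it:fij3} you omit the key input: when $\max(\sigma,\tau)\in[1,e]$ one has $|\theta|\lesssim 1$ on the support, and $\theta$-derivatives of $f_{2,1},f_{2,2},f_{6,1},f_{6,2}$ involve $\partial^2_{\theta\theta}\Phi$ and $\partial^2_{z\theta}\Phi$; these are controlled only because either $\Phi$ is linear in $\theta$ or $a$ vanishes for small $|\theta|$. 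Finally, your claim $f_{3,1}\eqsim 1$ is false on $\supp(a_2)$ (where $\tfrac{\nabla_z\Phi}{|\nabla_z\Phi|}$ is far from $\w$); fortunately $|f_{3,2}|\lesssim 1$ holds anyway, since it has the form $\sqrt{g_3}\,f_{3,1}\leq 1$.
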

\begin{proof} 
Using induction, the proof is a straightforward but cumbersome computation. We just draw attention to a few key points.

Throughout, one uses the properties of $\psi_{\w,\sigma}$ and $\wt{\psi}_{\nu,\tau}$ from Lemma \ref{lem:packetbounds}. Combined with the assumption that $\rho$ is homogeneous of degree zero in the $\theta$ variable for $|\theta|\geq \frac{1}{2}$, these properties imply in particular that $a_{1}$ and $a_{2}$ are bounded independently of $\sigma$ and $\tau$ under applications of $\tau^{-1}\w\cdot\nabla_{\zeta}$ and $\tau^{-1}(\hat{\theta}\cdot\nabla_{\theta}+|\theta|^{-1}\zeta\cdot\nabla_{\zeta})$ if $\sigma,\tau<1$. For the corresponding statements in \eqref{it:fij2} and \eqref{it:fij3} one uses that $\tau^{-1}\lesssim \sigma^{-1}$. For $\sigma\geq 1$ or $\tau\geq 1$ one also uses that $\rho(z,\theta)=1$ for $|\theta|$ small, so that $\theta$-derivatives of $\rho$ vanish near zero.

It is useful to note that $f_{j,1}$, for $1\leq j\leq 4$, is a negative power of a function that is homogeneous of order zero in the joint variable $(\zeta,\theta)$, and partial derivatives of such a function are homogeneous of the appropriate negative order. Moreover, $(\hat \theta \cdot \nabla_\theta + |\theta|^{-1} \zeta \cdot \nabla_\zeta)f_{j,1}=0$ for $1\leq j\leq 5$.

When dealing with $f_{j,1}$ and $f_{j,2}$ for $j\in\{2,3,6\}$, one uses the bounds for $\Phi$ from \eqref{eq:phibounds}. Note in particular that one only uses bounds for partial derivatives of $\Phi$ of order at least two. 

The restriction that $k\neq 1$ in $\sqrt{\tau}\w_{k}\cdot\nabla_{z}$ is relevant only for $f_{4,1}$ and $f_{4,2}$, and ensures that 
\[
\sqrt{\tau}\w_{k}\cdot\nabla_{z}f_{4,1}=\sqrt{\tau}\w_{k}\cdot\nabla_{z}f_{4,2}=0.
\]
The projection onto the hyperplane orthogonal to $\w$ in $f_{3,1}$ and $f_{3,2}$ ensures that 
\[
\tau^{-1}\w\cdot\nabla_{\zeta}f_{3,1}=\tau^{-1}\w\cdot\nabla_{\zeta}f_{3,2}=0.
\]

For \eqref{it:fij3} one has $\tau\in[c,e]$, so that the factors of $\tau$ in $f_{j,1}$ and $f_{j,2}$ are irrelevant. Moreover, here $\theta$-derivatives of $f_{2,1}$, $f_{2,2}$, $f_{6,1}$ and $f_{6,2}$ are bounded by the assumption that either $\partial^{2}_{\theta\theta}\Phi=0$ or that $a(z,\theta)=0$ for $|\theta|$ small.
\end{proof}

We are now ready to prove the required estimates, which we split into several cases.

\subsubsection{Case 1} Here we we assume that $c\tau\leq \sigma\leq\frac{1}{c}\tau$, $\sigma,\tau<1$ and $(y, \nu)\in\dom(\hchi)$, and we consider $K^{1}_{\sigma,\tau}(x,y)$. 

We apply (in any order) $D_1$ $2N+2n$ times, $D_2$ and $D_3$ each $2N$ times, and $D_4$ $2N+4n$ times to $e^{i\Pi}$, and then we integrate by parts in the representation for $K^{1}_{\sigma,\tau}(x,y)$ from \eqref{eq:kernelrep}. Each time we apply $D_j$, we gain a decaying factor $f_{j,1}$, and this factor persists under repeated integration by parts, due to Lemma \ref{lem:fij}. \emph{After} we have applied these operators, we then apply $D_5$ sufficiently many times to $e^{i\Pi}$ and integrate by parts. Since $D_{5}f_{5,j}=f_{5,1}^{2}f_{5,j}$ for $j\in\{1,2\}$, and because we do not apply the other operators to $f_{5,1}$ or $f_{5,2}$, this allows us to also incorporate the decaying factors $f_{5,1}$. More precisely, using part \eqref{it:fij1} of Lemma \ref{lem:fij}, we can express $K^{1}_{\sigma,\tau}(x,y)$ as a sum of terms of the form
\begin{equation}\label{eq:resultingkernel}
(\sigma\tau)^{-\frac{n-1}{4}}\int_{\R^{3n}}e^{i\Pi(z,\zeta,\theta)}(f_{1,1} f_{2,1} f_{3,1} f_{4,1} f_{5,1} \big)^{2N}f_{1,1}^{2n}f_{4,1}^{4n} \wt{a_{1}}(z,\zeta,\theta)\ud\theta\ud z\ud\zeta,
\end{equation}
where $\wt{a_{1}}$ is a bounded function such that $\supp(\wt{a}_{1})\subseteq\supp(a_{1})$, with a uniform bound that depends only on $n$, $N$, $a$ and $\Phi$. 

We now claim that 
\begin{equation}\label{eq:algebraic1}
1 + \tau^{-1} d((x,\w),\hchi(y,\nu))^{2} \lesssim\big( f_{1,1} f_{2,1} f_{3,1} f_{4,1} f_{5,1}\big)^{-2}(z,\zeta,\theta)
\end{equation}
for all $(z,\zeta,\theta)\in\supp(a_{1})$. Taking the claim for granted momentarily, we can conclude the proof. Recall that $\zeta\in\supp(\psi_{\w,\sigma})$ and $\theta\in\supp(\wt{\psi}_{\nu,\tau})$ for $(z,\zeta,\theta)\in\supp(a_{1})$, and that $V(\supp(\psi_{\w,\sigma}))\lesssim \sigma^{-\frac{n+1}{2}}\eqsim \tau^{-\frac{n+1}{2}}$ and $V(\supp(\wt{\psi}_{\nu,\tau}))\lesssim \tau^{-\frac{n+1}{2}}$. Hence, using \eqref{eq:algebraic1} and a substitution, we can bound each term as in \eqref{eq:resultingkernel} in absolute value by a multiple of 
\begin{align*}
&(\sigma\tau)^{-\frac{n-1}{4}}(1+\tau^{-1}d((x,\w),\hchi(y,\nu))^{2})^{-N}\int_{\supp(a_{1})}f_{1,1}^{2n}f_{4,1}^{4n}\ud\theta\ud\zeta\ud z\\
&\lesssim \tau^{-\frac{3n+1}{2}}(1+\tau^{-1}d((x,\w),\hchi(y,\nu))^{2})^{-N}\int_{\Rn}(1+\tau^{-1}(|\lb\w,x-z\rb|+|x-z|^{2}))^{-2n}\ud z\\
&\lesssim \tau^{-n}(1+\tau^{-1}d((x,\w),\hchi(y,\nu))^{2})^{-N},
\end{align*}
which suffices since $\Upsilon(\tfrac{\sigma}{\tau})^{-1}\lesssim 1$ by assumption.

It remains to prove \eqref{eq:algebraic1}. By assumption, $\hchi^{-1}$ is Lipschitz. Hence
\begin{align*}
&\tau^{-1}d((x,\w),\chi(y,\nu))^{2}\lesssim \tau^{-1}\big(d\big((x,\w),\big(z,\tfrac{\nabla_{z}\Phi(z,\hat{\theta})}{|\nabla_{z}\Phi(z,\hat{\theta})|}\big)\big)^{2}\!+d\big(\big(z,\tfrac{\nabla_{z}\Phi(z,\hat{\theta})}{|\nabla_{z}\Phi(z,\hat{\theta})|}\big),\hchi(y,\nu)\big)\big)^{2}\\
&\lesssim \tau^{-1}d\big((x,\w),\big(z,\tfrac{\nabla_{z}\Phi(z,\hat{\theta})}{|\nabla_{z}\Phi(z,\hat{\theta})|}\big)\big)^{2}+\tau^{-1}d((\nabla_{\theta}\Phi(z,\hat{\theta}),\hat{\theta}),(y,\nu))^{2}.
\end{align*}
Since $f_{j,1}\leq1$ for all $1\leq j\leq 5$, we need only bound both terms on the second line by a multiple of $\big( f_{1,1} f_{2,1} f_{3,1} f_{4,1} f_{5,1} \big)^{-2}$. 

Using the equivalent expression for $d$ from \eqref{eq:qm-equiv}, one has
\[
\tau^{-1}d\big((x,\w),\big(z,\tfrac{\nabla_{z}\Phi(z,\hat{\theta})}{|\nabla_{z}\Phi(z,\hat{\theta})|}\big)\big)^{2}\lesssim \tau^{-1}\big(|x-z|^2   + |\langle \omega, x-z \rangle| + \big|\tfrac{\nabla_z \Phi(z, \hat\theta)}{|\nabla_z \Phi(z, \hat\theta)|}-\w\big|^2\big).
\]
The first and the second term on the right-hand side are bounded by $f_{1,1}^{-2}$ and $f_{4,1}^{-2}$, respectively. For the third term, note that
\[
\big|\tfrac{\nabla_{z}\Phi(z,\hat{\theta})}{|\nabla_{z}\Phi(z,\hat{\theta})|}-\w\big|\lesssim \big|P_{\w}^{\perp}\big(\tfrac{\nabla_{z}\Phi(z,\hat{\theta})}{|\nabla_{z}\Phi(z,\hat{\theta})|}-\w\big)\big|=\frac{|P_{\w}^{\perp}(\nabla_{z}\Phi(z,\hat{\theta}))|}{|\nabla_{z}\Phi(z,\hat{\theta})|},
\]
because $\big|\tfrac{\nabla_{z}\Phi(z,\hat{\theta})}{|\nabla_{z}\Phi(z,\hat{\theta})|}-\w\big|\leq 1$ on $\supp(a_{1})$. Moreover, the homogeneity and boundedness assumptions on $\Phi$ show that $|\nabla_{z}\Phi(z,\hat{\theta})|=|\partial_{z\theta}^{2}\Phi(z,\hat{\theta})\cdot \hat{\theta}|\eqsim 1$. Since $\zeta\in\supp(\psi_{\w,\sigma})$ and $\theta\in\supp(\wt{\psi}_{\nu,\tau})$, one obtains
\begin{align*}
&\tau^{-1}\big|\tfrac{\nabla_{z}\Phi(z,\hat{\theta})}{|\nabla_{z}\Phi(z,\hat{\theta})|}-\w\big|^{2}\lesssim \tau^{-1}|P_{\w}^{\perp}(\nabla_{z}\Phi(z,\hat{\theta}))|^{2}\\
&\lesssim \tau^{-1}\big(\big|P_{\w}^{\perp}(\nabla_{z}\Phi(z,\hat{\theta}))-\tfrac{\zeta}{|\theta|}\big|^{2}+\big(\tfrac{|\zeta|}{|\theta|}\big)^{2}|P_{\w}^{\perp}(\hat{\zeta})|^{2}\big)\\
&\lesssim 1+\tau^{-1}\big|P_{\w}^{\perp}(\nabla_{z}\Phi(z,\hat{\theta}))-\tfrac{\zeta}{|\theta|}\big|^{2}=f_{3,1}^{-2}
\end{align*}
from the support properties of $\psi_{\w,\sigma}$ and $\wt{\psi}_{\nu,\tau}$. 

Next, we again use the equivalent expression for $d$:
\[
\tau^{-1}d((\nabla_{\theta}\Phi(z,\hat{\theta}),\hat{\theta}),(y,\nu))^{2}\lesssim  \tau^{-1}\big(| \nabla_\theta \Phi(z, \hat\theta)-y|^2 + |\lb\nu,\nabla_\theta \Phi(z, \hat\theta)-y\rb | + |\nu - \hat \theta|^2\big).
\]
The first term on the right-hand side is bounded by $f_{2, 1}^{-2}$, and the third term by a multiple of $1$ due to the support properties of $\wt{\psi}_{\tau, \nu}$. For the second term, write
\[
\lb \nu,\nabla_\theta \Phi(z, \hat\theta)-y\rb =  \lb\nu - \hat \theta,\nabla_\theta \Phi(z, \hat\theta)-y\rb + \frac{\Pi(z, \zeta, \theta)}{|\theta|} 
- (x-z) \cdot \frac{\zeta}{|\theta|}.
\]
Then $\tau^{-1}|\lb \nu,\nabla_\theta \Phi(z, \hat\theta)-y\rb|$ is bounded from above by 
\begin{align*}
&\tau^{-1}\Big(|\nu - \hat \theta|\,|\nabla_\theta \Phi(z, \hat\theta)-y| +\frac{|\Pi(z, \zeta, \theta)|}{|\theta|}+\frac{|\zeta|}{|\theta|}|\lb\w,x-z\rb| + \frac{|\zeta|}{|\theta|}|\hat{\zeta} - \omega|\,|x-z|\Big)\\
&\lesssim \tau^{-1/2}|\nabla_\theta \Phi(z, \hat\theta)-y|+(\tau|\theta|)^{-1}|\Pi(z,\zeta,\theta)|+\tau^{-1}|\lb\w,x-z\rb|+\tau^{-1/2}|x-z|\\
&\lesssim (f_{2,1}f_{5,1}f_{4,1}f_{1,1})^{-1}, 
\end{align*}
where we again used the support properties of $\psi_{\w,\sigma}$ and $\wt{\psi}_{\nu,\tau}$. This proves \eqref{eq:algebraic1} and concludes the proof of case 1.

\subsubsection{Case 2}

Here we we assume that $c\tau\leq \sigma\leq\frac{1}{c}\tau$, $\sigma,\tau<1$ and $(y,\nu)\in\dom(\hchi)$, and we consider $K^{2}_{\sigma,\tau}(x,y)$. 

We proceed in an analogous manner as before, except that we need to use different operators to obtain the required decay factors. Indeed, $f_{3,1}$ does not yield the necessary decay, given that $\nabla_{z}\Phi(z,\hat{\theta})$ may be a negative multiple of $\w$ on $\supp(a_{2})$. Note that, since $(y,\nu)\in \dom(\hchi)$, there exists a $\wt{y}\in\Rn$ such that $(\wt{y},\nu)\in\dom(\Phi)$ and $(y,\nu)=(\nabla_{\theta}\Phi(\wt{y},\nu),\nu)$. 

We first suppose additionally that $\big|\frac{\nabla_{z}\Phi(\wt{y},\nu)}{|\nabla_{z}\Phi(\wt{y},\nu)|}-\w\big|\geq \frac{1}{4}$. Then one has
\[
d((x,\w),\hchi(y,\nu))=d\big((x,\w),\big(\wt{y},\tfrac{\nabla_{z}\Phi(\wt{y},\nu)}{|\nabla_{z}\Phi(\wt{y},\nu)|}\big)\big)\gtrsim 1.
\]
Since $\hchi^{-1}$ is Lipschitz, one now obtains 
\begin{align*}
&1\lesssim d((x,\w),\hchi(y,\nu))^{2}\lesssim d\big((x,\w),\big(z,\tfrac{\nabla_{z}\Phi(z,\hat{\theta})}{|\nabla_{z}\Phi(z,\hat{\theta})|}\big)\big)^{2}+d\big(\big(z,\tfrac{\nabla_{z}\Phi(z,\hat{\theta})}{|\nabla_{z}\Phi(z,\hat{\theta})|}\big),\hchi(y,\nu)\big)^{2}\\
&\lesssim d\big((x,\w),\big(z,\tfrac{\nabla_{z}\Phi(z,\hat{\theta})}{|\nabla_{z}\Phi(z,\hat{\theta})|}\big)\big)^{2}+d(\nabla_{\theta}\Phi(z,\hat{\theta}),\hat{\theta}),(y,\nu)\big)^{2}
\end{align*}
for all $(z,\zeta,\theta)\in\supp(a_{2})$. In particular, the quantity on the second line is uniformly bounded from below by some $\delta>0$, and therefore one may ignore the anisotropic terms in the distance:
\begin{equation}\label{eq:dbound}
\begin{aligned}
&d((x,\w),\hchi(y,\nu))^{2}\lesssim d\big((x,\w),\big(z,\tfrac{\nabla_{z}\Phi(z,\hat{\theta})}{|\nabla_{z}\Phi(z,\hat{\theta})|}\big)\big)^{2}+d(\nabla_{\theta}\Phi(z,\hat{\theta}),\hat{\theta}),(y,\nu)\big)^{2}\\
&\lesssim |x-z|^{2}+\big|\w-\tfrac{\nabla_{z}\Phi(z,\hat{\theta})}{|\nabla_{z}\Phi(z,\hat{\theta})|}\big|^{2}+|\nabla_{\theta}\Phi(z,\hat{\theta})-y|^{2}+|\hat{\theta}-\nu|^{2},
\end{aligned}
\end{equation}
a general fact that is easily checked by splitting into the cases where the anisotropic terms are either bigger or smaller than a fraction of $\delta$. Next, one has $1+\tau^{-1}|\hat{\theta}-\nu|^{2}\lesssim 1$ on $\supp(a_{2})$, and $\tau^{-1}|x-z|^{2}+\tau^{-1}|\nabla_{\theta}\Phi(z,\hat{\theta})-y|^{2}\leq (f_{1,1}f_{2,1})^{-2}$. Also, since $\big|\frac{\nabla_{z}\Phi(z,\hat{\theta})}{|\nabla_{z}\Phi(z,\hat{\theta})|}-\w\big|\geq \frac{1}{2}$ on $\supp(a_{2})$, one has $\big|\frac{\nabla_{z}\Phi(z,\hat{\theta})}{|\nabla_{z}\Phi(z,\hat{\theta})|}-\alpha\w\big|\geq \frac{1}{4}$ for any $\alpha\geq0$. In particular, using the bounds on $\Phi$ one obtains
\begin{align*}
&\tau^{-1} \big|\tfrac{\nabla_{z}\Phi(z,\hat{\theta})}{|\nabla_{z}\Phi(z,\hat{\theta})|}-\w\big|^{2} \lesssim \tau^{-1} \big|\tfrac{\nabla_{z}\Phi(z,\hat{\theta})}{|\nabla_{z}\Phi(z,\hat{\theta})|}-\tfrac{|\zeta|}{|\theta||\nabla_{z}\Phi(z,\hat{\theta})|} \w\big|^2\\
&\lesssim \tau | \zeta - \nabla_{z}\Phi(z,\theta)|^2 + \tau |\zeta - |\zeta| \omega |^2\lesssim 1+\tau|\zeta - \nabla_{z}\Phi(z,\theta)|^2= f_{6,1}^{-2}
\end{align*}
on $\supp(a_{2})$. By combining all this with \eqref{eq:dbound}, one has $1+\tau^{-1}d((x,\w),\hchi(y,\nu))^{2}\leq (f_{1,1}f_{2,1}f_{6,1})^{-2}$. Now, as in case 1, one integrates by parts with respect to $D_{1}$, $D_{2}$ and $D_{6}$ in the representation \eqref{eq:kernelrep} for $K^{2}_{\sigma,\tau}(x,y)$. Lemma \ref{lem:fij} deals with the additional terms that arise in this manner, and one obtains the required decay in $\tau$ by using that $d((x,\w),\chi(y,\nu))\gtrsim 1$. This ensures in particular that do not have to apply $D_{4}$ for the anisotropic substitution procedure that was used in case 1.

To conclude the proof of case 2, we need to deal with the case where $\big|\frac{\nabla_{z}\Phi(\wt{y},\nu)}{|\nabla_{z}\Phi(\wt{y},\nu)|}-\w\big|<\frac{1}{4}$. This is done in an analogous manner, using now that $\big|\frac{\nabla_{z}\Phi(\wt{y},\nu)}{|\nabla_{z}\Phi(\wt{y},\nu)|}-\frac{\nabla_{z}\Phi(z,\hat{\theta})}{|\nabla_{z}\Phi(z,\hat{\theta})|}\big|>\frac{1}{4}$ on $\supp(a_{2})$ to ignore the anisotropic terms and to see that $\tau\gtrsim (f_{1,1}f_{2,1}f_{6,1})^{2}$.

\subsubsection{Case 3}

Here we assume that $c\tau\leq \sigma\leq\frac{1}{c}\tau$ and $\sigma,\tau<1$, that either $(y, \nu)\notin\dom(\hchi)$ or $(x,\w)\notin\ran(\hchi)$, and we consider $K^{1}_{\sigma,\tau}(x,y)$ and $K^{2}_{\sigma,\tau}(x,y)$.

First note that, if $d((y,\nu),A)\geq\veps$ for some $\veps>0$ independent of $(y,\nu)$, then as in case 2 one may ignore the anisotropic terms in the distance to obtain
\begin{align*}
&\tau^{-1}\lesssim 1+\tau^{-1}d((y,\nu),A)^{2}\leq 1+\tau^{-1}d((y,\nu),(\nabla_{\theta}\Phi(z,\hat{\theta}),\hat{\theta}))^{2}\\
&\eqsim 1+\tau^{-1}(|\nabla_{\theta}\Phi(z,\theta)-y|^{2}+|\hat{\theta}-\nu|^{2})\lesssim 1+\tau^{-1}|\nabla_{\theta}\Phi(z,\theta)-y|^{2}=f_{2,1}^{-2}
\end{align*}
for all $(z,\zeta,\theta)\in\supp(a_{1})\cup\supp(a_{2})$. On the other hand, if $d((y,\nu),A)<\veps$ then clearly $1+\tau^{-1}d((y,\nu),A)^{2}\lesssim \tau^{-1}$. 

Similarly, if $d((x,\w),\chi(A))\geq\veps$ then
\begin{align*}
&\tau^{-1}\lesssim 1+\tau^{-1}d((x,\w),\hchi(A))^{2}\leq 1+\tau^{-1}d\big((x,\w),\big(z,\tfrac{\nabla_{z}\Phi(z,\hat{\theta})}{|\nabla_{z}\Phi(z,\hat{\theta})|}\big)\big)^{2}\\
&\eqsim 1+\tau^{-1}\big(|x-z|^{2}+\big|\w-\tfrac{\nabla_{z}\Phi(z,\hat{\theta})}{|\nabla_{z}\Phi(z,\hat{\theta})|}\big|^{2}\big)\leq f_{1,1}^{-2}\big(1+\tau^{-1}\big|\w-\tfrac{\nabla_{z}\Phi(z,\hat{\theta})}{|\nabla_{z}\Phi(z,\hat{\theta})|}\big|^{2}\big).
\end{align*}
Now, as in case 1, the term in brackets can be bounded by a multiple of $f_{3,1}^{-2}$ on $\supp(a_{1})$. And as in case 2, it can be bounded by a multiple of $f_{6,1}^{-2}$ on $\supp(a_{2})$. In both cases one finds
\[
\tau^{-1}\lesssim 1+\tau^{-1}d((x,\w),\chi(A))^{2}\lesssim (f_{1,1}f_{3,1}f_{6,1})^{-2}.
\]
Again, if $d((x,\w),\hchi(A))<\veps$ then $1+\tau^{-1}d((x,\w),\hchi(A))^{2}\lesssim \tau^{-1}$.

Next, we recall that $\dom(\hchi)$ is an $\epsilon$-neighborhood of $A$ for some $\veps>0$, by \eqref{it:phase4} in Definition \ref{def:operator}. Since $\hchi$ is bi-Lipschitz, we may also assume that $\ran(\hchi)$ is an $\veps$-neighborhood of $\hchi(A)$. 

Now suppose that $(y,\nu)\notin\dom(\chi)$ and $d((x,\w),\chi(A))<\veps$. Then $d((y,\nu),A)\geq\veps$, so as above we obtain
\[
(1+\tau^{-1}d((x,\w),\hchi(A))^{2})(1+\tau^{-1}d((y,\nu),A)^{2})\lesssim \tau^{-1}f_{2,1}^{-2}\lesssim f_{2,1}^{-4}
\] 
on $\supp(a_{1})\cup\supp(a_{2})$. Hence one can directly integrate by parts with respect to $D_{1}$ and $D_{2}$ in the representation \eqref{eq:kernelrep} for $K^{1}_{\sigma,\tau}(x,y)$ and $K^{2}_{\sigma,\tau}(x,y)$, by Lemma \ref{lem:fij}. This allows one to write, for $j=1,2$ and $m\in\N$ large enough,
\begin{align*}
&|K^{j}_{\sigma,\tau}(x,y)|\lesssim (\sigma\tau)^{-\frac{n-1}{4}}\int_{\supp(a_{j})}f_{1,1}^{m}f_{2,1}^{4N}\ud\theta\ud z\ud\zeta\\
&\lesssim(1+\tau^{-1}d((x,\w),\hchi(A))^{2})(1+\tau^{-1}d((y,\nu),A)^{2})(\sigma\tau)^{-\frac{n-1}{4}}\int_{\supp(a_{j})}f_{1,1}^{m}\ud\theta\ud z\ud\zeta\\
&\lesssim (1+\tau^{-1}d((x,\w),\hchi(A))^{2})(1+\tau^{-1}d((y,\nu),A)^{2}),
\end{align*}
as required.

Next, if $(y,\nu)\notin\dom(\hchi)$ and $d((x,\w),\hchi(A))\geq\veps$, then
\[
(1+\tau^{-1}d((x,\w),\hchi(A))^{2})(1+\tau^{-1}d((y,\nu),A)^{2})\lesssim (f_{1,1}f_{2,1}f_{3,1}f_{6,1})^{-2}
\] 
on $\supp(a_{1})\cup\supp(a_{2})$. Again, integrating by parts with respect to $D_{1}$, $D_{2}$, $D_{3}$ and $D_{6}$ in \eqref{eq:kernelrep} and using Lemma \ref{lem:fij} then concludes the proof of case 4 for $(y,\nu)\notin\dom(\hchi)$.

Next, suppose that $(x,\w)\notin\ran(\hchi)$. If $d((y,\nu),A)<\veps$, then
\begin{align*}
(1+\tau^{-1}d((x,\w),\hchi(A))^{2})(1+\tau^{-1}d((y,\nu),A)^{2})&\lesssim \tau^{-1}(f_{1,1}f_{3,1}f_{6,1})^{-2}\\
&\lesssim (f_{1,1}f_{3,1}f_{6,1})^{-4}
\end{align*}
on $\supp(a_{1})\cup\supp(a_{2})$. Hence integrating by parts with respect to $D_{1}$, $D_{3}$ and $D_{6}$ deals with this case. For $d((y,\nu),A)\geq\veps$ one may again integrate by parts with respect to $D_{1}$, $D_{2}$, $D_{3}$ and $D_{6}$, since 
\[
(1+\tau^{-1}d((x,\w),\hchi(A))^{2})(1+\tau^{-1}d((y,\nu),A)^{2})\lesssim (f_{1,1}f_{2,1}f_{3,1}f_{6,1})^{-2}
\]
on $\supp(a_{1})\cup\supp(a_{2})$. This concludes the proof of case 3.

\subsubsection{Case 4}

Here we we assume that $c\tau\leq \sigma\leq\frac{1}{c}\tau$ and $\max(\sigma,\tau)\in[1,e]$, and we consider both $K^{1}_{\sigma,\tau}(x,y)$ and $K^{2}_{\sigma,\tau}(x,y)$. 

First suppose in addition that $(y,\nu)\in\dom(\hchi)$. Since $\tau\in[c,e]$ and $S^{n-1}$ is compact, one can use that $\hchi^{-1}$ is Lipschitz to see that
\begin{align*}
\tau^{-1}d((x,\w),\hchi(y,\nu))^{2}&\lesssim d\big((x,\w),\big(z,\tfrac{\nabla_{z}\Phi(z,\hat{\theta})}{|\nabla_{z}\Phi(z,\hat{\theta})|}\big)\big)^{2}+d\big(\big(z,\tfrac{\nabla_{z}\Phi(z,\hat{\theta})}{|\nabla_{z}\Phi(z,\hat{\theta})|}\big),\hchi(y,\nu)\big)^{2}\\
&\lesssim d\big((x,\w),\big(z,\tfrac{\nabla_{z}\Phi(z,\hat{\theta})}{|\nabla_{z}\Phi(z,\hat{\theta})|}\big)\big)^{2}+d((\nabla_{\theta}\Phi(z,\hat{\theta}),\hat{\theta}),(y,\nu)\big)^{2}\\
&\lesssim 1+|x-z|^{2}+|\nabla_{\theta}\Phi(z,\hat{\theta})-y|^{2}\lesssim (f_{1,1}f_{2,1})^{-2}
\end{align*}
for $(z,\zeta,\theta)\in \supp(a_{1})\cup\supp(a_{2})$ with $\theta\neq0$. By combining this with part \eqref{it:fij3} of Lemma \ref{lem:fij}, we can integrate by parts with respect to $D_{1}$ and $D_{2}$ in \eqref{eq:kernelrep} for both $K^{1}_{\sigma,\tau}(x,y)$ and $K^{2}_{\sigma,\tau}(x,y)$. As in the other cases, additional factors of $f_{1,1}$ lead to an absolutely convergent integral, and here factors of $\tau$ can be ignored. 

Next, suppose that $(y,\nu)\notin \dom(\hchi)$. Again, one has
\[
1+\tau^{-1}d((x,\w),\hchi(A))^{2}\lesssim 1+d\big((x,\w),\big(z,\tfrac{\nabla_{z}\Phi(z,\hat{\theta})}{|\nabla_{z}\Phi(z,\hat{\theta})|}\big)\big)^{2}\lesssim 1+|x-z|^{2}\lesssim f_{1,1}^{-2}
\]
and
\[
1+\tau^{-1}d((y,\nu),A)^{2}\lesssim 1+d((y,\nu),(\nabla_{\theta}\Phi(z,\hat{\theta}),\hat{\theta}))^{2}\lesssim 1+|\nabla_{\theta}\Phi(z,\hat{\theta})-y|^{2}\lesssim f_{2,1}^{-2},
\]
so one can integrate by parts with respect to $D_{1}$ and $D_{2}$ as before.

\subsubsection{Case 5} 

Here we we assume that $\sigma\notin [c\tau,\frac{1}{c}\tau]$ and $(y,\nu)\in\dom(\chi)$, and we consider both $K^{1}_{\sigma,\tau}(x,y)$ and $K^{2}_{\sigma,\tau}(x,y)$. 

Recall that, by the choice of $c$ in \eqref{eq:defc}, one has
\begin{equation}\label{eq:sigmasmall}
|\nabla_{z}\Phi(z,\theta)-\zeta|\geq c'\max(\sigma^{-1},\tau^{-1})
\end{equation}
for all $(z,\zeta,\theta)\in\supp(a_{1})\cup\supp(a_{2})$ with $\theta\neq 0$. 

First suppose that $\sigma<c\tau$. Then, by \eqref{eq:sigmasmall}, one has
\[
f_{6,1}^{2}\lesssim \frac{1}{1+\tau\sigma^{-2}}\leq \sigma\frac{\sigma}{\tau}
\]
for all $(z,\zeta,\theta)\in\supp(a_{1})\cup\supp(a_{2})$. Since $\hchi^{-1}$ is Lipschitz, this yields
\begin{align*}
&\frac{\tau}{\sigma}(1+\sigma^{-1}d((x,\w),\hchi(y,\nu))^{2})\\
&\lesssim \frac{\tau}{\sigma}\big(1+\sigma^{-1}\big(d\big((x,\w),\big(z,\tfrac{\nabla_{z}\Phi(z,\hat{\theta})}{|\nabla_{z}\Phi(z,\hat{\theta})|}\big)\big)^{2}+d\big(\big(z,\tfrac{\nabla_{z}\Phi(z,\hat{\theta})}{|\nabla_{z}\Phi(z,\hat{\theta})|}\big),\hchi(y,\nu)\big)^{2}\big)\big)\\
&\lesssim \frac{\tau}{\sigma}\big(1+\sigma^{-1}\big(d\big((x,\w),\big(z,\tfrac{\nabla_{z}\Phi(z,\hat{\theta})}{|\nabla_{z}\Phi(z,\hat{\theta})|}\big)\big)^{2}+d((\nabla_{\theta}\Phi(z,\hat{\theta}),\hat{\theta}),(y,\nu))^{2}\big)\big)\\
&\lesssim \frac{\tau}{\sigma}(1+\sigma^{-1}(1+|x-z|^{2}+|\nabla_{\theta}\Phi(z,\hat{\theta})-y|^{2}))\lesssim (f_{1,1}f_{2,1}f_{6,1})^{-2}.
\end{align*}
It follows that we can integrate by parts sufficiently many times in \eqref{eq:kernelrep} with respect to $D_{1}$, $D_{2}$ and $D_{6}$, using parts \eqref{it:fij2} and \eqref{it:fij3} of Lemma \ref{lem:fij}, to obtain the required conclusion.

Next, suppose that $\sigma>\frac{1}{c}\tau$. The argument here is analogous. By \eqref{eq:sigmasmall}, one has $f_{6,1}^{2}\lesssim \tau^{3}$. Moreover, by Lemma \ref{lem:fij} \eqref{it:fij4}, each application of $D_{1}$ to $a_{1}$ or $a_{2}$ yields a factor which is bounded by $(\frac{\sigma}{\tau})^{1/2}$. By applying $D_{6}$ sufficiently many times, one can use $f_{6,1}$ to cancel out these factors as well.

\subsubsection{Case 6} 

Here we we assume that $\sigma\notin [c\tau,\frac{1}{c}\tau]$ and either $(y,\nu)\notin\dom(\hchi)$ or $(x,\w)\notin\ran(\hchi)$, and we consider both $K^{1}_{\sigma,\tau}(x,y)$ and $K^{2}_{\sigma,\tau}(x,y)$. 

This case is very similar to Case 5. For $\sigma<c\tau$, one again uses \eqref{eq:sigmasmall} to see that 
\[
\frac{\tau}{\sigma}(1+\sigma^{-1}d((x, \omega), \hchi(A))^2) \lesssim \frac{\tau}{\sigma}( 1 +\sigma^{-1}(1+|x-z|^2))\lesssim (f_{1,1}f_{6,1})^{-2}
\]
and
\[
\frac{\tau}{\sigma}(1+\sigma^{-1}d((y, \nu), A)^2)\lesssim\frac{\tau}{\sigma}(1+\sigma^{-1}(1+|\nabla_{\theta} \Phi(z, \theta)-y|^2))\lesssim (f_{2,1}f_{6,1})^{-2}
\]
for all $(z,\zeta,\theta)\in\supp(a_{1})\cup\supp(a_{2})$ with $\theta\neq0$. Now one simply integrates by parts with respect to $D_{1}$, $D_{2}$ and $D_{6}$. For $\sigma>\frac{1}{c}\tau$ the argument is similar, but one has to cancel out factors of $\frac{\sigma}{\tau}$ by applying $D_{6}$ sufficiently many times.

This concludes the proof of Theorem \ref{thm:offsingFIO}.

\begin{remark}\label{rem:bounds}
The constant $C$ in Theorem \ref{thm:offsingFIO} only depends on $T$ through finitely many of the $S^{0}_{\frac{1}{2},\frac{1}{2},1}$ norms of the symbol $a$, finitely many of the uniform bounds in \eqref{eq:phibounds} and \eqref{eq:phibounds2}, through the $\veps>0$ for which $\dom(\hchi)$ is an $\veps$-neighborhood of $A$, and through the Lipschitz constants of $\hchi$ and $\hchi^{-1}$. In particular, given a collection of operators $\{T_{i}\mid i\in I\}$ for which these quantities are uniformly bounded in $i\in I$, for each $N\geq0$ the constants $C_{i}$ associated with $T_{i}$ in Theorem \ref{thm:offsingFIO} are also uniformly bounded. Also, by Proposition \ref{prop:Lipschitz} and the proof of Lemma \ref{lem:philip}, the Lipschitz constants of $\hchi$ and $\hchi^{-1}$ depend only on \eqref{eq:phibounds} and \eqref{eq:phibounds2} if $\dom(\hchi)=\ran(\hchi)=\Sp$. By combining this with the remark after Theorem \ref{thm:tentbounded}, it follows that the operator norms of $WT_{i}V^{*}\in \La(T^{p}(\Sp))$ in Corollary \ref{cor:normalbounds} are uniformly bounded in $i$, for each $p\in[1,\infty]$.
\end{remark}

\begin{remark}\label{rem:symbols}
It should be noted that the symbol class $S^{0}_{\frac{1}{2},\frac{1}{2},1}(\R^{2n})$ arises naturally in the proof of Theorem \ref{thm:offsingFIO}. Indeed, even if the symbol $a$ itself is in fact of class $S^{0}(\R^{2n})$, then the symbol $\psi_{\w,\sigma}(\zeta)a(z,\theta)\wt{\psi}_{\nu,\tau}(\theta)$ behaves asymptotically like an $S^{0}_{\frac{1}{2},\frac{1}{2},1}$ symbol in the $\zeta$ and $\theta$ variables, due to the behavior of the wave packets $\psi_{\w,\sigma}$ and $\wt{\psi}_{\nu,\tau}$. In turn, the integration by parts procedure from the proof allows for the appropriate growth of $a$ in the $z$-variable. 
\end{remark}

\section{Hardy spaces for Fourier integral operators}\label{sec:Hardy spaces}

In this section we define Hardy spaces for Fourier integral operators and derive some of their basic properties, such as interpolation and duality theorems.

\subsection{Definition and basic properties}

For the rest of the article, fix wave packets $\psi_{\w,\sigma}$, for $w\in S^{n-1}$ and $\sigma>0$, as in Section \ref{sec:wavetransforms}, and the associated $r\in C^{\infty}_{c}(\Rn)$ and wave packet transform $W$. Recall that, for $f\in\Sw'(\Rn)$ and $(y,\nu,\sigma)\in\Spp$,
\[
Wf(y,\nu,\sigma)=W_{\sigma}f(y,\nu)=
\begin{cases}
\psi_{\nu,\sigma}(D)f(y)&\text{if }\sigma\in(0,1),\\
V(S^{n-1})^{-1/2}\ind_{[1,e]}(\sigma)r(D)f(y)&\text{if }\sigma\geq1.
\end{cases}
\]
We now define the Hardy spaces for Fourier integral operators through wave packet embeddings into the tent spaces $T^{p}(\Sp)$, $p\in[1,\infty]$.
 
\begin{definition}\label{def:spaces}
Let $p\in[1,\infty]$. Then $\HT^{p}_{FIO}(\Rn)$ consists of all $f\in\Sw'(\Rn)$ such that $Wf\in T^{p}(\Sp)$, with
\[
\|f\|_{\HT^{p}_{FIO}(\Rn)}:=\|Wf\|_{T^{p}(\Sp)}\quad(f\in \HT^{p}_{FIO}(\Rn)).
\]
\end{definition}

Note that, for $p\in[1,\infty)$ and $f\in\HT^{p}_{FIO}(\Rn)$, 
\begin{equation}\label{eq:pnorm}
\|f\|_{\Hps}=\Big(\int_{\Sp}\Big(\int_{0}^{e}\fint_{B_{\sqrt{\sigma}}(x,\w)}|W_{\sigma}f(y,\nu)|^{2}\ud y\ud\nu\frac{\ud\sigma}{\sigma}\Big)^{p/2}\ud x\ud\w\Big)^{1/p}.
\end{equation}
Similarly, for $f\in \HT^{\infty}_{FIO}(\Rn)$,
\begin{equation}\label{eq:inftynorm}
\|f\|_{\HT^{\infty}_{FIO}}=\sup_{(x,\w)\in\Sp}\sup_{B}\Big(\frac{1}{V(B)}\int_{T(B)}|W_{\sigma}f(y,\nu)|^{2}\ud y\ud\nu\frac{\ud\sigma}{\sigma}\Big)^{1/2},
\end{equation}
where the second supremum is taken over all balls $B\subseteq\Sp$ containing $(x,\w)$. 

\begin{remark}\label{rem:lowfrequency}
For $f\in\Sw'(\Rn)$ and $(x,\w)\in\Sp$, set
\begin{equation}\label{eq:squarefunction}
S f(x,\w):=\Big(\int_{0}^{1}\fint_{B_{\sqrt{\sigma}}(x,\w)}|\psi_{\nu,\sigma}(D)f(y)|^{2}\ud y\ud\nu\frac{\ud\sigma}{\sigma}\Big)^{1/2}\in[0,\infty].
\end{equation}
Let $q\in C^{\infty}_{c}(\Rn)$ be such that $q(\zeta)=1$ if $|\zeta|\leq 2$. It is shown in Corollary \ref{cor:equivalentnorm} that 
\begin{equation}\label{eq:alternativenorm}
\|f\|_{\Hp}\eqsim \|Sf\|_{L^{p}(\Sp)}+\|q(D)f\|_{L^{p}(\Rn)}
\end{equation}
for all $p\in[1,\infty)$, and similarly for $p=\infty$. Hence most of the information contained in \eqref{eq:pnorm} and \eqref{eq:inftynorm} pertains to the high frequencies of $f$, and $\Hp$ is a \emph{local} Hardy space. We could equally well have defined the $\Hp$-norm using \eqref{eq:alternativenorm}, but for technical purposes it turns out to be more convenient to include the low frequencies of $f$ in the square functions in \eqref{eq:pnorm} and \eqref{eq:inftynorm}.
\end{remark}

To derive some basic properties of these spaces we use results from Sections \ref{sec:wavetransforms} and \ref{sec:offsingFIO}.

\begin{proposition}\label{prop:Banach}
Let $p\in[1,\infty]$. Then $\|\cdot\|_{\HT^{p}_{FIO}(\Rn)}$ is a norm on $\HT^{p}_{FIO}(\Rn)$, $W^{*}\in \La(T^{p}(\Sp),\HT^{p}_{FIO}(\Rn))$, and 
\[
W^{*}:T^{p}(\Sp)/\ker(W^{*})\to \HT^{p}_{FIO}(\Rn)  
\]
is an isomorphism. In particular, $\HT^{p}_{FIO}(\Rn)$ is a Banach space.
\end{proposition}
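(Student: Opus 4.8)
The plan is to deduce everything from the mapping properties of $W$ and $W^{*}$ established in Proposition~\ref{prop:bddwavetransform}, together with the identity $W^{*}W = \mathrm{id}$ on $\Sw'(\Rn)$ from \eqref{eq:identity}. First I would verify that $\|\cdot\|_{\HT^{p}_{FIO}(\Rn)}$ is a norm: it is clearly a seminorm since $f\mapsto Wf$ is linear and $\|\cdot\|_{T^{p}(\Sp)}$ is a norm, so the only point is positive definiteness. If $\|f\|_{\HT^{p}_{FIO}(\Rn)} = 0$ then $Wf = 0$ in $T^{p}(\Sp)$, hence $Wf = 0$ as an element of $\Da'(\Spp)$ by the continuous inclusion $T^{p}(\Sp)\subseteq\Da'(\Spp)$ from Lemma~\ref{lem:distributions}; applying $W^{*}:\Da'(\Spp)\to\Sw'(\Rn)$ and using \eqref{eq:identity} gives $f = W^{*}Wf = 0$.

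Next I would show $W^{*}\in\La(T^{p}(\Sp),\HT^{p}_{FIO}(\Rn))$. The operator $R:=WW^{*}$ acting on $\Spp$-functions is of the form \eqref{eq:defR} with $T_{\sigma,\tau} = W_{\sigma}W_{\tau}^{*}$; since $W_{\sigma}f(x,\w) = \psi_{\w,\sigma}(D)f(x)$ for $\sigma<1$ (and the low-frequency piece for $\sigma\geq 1$), the family $(W_{\sigma}W_{\tau}^{*})_{\sigma,\tau>0}$ is exactly $(W_{\sigma}\,\mathrm{Id}\,V_{\tau}^{*})$ for the trivial FIO $T=\mathrm{Id}$, which has phase $\Phi(x,\eta) = x\cdot\eta$, linear in $\eta$, with induced contact transformation $\hchi = \mathrm{id}$; hence by Corollary~\ref{cor:normalbounds} (or directly Theorem~\ref{thm:offsingFIO}) $(W_{\sigma}W_{\tau}^{*})\in OS(\mathrm{id},N,C)$ for every $N$, and so $R = WW^{*}\in\La(T^{p}(\Sp))$ for all $p\in[1,\infty]$ by Theorem~\ref{thm:tentbounded}. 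Then for $F\in T^{p}(\Sp)$, $W(W^{*}F) = RF\in T^{p}(\Sp)$ with $\|W(W^{*}F)\|_{T^{p}(\Sp)} \lesssim \|F\|_{T^{p}(\Sp)}$, i.e. $W^{*}F\in\HT^{p}_{FIO}(\Rn)$ with $\|W^{*}F\|_{\HT^{p}_{FIO}(\Rn)}\lesssim\|F\|_{T^{p}(\Sp)}$, as required. (For the endpoint $p=\infty$ one interprets $W^{*}$ and $R$ via the adjoint action as in Remark~\ref{rem:p=infty}, noting $W^{*}$ is still well-defined on $T^{\infty}(\Sp)\subseteq\Da'(\Spp)$.)

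For the isomorphism statement: by construction $W:\HT^{p}_{FIO}(\Rn)\to T^{p}(\Sp)$ is an isometry onto its image $W(\HT^{p}_{FIO}(\Rn)) = \ran(W)$, and $W^{*}$ is a bounded left inverse of $W$ by \eqref{eq:identity}. Thus $W^{*}$ restricted to $\ran(W)$ is the inverse isometry-up-to-constants of $W$, and I would show $\ran(W)$ is a complemented subspace of $T^{p}(\Sp)$ with complement $\ker(W^{*})$: indeed $P:=WW^{*} = R$ is a bounded projection on $T^{p}(\Sp)$ (idempotent since $W^{*}W = \mathrm{id}$ gives $P^{2} = WW^{*}WW^{*} = WW^{*} = P$) with $\ran(P) = \ran(W)$ and $\ker(P) = \ker(W^{*})$, so $T^{p}(\Sp) = \ran(W)\oplus\ker(W^{*})$. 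Hence $W^{*}$ descends to a bounded bijection $T^{p}(\Sp)/\ker(W^{*})\to\HT^{p}_{FIO}(\Rn)$ whose inverse is $[G]\mapsto WG$ restricted appropriately, bounded by the isometry property; the open mapping theorem (or the explicit two-sided bound) makes it an isomorphism. Finally, $\HT^{p}_{FIO}(\Rn)\cong T^{p}(\Sp)/\ker(W^{*})$ is a quotient of a Banach space by a closed subspace ($\ker(W^{*})$ is closed since $W^{*}$ is continuous), hence a Banach space; equivalently, $\HT^{p}_{FIO}(\Rn)$ is isometric to the closed subspace $\ran(W)$ of $T^{p}(\Sp)$, so it is complete. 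The main thing to be careful about is the $p=\infty$ case, where density fails and one must consistently use the adjoint-action definition of $R$ and $W^{*}$ from Remark~\ref{rem:p=infty}; otherwise the argument is a routine unwinding of the isometry $W$ and its left inverse $W^{*}$.
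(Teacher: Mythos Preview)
Your proof is correct and takes essentially the same approach as the paper: both use $W^{*}W=\mathrm{id}$ from \eqref{eq:identity} and invoke Corollary~\ref{cor:normalbounds} with $T=\mathrm{Id}$ to obtain $WW^{*}\in\La(T^{p}(\Sp))$. The only cosmetic difference is that you organize the isomorphism step via the bounded projection $P=WW^{*}$ and the splitting $T^{p}(\Sp)=\ran(W)\oplus\ker(W^{*})$, whereas the paper directly verifies the quotient-norm equivalence $\|f\|_{\HT^{p}_{FIO}}\eqsim\inf\{\|F\|_{T^{p}(\Sp)}:W^{*}F=f\}$.
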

\begin{proof}
For the first statement, note that if $f\in\Sw'(\Rn)$ is such that 
\[
\|f\|_{\HT^{p}_{FIO}(\Rn)}=\|Wf\|_{T^{p}(\Sp)}=0,
\]
then $f=W^{*}Wf=0$. Next, it follows from Corollary \ref{cor:normalbounds}, with $T$ the identity operator, that $WW^{*}\in\La(T^{p}(\Sp)$. Hence 
\[
\|W^{*}F\|_{\HT^{p}_{FIO}(\Rn)}=\|WW^{*}F\|_{T^{p}(\Sp)}\lesssim \|F\|_{T^{p}(\Sp)}
\] 
for all $F\in T^{p}(\Sp)$. For the third statement, it suffices to show that
\[
\|f\|_{\HT^{p}_{FIO}(\Rn)}\eqsim \inf\{\|F\|_{T^{p}(\Sp)}\mid F\in T^{p}(\Sp), W^{*}F=f\}
\]
for all $f\in \HT^{p}_{FIO}(\Rn)$. Since $Wf\in T^{p}(\Sp)$ and $f=W^{*}Wf$, one has 
\[
\|f\|_{\HT^{p}_{FIO}(\Rn)}\geq \inf\{\|F\|_{T^{p}(\Sp)}\mid F\in T^{p}(\Sp), W^{*}F=f\}.
\]
On the other hand, for $F\in T^{p}(\Sp)$ such that $W^{*}F=f$, Corollary \ref{cor:normalbounds} yields
\[
\|f\|_{\Hp}=\|WW^{*}F\|_{T^{p}(\Sp)}\lesssim \|F\|_{T^{p}(\Sp)}.
\]
Finally, $\ker(W^{*})\subseteq T^{p}(\Sp)$ is closed because $W^{*}\in\La(T^{p}(\Sp),\HT^{p}_{FIO}(\Rn))$, and therefore $T^{p}(\Sp)/\ker(W^{*})$ and $\HT^{p}_{FIO}(\Rn)$ are Banach spaces.
\end{proof}

Note that $\|\cdot\|_{\HT^{p}_{FIO}(\Rn)}$ depends on the choice of $\ph$ and $\Psi$, via the wave packet transform $W$. We now show that $\HT^{p}_{FIO}(\Rn)$ itself does not depend on the choice of $\ph$ and $\Psi$, up to norm equivalence. Consider functions $\wt{\ph}$ and $\wt{\Psi}$ with the same properties as $\ph$ and $\Psi$, and let $V$ be the associated wave packet transform, as in Section \ref{sec:offsingFIO}. For $p\in[1,\infty]$, let $\wt{\HT}^{p}_{FIO}(\Rn)$ consist of all $f\in\Sw'(\Rn)$ such that $Vf\in T^{p}(\Sp)$, with 
\[
\|f\|_{\wt{\HT}^{p}_{FIO}(\Rn)}:=\|Vf\|_{T^{p}(\Sp)}.
\]

\begin{proposition}\label{prop:independence}
Let $p\in[1,\infty]$. Then there exists a constant $C>0$ such that $\HT^{p}_{FIO}(\Rn)=\wt{\HT}^{p}_{FIO}(\Rn)$ and
\[
\frac{1}{C}\|f\|_{\wt{\HT}^{p}_{FIO}(\Rn)}\leq \|f\|_{\HT^{p}_{FIO}(\Rn)}\leq C\|f\|_{\wt{\HT}^{p}_{FIO}(\Rn)}
\]
for all $f\in \HT^{p}_{FIO}(\Rn)$. 
\end{proposition}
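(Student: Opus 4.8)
The plan is to reduce the claim to the boundedness of the composition $WV^{*}$ (and its companion $VW^{*}$) on all tent spaces $T^{p}(\Sp)$, and to obtain that boundedness from the off-singularity machinery developed in Section~\ref{sec:offsing}. Concretely, if $f\in\wt{\HT}^{p}_{FIO}(\Rn)$, then $Vf\in T^{p}(\Sp)$, and since $W^{*}V=\mathrm{Id}$ on $\Sw'(\Rn)$ by \eqref{eq:identity} (the same reproducing identity holds for the transform $V$, as $\wt{\ph},\wt\Psi$ have all the properties of $\ph,\Psi$), we have $f=W^{*}Vf$. Hence $Wf=WW^{*}Vf=(WV^{*})(Vf)$, using that $V$ is an isometry on $L^{2}$ so that $V^{*}V=\mathrm{Id}$ is not quite what is needed; rather one writes $Wf = W W^{*} (Vf)$ and notes $W^{*} = W^{*} V V^{*}$ on the range... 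I will instead argue directly: $Wf(x,\w,\sigma) = \int_0^\infty (W_\sigma V_\tau^* )(Vf)(\cdot,\cdot,\tau)(x,\w)\,\tfrac{\ud\tau}{\tau}$, which is precisely the operator $R=WV^{*}$ of the form \eqref{eq:defR} applied to $Vf\in T^{p}(\Sp)$. So $\|f\|_{\HT^{p}_{FIO}}=\|Wf\|_{T^{p}(\Sp)}=\|(WV^{*})(Vf)\|_{T^{p}(\Sp)}\lesssim\|Vf\|_{T^{p}(\Sp)}=\|f\|_{\wt{\HT}^{p}_{FIO}}$, and the reverse inequality follows by symmetry (interchanging the roles of $\ph,\Psi$ and $\wt\ph,\wt\Psi$, i.e.\ bounding $VW^{*}$ on $T^p(\Sp)$). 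This also shows the two spaces coincide as subsets of $\Sw'(\Rn)$.

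The substantive point, then, is to show $WV^{*}\in\La(T^{p}(\Sp))$ for all $p\in[1,\infty]$. The natural route is to invoke Corollary~\ref{cor:normalbounds} with $T$ the identity operator: the identity is a normal oscillatory integral operator of order $0$ and type $(\tfrac12,\tfrac12,1)$ with phase $\Phi(x,\eta)=x\cdot\eta$ (which is linear in $\eta$) and symbol $a\equiv 1$, and its induced contact transformation is $\hchi=\mathrm{id}$ on $\Sp$, which is trivially bi-Lipschitz with $\dom(\hchi)=\ran(\hchi)=\Sp$. Corollary~\ref{cor:normalbounds} is stated for $W_\sigma T V_\tau^*$, i.e.\ precisely with the two (possibly different) wave packet transforms $W$ and $V$; applied with $T=\mathrm{Id}$ it gives $(W_\sigma V_\tau^*)_{\sigma,\tau>0}\in OS(\mathrm{id},N,C)$ for every $N$, and hence $WV^{*}=\int_0^\infty W_\sigma V_\tau^*\,\tfrac{\ud\tau}{\tau}\in\La(T^{p}(\Sp))$ for all $p\in[1,\infty]$ by Theorem~\ref{thm:tentbounded}. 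For the reverse inequality we apply the same corollary with the roles of the two sets of wave packets swapped, obtaining $VW^{*}\in\La(T^{p}(\Sp))$.

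I expect the main obstacle to be bookkeeping rather than anything deep: one must check that Corollary~\ref{cor:normalbounds} and the underlying Theorem~\ref{thm:offsingFIO} genuinely allow the two wave packet transforms appearing in $W_\sigma T V_\tau^*$ to be built from \emph{different} functions $\ph,\Psi$ and $\wt\ph,\wt\Psi$ — but this is exactly how those results are set up in Section~\ref{sec:offsingFIO}, where $\wt\ph,\wt\Psi,\wt r$ are introduced as independent data with the same qualitative properties, so no extra work is needed. A second, truly minor, point is to record the reproducing identity $W^{*}V=\mathrm{Id}$ on $\Sw'(\Rn)$: this follows from Proposition~\ref{prop:bddwavetransform}\eqref{it:W4} and its analogue for $V$, together with the fact that $\int_0^\infty\int_{S^{n-1}}\psi_{\w,\sigma}(\zeta)\overline{\wt\psi_{\w,\sigma}(\zeta)}\,\ud\w\,\tfrac{\ud\sigma}{\sigma}$ is \emph{not} identically $1$ — so in fact $W^{*}V\ne\mathrm{Id}$ in general, and one should instead argue purely through the operator $WV^{*}$ of the form \eqref{eq:defR} as in the first paragraph, without ever needing such an identity. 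With these two caveats handled, the proof is a direct concatenation of Corollary~\ref{cor:normalbounds} (for $T=\mathrm{Id}$, in both orderings of the wave packets) and the definition of the norms.

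\begin{proof}
By \eqref{eq:identity} applied to the wave packet transform $V$, one has $W^{*}V=\mathrm{Id}$ on $\Sw'(\Rn)$; more precisely, $V^{*}Vf=f$ and hence for $f\in\Sw'(\Rn)$,
\[
Wf(x,\w,\sigma)=W_{\sigma}(V^{*}Vf)(x,\w)=\int_{0}^{\infty}(W_{\sigma}V_{\tau}^{*})(Vf)(\cdot,\cdot,\tau)(x,\w)\,\frac{\ud\tau}{\tau}.
\]
Thus $Wf=(WV^{*})(Vf)$, where $WV^{*}$ is the operator of the form \eqref{eq:defR} associated with the family $(W_{\sigma}V_{\tau}^{*})_{\sigma,\tau>0}$.

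Now apply Corollary \ref{cor:normalbounds} with $T$ equal to the identity operator on $\Rn$. Indeed, the identity is a normal oscillatory integral operator of order $0$ and type $(\tfrac12,\tfrac12,1)$ with symbol $a\equiv1$ and phase function $\Phi(x,\eta)=x\cdot\eta$, which is linear in $\eta$. The induced contact transformation $\hchi$ is the identity on $\Sp$, so $\dom(\hchi)=\ran(\hchi)=\Sp$. Hence, for each $N\geq0$ there exists a $C\geq0$ such that $(W_{\sigma}V_{\tau}^{*})_{\sigma,\tau>0}\in OS(\hchi,N,C)$, and therefore $WV^{*}\in\La(T^{p}(\Sp))$ for all $p\in[1,\infty]$ by Theorem \ref{thm:tentbounded}. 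In particular, for $f\in\wt{\HT}^{p}_{FIO}(\Rn)$ one has $Vf\in T^{p}(\Sp)$, so $Wf=(WV^{*})(Vf)\in T^{p}(\Sp)$, i.e.\ $f\in\HT^{p}_{FIO}(\Rn)$, and
\[
\|f\|_{\HT^{p}_{FIO}(\Rn)}=\|Wf\|_{T^{p}(\Sp)}=\|(WV^{*})(Vf)\|_{T^{p}(\Sp)}\lesssim\|Vf\|_{T^{p}(\Sp)}=\|f\|_{\wt{\HT}^{p}_{FIO}(\Rn)}.
\]

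The reverse inequality, and hence the inclusion $\HT^{p}_{FIO}(\Rn)\subseteq\wt{\HT}^{p}_{FIO}(\Rn)$, follows by the symmetric argument: the roles of $(\ph,\Psi)$ and $(\wt{\ph},\wt{\Psi})$, and correspondingly of $W$ and $V$, may be interchanged throughout. This shows that $(V_{\tau}W_{\sigma}^{*})_{\sigma,\tau>0}\in OS(\hchi,N,C)$ for each $N$, hence $VW^{*}\in\La(T^{p}(\Sp))$, and therefore $\|f\|_{\wt{\HT}^{p}_{FIO}(\Rn)}=\|(VW^{*})(Wf)\|_{T^{p}(\Sp)}\lesssim\|Wf\|_{T^{p}(\Sp)}=\|f\|_{\HT^{p}_{FIO}(\Rn)}$ for $f\in\HT^{p}_{FIO}(\Rn)$. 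Combining the two estimates yields $\HT^{p}_{FIO}(\Rn)=\wt{\HT}^{p}_{FIO}(\Rn)$ with equivalence of norms.
\end{proof}
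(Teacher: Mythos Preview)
Your proof is correct and follows the same approach as the paper: write $Wf = WV^{*}Vf$ using the reproducing identity $V^{*}V=\mathrm{Id}$, then invoke Corollary~\ref{cor:normalbounds} with $T$ the identity operator to bound $WV^{*}$ on $T^{p}(\Sp)$, and conclude by symmetry. The only blemish is the typo ``$W^{*}V=\mathrm{Id}$'' at the start of the formal proof, which should read ``$V^{*}V=\mathrm{Id}$'' (as you correctly use immediately after).
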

\begin{proof}
By symmetry, it suffices to show that $\|f\|_{\HT^{p}_{FIO}(\Rn)}\lesssim\|f\|_{\wt{\HT}^{p}_{FIO}(\Rn)}$ for all $f\in\wt{\HT}^{p}_{FIO}(\Rn)$. But this follows from Corollary \ref{cor:normalbounds}: 
\[
\|f\|_{\HT^{p}_{FIO}(\Rn)}=\|WV^{*}Vf\|_{T^{p}(\Sp)}\lesssim \|Vf\|_{T^{p}(\Sp)}=\|f\|_{\wt{\HT}^{p}_{FIO}(\Rn)}.\qedhere
\]
\end{proof}

Next, we consider the case where $p=2$, which is particularly simple.

\begin{lemma}\label{lem:L2}
One has $\HT^{2}_{FIO}(\Rn)=L^{2}(\Rn)$, with equal norms.
\end{lemma}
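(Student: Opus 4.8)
The plan is to deduce $\HT^{2}_{FIO}(\Rn) = L^{2}(\Rn)$ with equal norms directly from the isometry property of the wave packet transform established in Proposition~\ref{prop:bddwavetransform} together with the identification $T^{2}(\Sp) = L^{2}(\Spp)$ with equal norms from Section~\ref{subsec:tent}.

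First I would recall that by Proposition~\ref{prop:bddwavetransform}\eqref{it:W1}, $W:L^{2}(\Rn)\to L^{2}(\Spp)$ is an isometry, and that $T^{2}(\Sp) = L^{2}(\Spp)$ with equal norms. Hence if $f\in L^{2}(\Rn)$, then $Wf\in L^{2}(\Spp) = T^{2}(\Sp)$, so $f\in\HT^{2}_{FIO}(\Rn)$ and $\|f\|_{\HT^{2}_{FIO}(\Rn)} = \|Wf\|_{T^{2}(\Sp)} = \|Wf\|_{L^{2}(\Spp)} = \|f\|_{L^{2}(\Rn)}$. This gives the inclusion $L^{2}(\Rn)\subseteq\HT^{2}_{FIO}(\Rn)$ isometrically.

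For the reverse inclusion, suppose $f\in\Sw'(\Rn)$ with $Wf\in T^{2}(\Sp) = L^{2}(\Spp)$. I would then invoke the identity $W^{*}Wf = f$ from \eqref{eq:identity}, valid for all $f\in\Sw'(\Rn)$. Since $W^{*}:L^{2}(\Spp)\to L^{2}(\Rn)$ is the adjoint of the isometry $W$ (as follows from \eqref{eq:duality} and the density statement in Proposition~\ref{prop:bddwavetransform}\eqref{it:W2}), one has $W^{*}W = \mathrm{id}$ on $L^{2}(\Rn)$ and $\|W^{*}G\|_{L^{2}(\Rn)}\leq\|G\|_{L^{2}(\Spp)}$ for all $G\in L^{2}(\Spp)$. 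Applying this with $G = Wf$ gives $f = W^{*}Wf\in L^{2}(\Rn)$ with $\|f\|_{L^{2}(\Rn)}\leq\|Wf\|_{L^{2}(\Spp)} = \|f\|_{\HT^{2}_{FIO}(\Rn)}$. Combined with the previous paragraph, this yields $\HT^{2}_{FIO}(\Rn) = L^{2}(\Rn)$ with equal norms.

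The only subtlety worth checking is that the distributional identity $W^{*}Wf = f$ from \eqref{eq:identity} is compatible with the $L^{2}$-bounded extension of $W^{*}$: that is, when $Wf$ happens to lie in $L^{2}(\Spp)$, the distribution $W^{*}(Wf)$ defined via the pairing $\lb W^{*}F,g\rb = \lb F,Wg\rb_{\Spp}$ coincides with the $L^{2}$-function obtained by applying the bounded operator $W^{*}:L^{2}(\Spp)\to L^{2}(\Rn)$. This is immediate from the fact that both are defined by the same pairing against $\Sw(\Rn)$, using that $\Sw(\Rn)\subseteq L^{2}(\Rn)$ and that $W$ maps $\Sw(\Rn)$ into $\Da(\Spp)\subseteq L^{2}(\Spp)$; so there is really no obstacle here, merely a bookkeeping remark. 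The proof is therefore short and essentially formal once the isometry and the reproducing formula are in hand.
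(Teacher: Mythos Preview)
Your proof is correct and follows the same approach as the paper: both rely on the isometry property of $W$ from Proposition~\ref{prop:bddwavetransform}\eqref{it:W1} together with $T^{2}(\Sp)=L^{2}(\Spp)$. The paper compresses everything into a single chain of equalities for $f\in L^{2}(\Rn)\cup\HT^{2}_{FIO}(\Rn)$, whereas you spell out the reverse inclusion $\HT^{2}_{FIO}(\Rn)\subseteq L^{2}(\Rn)$ explicitly via the reproducing formula $W^{*}Wf=f$ and the $L^{2}$-boundedness of $W^{*}$; this extra care is appropriate and fills in what the paper leaves implicit.
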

\begin{proof}
For each $f\in L^{2}(\Rn)\cup \HT^{2}_{FIO}(\Rn)$ one has, by Proposition \ref{prop:bddwavetransform},
\[
\|f\|_{\HT^{2}_{FIO}(\Rn)}=\|Wf\|_{T^{2}(\Sp)}=\|Wf\|_{L^{2}(\Spp)}=\|f\|_{L^{2}(\Rn)}.\qedhere
\]
\end{proof}

To conclude this subsection, we consider the Schwartz functions as a subset of $\Hp$.

\begin{proposition}\label{prop:Schwartzdense}
Let $p\in[1,\infty]$. Then $\Sw(\Rn)\subseteq\Hp$ continuously. If $p<\infty$, then $\Sw(\Rn)$ is dense in $\Hp$.
\end{proposition}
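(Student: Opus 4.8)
The plan is to prove the continuous embedding $\Sw(\Rn)\subseteq\Hp$ first, and then deduce density for $p<\infty$ from an approximation argument combined with the distribution theory for tent spaces.

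For the embedding, the key point is Proposition \ref{prop:bddwavetransform}\eqref{it:W2}, which says $W:\Sw(\Rn)\to\Da(\Spp)$ is continuous, together with Lemma \ref{lem:distributions}, which gives $\Da(\Spp)\subseteq T^{p}(\Sp)$ continuously for every $p\in[1,\infty]$. Composing these, $f\mapsto Wf$ maps $\Sw(\Rn)$ continuously into $T^{p}(\Sp)$, so $\|f\|_{\Hp}=\|Wf\|_{T^{p}(\Sp)}\lesssim \|f\|_{\Sw(\Rn)}$ (more precisely, bounded by finitely many Schwartz seminorms of $f$), which is exactly continuity of the inclusion $\Sw(\Rn)\hookrightarrow\Hp$. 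In particular this shows $\Sw(\Rn)\subseteq\Hp$ as sets.

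For density when $p<\infty$, I would argue as follows. By Proposition \ref{prop:Banach}, $W^{*}:T^{p}(\Sp)\to\Hp$ is surjective, and in fact for any $f\in\Hp$ we have $f=W^{*}(Wf)$ with $Wf\in T^{p}(\Sp)$. By Lemma \ref{lem:distributions}, $\Da(\Spp)$ is dense in $T^{p}(\Sp)$ for $p<\infty$, so we can choose a sequence $(F_{k})_{k}\subseteq\Da(\Spp)$ with $F_{k}\to Wf$ in $T^{p}(\Sp)$. Set $f_{k}:=W^{*}F_{k}$. Since $W^{*}\in\La(T^{p}(\Sp),\Hp)$, we get $f_{k}\to W^{*}Wf=f$ in $\Hp$. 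It remains to check that each $f_{k}$ lies in $\Sw(\Rn)$: this is precisely Proposition \ref{prop:bddwavetransform}\eqref{it:W3}, which states $W^{*}:\Da(\Spp)\to\Sw(\Rn)$ is continuous, so $f_{k}=W^{*}F_{k}\in\Sw(\Rn)$. This gives a sequence of Schwartz functions converging to $f$ in $\Hp$, proving density.

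I do not expect a serious obstacle here, since all the required ingredients — the mapping properties of $W$ and $W^{*}$ on Schwartz/test-function spaces, the reproducing identity $W^{*}W=\mathrm{id}$, the sandwich $\Da(\Spp)\subseteq T^{p}(\Sp)\subseteq\Da'(\Spp)$ with density of $\Da(\Spp)$, and boundedness of $W^{*}$ on $T^{p}$ — have all been established earlier in the paper. The only mild subtlety is keeping track of the fact that for $p=\infty$ the density statement genuinely fails (as with classical $\BMO$), which is why the claim is restricted to $p<\infty$; in the proof this restriction enters exactly at the step where we invoke density of $\Da(\Spp)$ in $T^{p}(\Sp)$, which holds only for $p<\infty$ by Lemma \ref{lem:distributions}.
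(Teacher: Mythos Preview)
Your proof is correct and follows essentially the same approach as the paper: the embedding via Proposition~\ref{prop:bddwavetransform}\eqref{it:W2} and Lemma~\ref{lem:distributions}, and density by approximating $Wf$ in $T^{p}(\Sp)$ by a sequence in $\Da(\Spp)$, pushing down via $W^{*}$, and invoking Propositions~\ref{prop:bddwavetransform}\eqref{it:W3} and~\ref{prop:Banach}. The arguments are virtually identical.
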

\begin{proof}
For the first statement note that $W:\Sw(\Rn)\to \Da(\Spp)\subseteq T^{p}(\Sp)$ continuously, by Proposition \ref{prop:bddwavetransform} and Lemma \ref{lem:distributions}. Next, suppose that $p\in[1,\infty)$ and let $f\in \Hp$. By Lemma \ref{lem:distributions}, there exists a sequence $(F_{k})_{k=1}^{\infty}\subseteq \Da(\Spp)$ such that $\lim_{k\to\infty}F_{k}=Wf$ in $T^{p}(\Sp)$. Moreover, $f_{k}:=W^{*}F_{k}\in \Sw(\Rn)$ for all $k\in\N$, by Proposition \ref{prop:bddwavetransform}. Now Proposition \ref{prop:Banach} yields
\[
\lim_{k\to\infty}f_{k}=\lim_{k\to\infty}W^{*}F_{k}= W^{*}Wf=f
\]
in $\Hp$.
\end{proof}

\subsection{Interpolation and duality}

We now relate the Hardy spaces for Fourier integral operators to each other. We first show that these spaces form a complex interpolation scale.

\begin{proposition}\label{prop:interpolation}
Let $p_{1},p_{2}\in[1,\infty]$ and $\theta\in[0,1]$, and let $p\in[1,\infty]$ be such that $\frac{1}{p}=\frac{1-\theta}{p_{1}}+\frac{\theta}{p_{2}}$. Then 
\[
[\HT^{p_{1}}_{FIO}(\Rn),\HT^{p_{2}}_{FIO}(\Rn)]_{\theta}=\Hp,
\]
with equivalent norms.
\end{proposition}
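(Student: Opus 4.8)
The proof follows the standard pattern for spaces defined by a retraction/coretraction onto an interpolation scale. The plan is to transfer the interpolation identity for tent spaces (Lemma~\ref{lem:tentint}) to the Hardy spaces via the wave packet transform $W$, using that $W$ is bounded $\HT^{p}_{FIO}(\Rn)\to T^{p}(\Sp)$ by definition and that $W^{*}$ is bounded $T^{p}(\Sp)\to \HT^{p}_{FIO}(\Rn)$ for all $p\in[1,\infty]$ by Corollary~\ref{cor:normalbounds} (applied with $T$ the identity, so that $WW^{*}\in\La(T^{p}(\Sp))$), together with the identity $W^{*}W=\mathrm{id}$ from \eqref{eq:identity}.

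Concretely, I would argue as follows. First, since $W:\HT^{p_{j}}_{FIO}(\Rn)\to T^{p_{j}}(\Sp)$ is an isometry for $j=1,2$, interpolation of operators gives that $W$ maps $[\HT^{p_{1}}_{FIO}(\Rn),\HT^{p_{2}}_{FIO}(\Rn)]_{\theta}$ boundedly into $[T^{p_{1}}(\Sp),T^{p_{2}}(\Sp)]_{\theta}=T^{p}(\Sp)$, where the last identity is Lemma~\ref{lem:tentint}. Hence any $f$ in the interpolation space satisfies $Wf\in T^{p}(\Sp)$, i.e.\ $f\in\Hp$, with $\|f\|_{\Hp}\lesssim\|f\|_{[\HT^{p_{1}}_{FIO}(\Rn),\HT^{p_{2}}_{FIO}(\Rn)]_{\theta}}$. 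Conversely, $W^{*}:T^{p_{j}}(\Sp)\to\HT^{p_{j}}_{FIO}(\Rn)$ is bounded for $j=1,2$, so by interpolation $W^{*}:T^{p}(\Sp)=[T^{p_{1}}(\Sp),T^{p_{2}}(\Sp)]_{\theta}\to[\HT^{p_{1}}_{FIO}(\Rn),\HT^{p_{2}}_{FIO}(\Rn)]_{\theta}$ is bounded; composing with $W:\Hp\to T^{p}(\Sp)$ and using $W^{*}W=\mathrm{id}$ shows that the inclusion $\Hp\hookrightarrow[\HT^{p_{1}}_{FIO}(\Rn),\HT^{p_{2}}_{FIO}(\Rn)]_{\theta}$ holds with $\|f\|_{[\HT^{p_{1}}_{FIO}(\Rn),\HT^{p_{2}}_{FIO}(\Rn)]_{\theta}}=\|W^{*}Wf\|_{[\cdots]_{\theta}}\lesssim\|Wf\|_{T^{p}(\Sp)}=\|f\|_{\Hp}$. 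The two inclusions together give the claimed equality with equivalent norms.

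One technical point to address is the interpretation of the complex interpolation functor when $\infty$ is among the exponents, since $[\cdot,\cdot]_{\theta}$ in its classical form requires compatible couples and the complex method behaves less well for non-reflexive endpoints; but since $\HT^{p_{j}}_{FIO}(\Rn)$ are, by Proposition~\ref{prop:Banach}, isomorphic to complemented subspaces (more precisely, quotients) of $T^{p_{j}}(\Sp)$ via the bounded projection $WW^{*}$, one can invoke the standard fact that complex interpolation commutes with taking images of a common bounded projection on an interpolation couple. This is exactly the "retract" lemma: if $P_{j}=WW^{*}$ is a uniformly bounded projection on $T^{p_{j}}(\Sp)$ with range isomorphic to $\HT^{p_{j}}_{FIO}(\Rn)$ (the isomorphism being $W^{*}$ with inverse $W$, independent of $j$ on the overlap), then $[\HT^{p_{1}}_{FIO}(\Rn),\HT^{p_{2}}_{FIO}(\Rn)]_{\theta}$ is the range of $P$ acting on $[T^{p_{1}}(\Sp),T^{p_{2}}(\Sp)]_{\theta}$, which is $\Hp$.

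The main obstacle, such as it is, is bookkeeping rather than mathematics: one must make sure the complex interpolation couples are set up correctly (the ambient space being $\Sw'(\Rn)$, in which all $\HT^{p}_{FIO}(\Rn)$ embed continuously by Proposition~\ref{prop:bddwavetransform}\eqref{it:W4} and Lemma~\ref{lem:distributions}), and that the coretraction $W$ and retraction $W^{*}$ are genuinely the same maps across the scale so that the retract lemma applies. Once that is in place, everything reduces to Lemma~\ref{lem:tentint} and Corollary~\ref{cor:normalbounds}, and no further estimates are needed.
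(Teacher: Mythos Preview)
Your proof is correct and follows essentially the same retraction/coretraction argument as the paper: interpolate $W$ to get one inclusion, interpolate $W^{*}$ and use $W^{*}W=\mathrm{id}$ to get the other, reducing everything to Lemma~\ref{lem:tentint}. The paper invokes Proposition~\ref{prop:Banach} for the boundedness of $W^{*}$ rather than citing Corollary~\ref{cor:normalbounds} directly, but this is the same content, and your additional remarks on the retract lemma and the compatible couple are sound elaborations of points the paper leaves implicit.
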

\begin{proof}
By definition, one has $W\in\La(\HT^{p_{j}}_{FIO}(\Rn),T^{p_{j}}(\Sp))$ for $j=1,2$. Now Lemma \ref{lem:tentint} shows that
\[
W:[\HT^{p_{1}}_{FIO}(\Rn),\HT^{p_{2}}_{FIO}(\Rn)]_{\theta}\to [T^{p_{1}}(\Sp),T^{p_{2}}(\Sp)]_{\theta}=T^{p}(\Sp)
\]
boundedly, and therefore $[\HT^{p_{1}}_{FIO}(\Rn),\HT^{p_{2}}_{FIO}(\Rn)]_{\theta}\subseteq \Hp$. On the other hand, by Lemma \ref{lem:tentint} and Proposition \ref{prop:Banach}, 
\[
W^{*}:T^{p}(\Sp)\to [\HT^{p_{1}}_{FIO}(\Rn),\HT^{p_{2}}_{FIO}(\Rn)]_{\theta}.
\]
So if $f\in\Hp$, then $f=W^{*}Wf\in [\HT^{p_{1}}_{FIO}(\Rn),\HT^{p_{2}}_{FIO}(\Rn)]_{\theta}$ and
\[
\|f\|_{[\HT^{p_{1}}_{FIO}(\Rn),\HT^{p_{2}}_{FIO}(\Rn)]_{\theta}}\lesssim \|Wf\|_{T^{p}(\Sp)}=\|f\|_{\Hp}.\qedhere
\]
\end{proof}

The following proposition on duality shows in particular that $\HT^{\infty}_{FIO}(\Rn)$ is a $\text{bmo}$ type space. 

\begin{proposition}\label{prop:duality}
Let $p\in[1,\infty)$. Then $\Hp^{*}=\HT^{p'}_{FIO}(\Rn)$, with equivalent norms, where the duality pairing is given by the distributional duality $(f,g)\to\lb f,g\rb$ for $f\in\Sw(\Rn)\subseteq\Hp$ and $g\in\HT^{p'}_{FIO}(\Rn)\subseteq\Sw'(\Rn)$. 
\end{proposition}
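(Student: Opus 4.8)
The plan is to deduce the duality $\Hp^{*}=\HT^{p'}_{FIO}(\Rn)$ from the tent space duality in Lemma~\ref{lem:tentdual} together with the isometric-modulo-projection picture from Proposition~\ref{prop:Banach}. The key structural fact is that $W$ is a left-inverse-able isometry: $W^{*}W=\mathrm{id}$ on $\Sw'(\Rn)$, and $P:=WW^{*}$ is a bounded projection on $T^{p}(\Sp)$ for every $p\in[1,\infty]$ (Corollary~\ref{cor:normalbounds} with $T$ the identity). Thus $W$ identifies $\Hp$ isometrically with the closed subspace $\ran(P)\subseteq T^{p}(\Sp)$, and $\Hp^{*}$ with the quotient $T^{p}(\Sp)^{*}/\ran(P)^{\perp}$, which by the complemented-subspace duality is isomorphic to $\ran(P^{*})\subseteq T^{p}(\Sp)^{*}=T^{p'}(\Sp)$. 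The point is then to check that under the pairing of Lemma~\ref{lem:tentdual}, $\ran(P^{*})$ corresponds exactly to $W(\HT^{p'}_{FIO}(\Rn))$, and that the induced pairing on functions is the distributional duality $\lb f,g\rb$.

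First I would record that $P^{*}=WW^{*}$ is also bounded on $T^{p'}(\Sp)$ (again Corollary~\ref{cor:normalbounds}, using that the adjoint of the identity-conjugated family is of the same type, or directly that $W W^*$ is self-adjoint on $T^2$ and bounded on all $T^q$). Given $g\in\HT^{p'}_{FIO}(\Rn)$, define the functional $\Lambda_{g}(f):=\lb Wf, Wg\rb_{\Spp}$ for $f\in\Hp$; by Lemma~\ref{lem:tentdual} and the definition of the norms, $|\Lambda_{g}(f)|\lesssim\|Wf\|_{T^{p}(\Sp)}\|Wg\|_{T^{p'}(\Sp)}=\|f\|_{\Hp}\|g\|_{\HT^{p'}_{FIO}(\Rn)}$, so $\Lambda_{g}\in\Hp^{*}$ with $\|\Lambda_{g}\|\lesssim\|g\|_{\HT^{p'}_{FIO}(\Rn)}$. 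For $f\in\Sw(\Rn)$ one has $Wg\in\Da'(\Spp)$ and $Wf\in\Da(\Spp)$, and using $W^{*}Wf=f$ together with the definition of $W^{*}$ on $\Da'(\Spp)$ (the extension in Section~\ref{sec:wavetransforms}, $\lb W^{*}F,f\rb=\lb F,Wf\rb_{\Spp}$), we get $\Lambda_{g}(f)=\lb W^{*}Wg,f\rb=\lb g,f\rb$; so $\Lambda_{g}$ is indeed implemented by the distributional pairing, and since $\Sw(\Rn)$ is dense in $\Hp$ for $p<\infty$ (Proposition~\ref{prop:Schwartzdense}) the functional is determined by this.

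Conversely, given $\Lambda\in\Hp^{*}$, the map $F\mapsto\Lambda(W^{*}F)$ is a bounded functional on $T^{p}(\Sp)$ since $W^{*}\in\La(T^{p}(\Sp),\Hp)$ (Proposition~\ref{prop:Banach}); by Lemma~\ref{lem:tentdual} there is $G\in T^{p'}(\Sp)$ with $\Lambda(W^{*}F)=\lb F,G\rb_{\Spp}$ for all $F\in T^{p}(\Sp)$, and $\|G\|_{T^{p'}(\Sp)}\lesssim\|\Lambda\|$. Set $g:=W^{*}G\in\Sw'(\Rn)$. Then for $F\in T^{p}(\Sp)$, $\lb F,Wg\rb_{\Spp}=\lb F,WW^{*}G\rb_{\Spp}=\lb WW^{*}F,G\rb_{\Spp}$ (using that $WW^{*}$ is self-adjoint with respect to the $\Spp$ pairing, which follows from $W^{*}=(W)^{*}$ and $W^{**}=W$ on the relevant spaces, or by a density argument from $T^{2}(\Sp)$), $=\Lambda(W^{*}WW^{*}F)=\Lambda(W^{*}F)=\lb F,G\rb_{\Spp}$; taking $F=Wh$ for $h\in L^{2}(\Rn)\cap\Hp$ gives $\lb Wh,Wg\rb_{\Spp}=\lb Wh,G\rb_{\Spp}$. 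In particular $Wg=WW^{*}G$ and hence $\|Wg\|_{T^{p'}(\Sp)}=\|WW^{*}G\|_{T^{p'}(\Sp)}\lesssim\|G\|_{T^{p'}(\Sp)}\lesssim\|\Lambda\|$, so $g\in\HT^{p'}_{FIO}(\Rn)$ with $\|g\|_{\HT^{p'}_{FIO}(\Rn)}\lesssim\|\Lambda\|$, and $\Lambda=\Lambda_{g}$ on $\Sw(\Rn)$, hence on all of $\Hp$ by density. This gives the isomorphism $\Hp^{*}\cong\HT^{p'}_{FIO}(\Rn)$ with equivalent norms.

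The main obstacle — really the only subtle point — is the self-adjointness bookkeeping: one must be careful that $WW^{*}$ is genuinely self-adjoint for the $\lb\cdot,\cdot\rb_{\Spp}$ pairing between $T^{p}(\Sp)$ and $T^{p'}(\Sp)$, since $W^{*}$ on $T^{p}(\Sp)$ and $W^{*}$ on $T^{p'}(\Sp)$ a priori refer to the boundedness extensions of the same densely-defined operator, and the extension to $T^{\infty}(\Sp)$ is non-unique (Remark~\ref{rem:p=infty}). I would handle this by first establishing $\lb WW^{*}F,G\rb_{\Spp}=\lb F,WW^{*}G\rb_{\Spp}$ for $F,G\in T^{2}(\Sp)\cap T^{1}(\Sp)$ (where everything is an honest $L^{2}$ computation using $W^{*}W=\mathrm{id}$ on $L^{2}$ and $W$ an isometry, Proposition~\ref{prop:bddwavetransform}), and then extending by density of $T^{2}(\Sp)\cap T^{p}(\Sp)$ in $T^{p}(\Sp)$ for $p<\infty$ and by the adjoint-action definition of the extension to $T^{\infty}(\Sp)$ in Remark~\ref{rem:p=infty}. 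The case $p=1$, $p'=\infty$ requires invoking that $\Da(\Spp)$ is dense in $T^{1}(\Sp)$ and that the $T^{\infty}(\Sp)$ pairing is exactly the adjoint action, so no circularity arises. With that in hand the argument above goes through uniformly in $p\in[1,\infty)$.
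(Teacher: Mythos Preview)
Your proposal is correct and follows essentially the same route as the paper's proof: both directions use tent space duality (Lemma~\ref{lem:tentdual}), the identity $W^{*}W=\mathrm{id}$, boundedness of $WW^{*}$ on all $T^{q}(\Sp)$ (Corollary~\ref{cor:normalbounds}), and density of $\Sw(\Rn)$ in $\Hp$. The only cosmetic difference is in the converse norm estimate: you bound $\|Wg\|_{T^{p'}}=\|WW^{*}G\|_{T^{p'}}\lesssim\|G\|_{T^{p'}}$ directly via Corollary~\ref{cor:normalbounds}, whereas the paper shows $|\lb F,Wg\rb_{\Spp}|=|\lb WW^{*}F,G\rb_{\Spp}|\lesssim\|l\|\,\|F\|_{T^{p}(\Sp)}$ for $F\in\Da(\Spp)$ and then invokes Lemmas~\ref{lem:tentdual} and~\ref{lem:distributions}. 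The paper's variant sidesteps your self-adjointness bookkeeping for $p'=\infty$ (since it never needs to identify the adjoint-extension $WW^{*}G$ with the distributional $W(W^{*}G)$), but your density argument handles that point correctly as well.
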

\begin{proof}
First let $g\in \HT^{p'}_{FIO}(\Rn)$. Then Proposition \ref{prop:bddwavetransform} and Lemma \ref{lem:tentdual} yield
\[
|\lb f,g\rb|=|\lb Wf,Wg\rb_{\Spp}|\lesssim \|Wf\|_{T^{p}(\Sp)}\|Wg\|_{T^{p'}(\Sp)}=\|f\|_{\Hps}\|g\|_{\HT^{p'}_{FIO}}
\]
for all $f\in\Sw(\Rn)$. By combining this with Proposition \ref{prop:Schwartzdense}, one obtains that $\HT^{p'}_{FIO}(\Rn)\subseteq\Hp^{*}$ continuously. 

Conversely, let $l\in \Hp^{*}$. Then $l\circ W^{*}\in T^{p}(\Sp)^{*}$, and Lemma \ref{lem:tentdual} yields a $G\in T^{p'}(\Sp)$ such that $l(W^{*}F)=\lb F,G\rb_{\Spp}$ for all $F\in\Da(\Spp)$. Set $g:=W^{*}G\in\HT^{p'}_{FIO}(\Rn)$. Then
\[
\lb f,g\rb=\lb Wf,G\rb_{\Spp}=l(W^{*}Wf)=l(f)
\]
for all $f\in\Sw(\Rn)$, and Corollary \ref{cor:normalbounds} yields
\begin{align*}
|\lb F,Wg\rb|&=|\lb WW^{*}F,G\rb|=|l(W^{*}F)|\leq \|l\|\,\|W^{*}F\|_{\Hps}\\
&=\|l\|\,\|WW^{*}F\|_{T^{p}(\Sp)}\lesssim\|l\|\,\|F\|_{T^{p}(\Sp)}
\end{align*}
for all $F\in \Da(\Spp)$. Now Lemmas \ref{lem:tentdual} and \ref{lem:distributions} show that 
\[
\|g\|_{\HT^{p'}_{FIO}(\Rn)}=\|Wg\|_{T^{p'}(\Sp)}\lesssim \|l\|_{\Hp^{*}}.\qedhere
\]
\end{proof}

\begin{remark}\label{rem:alternative}
We expect that the duality pairing in Proposition \ref{prop:duality} cannot be replaced by the pointwise pairing
\begin{equation}\label{eq:standardproduct}
(f,g)\mapsto \int_{\Rn}f(x)\overline{g(x)}\ud x
\end{equation}
for all $f\in\Hp$ and $g\in\HT^{p'}_{FIO}(\Rn)$. This is similar to the classical duality between $H^{1}(\Rn)$ and $\text{BMO}(\Rn)$, in the sense that \eqref{eq:standardproduct} need not be well defined for all $f\in H^{1}(\Rn)$ and $g\in BMO(\Rn)$. On the other hand, it is straightforward to check that the duality pairing is given in a pointwise sense by
\[
(f,g)\mapsto \int_{\Spp}Wf(x,\w,\sigma)\overline{Wg(x,\w,\sigma)}\ud x\ud\w\frac{\ud\sigma}{\sigma}
\]
for all $f\in \Hp$ and $g\in\HT^{p'}_{FIO}(\Rn)$. Moreover, Proposition \ref{prop:bddwavetransform} shows that the duality pairing is in fact given by \eqref{eq:standardproduct} if $f\in\Hp\cap L^{2}(\Rn)$ and $g\in \HT^{p'}_{FIO}(\Rn)\cap L^{2}(\Rn)$.
\end{remark}

\subsection{Boundedness of Fourier integral operators}\label{subsec:boundedFIOs}

Next, we show that our Hardy spaces are invariant under suitable oscillatory integral operators. 

\begin{theorem}\label{thm:FIObddHardy}
Let $T$ be one of the following:
\begin{enumerate}
\item\label{it:bdd1} A normal oscillatory integral operator of order $0$ and type $(\frac{1}{2},\frac{1}{2},1)$ with symbol $a$ and phase function $\Phi$ with the following properties. The induced contact transformation $\hchi$ satisfies $\dom(\hchi)=\ran(\hchi)=\Sp$, and either $(z,\theta)\mapsto\Phi(z,\theta)$ is linear in $\theta$ or there exists an $\veps>0$ such that $a(z,\theta)=0$ for all $(z,\theta)\in \R^{2n}$ with $|\theta|<\veps$;
\item\label{it:bdd2} A Fourier integral operator of order $0$ and type $(\rho,1-\rho,1)$, for $\rho\in(\frac{1}{2},1]$, associated with a local canonical graph, such that the Schwartz kernel of $T$ has compact support.
\end{enumerate}
Then $T\in \La(\Hp)$ for all $p\in[1,\infty]$.
\end{theorem}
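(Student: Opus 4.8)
The plan is to reduce the statement to the tent-space results already established via the wave packet transform, using the key identity $W^{*}W=\mathrm{id}$ on $\Sw'(\Rn)$ and the fact that $\|f\|_{\Hp}=\|Wf\|_{T^{p}(\Sp)}$. The fundamental observation is that if $T$ acts on $\Sw'(\Rn)$, then on the level of phase space it is conjugated to the operator $WTW^{*}$ acting on $T^{p}(\Sp)$, since $WTf = (WTW^{*})(Wf)$. Thus, for any $f \in \Hp \cap \Sw'(\Rn)$,
\[
\|Tf\|_{\Hp}=\|WTf\|_{T^{p}(\Sp)}=\|(WTW^{*})Wf\|_{T^{p}(\Sp)}\leq \|WTW^{*}\|_{\La(T^{p}(\Sp))}\|Wf\|_{T^{p}(\Sp)}=\|WTW^{*}\|_{\La(T^{p}(\Sp))}\|f\|_{\Hp},
\]
so everything comes down to showing $WTW^{*}\in\La(T^{p}(\Sp))$ for all $p\in[1,\infty]$.

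For case \eqref{it:bdd1}, I would apply Corollary~\ref{cor:normalbounds} directly: under the stated hypotheses (the induced contact transformation $\hchi$ is globally defined with $\dom(\hchi)=\ran(\hchi)=\Sp$, and either $\Phi$ is linear in $\theta$ or $a$ vanishes for small $|\theta|$), Corollary~\ref{cor:normalbounds} gives $WTV^{*}\in\La(T^{p}(\Sp))$, where $V$ is any wave packet transform of the same type. Taking $V=W$ (which is permitted since $W$ itself has all the required properties), this yields $WTW^{*}\in\La(T^{p}(\Sp))$. One small point to check is that $\hchi$ is automatically bi-Lipschitz here, which follows from Lemma~\ref{lem:philip} since $\dom(\hchi)=\ran(\hchi)=\Sp$. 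For case \eqref{it:bdd2}, I would invoke Corollary~\ref{cor:FIObounds}, which states precisely that for a Fourier integral operator $T$ of order $0$ and type $(\rho,1-\rho,1)$ with $\rho\in(\tfrac12,1]$ associated with a local canonical graph and with compactly supported Schwartz kernel, one has $WTV^{*}\in\La(T^{p}(\Sp))$ for all $p\in[1,\infty]$; again taking $V=W$ gives the claim.

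To make the argument rigorous I would first establish it for $f$ in a dense subspace, namely $\Sw(\Rn)$, which is dense in $\Hp$ for $p<\infty$ by Proposition~\ref{prop:Schwartzdense}, and on which $Tf$ is well defined (as an element of $\Sw'(\Rn)$, or of $\Sw(\Rn)$ in the smoothing-corrected sense) and the identity $WTf=(WTW^{*})Wf$ holds by \eqref{eq:identity}. The bound above then shows $T$ extends to a bounded operator on $\Hp$ for $p<\infty$, and this bounded extension agrees with the action of $T$ on $\Sw'(\Rn)$ wherever the latter is defined. For $p=\infty$, density fails, so I would instead argue by duality: for $T$ as in either case, the transpose $T'$ (or a suitable adjoint) is again an operator of the same class (this requires a brief remark, e.g.\ that the class of normal oscillatory integral operators and of FIOs associated with canonical graphs is closed under taking adjoints, with the canonical relation replaced by its inverse), hence bounded on $\HT^{1}_{FIO}(\Rn)$ by the case already proved, and then Proposition~\ref{prop:duality} gives boundedness on $\HT^{\infty}_{FIO}(\Rn)=(\HT^{1}_{FIO}(\Rn))^{*}$. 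Alternatively, and more cleanly, one observes that $WTW^{*}\in\La(T^{\infty}(\Sp))$ is already part of the conclusions of Corollaries~\ref{cor:normalbounds} and~\ref{cor:FIObounds}, so the extension to $\Hps^{\infty}$ is obtained via the adjoint action $\lb Tf,g\rb=\lb f,T'g\rb$ exactly as in Remark~\ref{rem:p=infty}, with no genuine extra work.

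The only real obstacle is bookkeeping rather than mathematics: one must be careful that the various a priori definitions of $Tf$ (as an oscillatory integral, as an element of $\Sw'(\Rn)$, or modulo smoothing operators) are consistent with the bounded extension to $\Hp$, and that in case \eqref{it:bdd2} the decomposition $T=\sum_{j}T_{j,1}T_{j,2}+R$ from Proposition~\ref{prop:FIOnormal} is compatible with the wave packet conjugation — but this is already handled internally in the proof of Corollary~\ref{cor:FIObounds}, where one writes $WT_{j,1}T_{j,2}V^{*}=(WT_{j,1}W^{*})(WT_{j,2}V^{*})$ using that $W$ is an isometry. So the proof is essentially a two-line deduction from the corollaries of the previous sections, plus the density/duality boilerplate for the endpoint $p=\infty$.
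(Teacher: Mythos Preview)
Your proposal is correct and takes essentially the same approach as the paper: the paper's proof is literally the one-line chain $\|Tf\|_{\Hp}=\|WTW^{*}Wf\|_{T^{p}(\Sp)}\lesssim \|Wf\|_{T^{p}(\Sp)}=\|f\|_{\Hp}$, citing Corollaries~\ref{cor:normalbounds} and~\ref{cor:FIObounds}. Your additional care about density for $p<\infty$ and the $p=\infty$ endpoint is reasonable but not spelled out in the paper, which simply asserts the inequality for all $f\in\Hp$ (implicitly relying on the fact that $WTW^{*}$ is already defined on all of $T^{p}(\Sp)$, including $p=\infty$ via the adjoint extension of Remark~\ref{rem:p=infty}).
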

\begin{proof}
By Corollaries \ref{cor:normalbounds} and \ref{cor:FIObounds},
\[
\|Tf\|_{\Hp}=\|WTW^{*}Wf\|_{T^{p}(\Sp)}\lesssim \|Wf\|_{T^{p}(\Sp)}=\|f\|_{\Hp}
\]
for all $f\in\Hp$.
\end{proof}

Let $\{T_{i}\mid i\in I\}$ be a collection of operators as in \eqref{it:bdd1}, with uniform bounds in $i$ for the $S^{0}_{\frac{1}{2},\frac{1}{2},1}(\R^{2n})$ norms of the symbols $a_{i}$ and the bounds for the phase functions $\Phi_{i}$ from Definition \ref{def:operator}. Then it follows from Remark \ref{rem:bounds} that one has $\sup_{i\in I}\|T_{i}\|_{\La(\Hp)}<\infty$ for each $p\in[1,\infty]$.

\begin{corollary}\label{cor:wavebdd}
The families $(e^{it\sqrt{-\Delta}})_{t\in\R}$, $(\cos(t\sqrt{-\Delta}))_{t\in\R}$ and $(\sin(t\sqrt{-\Delta}))_{t\in\R}$ are locally uniformly bounded on $\Hp$ for all $p\in[1,\infty]$, and strongly continuous on $\Hp$ for $p<\infty$.
\end{corollary}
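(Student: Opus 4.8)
The plan is to reduce the statement to Theorem \ref{thm:FIObddHardy}\eqref{it:bdd1}, using a standard decomposition of the half-wave propagator into a low-frequency part and a high-frequency Fourier integral operator, and then to establish strong continuity by a density argument using Proposition \ref{prop:Schwartzdense}. First I would recall that $e^{it\sqrt{-\Delta}}$ can be written as $q_{0}(D)e^{it\sqrt{-\Delta}} + (1-q_{0}(D))e^{it\sqrt{-\Delta}}$, where $q_{0}\in C^{\infty}_{c}(\Rn)$ equals one near the origin. The low-frequency piece $q_{0}(D)e^{it\sqrt{-\Delta}}$ has symbol $q_{0}(\xi)e^{it|\xi|}$, which is compactly supported in $\xi$ and, for $t$ ranging over a compact interval, has all derivatives bounded uniformly; it is therefore a normal oscillatory integral operator with phase linear in $\theta$ (indeed it is just a nice Fourier multiplier whose symbol is a Schwartz function in $\xi$, so it acts boundedly on $\Hp$ by the reasoning of Remark \ref{rem:lowfrequency}/Corollary \ref{cor:equivalentnorm}, or directly from Theorem \ref{thm:FIObddHardy}\eqref{it:bdd1} with a cutoff symbol). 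The high-frequency piece has symbol $a_{t}(\xi) := (1-q_{0}(\xi))e^{it|\xi|}$ supported in $|\xi| \gtrsim 1$, and phase $\Phi_{t}(x,\eta) = x\cdot\eta + t|\eta|$; this phase is homogeneous of degree one in $\eta$, satisfies the uniform bounds \eqref{eq:phibounds} and \eqref{eq:phibounds2} locally uniformly in $t$ (the mixed Hessian $\partial^{2}_{x\eta}\Phi_{t}$ is the identity), and the induced contact transformation is the geodesic flow on $\Sp$ at time $t$, which is a global contact diffeomorphism; hence the hypotheses of Theorem \ref{thm:FIObddHardy}\eqref{it:bdd1} hold, and the operator is bounded on $\Hp$ for all $p\in[1,\infty]$. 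Local uniform boundedness then follows from the remark after Theorem \ref{thm:FIObddHardy} together with Remark \ref{rem:bounds}, since all the relevant $S^{0}_{\frac{1}{2},\frac{1}{2},1}$ seminorms of $a_{t}$ and the bounds on $\Phi_{t}$ are uniform for $t$ in a compact set. The cases of $\cos(t\sqrt{-\Delta})$ and $\sin(t\sqrt{-\Delta})$ follow by writing them as $\frac{1}{2}(e^{it\sqrt{-\Delta}}+e^{-it\sqrt{-\Delta}})$ and $\frac{1}{2i}(e^{it\sqrt{-\Delta}}-e^{-it\sqrt{-\Delta}})$.

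For strong continuity on $\Hp$ with $p<\infty$, I would use the standard $\veps/3$ argument. Fix $f\in\Hp$ and $t_{0}\in\R$. By Proposition \ref{prop:Schwartzdense}, pick $g\in\Sw(\Rn)$ with $\|f-g\|_{\Hp}$ small; by the local uniform boundedness just proved, $\|e^{it\sqrt{-\Delta}}(f-g)\|_{\Hp}$ is small uniformly for $t$ near $t_{0}$. For the Schwartz function $g$, one has $e^{it\sqrt{-\Delta}}g \to e^{it_{0}\sqrt{-\Delta}}g$ in $\Sw(\Rn)$ as $t\to t_{0}$ (differentiate under the integral sign in the Fourier representation; the symbol $e^{it|\xi|}$ is smooth in $t$ away from $\xi=0$ and contributes only polynomially growing factors, which are controlled by the rapid decay of $\hat g$), hence in $\Hp$ by the continuity of the embedding $\Sw(\Rn)\hookrightarrow\Hp$. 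Combining these three estimates gives $\|e^{it\sqrt{-\Delta}}f - e^{it_{0}\sqrt{-\Delta}}f\|_{\Hp}\to 0$.

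The main obstacle I anticipate is verifying cleanly that the high-frequency part genuinely falls under Theorem \ref{thm:FIObddHardy}\eqref{it:bdd1}: one must confirm that the contact transformation induced by $\Phi_{t}(x,\eta)=x\cdot\eta+t|\eta|$ has domain and range all of $\Sp$ (it is the time-$t$ geodesic flow, $\hchi_{t}(x,\omega)=(x+t\omega,\omega)$, which is clearly a global diffeomorphism of $\Sp$ with uniformly bounded derivatives for $t$ in a compact set) and that the symbol $a_{t}$ vanishes for $|\eta|$ small (built in by the cutoff $1-q_{0}$), so that the second alternative in hypothesis \eqref{it:bdd1} is satisfied. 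Once this is checked the rest is bookkeeping; the local uniformity in $t$ is then immediate from Remark \ref{rem:bounds}, and strong continuity is the routine density argument above.
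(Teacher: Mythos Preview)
Your overall strategy---split into a low-frequency multiplier and a high-frequency normal oscillatory integral operator, apply Theorem \ref{thm:FIObddHardy}\eqref{it:bdd1} to the latter, then use density for strong continuity---is exactly the paper's approach, and your treatment of the high-frequency piece and of strong continuity is correct. (One notational slip: once you put $t|\eta|$ into the phase $\Phi_{t}(x,\eta)=x\cdot\eta+t|\eta|$, the symbol is just $1-q_{0}(\eta)$, not $(1-q_{0}(\eta))e^{it|\eta|}$; you clearly mean this, since you go on to check the contact transformation correctly.)

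The gap is in the low-frequency piece. You assert that $q_{0}(\xi)e^{it|\xi|}$ ``has all derivatives bounded uniformly'' and is ``a Schwartz function in $\xi$''. This is false for $t\neq 0$: the function $|\xi|$ is not smooth at the origin, and $\partial_{\xi}^{\alpha}e^{it|\xi|}$ blows up like $|\xi|^{1-|\alpha|}$ there, so the multiplier is not even $C^{2}$ at $\xi=0$. In particular it is not an $S^{0}_{\frac{1}{2},\frac{1}{2},1}$ symbol (which must lie in $C^{\infty}(\R^{2n})$), and Theorem \ref{thm:FIObddHardy}\eqref{it:bdd1} with linear phase does not apply. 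Nor can you absorb this piece into the high-frequency operator, since with phase $x\cdot\eta+t|\eta|$ the symbol $q_{0}(\eta)$ fails both alternatives in \eqref{it:bdd1}. The paper handles this by observing instead that the convolution kernel $\F^{-1}(q_{0}(\cdot)e^{it|\cdot|})$ is smooth and decays like $|x|^{-(n+1)}$ (the singularity of $|\xi|$ at $0$ is homogeneous of degree $1$, which on the Fourier side gives decay of order $n+1$; alternatively note $\cos(t|\xi|)$ is a smooth function of $|\xi|^{2}$, while $\sin(t|\xi|)=|\xi|\cdot(\text{smooth in }|\xi|^{2})$). Hence $q_{0}(D)e^{it\sqrt{-\Delta}}$ is bounded on $L^{1}(\Rn)$, locally uniformly in $t$, and Lemma \ref{lem:lowfrequency} converts this into the required $\HT^{1}_{FIO}$ bound; the other $p$ follow by duality and interpolation.
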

\begin{proof}
It suffices to prove the statement for $(e^{it\sqrt{-\Delta}})_{t\in\R}$, and by Propositions \ref{prop:duality} and \ref{prop:interpolation} we may consider $p=1$ for the first statement. Let $q\in C^{\infty}_{c}(\Rn)$ be such that $q\equiv1$ near zero,  and $q(\zeta)=0$ for $|\zeta|\geq\frac{1}{2}$. Then $((1-q)(D)e^{it\sqrt{-\Delta}})_{t\in\R}$ is uniformly bounded on $\HT^{1}_{FIO}(\Rn)$, by Theorem \ref{thm:FIObddHardy} and the remark following it. On the other hand, $(q(D)e^{it\sqrt{-\Delta}})_{t\in\R}$ is a locally uniformly bounded subset of $\La(L^{1}(\Rn))$, because these are convolution operators with kernels that are locally uniformly bounded in $L^{1}(\Rn)$. In fact, the kernels are smooth with decay of order $|x-y|^{-(n+1)}$ as $|x-y| \to \infty$. Now Lemma \ref{lem:lowfrequency} yields
\begin{align*}
\|e^{it\sqrt{-\Delta}}f\|_{\HT^{1}_{FIO}(\Rn)}&\leq \|(1-q)(D)e^{it\sqrt{-\Delta}}f\|_{\HT^{1}_{FIO}(\Rn)}+\|q(D)e^{it\sqrt{-\Delta}}f\|_{\HT^{1}_{FIO}(\Rn)}\\
&\lesssim \|f\|_{\HT^{1}_{FIO}(\Rn)}+\|r(D)f\|_{L^{1}(\Rn)}\lesssim \|f\|_{\HT^{1}_{FIO}(\Rn)}
\end{align*}
for all $f\in\HT^{1}_{FIO}(\Rn)$, locally uniformly in $t$. 

The strong continuity for $p<\infty$ now follows from the strong continuity of these families on the Schwartz class, which is dense in $\Hp$ by Proposition \ref{prop:Schwartzdense}.
\end{proof}

It follows in particular that the initial value problem for the wave equation,
\[
\begin{cases} 
\partial_{t}^{2}u=\Delta u&\text{on }\R\times\Rn,\\
(u,\partial_{t}u)|_{t=0}=(f,g)&\text{on }\Rn,
\end{cases}
\]
has a unique solution $u\in C^{2}(\R;\Hp)\cap C(\R;\HT^{p,2}_{FIO}(\Rn))$ for $p<\infty$ if $f\in \HT^{p,2}_{FIO}(\Rn)$ and $g\in \HT_{FIO}^{p,1}(\Rn)$. Here $\HT^{p,s}_{FIO}(\Rn):=\lb D\rb^{-s}\Hp$ for $s\in\R$.

\begin{remark}\label{rem:otherwaves}
The statement of Corollary~\ref{cor:wavebdd} also holds when the flat Laplacian is replaced by a metric Laplacian $\Delta_g$, provided that the metric tensor $(g_{ij})$ is uniformly bounded in $C^\infty$, and uniformly elliptic, on $\R^n$. 
\end{remark}

\section{Sobolev embeddings}\label{sec:Sobolev}

In this section we prove Sobolev embeddings that relate $\Hp$ to the $L^{p}$-scale. 

For later use, we first prove in a fairly direct manner a version of one of the embeddings for $p=1$ that is optimal up to an $\veps$-loss of smoothness.

\begin{lemma}\label{lem:sobolevloss}
For all $\veps>0$ one has $\HT^{1}_{FIO}(\Rn)\subseteq W^{-\frac{n-1}{4}-\veps,1}(\Rn)$ continuously.
\end{lemma}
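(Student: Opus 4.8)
The plan is to prove the embedding by duality: it suffices to show that $W^{\frac{n-1}{4}+\veps,\infty}(\Rn)\subseteq \HT^{\infty}_{FIO}(\Rn)$ continuously, since $\HT^{\infty}_{FIO}(\Rn)$ is the dual of $\HT^{1}_{FIO}(\Rn)$ by Proposition \ref{prop:duality}, and $W^{-\frac{n-1}{4}-\veps,1}(\Rn)$ is the dual of $W^{\frac{n-1}{4}+\veps,\infty}(\Rn)$ in the appropriate sense. Actually, the cleanest route is probably to work directly: given $f\in\HT^{1}_{FIO}(\Rn)$, I would estimate $\|\lb D\rb^{-\frac{n-1}{4}-\veps}f\|_{L^{1}(\Rn)}$ by pairing with an arbitrary $g\in L^{\infty}(\Rn)$ and showing $|\lb \lb D\rb^{-\frac{n-1}{4}-\veps}f,g\rb|\lesssim \|f\|_{\HT^1_{FIO}}\|g\|_{L^\infty}$. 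Using \eqref{eq:identity}, write $f=W^{*}Wf$, so the pairing becomes $\lb Wf, W(\lb D\rb^{-\frac{n-1}{4}-\veps}\overline{g})\rb_{\Spp}$, and by the tent space duality (Lemma \ref{lem:tentdual}) this is bounded by $\|Wf\|_{T^1(\Sp)}\|W(\lb D\rb^{-\frac{n-1}{4}-\veps}g)\|_{T^\infty(\Sp)} = \|f\|_{\HT^1_{FIO}}\|\lb D\rb^{-\frac{n-1}{4}-\veps}g\|_{\HT^\infty_{FIO}}$. So the whole problem reduces to the single estimate
\[
\|\lb D\rb^{-\frac{n-1}{4}-\veps}g\|_{\HT^{\infty}_{FIO}(\Rn)}\lesssim \|g\|_{L^{\infty}(\Rn)}.
\]

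To prove this, I would use the characterization \eqref{eq:inftynorm} of the $\HT^\infty_{FIO}$ norm as a Carleson-type supremum. For $h:=\lb D\rb^{-\frac{n-1}{4}-\veps}g$, one needs to control $\frac{1}{V(B)}\int_{T(B)}|W_\sigma h(y,\nu)|^2\,\ud y\,\ud\nu\frac{\ud\sigma}{\sigma}$ uniformly over balls $B\subseteq\Sp$. In the high-frequency regime $\sigma<1$, we have $W_\sigma h(y,\nu)=\psi_{\nu,\sigma}(D)\lb D\rb^{-\frac{n-1}{4}-\veps}g(y)$. Since $\psi_{\nu,\sigma}$ is supported where $|\zeta|\eqsim\sigma^{-1}$, the factor $\lb D\rb^{-\frac{n-1}{4}-\veps}$ contributes a gain of $\sigma^{\frac{n-1}{4}+\veps}$. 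The key pointwise estimate is then $|W_\sigma h(y,\nu)|\lesssim \sigma^{\veps}\|g\|_{L^\infty}$: indeed, by \eqref{eq:boundspsiinverse} the convolution kernel $\F^{-1}(\psi_{\nu,\sigma})$ has $L^1$ norm $\lesssim \sigma^{-\frac{n-1}{4}}$ (this is the "uniformly bounded in $L^1$ after rescaling" statement at the end of Lemma \ref{lem:packetbounds}), so $|\psi_{\nu,\sigma}(D)u(y)|\lesssim \sigma^{-\frac{n-1}{4}}\|u\|_{L^\infty}$, and applying this with $u=\lb D\rb^{-\frac{n-1}{4}-\veps}g$ — whose $L^\infty$ norm is controlled by $\sigma^{\frac{n-1}{2}+\veps}\sigma^{-\frac{n-1}{4}}\|g\|_{L^\infty}$... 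I need to be careful here. The cleaner bookkeeping: $\psi_{\nu,\sigma}(D)\lb D\rb^{-\frac{n-1}{4}-\veps} = m_{\nu,\sigma}(D)$ where $m_{\nu,\sigma}(\zeta)=\psi_{\nu,\sigma}(\zeta)\lb\zeta\rb^{-\frac{n-1}{4}-\veps}$, and on the support of $\psi_{\nu,\sigma}$ one has $\lb\zeta\rb^{-\frac{n-1}{4}-\veps}\eqsim \sigma^{\frac{n-1}{4}+\veps}$, so $m_{\nu,\sigma}$ satisfies the same bounds as $\psi_{\nu,\sigma}$ times $\sigma^{\frac{n-1}{4}+\veps}$; by the argument proving \eqref{eq:boundspsiinverse}, $\|\F^{-1}m_{\nu,\sigma}\|_{L^1}\lesssim \sigma^{\frac{n-1}{4}+\veps}\cdot\sigma^{-\frac{n-1}{4}}=\sigma^{\veps}$. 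Hence $|W_\sigma h(y,\nu)|\lesssim\sigma^{\veps}\|g\|_{L^\infty}$.

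With this bound, for any ball $B$ of radius $r$ one has (using $T(B)\subseteq \Sp\times(0,r^2]$ and Lemma \ref{lem:doubling} for the measure of the $(y,\nu)$-slices)
\[
\frac{1}{V(B)}\int_{T(B)}|W_\sigma h(y,\nu)|^2\,\ud y\,\ud\nu\frac{\ud\sigma}{\sigma}\lesssim \|g\|_{L^\infty}^2\,\frac{1}{V(B)}\int_0^{\min(r^2,1)}\sigma^{2\veps}V(B_{\sqrt\sigma})\frac{\ud\sigma}{\sigma}\lesssim \|g\|_{L^\infty}^2,
\]
where the $\sigma$-integral converges at $0$ precisely because of the extra $\sigma^{2\veps}$ (this is where the $\veps$-loss is genuinely used), and the ratio $V(B_{\sqrt\sigma})/V(B)\lesssim 1$ for $\sigma\le r^2$. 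The low-frequency part $\sigma\ge1$ is handled separately and easily: $W_\sigma h=V(S^{n-1})^{-1/2}\ind_{[1,e]}(\sigma)r(D)h$ and $r(D)\lb D\rb^{-\frac{n-1}{4}-\veps}$ is convolution with an $L^1$ (indeed Schwartz) kernel, so $|W_\sigma h(y,\nu)|\lesssim\|g\|_{L^\infty}$ on the bounded $\sigma$-range $[1,e]$, contributing $\lesssim \|g\|_{L^\infty}^2$ to the Carleson average. Combining the two regimes gives $\|h\|_{\HT^\infty_{FIO}}\lesssim\|g\|_{L^\infty}$, completing the proof.

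\textbf{Main obstacle.} The only real subtlety is the uniform $L^1$ bound $\|\F^{-1}(\psi_{\nu,\sigma}\lb\cdot\rb^{-\frac{n-1}{4}-\veps})\|_{L^1}\lesssim\sigma^\veps$, i.e. keeping careful track of the anisotropic scaling in Lemma \ref{lem:packetbounds} so that the $\sigma$-powers balance to leave exactly $\sigma^\veps$; everything downstream is then a routine Carleson-measure computation. An alternative that avoids re-deriving kernel bounds would be to first prove the embedding $\HT^1_{FIO}(\Rn)\hookrightarrow L^1(\Rn)$ type estimate via the atomic decomposition of Lemma \ref{lem:atomictent} (using Remark \ref{rem:localatoms}), estimating the $W^{-\frac{n-1}{4}-\veps,1}$ norm of $W^*$ applied to a single atom, but the duality route above seems shortest.
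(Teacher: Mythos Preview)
Your proposal is correct and takes a genuinely different route from the paper's direct argument. The paper writes $f = W^*Wf$, splits into low and high frequencies, and for the high-frequency piece estimates the $W^{-\frac{n-1}{4}-\veps,1}$ norm directly: using $\|\F^{-1}(\sigma^{\frac{n-1}{4}}\psi_{\w,\sigma})\|_{L^1} \lesssim 1$ from Lemma~\ref{lem:packetbounds}, then Cauchy--Schwarz in $\sigma$ (which is exactly where the $\veps$ is spent), and finally the vertical-to-conical comparison \eqref{eq:vertical} to land in $T^{1}(\Sp)$. You instead dualize, reducing to the Carleson-type bound $\|\lb D\rb^{-\frac{n-1}{4}-\veps}g\|_{\HT^\infty_{FIO}} \lesssim \|g\|_{L^\infty}$, which you verify via the pointwise estimate $|W_\sigma(\lb D\rb^{-s}g)(y,\nu)|\lesssim \sigma^{\veps}\|g\|_{L^\infty}$. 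Both proofs ultimately rest on the same $L^1$ kernel bound for the wave packets; the paper's route avoids invoking the duality Proposition~\ref{prop:duality}, while yours trades the vertical-to-conical inequality for an elementary Carleson computation.

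Two small points to tighten. First, in your Carleson estimate the $(y,\nu)$-slice of $T(B)$ at height $\sigma$ is contained in $B$ itself (not in a ball of radius $\sqrt\sigma$), so the correct bound is simply $\frac{1}{V(B)}\int_0^{\min(r^2,1)}\sigma^{2\veps}\,V(B)\,\frac{\ud\sigma}{\sigma}=\int_0^{\min(r^2,1)}\sigma^{2\veps-1}\ud\sigma<\infty$; your use of $V(B_{\sqrt\sigma})$ is not right as written, though the conclusion survives. Second, for general $f\in\HT^1_{FIO}(\Rn)$ the bound $|\lb \lb D\rb^{-s}f, g\rb|\lesssim \|f\|_{\HT^1_{FIO}}\|g\|_{L^\infty}$ over $g\in\Sw$ only shows $\lb D\rb^{-s}f$ is a finite measure; to conclude it lies in $L^1$, first establish the estimate for $f\in\Sw(\Rn)$ (where $\lb D\rb^{-s}f\in L^1$ manifestly and $\|\lb D\rb^{-s}f\|_{L^1}=\sup\{|\lb \lb D\rb^{-s}f,g\rb|:g\in C_c^\infty,\|g\|_\infty\le1\}$), then extend by density using Proposition~\ref{prop:Schwartzdense}.
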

\begin{proof}
First set 
\[
\wt{\Psi}(\zeta):=\frac{|\zeta|^{-\frac{n-1}{4}-\veps}\Psi(\zeta)}{\big(\int_{0}^{\infty}|\tau\zeta|^{-\frac{n-1}{2}-2\veps}\Psi(\tau\zeta)^{2}\frac{\ud\tau}{\tau}\big)^{1/2}}
\]
for $\zeta\neq0$, and note that the denominator in this expression is a positive constant independent of $\zeta$, since $\Psi$ is radial. Next, set $\wt{\psi}_{\w,\sigma}(\zeta):=\ph_{\w,\sigma}(\zeta)\wt{\Psi}_{\sigma}(\zeta)$ for $\w\in S^{n-1}$ and $\sigma>0$, and note that these wave packets have the properties collected in Lemma \ref{lem:packetbounds}. Let $f\in\HT^{1}_{FIO}(\Rn)$ and write
\[
f=W^{*}Wf=\int_{0}^{1}\int_{S^{n-1}}\psi_{\w,\sigma}(D)^{2}f\ud\w\frac{\ud\sigma}{\sigma}+r(D)^{2}f.
\]
Since $r\in C^{\infty}_{c}(\Rn)$, Lemma \ref{lem:lowfrequency} (with $q=r$) yields
\[
\|r(D)^{2}f\|_{W^{-\frac{n-1}{4}-\veps,1}(\Rn)}\lesssim \|r(D)f\|_{L^{1}(\Rn)}\lesssim \|Wf\|_{T^{1}(\Sp)}=\|f\|_{\HT^{1}_{FIO}(\Rn)}.
\]
On the other hand, $\{\sigma^{\frac{n-1}{4}}\wt{\psi}_{\w,\sigma}(D)\mid\w\in S^{n-1},\sigma>0\}\subseteq \La(L^{1}(\Rn))$ is uniformly bounded, by Lemma \ref{lem:packetbounds}. Hence, using the Cauchy-Schwarz inequality and \eqref{eq:vertical} in the last two inequalities, one has
\begin{align*}
&\Big\|\int_{0}^{1}\int_{S^{n-1}}\psi_{\w,\sigma}(D)^{2}f\ud\w\frac{\ud\sigma}{\sigma}\Big\|_{W^{-\frac{n-1}{4}-\veps,1}(\Rn)}\\
&\lesssim \Big\||D|^{-\frac{n-1}{4}-\veps}\int_{0}^{1}\int_{S^{n-1}}\psi_{\w,\sigma}(D)^{2}f\ud\w\frac{\ud\sigma}{\sigma}\Big\|_{L^{1}(\Rn)}\\
&\lesssim\int_{0}^{1}\int_{S^{n-1}}\int_{\Rn}\sigma^{\frac{n-1}{4}+\veps}|\wt{\psi}_{\w,\sigma}(D)\psi_{\w,\sigma}(D)f(x)|\ud x\ud\w\frac{\ud\sigma}{\sigma}\\
&\lesssim\int_{\Sp}\int_{0}^{1}\sigma^{\veps}|\psi_{\w,\sigma}(D)f(x)|\frac{\ud\sigma}{\sigma}\ud x\ud\w\\
&\lesssim \int_{\Sp}\Big(\int_{0}^{1}|\psi_{\w,\sigma}(D)f(x)|^{2}\frac{\ud\sigma}{\sigma}\Big)^{1/2}\ud x\ud\w\lesssim \|Wf\|_{T^{1}(\Sp)}=\|f\|_{\HT^{1}_{FIO}(\Rn)},
\end{align*}
which concludes the proof.
\end{proof}

For the optimal Sobolev embeddings, we recall the definitions of $\HT^{1}(\Rn)$ and $\bmo(\Rn)$. Let $q\in C^{\infty}_{c}(\Rn)$ be such that $q\equiv1$ in a neighborhood of the origin. The local real Hardy space $\HT^{1}(\Rn)$ consists of all $f\in\Sw'(\Rn)$ such that $(1-q)(D)f\in H^{1}(\Rn)$ and $q(D)f\in L^{1}(\Rn)$, with
\[
\|f\|_{\HT^{1}(\Rn)}:=\|(1-q)(D)f\|_{H^{1}(\Rn)}+\|q(D)f\|_{L^{1}(\Rn)}.
\] 
Here $H^{1}(\Rn)$ is the real Hardy space of Fefferman and Stein. Similarly, $\bmo(\Rn)$ consists of all $f\in \Sw'(\Rn)$ such that $(1-q)(D)f\in\BMO(\Rn)$ and $q(D)f\in L^{\infty}(\Rn)$. For the basic theory of these spaces we refer to \cite{Stein93}.

\begin{remark}\label{rem:H1}
Up to norm equivalence, $\HT^{1}(\Rn)$ is independent of the choice of $q$. Let $q_* \in C_c^\infty(\Rn)$ be such that $q_{*}\equiv 1$ in a neighbourhood of the origin and such that $q_{*}(\zeta)=0$ if $|\zeta|>\frac{1}{2}$, and let $q^* \in C_c^\infty(\Rn)$ be such that $q^{*}(\zeta)=1$ if $|\zeta| \leq 4$. It is straightforward to check that an equivalent norm on $\HT^1(\Rn)$ is given by
\[
\|q^*(D)f\|_{L^{1}(\Rn)}+\|(1-q_*)(D)f\|_{H^{1}(\Rn)}
\]
for $f\in\HT^{1}(\Rn)$.
\end{remark}

In the spirit of this article, we shall prove one of the Sobolev embeddings using the tent space characterization of $H^{1}(\Rn)$. We recall that the tent space $T^{p}(\Rn)$, for $p\in[1,\infty)$, consists of all $F\in L^{2}_{\loc}(\Rn\times(0,\infty))$ such that
\[
\|F\|_{T^{p}(\Rn)}:=\Big(\int_{\Rn}\Big(\int_{0}^{\infty}\fint_{B_{\sigma}(x)}|F(y,\sigma)|^{2}\ud y\frac{\ud\sigma}{\sigma}\Big)^{p/2}\ud x\Big)^{1/p}<\infty.
\]
Moreover, $T^{p}(\Rn)$ has many of the same properties as $T^{p}(\Sp)$, collected in Section \ref{subsec:tent}. In particular, $T^{1}(\Rn)$ has an atomic decomposition. A $T^{1}(\Rn)$-atom, associated with a Euclidean ball $B\subseteq\Rn$ of radius $\tau>0$, is a function $A\in L^{2}(\Rn\times(0,\infty),\ud x\frac{\ud\sigma}{\sigma})$ with $\supp(A)\subseteq B\times[0,\tau]$ and $\|A\|_{L^{2}(\Rn\times(0,\infty))}\leq \tau^{-n/2}$. Then $T^{1}(\Rn)$ has a decomposition as in Lemma \ref{lem:atomictent} involving these atoms. 

The connection to the classical Hardy space $H^{1}(\Rn)$ is as follows. Let $\wt{\Psi}\in C_{c}^{\infty}(\Rn)$ be such that $\wt{\Psi}(\zeta)=1$ for $|\zeta|\in[\frac{1}{2},2]$. Then $f\in H^{1}(\Rn)$ if and only if the function $(x,\sigma)\mapsto \wt{\Psi}_{\sigma}(D)f(x)$ is in the tent space $T^{1}(\Rn)$, and the $T^{1}(\Rn)$-norm of this function is equivalent to the $H^{1}(\Rn)$-norm of $f$. Also, although the conical $T^{1}(\Rn)$-norm is not equivalent to its vertical analogue (see the remarks following \eqref{eq:vertical} in the analogous case of $T^{1}(\Sp)$), they are equivalent on the subspace obtained by embedding $H^{1}(\Rn)$. The corresponding statement for $\HT^{1}(\Rn)$ which is most useful for us is that (see \cite[Sections 1.5.1 and 2.4.2]{Triebel92})
\begin{equation}\label{eq:localHardy}
\|\lb D\rb^{s}f\|_{\HT^{1}(\Rn)}\eqsim \|q(D)f\|_{L^{1}(\Rn)}+\int_{\Rn}\Big(\int_{0}^{1}\sigma^{-2s}|\Psi_{\sigma}(D)f(x)|^{2}\frac{\ud\sigma}{\sigma}\Big)^{1/2}\ud x
\end{equation}
for all $s\in\R$ and $f\in\Sw(\Rn)$, where $q\in C^{\infty}_{c}(\Rn)$ is such that $q(\zeta)=1$ for $|\zeta|\leq 2$. 

The heart of the proof of one of the Sobolev embeddings is the following lemma about averaging operators with respect to balls on $\Sp$. Throughout this section, we write $m(\zeta):=\lb\zeta\rb^{-\frac{n-1}{4}}$ for $\zeta\in\Rn$. 

\begin{lemma}\label{lem:averaging}
For each $c>0$ there exists a constant $C\geq0$ such that the following holds. For $F\in T^{1}(\Rn)$ and $(x,\w,\sigma)\in\Spp$, set
\[
M_{\omega}F(x,\sigma):= \Big( \fint _{B_{c\sqrt{\sigma}}(x,\omega)} 
|m(D)\psi_{\nu,\sigma}(D)F(\cdot,\sigma)(y)|^{2} \ud y \ud\nu \Big)^{1/2}
\]
if $\sigma<1$, and $M_{\w}F(x,\sigma):=0$ if $\sigma\geq1$. Let $A$ be a $T^{1}(\Rn)$-atom associated with a ball $B\subseteq\Rn$ of radius $\tau\in(0,2]$. Then
\[
\int_{S^{n-1}} 
\|M_{\omega}A \|_{T^{1}(\R^{n})} \ud\omega 
 \leq C.
\]
\end{lemma}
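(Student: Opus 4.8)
The plan is to exploit the atomic structure of $A$ together with the off-singularity decay and the rescaled weights coming from Lemma~\ref{lem:packetbounds} and Lemma~\ref{lem:doubling}. Fix $c>0$, let $A$ be a $T^1(\Rn)$-atom supported in $B\times[0,\tau]$ with $B=B_{\tau}(x_0)\subseteq\Rn$, $\tau\in(0,2]$, and $\|A\|_{L^2(\Rn\times(0,\infty))}\le\tau^{-n/2}$. The first step is to record the pointwise decay of the kernel of $m(D)\psi_{\nu,\sigma}(D)$: writing it out as an oscillatory integral and integrating by parts (exactly as in the proof of \eqref{eq:boundspsiinverse}, using that $m(\zeta)\eqsim\sigma^{\frac{n-1}{4}}$ on $\supp(\psi_{\nu,\sigma})$), one gets, for each $N\ge0$, a bound of the form $|K^{m\psi}_{\nu,\sigma}(z,y)|\lesssim_N \sigma^{-\frac{n+1}{2}}(1+\sigma^{-1}|z-y|^2+\sigma^{-2}\lb\nu,z-y\rb^2)^{-N}$. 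Integrating this against $A(\cdot,\sigma)$ and using Cauchy--Schwarz in the $y$ variable over the anisotropic box where the weight is $\gtrsim 1$ (which has volume $\eqsim\sigma^{\frac{n+1}{2}}$) gives a pointwise estimate for $m(D)\psi_{\nu,\sigma}(D)A(\cdot,\sigma)(z)$ in terms of an anisotropic average of $|A(\cdot,\sigma)|$.

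The second step is to split the $x$-integral in $\|M_\omega A\|_{T^1(\Rn)}$ into the local part, where $d((x,\omega),(x_0,\nu))\lesssim\sqrt{\tau}$ for the relevant $\nu$ (equivalently, $x$ lies in a box of anisotropic size $\sqrt{\tau}\times\tau$ around $x_0$, whose Euclidean volume is $\eqsim\tau^{(n+1)/2}$), and the tail part. For the local part one uses Cauchy--Schwarz in $x$ against the indicator of that box, then Fubini and the $L^2\to L^2$ boundedness of the averaging operator and of $m(D)\psi_{\nu,\sigma}(D)$ (uniformly in $\nu$, $\sigma$, by Lemma~\ref{lem:packetbounds}), together with $\int_{S^{n-1}}\ud\nu$ being finite; the atom normalisation $\|A\|_{L^2}\le\tau^{-n/2}$ then closes this piece with a constant depending only on $c$ and $n$. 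For the tail, one uses the kernel decay from step one: if $x$ is at $d$-distance $\gtrsim 2^k\sqrt{\tau}$ from the support of $A(\cdot,\sigma)$, then $M_\omega A(x,\sigma)$ is bounded by $2^{-kN}$ times a local average, and summing the resulting geometric series over the dyadic annuli (whose $d$-volumes grow like $2^{2kn}\tau^{n}$ for $\sqrt{\sigma}\le\sqrt{\tau}\le 1$, by Lemma~\ref{lem:doubling}) against $\sqrt{\text{volume}}$, as in the atomic estimates in the proof of Theorem~\ref{thm:tentbounded}, again gives a bounded contribution once $N$ is chosen large enough. Integrating over $\omega\in S^{n-1}$ costs only the finite factor $V(S^{n-1})$.

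The main obstacle I expect is bookkeeping the anisotropy correctly: the ball $B_{c\sqrt\sigma}(x,\omega)$ in $\Sp$ projects to a Euclidean set that is a box of dimensions $\sqrt\sigma$ (in directions transverse to $\omega$) by $\sigma$ (in the $\omega$ direction), and the kernel weight $(1+\sigma^{-1}|z-y|^2+\sigma^{-2}\lb\nu,z-y\rb^2)^{-N}$ is adapted to exactly such a box but centred at a possibly different direction $\nu$ with $|\nu-\omega|\lesssim\sqrt\sigma$. One must check that for $|\nu-\omega|\lesssim\sqrt\sigma$ the $\nu$-box and the $\omega$-box are comparable (they are, since the tilt changes the $\omega$-component of a vector of length $\lesssim\sqrt\sigma$ by $O(\sigma)$), so that the composition of the averaging over $B_{c\sqrt\sigma}(x,\omega)$ with the kernel of $m(D)\psi_{\nu,\sigma}(D)$ still has kernel controlled by a single anisotropic weight adapted to $(x,\omega,\sigma)$ — which is precisely an off-singularity-type bound for $\hchi=\mathrm{id}$, after the change of variables $\sigma\mapsto\sqrt\sigma$ implicit in the tent-space geometry. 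Once this compatibility is in hand, the estimate reduces to the standard atomic argument already used twice in the paper (Proposition~\ref{prop:smoothingbounds} and Theorem~\ref{thm:tentbounded}), and the constant depends only on $c$ and $n$ as claimed.
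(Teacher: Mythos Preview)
There is a genuine gap in your treatment of the local part. Carry out the arithmetic you propose: the anisotropic box around $x_0$ has Euclidean volume $\eqsim\tau^{(n+1)/2}$, so Cauchy--Schwarz in $x$ gives
\[
\|M_\omega A\|_{T^1(\text{local})}\lesssim \tau^{\frac{n+1}{4}}\,\|M_\omega A\|_{L^2(\Rn\times(0,\infty))}.
\]
Your Fubini/$L^2$-boundedness argument is correct and yields $\|M_\omega A\|_{L^2}\lesssim\|A\|_{L^2}\le\tau^{-n/2}$ (the multiplier $m\psi_{\nu,\sigma}$ is indeed uniformly bounded, since $m(\zeta)\eqsim\sigma^{(n-1)/4}$ on $\supp(\psi_{\nu,\sigma})$ cancels the normalization $c_\sigma\eqsim\sigma^{-(n-1)/4}$). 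But then the local contribution for a \emph{single} $\omega$ is $\lesssim\tau^{(n+1)/4-n/2}=\tau^{-(n-1)/4}$, which blows up as $\tau\to 0$. Integrating over $\omega\in S^{n-1}$ does \emph{not} recover this loss: the atom $A$ is supported in a Euclidean ball, hence is not localized in direction, so there is no small-cap restriction on $\omega$ and you genuinely pick up the factor $V(S^{n-1})$ as you say --- but you are still left with $\tau^{-(n-1)/4}$.

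The missing ingredient is the smoothing of $m(D)=\lb D\rb^{-\frac{n-1}{4}}$ beyond mere $L^2$-boundedness. The paper exploits this via the Sobolev embedding $m(D):L^{p}(\Rn)\to L^2(\Rn)$ with $p=\frac{4n}{3n-1}$, together with H\"older on the compact support of the atom:
\[
\|m(D)A(\cdot,\sigma)\|_{L^2}^2\lesssim\|A(\cdot,\sigma)\|_{L^p}^2\lesssim\tau^{\frac{n-1}{2}}\|A(\cdot,\sigma)\|_{L^2}^2.
\]
This $\tau^{(n-1)/2}$ gain in the $L^2$-square is exactly what closes the local estimate. To make this work the paper first brings the $\omega$-integral \emph{inside} the square (Cauchy--Schwarz on $S^{n-1}$), so that after Fubini the full $\nu$-integral over $S^{n-1}$ appears and one can use $\int_{S^{n-1}}|\ph_{\nu,\sigma}(\zeta)|^2\,\ud\nu=1$ to strip away $\psi_{\nu,\sigma}$ and be left with $\|m(D)A(\cdot,\sigma)\|_{L^2}$. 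Only then does the Sobolev step apply. Your tail argument via kernel decay is essentially the paper's approach and should go through, but the local part needs this extra idea.
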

\begin{proof}
Without loss of generality, we may assume throughout that $A(\cdot,\sigma)=0$ for $\sigma\geq1$. Let $c_{B}\in\Rn$ be the center of $B$, and for $\w\in S^{n-1}$ set
\[
C_{0,\w}(B):= \{ y \in \R^{n} \mid |\langle \omega, c_{B}-y \rangle|+|c_{B}-y|^{2} \leq \tau\}
\]
and
\[
C_{j,\omega}(B) := \{ y \in \R^{n} \mid 2^{j-1}\tau < |\langle \omega, c_{B}-y \rangle|+|c_{B}-y|^{2} \leq 2^{j}\tau\}
\]
for $j\in\N$. Then $M_{\omega}A = \sum_{j=0}^{\infty} \ind_{C_{j,\omega}(B)}M_{\omega}A$, so it suffices to show that
\[
\int_{S^{n-1}} \|\ind_{C_{j,\omega}(B)}M_{\omega}A\|_{T^{1}(\R^{n})} \ud\omega\lesssim 2^{-j}
\]
for an implicit constant independent of $j\in\Z_{+}$ and $A$. Moreover, for all $\w\in S^{n-1}$ one has $M_{\w}A(\cdot,\sigma)=0$ for $\sigma>\tau$, and for $\sigma\leq \tau$ one has $B_{\sigma}(x)\cap C_{j,\w}(B)=\emptyset$ if $x\notin K_{j,\w}(B)$ for a set $K_{j,\w}(B)\subseteq\Rn$ with $V(K_{j,\w}(B))\lesssim (2^{j}\tau)^{\frac{n+1}{2}}$. Now, by the Cauchy-Schwarz inequality,
\begin{align*}
&\|\ind_{C_{j,\omega}(B)}M_{\omega}A\|_{T^{1}(\R^{n})}=\int_{K_{j,\w}(B)}\Big(\int_{0}^{\tau}\fint_{B_{\sigma}(x)}\ind_{C_{j,\w}(B)}(y)|M_{\w}A(y,\sigma)|^{2}\ud y\frac{\ud\sigma}{\sigma}\Big)^{1/2}\ud x\\
&\lesssim (2^{j}\tau)^{\frac{n+1}{4}} \Big(\int_{\Rn}\int_{0}^{\tau}\int_{\Rn}\ind_{B_{\sigma}(x)}(y)\ind_{C_{j,\omega}(B)}(y)|M_{\omega}A(y,\sigma)|^{2}\frac{\ud y}{V(B_{\sigma}(y))}\frac{\ud\sigma}{\sigma}\ud x\Big)^{1/2}\\
&=(2^{j}\tau)^{\frac{n+1}{4}}\Big(\int_{0}^{\tau}\int_{\Rn}\ind_{C_{j,w}(B)}(y)|M_{\w}A(y,\sigma)|^{2}\ud y\frac{\ud\sigma}{\sigma}\Big)^{1/2}.
\end{align*}
So it suffices to show that 
\[
\int_{S^{n-1}}\|\ind_{C_{j,\w}(B)}M_{\w}A\|_{L^{2}(\Rn\times(0,\infty),\ud x\ud\sigma/\sigma)}\ud\w\lesssim 2^{-j} (2^{j}\tau)^{-\frac{n+1}{4}}.
\]

We first consider $j\leq j_{0}$, for some fixed $j_{0}\in\N$ to be chosen later. Recall that $\int_{S^{n-1}} |\ph_{\nu,\sigma}(\zeta)|^{2} \ud\nu = 1$ for all $\zeta \neq0$. By combining this with the Sobolev embedding $m(D)=\lb D\rb^{-\frac{n-1}{4}} \in \mathcal{L}(L^{p}(\Rn),L^{2}(\Rn))$ for $p = \frac{4n}{3n-1}$, we obtain
\begin{align*}
&\Big(\int_{S^{n-1}} \|\ind_{C_{j,\w}(B)}M_{\w}A\|_{L^{2}(\Rn\times(0,\infty))} \ud\omega\Big)^{2} \lesssim \int_{S^{n-1}} \|M_{\omega}A\|_{L^{2}(\Rn\times(0,\infty))}^{2} \ud \omega \\
&= \!\int_{0}^{\tau}\!\int_{\Sp}\int_{\Sp}\!\ind_{B_{c\sqrt{\sigma}}(x,\w)}(y,\nu)|m(D)\psi_{\nu,\sigma}(D)A(y,\sigma)|^{2}\frac{\ud x\ud\w}{V(B_{c\sqrt{\sigma}}(y,\nu))}\ud y\ud\nu\frac{\ud\sigma}{\sigma}\\
&=\int_{0}^{\tau}\int_{\Sp}|m(D)\psi_{\nu,\sigma}(D)A(y,\sigma)|^{2}\ud y\ud\nu\frac{\ud\sigma}{\sigma}\\
&=(2\pi)^{-n}\int_{0}^{\tau}\int_{S^{n-1}} \int_{\Rn}
\big|m(\zeta){\ph}_{\nu,\sigma}(\zeta)\Psi(\sigma\zeta) \wh{A}(\cdot,\sigma)(\zeta)\big|^{2} \ud\zeta \ud\nu\frac{\ud\sigma}{\sigma}\\
&\lesssim \int_{0}^{\tau} 
\|m(D)A(\cdot,\sigma)\|_{L^{2}(\R^{n})} ^{2} \frac{\ud\sigma}{\sigma} 
 \lesssim \int_{0}^{\tau} \|A(\cdot,\sigma)\|_{L^{p}(\R^{n})} ^{2} \frac{\ud\sigma}{\sigma} \\
&\lesssim \int_{0}^{\tau} \tau^{\frac{n-1}{2}}
\|A(\cdot,\sigma)\|_{L^{2}(\R^{n})} ^{2} \frac{\ud\sigma}{\sigma} 
=\tau^{\frac{n-1}{2}} \|A\|_{L^{2}(\Rn\times(0,\infty))}^{2} \lesssim 2^{-2j}(2^{j}\tau)^{-\frac{n+1}{2}},
\end{align*}
where we also used the defining properties of the atom $A$.

We claim that, by choosing $j_{0}=j_{0}(c,n)$ sufficiently large, we can ensure that for all $j>j_{0}$, $\w\in S^{n-1}$, $\sigma\leq \tau$, $x \in C_{j,\omega}(B)$, $(y,\nu)\in B_{c\sqrt{\sigma}}(x,\w)$ and $z \in B$, one has
\[
(1+\sigma^{-1}|z-y|^{2}+\sigma^{-2}|\lb\nu,z-y\rb|^{2})^{-1}
\lesssim \frac{\sigma}{2^{j}\tau}.
\]
Indeed, Lemma \ref{lem:metric} yields a constant $c'=c'(n)>0$ such that, if $|\lb\w,c_{B}-x\rb|\geq 2^{j-2}\tau$, then
\begin{align*}
|\lb\nu,z-y\rb|&\geq |\lb\w,c_{B}-x\rb|-|\lb\nu-\w,c_{B}-x\rb|-|\lb \nu,z-c_{B}\rb|-|\lb \nu,x-y\rb|\\
&\geq 2^{j-2}\tau-c'c\sqrt{\sigma}2^{j/2}\sqrt{\tau}-\tau-c'c\sigma \gtrsim 2^{j}\tau
\end{align*}
for $j>j_{0}=j_{0}(c,c')$ large enough. Similarly, if $|c_{B}-x|^{2}\geq 2^{j-2}\tau$, then 
\[
|z-y|\geq |c_{B}-x|-|z-c_{B}|-|x-y|\geq \sqrt{2^{j-2}\tau}-\tau-c'c\sqrt{\sigma} \gtrsim 2^{j/2}\sqrt{\tau},
\]
thereby proving the claim.

Now fix $j>j_{0}$ and set
\[
\wt{\Psi}(\zeta):=\frac{|\zeta|^{-\frac{n-1}{4}}\Psi(\zeta)}{\big(\int_{0}^{\infty}|\tau\zeta|^{-\frac{n-1}{2}}\Psi(\tau\zeta)^{2}\frac{\ud\tau}{\tau}\big)^{1/2}}
\]
and $\wt{m}(\zeta)=|\zeta|^{\frac{n-1}{4}}m(\zeta)$
for $\zeta\neq0$. Let $\wt{\psi}_{\w,\sigma}(\zeta):=\ph_{\w,\sigma}(\zeta)\wt{\Psi}_{\sigma}(\zeta)$ for $\w\in S^{n-1}$ and $\sigma>0$. 
We combine the claim above with the properties of $\wt{\psi}_{\nu,\sigma}$ from Lemma \ref{lem:packetbounds}. This yields, for each $M\geq\frac{n+3}{2}$ and uniformly in $\omega \in S^{n-1}$,
\begin{align*}
&\|\ind_{C_{j,\omega}(B)} M_{\omega}A\|_{L^{2}(\Rn\times(0,\infty))} ^{2}\\
&=\int_{0}^{\tau} \int_{C_{j,\omega}(B)} \fint _{B_{c\sqrt{\sigma}}(x,\omega)} 
|m(D)\psi_{\nu,\sigma}(D)A(y,\sigma)|^{2}\ud y\ud\nu\ud x\frac{\ud\sigma}{\sigma}\\
&\eqsim\int_{0}^{\tau} \int_{C_{j,\omega}(B)} \fint _{B_{c\sqrt{\sigma}}(x,\omega)} 
|\sigma^{\frac{n-1}{4}}\wt{\psi}_{\nu,\sigma}(D)\wt{m}(D)A(y,\sigma)|^{2}\ud y\ud\nu\ud x\frac{\ud\sigma}{\sigma}\\
&=\int_{0}^{\tau} \int_{C_{j,\omega}(B)} \fint _{B_{c\sqrt{\sigma}}(x,\omega)}\!\Big(\int_{B} \sigma ^{\frac{n-1}{4}} |\mathcal{F}^{-1}(\wt{\psi}_{\nu,\sigma})(z-y)\wt{m}(D)A(z,\sigma)|\ud z\Big)^{2} \ud y \ud\nu \ud x \frac{\ud\sigma}{\sigma}\\
&\lesssim \int_{0}^{\tau} \int_{C_{j,\omega}(B)}\fint _{B_{c\sqrt{\sigma}}(x,\omega)} 
\Big(\int_{B} \sigma ^{-\frac{n+1}{2}} \Big(\frac{\sigma}{2^{j}\tau}\Big)^{M} |\wt{m}(D)A(z,\sigma)|\ud z\Big)^{2} \ud y \ud\nu \ud x \frac{\ud\sigma}{\sigma}\\
&\lesssim (2^{j}\tau)^{-n-1}\!\int_{0}^{\tau}\!\int_{C_{j,\omega}(B)}\fint_{B_{c\sqrt{\sigma}}(x,\omega)}\!\Big(\!\int_{B} \!\Big(\frac{\sigma}{2^{j}\tau}\Big)^{M-\frac{n+1}{2}} |\wt{m}(D)A(z,\sigma)|\ud z\!\Big)^{2} \ud y \ud\nu \ud x \frac{\ud\sigma}{\sigma} \\
&\lesssim 2^{-j(2M-n-1)}(2^{j}\tau)^{-n-1}\tau^{n}\!
\int_{0}^{\tau}\int_{C_{j,\omega}(B)} \fint _{B_{c\sqrt{\sigma}}(x,\omega)} 
\int_{B}|\wt{m}(D)A(z,\sigma)|^{2} \ud z \ud y \ud \nu \ud x \frac{\ud\sigma}{\sigma} \\
&\lesssim 2^{-j(2M-n-1)} (2^{j}\tau)^{-\frac{n+1}{2}}\tau^{n} \int_{0} ^{\infty} \|\wt{m}(D)A(\cdot,\sigma)\|_{L^{2}(\R^{n})} ^{2} 
\frac{\ud\sigma}{\sigma}\\
& \lesssim 2^{-2j} (2^{j}\tau)^{-\frac{n+1}{2}},
\end{align*}
where in the last step we also used that $\wt{m}\in L^{\infty}(\Rn)$. Finally, the inequality 
\[
\int_{S^{n-1}}\|\ind_{C_{j,\omega}(B)} M_{\omega}A\|_{L^{2}(\Rn\times(0,\infty))}\ud\w\lesssim \Big(\int_{S^{n-1}}\|\ind_{C_{j,\omega}(B)} M_{\omega}A\|_{L^{2}(\Rn\times(0,\infty))}^{2}\ud\w\Big)^{1/2}
\]
concludes the proof.
\end{proof}

We are now ready for the proof of the Sobolev embeddings.

\begin{theorem}\label{thm:Sobolev}
Let $p\in(1,\infty)$. Then the continuous embeddings 
\[
W^{\frac{n-1}{2}|\frac{1}{p}-\frac{1}{2}|,p}(\Rn)\subseteq\Hp\subseteq W^{-\frac{n-1}{2}|\frac{1}{p}-\frac{1}{2}|,p}(\Rn)
\]
hold. Moreover, 
\begin{equation}\label{eq:embedding1}
\HT^{1}(\Rn)\stackrel{\lb D\rb^{-\frac{n-1}{4}}}{\longrightarrow} \HT^{1}_{FIO}(\Rn)\stackrel{\lb D\rb^{-\frac{n-1}{4}}}{\longrightarrow}\HT^{1}(\Rn)
\end{equation}
and
\[
\bmo(\Rn)\stackrel{\lb D\rb^{-\frac{n-1}{4}}}{\longrightarrow} \HT^{\infty}_{FIO}(\Rn)\stackrel{\lb D\rb^{-\frac{n-1}{4}}}{\longrightarrow}\bmo(\Rn)
\]
continuously.
\end{theorem}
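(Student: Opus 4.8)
The plan is to prove the four Sobolev embeddings in Theorem~\ref{thm:Sobolev} by combining the special cases already available with interpolation and duality. The backbone is the case $p=1$, i.e.~the pair of embeddings in \eqref{eq:embedding1}, which I would prove first; everything else follows formally.

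\textbf{Step 1: the embedding $\lb D\rb^{-\frac{n-1}{4}}:\HT^{1}(\Rn)\to\HT^{1}_{FIO}(\Rn)$.} I would show that $\|\lb D\rb^{-\frac{n-1}{4}}f\|_{\HT^{1}_{FIO}(\Rn)}\lesssim\|f\|_{\HT^{1}(\Rn)}$ by testing against the atomic decomposition of $\HT^{1}(\Rn)$. Using Remark~\ref{rem:H1} one writes $f$ in terms of $T^{1}(\Rn)$-atoms associated with balls of radius at most $2$ (plus a harmless low-frequency piece handled by Lemma~\ref{lem:lowfrequency}-type estimates, i.e.~the mapping properties of $r(D)$ and $q(D)$ on $L^{1}$). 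For the high-frequency part, the point is that the $\HT^{1}_{FIO}$-norm of $\lb D\rb^{-\frac{n-1}{4}}g$ is controlled by $\int_{S^{n-1}}\|M_{\w}F\|_{T^{1}(\Rn)}\ud\w$, where $F(x,\sigma)=\wt\Psi_\sigma(D)g(x)$ encodes $g$ in $T^{1}(\Rn)$ and $M_\w$ is precisely the averaging operator from Lemma~\ref{lem:averaging}, up to writing $m(D)=\lb D\rb^{-\frac{n-1}{4}}$ and absorbing the extra Littlewood–Paley factor. Lemma~\ref{lem:averaging} then gives a uniform bound on $\int_{S^{n-1}}\|M_\w A\|_{T^{1}(\Rn)}\ud\w$ for each atom $A$, and summing over the atomic decomposition (using that $T^{1}(\Sp)$-atoms are uniformly bounded and the series converges, as in Lemma~\ref{lem:atomictent}) yields the claim.

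\textbf{Step 2: the embedding $\lb D\rb^{-\frac{n-1}{4}}:\HT^{1}_{FIO}(\Rn)\to\HT^{1}(\Rn)$.} Here I would use the tent-space characterization \eqref{eq:localHardy} of $\HT^{1}(\Rn)$: up to the low frequencies (handled by $q(D)$ bounds), $\|\lb D\rb^{-\frac{n-1}{4}}f\|_{\HT^{1}(\Rn)}$ is comparable to $\int_{\Rn}(\int_{0}^{1}\sigma^{\frac{n-1}{2}}|\Psi_\sigma(D)f(x)|^{2}\tfrac{\ud\sigma}{\sigma})^{1/2}\ud x$, a \emph{vertical} square function. By the reproducing identity $f=W^{*}Wf$ one writes $\Psi_\sigma(D)f$ as a superposition over $\nu$ of $\psi_{\nu,\sigma}(D)f$-type terms, and then estimates using $\int_{S^{n-1}}|\ph_{\nu,\sigma}(\zeta)|^{2}\ud\nu=1$ together with the $L^1$-bounds on the kernels $\sigma^{\frac{n-1}{4}}\F^{-1}(\psi_{\nu,\sigma})$ from Lemma~\ref{lem:packetbounds}. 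This reduces the vertical $\HT^{1}(\Rn)$ square function to $\int_{\Sp}(\int_0^1|\psi_{\nu,\sigma}(D)f(y)|^{2}\tfrac{\ud\sigma}{\sigma})^{1/2}\ud y\ud\nu$, which by \eqref{eq:vertical} (the $p=1$ vertical-vs-conical comparison on $T^{1}(\Sp)$) is dominated by $\|Wf\|_{T^{1}(\Sp)}=\|f\|_{\HT^{1}_{FIO}(\Rn)}$. Lemma~\ref{lem:sobolevloss} can serve as a sanity check / fallback for the $\veps$-lossy version.

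\textbf{Step 3: duality and interpolation.} The $\bmo$ statement follows from Step~1--2 by duality: Proposition~\ref{prop:duality} gives $(\HT^{1}_{FIO}(\Rn))^{*}=\HT^{\infty}_{FIO}(\Rn)$, the classical duality gives $(\HT^{1}(\Rn))^{*}=\bmo(\Rn)$, and $\lb D\rb^{-\frac{n-1}{4}}$ is (formally) self-adjoint, so dualizing \eqref{eq:embedding1} yields $\bmo(\Rn)\xrightarrow{\lb D\rb^{-\frac{n-1}{4}}}\HT^{\infty}_{FIO}(\Rn)\xrightarrow{\lb D\rb^{-\frac{n-1}{4}}}\bmo(\Rn)$ (with the usual care that $\lb D\rb^{-\frac{n-1}{4}}$ maps the relevant test/dual spaces appropriately, which is routine since $\lb\zeta\rb^{-s}$ is a smooth symbol). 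For $p\in(1,\infty)$, recall $\HT^{2}_{FIO}(\Rn)=L^{2}(\Rn)$ (Lemma~\ref{lem:L2}) and $\HT^{p,s}_{FIO}(\Rn)=\lb D\rb^{-s}\Hp$. Interpolating (Proposition~\ref{prop:interpolation} for the $\Hp$ scale, and the standard complex interpolation $[\HT^{1}(\Rn),L^{2}(\Rn)]_\theta=H^{p_\theta}$ / $[L^2,\bmo]_\theta$ and Sobolev-space interpolation on the other side) between the $p=1$ embeddings and the trivial $p=2$ identity, and using duality for $p>2$, gives $W^{\frac{n-1}{2}|\frac1p-\frac12|,p}(\Rn)\subseteq\Hp\subseteq W^{-\frac{n-1}{2}|\frac1p-\frac12|,p}(\Rn)$, since the interpolated order of smoothness is $(1-\theta)\cdot\frac{n-1}{4}=\frac{n-1}{2}|\frac1p-\frac12|$ when $\frac1p=\frac{1-\theta}{1}+\frac{\theta}{2}$. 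The main obstacle is Step~1: controlling the interaction between the anisotropic wave-packet balls $B_{c\sqrt\sigma}(x,\w)$ and a Euclidean atom for \emph{all} directions $\w$ simultaneously, so that the integral over $S^{n-1}$ converges — but this is exactly the content of Lemma~\ref{lem:averaging}, so the real work has been isolated there, and Step~1 becomes an organizational argument. The second obstacle, milder, is bookkeeping the low-frequency pieces and checking that the Riesz-type operator $\lb D\rb^{-\frac{n-1}{4}}$ interacts correctly with all the function-space identifications; these are standard and I would treat them briskly.
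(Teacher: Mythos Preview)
Your proposal is correct and follows essentially the same route as the paper: reduce to $p=1$ via Proposition~\ref{prop:interpolation} and Proposition~\ref{prop:duality}, prove the first embedding in \eqref{eq:embedding1} by passing to $T^{1}(\Rn)$-atoms and invoking Lemma~\ref{lem:averaging}, and prove the second embedding via the reproducing formula \eqref{eq:Csigma}, Minkowski, and the vertical-versus-conical inequality \eqref{eq:vertical}. One small correction: in Step~2, Lemma~\ref{lem:sobolevloss} is not merely a sanity check but is actually needed to bound the low-frequency term $\|q(D)f\|_{L^{1}(\Rn)}$ by $\|f\|_{\HT^{1}_{FIO}(\Rn)}$ (cf.~Remark~\ref{rem:why}); your ``handled by $q(D)$ bounds'' glosses over precisely this point.
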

\begin{proof}
By Propositions \ref{prop:interpolation} and \ref{prop:duality} and by basic facts about interpolation and duality of $\HT^{1}(\Rn)$ and $\bmo(\Rn)$, it suffices to prove \eqref{eq:embedding1}. 

Our proof of the first embedding in \eqref{eq:embedding1} is similar to that in \cite[Theorem 4.2]{Smith98a}, although we use tent spaces and Lemma \ref{lem:averaging} instead of the atomic decomposition of $\HT^{1}(\Rn)$. Let $q_* \in C_c^\infty(\Rn)$ be such that $q_{*}\equiv 1$ in a neighbourhood of the origin and such that $q_{*}(\zeta)=0$ if $|\zeta|>\frac{1}{2}$, and let $q^* \in C_c^\infty(\Rn)$ be such that $q^{*}(\zeta)=1$ if $|\zeta| \leq 4$. Let $f\in\HT^{1}(\Rn)$, and recall that $m(\zeta)=\lb\zeta\rb^{-\frac{n-1}{4}}$ for $\zeta\in\Rn$. By Remark \ref{rem:H1}, for the first embedding it suffices to show that
\begin{equation}\label{eq:Sobolev1}
\|Wm(D)f\|_{T^{1}(\Sp)}\lesssim \|q^{*}(D)f\|_{L^{1}(\Rn)}+\|(1-q_{*})(D)f\|_{H^{1}(\Rn)}.
\end{equation}
Write $f=f_{1}+f_{2}$ with $f_{1}:= q_{*}(D)f$ and $f_2 := (1-q_*)(D) f$. For $g\in\Sw'(\Rn)$ and $(x,\w,\sigma)\in\Spp$, set 
\[
W_{<1}g(x,\w,\sigma):=\ind_{(0,1)}(\sigma)Wg(x,\w,\sigma)
\]
and 
\[
W_{\geq 1}g(x,\w,\sigma):=\ind_{[1,\infty)}(\sigma)Wg(x,\w,\sigma).
\]
Note that, by construction, $W_{<1}m(D)f_1 = 0$. 

We first deal with the low-frequency components, $W_{\geq 1} m(D) f_1$ and $W_{\geq 1} m(D) f_2$. By Lemma~\ref{lem:lowfrequency} (with $q=r$) and because $q_{*}=q_{*}q^{*}$ and $r=rq^{*}$, we have 
\[
\| W_{\geq 1} m(D) f_1 \|_{T^1(\Sp)} \lesssim \| r(D)m(D) q_*(D) f \|_{L^1(\Rn)}\lesssim\|q^*(D) f \|_{L^1(\Rn)},
\]
and
\[
\| W_{\geq 1} m(D) f_2 \|_{T^{1}(\Sp)} \lesssim\| m(D) r(D)(1-q_{*})(D) f \|_{L^1(\Rn)}\lesssim \|q^{*}(D)f\|_{L^{1}(\Rn)}.
\]
Hence $\| W_{\geq 1} m(D) f \|_{T^1(S^* \Rn)} \lesssim \| q^*(D) f \|_{L^1(\Rn)}$.

It remains to consider $W_{<1} m(D)f_2$. Let $\wt{\Psi}\in C^{\infty}_{c}(\Rn)$ be such that $\wt{\Psi}(\zeta)=1$ if $|\zeta|\in[\frac{1}{2},2]$, and set $F(x,\sigma):=\wt{\Psi}_{\sigma}(D)f_{2}(x)$ for $x\in\Rn$ and $\sigma>0$. Then $F\in T^{1}(\Rn)$ and 
\[
\|F\|_{T^{1}(\Rn)}\eqsim \|f_{2}\|_{H^{1}(\Rn)}\eqsim \|f_{2}\|_{\HT^{1}(\Rn)}.
\]
Also note that there exists a $c>0$ such that $B_{\sqrt{\sigma}}(x,\w)\subseteq B_{c\sqrt{\sigma}}(z,\omega)$ for all $(x,w,\sigma)\in\Spp$ with $\sigma<1$ and all $z\in B_{\sigma}(x)$, by Lemma \ref{lem:metric}. Let $M_{\w}$ be as in Lemma \ref{lem:averaging}, for this $c$. Now use the fact that $\psi_{\w,\sigma}(D)=\psi_{\w,\sigma}(D)\wt{\Psi}_{\sigma}(D)$ for all $\w\in S^{n-1}$ and $\sigma>0$ to see that
\begin{align*}
&\int_{S^{n-1}}\|M_{\w}F\|_{T^{1}(\Rn)}\ud\w\\
&=\int_{\Sp}\Big(\int_{0}^{1}\fint_{B_{\sigma}(x)}\fint_{B_{c\sqrt{\sigma}}(z,\w)}|m(D)\psi_{\nu,\sigma}(D)F(\cdot,\sigma)(y)|^{2}\ud y\ud\nu\ud z\frac{\ud\sigma}{\sigma}\Big)^{1/2}\ud x\ud\w\\
&\gtrsim \int_{\Sp}\Big(\int_{0}^{1}\fint_{B_{\sigma}(x)}\fint_{B_{\sqrt{\sigma}}(x,\w)}|m(D)\psi_{\nu,\sigma}(D)f_{2}(y)|^{2}\ud y\ud\nu\ud z\frac{\ud\sigma}{\sigma}\Big)^{1/2}\ud x\ud\w\\
&=\|W_{<1}m(D)f_{2}\|_{T^{1}(\Sp)}.
\end{align*}
Hence, to conclude the proof of \eqref{eq:Sobolev1} and the first embedding in \eqref{eq:embedding1}, it suffices to show that
\[
\int_{S^{n-1}}\|M_{\w}F\|_{T^{1}(\Rn)}\ud\w\lesssim \|F\|_{T^{1}(\Rn)}.
\]
But this follows from Lemma \ref{lem:averaging}, since we may write $F=\sum_{k=1}^{\infty}\alpha_{k}A_{k}$ for a sequence of $T^{1}(\Rn)$-atoms $(A_{k})_{k=1}^{\infty}$ associated with balls of radius at most $2$ (see also Remark \ref{rem:localatoms}), and a sequence $(\alpha_{k})_{k=1}^{\infty}\subseteq\C$ such that $\|F\|_{T^{1}(\Rn)}\eqsim \sum_{k=1}^{\infty}|\alpha_{k}|$.

We now turn to the second embedding in \eqref{eq:embedding1}, for which we use the following reproducing formula: for all $\sigma>0$ there exists a $C_{\sigma}>0$ such that
\begin{equation}
\sigma ^{-\frac{n-1}{4}}\int_{S^{n-1}} \ph_{\omega,\sigma}(D)f \ud\omega = C_{\sigma}f
\label{eq:Csigma}\end{equation}
for all $f \in \Sw(\Rn)$. Moreover, one has $\frac{1}{C'}\leq C_{\sigma}\leq C'$ for all $\sigma>0$ and some constant $C'>0$ independent of $\sigma$. This is shown exactly as in \eqref{eq:csigma}, keeping in mind that $\ph_{\w,\sigma}(\zeta)=c_{\sigma}\ph(\tfrac{\hat{\zeta}-\w}{\sqrt{\sigma}})$ for $\zeta\neq0$, and that $c_{\sigma}\eqsim \sigma^{-\frac{n-1}{4}}$. 

Now let $f\in \Sw(\Rn)$. Since $\Sw(\Rn)\subseteq\HT^{1}_{FIO}(\Rn)$ is dense (see Proposition \ref{prop:Schwartzdense}), by \eqref{eq:localHardy} it suffices to show that
\[
\|q(D)f\|_{L^{1}(\Rn)}+\int_{\Rn}\Big(\int_{0}^{1}\sigma^{\frac{n-1}{2}}|\Psi_{\sigma}(D)f(x)|^{2}\frac{\ud\sigma}{\sigma}\Big)^{1/2}\ud x\lesssim \|f\|_{\HT^{1}_{FIO}(\Rn)},
\]
where $q\in C^{\infty}_{c}(\Rn)$ is such that $q(\zeta)=1$ for $|\zeta|\leq 2$. For the first term in this inequality one simply uses Lemma \ref{lem:sobolevloss}:
\[
\|q(D)f\|_{L^{1}(\Rn)}\lesssim \|f\|_{W^{-\frac{n-1}{4}-\veps,1}(\Rn)}\lesssim \|f\|_{\HT^{1}_{FIO}(\Rn)}
\]
for any $\veps>0$. 
For the second term, we use \eqref{eq:Csigma}, Minkowski's integral inequality and the bound for vertical square functions in terms of conical ones from \eqref{eq:vertical} to write  
\begin{align*}
& \int_{\Rn}\Big(\int_{0}^{1}\sigma^{\frac{n-1}{2}}|\Psi_{\sigma}(D)f(x)|^{2}\frac{\ud\sigma}{\sigma}\Big)^{1/2}\ud x\\
&\eqsim \int_{\Rn}\Big(\int_{0}^{1}|C_{\sigma}\sigma^{\frac{n-1}{4}}\Psi_{\sigma}(D)f(x)|^{2}\frac{\ud\sigma}{\sigma}\Big)^{1/2}\ud x\\
&=\int_{\Rn}\Big(\int_{0}^{1}\Big|\int_{S^{n-1}}\psi_{\w,\sigma}(D)f(x)\ud\w\Big|^{2}\frac{\ud\sigma}{\sigma}\Big)^{1/2}\ud x\\
&\leq \int_{\Sp}\Big(\int_{0}^{1}|\psi_{\w,\sigma}(D)f(x)|^{2}\frac{\ud\sigma}{\sigma}\Big)^{1/2}\ud x\ud\w\lesssim \|Wf\|_{T^{1}(\Sp)}=\|f\|_{\HT^{1}_{FIO}(\Rn)},
\end{align*}
thereby concluding the proof.
\end{proof}

\begin{remark}\label{rem:why}
The reason for first proving the suboptimal Sobolev embedding in Lemma \ref{lem:sobolevloss}, apart from the intrinsic interest in obtaining in a fairly straightforward manner a slightly weaker result, is that it allows us to estimate $\|q(D)f\|_{L^{1}(\Rn)}\lesssim \|f\|_{\HT^{1}_{FIO}(\Rn)}$ for any $f\in\HT^{1}_{FIO}(\Rn)$ and $q\in C^{\infty}_{c}(\Rn)$. One could alternatively try to obtain this estimate directly, and in fact it will follow a fortiori from Corollary \ref{cor:equivalentnorm} below.     
\end{remark}

As a corollary of Theorem \ref{thm:Sobolev}, we obtain two equivalent norms on $\Hp$. For $f\in\Sw'(\Rn)$ and $(x,\w)\in\Sp$, set
\[
Sf(x,\w):=\Big(\int_{0}^{1}\fint_{B_{\sqrt{\sigma}}(x,\w)}|\psi_{\nu,\sigma}(D)f(y)|^{2}\ud y\ud\nu\frac{\ud\sigma}{\sigma}\Big)^{1/2}
\]
and
\[
Q f(x,\w):=\sup_{B}\Big(\frac{1}{V(B)}\int_{T(B)}\ind_{[0,1]}(\sigma)|\psi_{\nu,\sigma}(D)f(y)|^{2}\ud y\ud\nu\frac{\ud\sigma}{\sigma}\Big)^{1/2},
\]
where the supremum is taken over all balls $B\subseteq\Sp$ containing $(x,\w)$.

\begin{corollary}\label{cor:equivalentnorm}
Let $q\in C^{\infty}_{c}(\Rn)$ be such that $q(\zeta)=1$ if $|\zeta|\leq 2$, and let $p\in[1,\infty]$ and $f\in\Sw'(\Rn)$. Then the following assertions hold.
\begin{enumerate}
\item For $p<\infty$, one has $f\in\Hp$ if and only if $Sf\in L^{p}(\Sp)$ and $q(D)f\in L^{p}(\Rn)$, and then 
\[
\|f\|_{\Hps}\!\eqsim \|Sf\|_{L^{p}(\Sp)}+\|q(D)f\|_{L^{p}(\Rn)}\!\eqsim\|Sf\|_{L^{p}(\Sp)}+\|f\|_{W^{-\frac{n-1}{2}|\frac{1}{p}-\frac{1}{2}|,p}(\Rn)}
\]
for implicit constants independent of $f$.
\item One has $f\in\HT^{\infty}_{FIO}(\Rn)$ if and only if $Qf\in L^{\infty}(\Sp)$ and $q(D)f\in L^{\infty}(\Rn)$, and then
\[
\|f\|_{\HT^{\infty}_{FIO}}\!\eqsim \|Q f\|_{L^{\infty}\!(\Sp)}+\|q(D)f\|_{L^{\infty}\!(\Rn)}\!\eqsim\! \|Q f\|_{L^{\infty}\!(\Sp)}+\|\lb D\rb^{-\frac{n-1}{4}}\!f\|_{\bmo(\Rn)}
\]
for implicit constants independent of $f$.
\end{enumerate}
\end{corollary}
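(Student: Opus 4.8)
The plan is to reduce both equivalences to the definition of $\Hp$ together with the Sobolev embeddings from Theorem~\ref{thm:Sobolev}, by decomposing the wave packet transform into its high-frequency part (which essentially gives $Sf$ or $Qf$) and its low-frequency part (which is controlled by $q(D)f$). The starting observation is that, by the definitions in \eqref{eq:pnorm} and \eqref{eq:inftynorm} and the way $W$ is built from the pieces $\psi_{\w,\sigma}(D)$ for $\sigma<1$ and $r(D)$ for $\sigma\in[1,e]$, one has $\|f\|_{\Hps}\eqsim \|Sf\|_{L^p(\Sp)}+\|r(D)f\|_{L^p(\Rn)}$ for $p<\infty$, and likewise $\|f\|_{\HT^{\infty}_{FIO}}\eqsim \|Qf\|_{L^\infty(\Sp)}+\|r(D)f\|_{L^\infty(\Rn)}$. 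Indeed, $\A(Wf)$ and $Sf$ differ only by the contribution of $\sigma\geq1$, and in that regime $Wf(x,\w,\sigma)=V(S^{n-1})^{-1/2}\ind_{[1,e]}(\sigma)r(D)f(x)$ is independent of $\w$, so that part of the square function is comparable to a fixed multiple of $|r(D)f(x)|$; the same reasoning applies to $\mathcal{C}(Wf)$ versus $Qf$ in the $L^\infty$ case, using that $T(B)$ for balls of bounded radius only sees $\sigma$ bounded. Thus it remains to show that, once $Sf$ (resp.\ $Qf$) is under control, the quantities $\|r(D)f\|_{L^p(\Rn)}$, $\|q(D)f\|_{L^p(\Rn)}$ and $\|f\|_{W^{-\frac{n-1}{2}|\frac1p-\frac12|,p}(\Rn)}$ are all mutually comparable, modulo $\|Sf\|_{L^p(\Sp)}$ (resp.\ $\|Qf\|_{L^\infty(\Sp)}$).

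For the comparison of the various low-frequency norms, I would argue as follows. Since $r$, $q$ and $\lb\zeta\rb^{-s}q(\zeta)$ (for any $s$) are all compactly supported with $q\equiv1$ on a neighbourhood of $\supp(r)$, the multipliers $r(D)$ and $q(D)\lb D\rb^{-s}$ differ from $q(D)$ only by composition with Schwartz-kernel convolution operators, so $\|r(D)f\|_{L^p}\lesssim\|q(D)f\|_{L^p}\eqsim\|\lb D\rb^{-s}q(D)f\|_{L^p}$; this handles the direction from $q(D)$ to $r(D)$. For the reverse direction — bounding $\|q(D)f\|_{L^p(\Rn)}$ by $\|f\|_{\Hps}$ — one invokes Theorem~\ref{thm:Sobolev}: the embedding $\Hp\subseteq W^{-\frac{n-1}{2}|\frac1p-\frac12|,p}(\Rn)$ (for $1<p<\infty$) and its endpoint analogues ($\HT^{1}_{FIO}(\Rn)\hookrightarrow \lb D\rb^{-\frac{n-1}{4}}\HT^1(\Rn)$, $\HT^{\infty}_{FIO}(\Rn)\hookrightarrow\lb D\rb^{-\frac{n-1}{4}}\bmo(\Rn)$) give $\|f\|_{W^{-r_p,p}(\Rn)}\lesssim\|f\|_{\Hps}$, and since $q(D)$ maps $W^{-r_p,p}$ boundedly into $L^p$ (the multiplier $q(\zeta)\lb\zeta\rb^{r_p}$ is a compactly supported smooth function, hence an $L^p$-Fourier multiplier for $1\le p\le\infty$, the endpoint cases being handled by the corresponding bounds for $\HT^1$ and $\bmo$), one gets $\|q(D)f\|_{L^p(\Rn)}\lesssim\|f\|_{\Hps}$. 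Combining with the trivial bound $\|Sf\|_{L^p(\Sp)}\le\|f\|_{\Hps}$ (resp.\ $\|Qf\|_{L^\infty}\le\|f\|_{\HT^\infty_{FIO}}$), one direction of the equivalence is complete.

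For the converse direction — that $f\in\Sw'(\Rn)$ with $Sf\in L^p(\Sp)$ and $q(D)f\in L^p(\Rn)$ already lies in $\Hp$ — I would reconstruct $Wf$ from these data. Write $Wf=W_{<1}f+W_{\geq1}f$ as in the proof of Theorem~\ref{thm:Sobolev}. The high-frequency piece satisfies $\|W_{<1}f\|_{T^p(\Sp)}\eqsim\|Sf\|_{L^p(\Sp)}$ directly from the definitions (for $p<\infty$; and $\eqsim\|Qf\|_{L^\infty}$ for $p=\infty$). The low-frequency piece $W_{\geq1}f(x,\w,\sigma)=V(S^{n-1})^{-1/2}\ind_{[1,e]}(\sigma)r(D)f(x)$; since $q\equiv1$ on $\supp(r)$, we have $r(D)f=r(D)q(D)f$, so this piece is controlled in $T^p(\Sp)$ by $\|q(D)f\|_{L^p(\Rn)}$ via the same Lemma (a version of) \ref{lem:lowfrequency}-type estimate already used repeatedly in the excerpt (an $L^p$ function convolved with a Schwartz kernel and placed at a single dyadic scale sits in $T^p(\Sp)$ with comparable norm). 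Adding the two pieces gives $Wf\in T^p(\Sp)$, i.e.\ $f\in\Hp$ with $\|f\|_{\Hps}\lesssim\|Sf\|_{L^p(\Sp)}+\|q(D)f\|_{L^p(\Rn)}$. The same scheme works verbatim for $p=\infty$ with $S$, $\A$ replaced by $Q$, $\mathcal{C}$. Finally, the second $\eqsim$ in each assertion (replacing $\|q(D)f\|_{L^p}$ by $\|f\|_{W^{-r_p,p}}$ resp.\ $\|\lb D\rb^{-\frac{n-1}{4}}f\|_{\bmo}$) follows from the boundedness of the two compactly supported smooth multipliers $q(\zeta)$ and $q(\zeta)\lb\zeta\rb^{r_p}$ in both directions, together with Theorem~\ref{thm:Sobolev} at the endpoints. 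I expect the only mildly delicate point to be the bookkeeping at $p=\infty$: one must check that the $\mathcal{C}$-functional only involves balls of controlled radius when comparing with $Qf$ and with $r(D)f\in L^\infty$, and that the duality-defined extension of $W$ from Remark~\ref{rem:p=infty} is consistent with this pointwise description; this is essentially contained in the proof of Lemma~\ref{lem:distributions} and in \eqref{eq:inftynorm}, so no genuinely new estimate is required.
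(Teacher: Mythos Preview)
Your approach is correct, but it differs from the paper's in the converse direction, and one step is stated more loosely than it deserves.

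First, your ``starting observation'' that the low-frequency part of $\A(Wf)(x,\w)$ is ``comparable to a fixed multiple of $|r(D)f(x)|$'' is not a pointwise fact: that part is the average
\[
\Big(\int_{1}^{e}\fint_{B_{\sqrt{\sigma}}(x,\w)}|r(D)f(y)|^{2}\,\ud y\,\ud\nu\,\frac{\ud\sigma}{\sigma}\Big)^{1/2},
\]
not $|r(D)f(x)|$ itself. What you actually need (and later invoke) is the $L^{p}$-equivalence $\|W_{\geq1}f\|_{T^{p}(\Sp)}\eqsim\|r(D)f\|_{L^{p}(\Rn)}$, i.e.\ an $L^{p}$ version of Lemma~\ref{lem:lowfrequency}. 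The paper only states that lemma for $p=1$; the extension is true (for $p\leq2$ the same Sobolev-embedding trick in the appendix works, using that $r(D)f$ has compact Fourier support so $\|r(D)f\|_{W^{m,p}}\eqsim\|r(D)f\|_{L^{p}}$; for $p>2$ the Hardy--Littlewood maximal function handles it), but it is not ``directly from the definitions'' and you should say so.

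Second, the paper sidesteps this entirely. For the bound $\|f\|_{\Hps}\lesssim\|Sf\|_{L^{p}}+\|q(D)f\|_{L^{p}}$ it does \emph{not} split $Wf$ into $W_{<1}f+W_{\geq1}f$. Instead it sets $F(x,\w,\sigma):=\ind_{[0,1]}(\sigma)\psi_{\w,\sigma}(D)f(x)$, checks that $W^{*}F=(1-r^{2})(D)f$ and $\|F\|_{T^{p}}=\|Sf\|_{L^{p}}$, and then uses the identity $(1-q)(1-r^{2})=(1-q)$ together with Corollary~\ref{cor:normalbounds} (boundedness of $W(1-q)(D)W^{*}$ on $T^{p}$, a pseudodifferential instance of the main off-singularity theorem) to get
\[
\|(1-q)(D)f\|_{\Hp}=\|W(1-q)(D)W^{*}F\|_{T^{p}}\lesssim\|F\|_{T^{p}}=\|Sf\|_{L^{p}}.
\]
The remaining piece $q(D)f$ is placed in $\Hp$ via the Sobolev embedding $W^{r_{p},p}\hookrightarrow\Hp$ from Theorem~\ref{thm:Sobolev}. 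This route recycles the heavy machinery already in place and needs no new low-frequency lemma; your route is more elementary in spirit but obliges you to supply the $L^{p}$ analogue of Lemma~\ref{lem:lowfrequency} before the argument closes.
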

\begin{proof}
First consider the case where $p\in[1,\infty)$, and set $s:=\frac{n-1}{2}|\frac{1}{p}-\frac{1}{2}|$. Suppose that $f\in \Hp$. Since $q\in C^{\infty}_{c}(\Rn)$, one has 
\begin{equation}\label{eq:Sobolevcompact}
\|q(D)f\|_{L^{p}(\Rn)}\eqsim \|q(D)f\|_{W^{s,p}(\Rn)}\eqsim \|q(D)f\|_{W^{-s,p}(\Rn)}\lesssim \|f\|_{W^{-s,p}(\Rn)}
\end{equation}
Also, 
\begin{equation}\label{eq:H1}
\|f\|_{W^{-\frac{n-1}{4},1}(\Rn)}\lesssim \|\lb D\rb^{-\frac{n-1}{4}}f\|_{\HT^{1}(\Rn)}
\end{equation}
and $\|Sf\|_{L^{p}(\Sp)}\leq \|f\|_{\Hp}$. Now Theorem \ref{thm:Sobolev} yields
\[
\|Sf\|_{L^{p}(\Sp)}+\|q(D)f\|_{L^{p}(\Rn)}\lesssim \|Sf\|_{L^{p}(\Sp)}+\|f\|_{W^{-s,p}(\Rn)}\lesssim \|f\|_{\Hp}.
\]

Next, suppose that $Sf\in L^{p}(\Sp)$ and $q(D)f\in L^{p}(\Rn)$. We first show that $(1-q)(D)f\in \Hp$ with
\begin{equation}\label{eq:qonHp}
\|(1-q)(D)f\|_{\Hp}\lesssim \|Sf\|_{L^{p}(\Sp)}.
\end{equation}
To this end, for $(x,\w,\sigma)\in\Spp$, set
\[
F(x,\w,\sigma):=\ind_{[0,1]}(\sigma)\psi_{\w,\sigma}(D)f(x).
\] 
Then $W^{*}F=(1-r^{2})(D)f$, as is straightforward to check, and $\|F\|_{T^{p}(\Sp)}=\|Sf\|_{L^{p}(\Sp)}$. Since $(1-q)(1-r^{2})=(1-q)$, \eqref{eq:qonHp} follows from Corollary \ref{cor:normalbounds}:
\begin{align*}
&\|(1-q)(D)f\|_{\Hp}=\|W(1-q)(D)f\|_{T^{p}(\Sp)}\\
&=\|W(1-q)(D)(1-r^{2})(D)f\|_{T^{p}(\Sp)}=\|W(1-q)(D)W^{*}F\|_{T^{p}(\Sp)}\\
&\lesssim \|F\|_{T^{p}(\Sp)}=\|Sf\|_{L^{p}(\Sp)}.
\end{align*}
Also, as in \eqref{eq:Sobolevcompact}, it is straightforward to see that
\begin{equation}\label{eq:H12}
\|\lb D\rb^{\frac{n-1}{4}}q(D)f\|_{\HT^{1}(\Rn)}\lesssim \|q(D)f\|_{L^{1}(\Rn)}.
\end{equation}
Now combine \eqref{eq:qonHp} with Theorem \ref{thm:Sobolev} and \eqref{eq:Sobolevcompact}, or \eqref{eq:H12} for $p=1$, to see that
\[
\|f\|_{\Hps}\leq \|(1-q)(D)f\|_{\Hps}\!+\|q(D)f\|_{\Hps}\!\lesssim \|Sf\|_{L^{p}(\Sp)}+\|q(D)f\|_{L^{p}(\Rn)}.
\]
This proves the required statements for $p\in[1,\infty)$. 

The proof for $p=\infty$ is similar, except that \eqref{eq:H1} needs to be replaced by
\[
\|q(D)f\|_{L^{\infty}(\Rn)}\lesssim \|\lb D\rb^{-\frac{n-1}{4}}f\|_{\bmo(\Rn)},
\]
and \eqref{eq:H12} by
\[
\|\lb D\rb^{\frac{n-1}{4}}q(D)f\|_{\bmo(\Rn)}\lesssim \|q(D)f\|_{L^{\infty}(\Rn)}.\qedhere
\]
\end{proof}

As another corollary, we extend the main results of \cite{SeSoSt91} and \cite{Coriasco-Ruzhansky14} to a wider class of oscillatory integrals (see also \cite{IsRoSt18} for recent related results). For $s\in\R$, write $\HT^{1,s}(\Rn):=\lb D\rb^{-s}\HT^{1}(\Rn)$ and $\bmo^{s}(\Rn):=\lb D\rb^{-s}\bmo(\Rn)$.

\begin{corollary}\label{cor:SSS}
Let $T$ be one of the following:
\begin{enumerate}
\item\label{it:SSS1} A normal oscillatory integral operator of order $0$ and type $(\frac{1}{2},\frac{1}{2},1)$ with symbol $a$ and phase function $\Phi$ with the following properties. The induced contact transformation $\hchi$ satisfies $\dom(\hchi)=\ran(\hchi)=\Sp$, and either $(z,\theta)\mapsto\Phi(z,\theta)$ is linear in $\theta$ or there exists an $\veps>0$ such that $a(z,\theta)=0$ for all $(z,\theta)\in \R^{2n}$ with $|\theta|<\veps$;
\item\label{it:SSS2} A Fourier integral operator of order $0$ and type $(\rho,1-\rho,1)$, for $\rho\in(\frac{1}{2},1]$, associated with a local canonical graph, such that the Schwartz kernel of $T$ has compact support.
\end{enumerate}
Then
\[
T:W^{\frac{n-1}{2}|\frac{1}{p}-\frac{1}{2}|,p}(\Rn)\to W^{-\frac{n-1}{2}|\frac{1}{p}-\frac{1}{2}|,p}(\Rn)
\]
continuously for all $p\in(1,\infty)$. Moreover, 
\[
T:\HT^{1,\frac{n-1}{4}}(\Rn)\to \HT^{1,-\frac{n-1}{4}}(\Rn)
\]
and
\[
T:\bmo^{\frac{n-1}{4}}(\Rn)\to \bmo^{-\frac{n-1}{4}}(\Rn)
\]
continuously.
\end{corollary}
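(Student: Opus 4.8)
The plan is to obtain this corollary essentially for free, by composing the invariance of the Hardy spaces $\Hp$ under $T$ (Theorem \ref{thm:FIObddHardy}) with the Sobolev embeddings of Theorem \ref{thm:Sobolev}. Nothing new has to be proved; the work is purely bookkeeping.

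For the first statement, fix $p\in(1,\infty)$ and set $s_{p}:=\frac{n-1}{2}|\frac{1}{p}-\frac{1}{2}|$. Theorem \ref{thm:Sobolev} provides the continuous inclusions $W^{s_{p},p}(\Rn)\subseteq\Hp\subseteq W^{-s_{p},p}(\Rn)$, and in either case \eqref{it:SSS1} or \eqref{it:SSS2} Theorem \ref{thm:FIObddHardy} gives $T\in\La(\Hp)$. Composing the three bounded maps
\[
W^{s_{p},p}(\Rn)\hookrightarrow\Hp\stackrel{T}{\longrightarrow}\Hp\hookrightarrow W^{-s_{p},p}(\Rn)
\]
yields the claimed boundedness $T:W^{s_{p},p}(\Rn)\to W^{-s_{p},p}(\Rn)$. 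Since $\Sw(\Rn)$ is dense in $W^{s_{p},p}(\Rn)$ for $p\in(1,\infty)$ and $T$ was already defined as a map $\Sw(\Rn)\to\Sw'(\Rn)$, the operator occurring here is the restriction of the extension of $T$ to $\Hp$, which agrees with $T$ on $\Sw(\Rn)$; so there is no ambiguity.

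For the endpoint statements I would argue in exactly the same way, using the second and third lines of Theorem \ref{thm:Sobolev}. Recalling $\HT^{1,s}(\Rn)=\lb D\rb^{-s}\HT^{1}(\Rn)$, the map $\lb D\rb^{-\frac{n-1}{4}}:\HT^{1}(\Rn)\to\HT^{1}_{FIO}(\Rn)$ in \eqref{eq:embedding1} is precisely the continuous inclusion $\HT^{1,\frac{n-1}{4}}(\Rn)\hookrightarrow\HT^{1}_{FIO}(\Rn)$, while the map $\lb D\rb^{-\frac{n-1}{4}}:\HT^{1}_{FIO}(\Rn)\to\HT^{1}(\Rn)$ is the continuous inclusion $\HT^{1}_{FIO}(\Rn)\hookrightarrow\lb D\rb^{\frac{n-1}{4}}\HT^{1}(\Rn)=\HT^{1,-\frac{n-1}{4}}(\Rn)$. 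Together with $T\in\La(\HT^{1}_{FIO}(\Rn))$ from Theorem \ref{thm:FIObddHardy}, composing gives $T:\HT^{1,\frac{n-1}{4}}(\Rn)\to\HT^{1,-\frac{n-1}{4}}(\Rn)$. The identical chain, with $\bmo(\Rn)$ in place of $\HT^{1}(\Rn)$, $\HT^{\infty}_{FIO}(\Rn)$ in place of $\HT^{1}_{FIO}(\Rn)$, and $\bmo^{\pm\frac{n-1}{4}}(\Rn)$ in place of $\HT^{1,\pm\frac{n-1}{4}}(\Rn)$, gives the $\bmo$ statement.

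Since every ingredient is already in hand, there is essentially no obstacle: the only point needing a little care is translating the arrows decorated by $\lb D\rb^{\pm\frac{n-1}{4}}$ in \eqref{eq:embedding1} and its $\bmo$-analogue into genuine continuous inclusions of the weighted spaces $\HT^{1,\pm\frac{n-1}{4}}(\Rn)$ and $\bmo^{\pm\frac{n-1}{4}}(\Rn)$, and checking that the extension of $T$ to $\Hp$ restricts correctly so that the composition is meaningful on $\Sw(\Rn)$ and extends by density. I would also remark that the improvement over \cite{SeSoSt91} and \cite{Coriasco-Ruzhansky14} — namely the removal of the compact support hypothesis in case \eqref{it:SSS1} and the enlarged symbol class $S^{0}_{\frac{1}{2},\frac{1}{2},1}$ — is already built into Theorems \ref{thm:FIObddHardy} and \ref{thm:Sobolev}, so no additional argument is required here.
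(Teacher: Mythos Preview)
Your proposal is correct and takes exactly the same approach as the paper, which simply states that the corollary follows directly from Theorems \ref{thm:FIObddHardy} and \ref{thm:Sobolev}. You have merely spelled out the composition in more detail than the paper does.
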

\begin{proof}
This follows directly from Theorems \ref{thm:FIObddHardy} and \ref{thm:Sobolev}.
\end{proof}

\begin{remark}\label{rem:Lp}
One can extend Corollary \ref{cor:SSS} to general Sobolev spaces if $T$ is either a translation-invariant operator as in \eqref{it:SSS1} or an operator as in \eqref{it:SSS2}. Then, for all $p \in (1, \infty)$ and $s \in \R$, one has
\[
T:W^{s,p}(\Rn)\to W^{s-(n-1)|\frac{1}{p}-\frac{1}{2}|,p}(\Rn)
\]
and 
\[
T:\HT^{1,s}(\Rn)\to \HT^{1,s-\frac{n-1}{2}}(\Rn),\quad T:\bmo^{s}(\Rn)\to \bmo^{s-\frac{n-1}{2}}(\Rn).
\]
For translation-invariant operators as in \eqref{it:SSS1}, this follows from the fact that $T$ commutes with $\lb D\rb^{s}$ for all $s\in\R$. For operators as in \eqref{it:SSS2}, we can use Egorov's theorem to write $T \lb D\rb^{s} = A \, \lb D \rb^{s} T + R$ for a suitable pseudodifferential operator $A$ of order zero and a smoothing operator $R$. 

It is also worth noting that our proof of Corollary \ref{cor:SSS} does not rely a priori on the $L^{2}$-boundedness of Fourier integral operators. In fact, Corollary \ref{cor:SSS} directly recovers the $L^{2}$-boundedness of both pseudodifferential operators and FIOs with suitable exotic symbols. 
\end{remark}

\begin{remark}\label{rem:sharpness}
It is known that Corollary \ref{cor:SSS} is sharp with respect to the Sobolev exponents (see e.g.~\cite[Section IX.6.13]{Stein93}). It follows that the Sobolev exponents in Theorem \ref{thm:Sobolev} cannot be improved either. 
\end{remark}

\section{Molecular decomposition}\label{sec:decomp}

In this section we obtain a molecular decomposition for $\HT^{1}_{FIO}(\Rn)$. The decomposition and the arguments used to obtain it are similar to those in \cite{Smith98a}. However, we cannot appeal \emph{a priori} to those results, because we use the molecular decomposition to argue that $\HT^{1}_{FIO}(\Rn)$ is a Sobolev space over the space in \cite{Smith98a} (see Remark \ref{rem:Hartspace}). Moreover, instead of constructing the $\HT^{1}_{FIO}(\Rn)$-molecules `by hand', we use the established atomic decomposition of the tent space $T^{1}(\Sp)$ and project these atoms down to $\HT^{1}_{FIO}(\Rn)$ using the adjoint of the wave packet transform.

Let $s>\frac{n}{2}$ and $C\geq0$. We call a function $f\in L^{2}(\Rn)$ a \emph{coherent molecule} of type $(s,C)$ associated with a ball $B_{\sqrt{\tau}}(y,\nu)$, for $(y,\nu)\in\Sp$ and $\tau>0$, if 
\begin{equation}\label{eq:supportmolecule}
\supp(\wh{f}\,)\subseteq \{\zeta\in\Rn\mid |\zeta|\geq \tau^{-1}, |\hat{\zeta}-\nu|\leq \sqrt{\tau}\}
\end{equation}
and 
\begin{equation}\label{eq:decaymolecule}
\int_{\Rn}(1+\tau^{-1}|\lb\nu,x-y\rb|)^{2s}(1+\tau^{-1}|x-y|^{2})^{2s}|f(x)|^{2}\ud x\leq C\tau^{-n}.
\end{equation}
The choice of $s>\frac{n}{2}$ and $C\geq0$ is irrelevant for most purposes, but our methods only yield a molecular decomposition with $C\geq0$ sufficiently large. Note that \eqref{eq:supportmolecule} implies that $f$ satisfies a strong cancellation condition. Note also that a coherent molecule is not compactly supported unless $f=0$; it is merely concentrated on an anisotropic ball around $y$. This is because, throughout, we work with wave packets $\psi_{\w,\sigma}$ that have compact support. In fact, each such wave packet is a coherent molecule of type $(s,C_{s})$ associated with the ball $B_{2\sqrt{\sigma}}(0,\w)$, for each $s>\frac{n}{2}$ and some $C_{s}\geq0$. If one were instead to use wave packets $\psi_{\w,\sigma}$ such that $\F^{-1}(\psi_{\w,\sigma})$ has compact support, then the procedure below would yield a decomposition into compactly supported atoms whose Fourier transform is concentrated on suitable cones. 

\begin{remark}\label{rem:molecule}
The terminology `coherent molecule' comes from \cite{Smith98a}. Essentially the same molecules are considered there, except that $f$ is replaced by $\lb D\rb^{\frac{n-1}{4}}f$, and the weight in \eqref{eq:decaymolecule} is replaced by a weight equivalent to
\[
(1+\tau^{-1}(|\lb\nu,x-y\rb|+|x-y|^{2}))^{2s}\eqsim (1+\tau^{-1}d((x,\nu),(y,\nu))^{2})^{2s}.
\]
One has
\begin{align*}
1+\tau^{-1}(|\lb\nu,x-y\rb|+|x-y|^{2})&\leq (1+\tau^{-1}|\lb\nu,x-y\rb|)(1+\tau^{-1}|x-y|^{2})\\
&\leq (1+\tau^{-1}(|\lb\nu,x-y\rb|+|x-y|^{2}))^{2},
\end{align*}
so that up to a change in $s$ either of the two weights can be used; we work with the weight in \eqref{eq:decaymolecule} because it is more convenient when integrating by parts.
\end{remark}

We need a lemma, similar to \cite[Lemma 2.14]{Smith98a}, about the mapping properties of the wave packet transform with respect to weighted $L^{2}$-spaces. Throughout, for $(x,\nu,\tau)\in\Spp$, we let
\begin{equation}\label{eq:moleculeweight}
m^{\nu,\tau}(x):=(1+\tau^{-2}\lb\nu,x\rb^{2})^{1/2}(1+\tau^{-1}|x|^{2}).
\end{equation}
Note that the weight occurring in \eqref{eq:decaymolecule} is equivalent to $m^{\nu,\tau}(\cdot-y)^{2s}$. 

\begin{lemma}\label{lem:weightedL2}
For all $s,\kappa\geq0$ there exists a constant $C\geq0$ such that the following holds. Fix $(y,\nu,\tau)\in\Spp$ and set
\[
E:=\{(\w,\sigma)\in S^{n-1}\times(0,1)\mid \sigma\leq \kappa\tau,|\w-\nu|\leq \kappa\sqrt{\tau}\}
\]
and $W_{E}f:=\ind_{E}Wf$ for $f\in\Sw(\Rn)$. Let $V_{E}:=W^{*}_{E}$ be the adjoint of $W_{E}\in \La(L^{2}(\Rn),L^{2}(\Spp))$. For $(x,\w,\sigma)\in\Sp$, set $m_{1}(x):=m^{\nu,\tau}(x-y)^{2s}$ and $m_{2}(x,\w,\sigma):=m_{1}(x)$. Then 
\[
\big\|W_{E}\big\|_{\La(L^{2}(\Rn,m_{1}),L^{2}(\Spp,m_{2}))}\leq C
\]
and 
\[
\big\|V_{E}\big\|_{\La(L^{2}(\Spp,m_{2}),L^{2}(\Rn,m_{1}))}\leq C.
\]
\end{lemma}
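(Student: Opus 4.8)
The two bounds are adjoint to each other in the weighted sense, so it suffices to prove the first, that $W_E$ is bounded from $L^2(\Rn,m_1)$ to $L^2(\Spp,m_2)$; the bound for $V_E$ then follows by taking adjoints with respect to the weighted inner products, noting that $m_1$ depends only on the base point $x$ and $m_2(x,\w,\sigma)=m_1(x)$, so the weights interact cleanly with the pairing $\lb W_E f,G\rb_{\Spp}=\lb f,V_E G\rb$. Concretely, I would first reduce to showing
\[
\int_E\int_{\Rn}m_1(x)\,|\psi_{\w,\sigma}(D)f(x)|^2\ud x\,\ud\w\frac{\ud\sigma}{\sigma}\leq C\int_{\Rn}m_1(x)|f(x)|^2\ud x,
\]
since on $E$ one has $\sigma<1$ and the low-frequency piece of $W$ does not occur.

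The heart of the argument is a weighted kernel estimate for the single operator $\psi_{\w,\sigma}(D)$. Writing $\psi_{\w,\sigma}(D)f(x)=\int_{\Rn}\F^{-1}(\psi_{\w,\sigma})(x-z)f(z)\ud z$, I would combine the pointwise decay of $\F^{-1}(\psi_{\w,\sigma})$ from \eqref{eq:boundspsiinverse} in Lemma \ref{lem:packetbounds},
\[
|\F^{-1}(\psi_{\w,\sigma})(x-z)|\leq C_N\sigma^{-\frac{3n+1}{4}}(1+\sigma^{-1}|x-z|^2+\sigma^{-2}\lb\w,x-z\rb^2)^{-N},
\]
with the observation that on $E$ one has $\sigma\leq\kappa\tau$ and $|\w-\nu|\leq\kappa\sqrt\tau$, so that the weight $m^{\w,\sigma}(x-z)$ built from $(\sigma,\w)$ is comparable, uniformly in $(\w,\sigma)\in E$, to a fixed power of $1+\sigma^{-1}|x-z|^2+\sigma^{-2}\lb\w,x-z\rb^2$, while at the same time $m_1(x)=m^{\nu,\tau}(x-y)^{2s}\lesssim m^{\nu,\tau}(z-y)^{2s}\,(1+\tau^{-1}|x-z|^2+\tau^{-2}\lb\nu,x-z\rb^2)^{2s}$ by a Peetre-type inequality for the weight $m^{\nu,\tau}$. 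Since $\sigma\leq\kappa\tau$, the factor $1+\tau^{-1}|x-z|^2+\tau^{-2}\lb\nu,x-z\rb^2$ is dominated by $1+\sigma^{-1}|x-z|^2+\sigma^{-2}\lb\w,x-z\rb^2$ up to the constant $\kappa$ and a harmless error from replacing $\w$ by $\nu$ (controlled using $|\w-\nu|\leq\kappa\sqrt\tau$ and $|x-z|\leq$ the same anisotropic quantity). Choosing $N=N(n,s)$ large enough, these factors are absorbed, leaving an anisotropic kernel with integrable tails.

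With the weighted kernel bound in hand, I would run a Schur test for the operator $T_{\w,\sigma}:=m_1^{1/2}\psi_{\w,\sigma}(D)m_1^{-1/2}$ on $L^2(\Rn)$: the estimate just described shows that its kernel $\tilde K_{\w,\sigma}(x,z)=m_1(x)^{1/2}\F^{-1}(\psi_{\w,\sigma})(x-z)m_1(z)^{-1/2}$ satisfies
\[
|\tilde K_{\w,\sigma}(x,z)|\leq C\,\sigma^{-\frac{3n+1}{4}}\big(1+\sigma^{-1}|x-z|^2+\sigma^{-2}\lb\w,x-z\rb^2\big)^{-N},
\]
and an anisotropic change of variables (scaling $\lb\w,\cdot\rb$ by $\sigma^{-1}$ and the orthogonal directions by $\sigma^{-1/2}$, as in the final sentence of Lemma \ref{lem:packetbounds}) shows that both $\sup_x\int|\tilde K_{\w,\sigma}(x,z)|\ud z$ and $\sup_z\int|\tilde K_{\w,\sigma}(x,z)|\ud x$ are bounded by a constant times $\sigma^{-\frac{3n+1}{4}}\cdot\sigma^{\frac{n+1}{2}}\cdot\sigma^{\frac{n-1}{4}}=1$, uniformly in $(\w,\sigma)\in E$. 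Hence $\|\psi_{\w,\sigma}(D)\|_{\La(L^2(\Rn,m_1))}\leq C$ uniformly on $E$. Finally, since $\{\psi_{\w,\sigma}\}$ satisfies the reproducing identity $\int_0^1\int_{S^{n-1}}\psi_{\w,\sigma}(\zeta)^2\ud\w\frac{\ud\sigma}{\sigma}+r(\zeta)^2=1$, one has $\int_E|\psi_{\w,\sigma}(\zeta)|^2\ud\w\frac{\ud\sigma}{\sigma}\leq1$; combining this with the uniform weighted bound (via a standard argument: expand $\|f\|_{L^2(m_1)}^2$ using the identity and dominate each piece) gives the integrated estimate above, with the low-frequency contribution trivially controlled because $m_1\lesssim\lb\cdot\rb^{4s}$ times constants local to $y$ and $r(D)$ maps weighted $L^2$ to itself.

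\textbf{Main obstacle.} The delicate point is the interplay of the \emph{two different anisotropies}: the weight $m_1$ is anisotropic in the direction $\nu$ at scale $\tau$, while the kernel of $\psi_{\w,\sigma}(D)$ is anisotropic in the direction $\w$ at scale $\sigma$. Making rigorous the claim that these are compatible on $E$ — i.e.\ that $1+\tau^{-1}|v|^2+\tau^{-2}\lb\nu,v\rb^2\lesssim_\kappa 1+\sigma^{-1}|v|^2+\sigma^{-2}\lb\w,v\rb^2$ for all $v\in\Rn$ when $\sigma\leq\kappa\tau$ and $|\w-\nu|\leq\kappa\sqrt\tau$ — requires splitting according to whether $|v|\lesssim\sqrt\tau$ or $|v|\gtrsim\sqrt\tau$ and carefully estimating $\lb\nu,v\rb$ versus $\lb\w,v\rb$ using $|\lb\w-\nu,v\rb|\leq|\w-\nu|\,|v|\leq\kappa\sqrt\tau|v|$, which is absorbed into $\tau^{-1}|v|^2$ plus a constant. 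This is exactly the kind of computation that underlies Lemma \ref{lem:metric}, and I would lean on the equivalent metric expressions there rather than redoing it from scratch.
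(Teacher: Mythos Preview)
Your proposal has two genuine gaps.

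First, the duality reduction fails. The operator $V_E$ is defined as the \emph{unweighted} adjoint $W_E^*$, whereas the Hilbert-space adjoint of $W_E:L^2(\Rn,m_1)\to L^2(\Spp,m_2)$ is $m_1^{-1}V_E m_2$. Since $\psi_{\w,\sigma}(D)$ does not commute with multiplication by $m_1$, these differ, and boundedness of the first map only yields $V_E:L^2(\Spp,m_2^{-1})\to L^2(\Rn,m_1^{-1})$ --- the statement with inverse weights. The paper proves the two bounds separately (by essentially the same mechanism), and you must do likewise.

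Second, and more seriously, the Schur argument and the subsequent integration over $E$ do not work. Your arithmetic slips: from \eqref{eq:boundspsiinverse} the anisotropic integration gives $\int|\F^{-1}(\psi_{\w,\sigma})(x)|\,\ud x\lesssim\sigma^{-\frac{3n+1}{4}}\cdot\sigma^{\frac{n+1}{2}}=\sigma^{-\frac{n-1}{4}}$, not $1$ (indeed $\|\psi_{\w,\sigma}\|_{L^\infty}\eqsim\sigma^{-\frac{n-1}{4}}$ because of the normalizing constant $c_\sigma$). So the individual operators $\psi_{\w,\sigma}(D)$ are \emph{not} uniformly bounded on $L^2$, weighted or not, and even if they were, the measure $\ud\w\,\ud\sigma/\sigma$ on $E$ has infinite mass. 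The whole point is the almost-orthogonality encoded in $\int_E\psi_{\w,\sigma}(\zeta)^2\,\ud\w\,\ud\sigma/\sigma\leq1$, which is a frequency-side statement that your kernel-side approach does not exploit. The paper's route is to stay on the Fourier side: reduce by interpolation to even integer $s$, write $m^{\nu,\tau}(x)^s e^{ix\cdot\zeta}=L(e^{ix\cdot\zeta})$ for the constant-coefficient operator $L=(1-\tau^{-2}\lb\nu,\nabla_\zeta\rb^2)^{s/2}(1-\tau^{-1}\Delta_\zeta)^s$, integrate by parts, and then use the derivative bounds \eqref{eq:boundspsi} together with the constraints $\sigma\leq\kappa\tau$, $|\w-\nu|\leq\kappa\sqrt\tau$ defining $E$ to show that the resulting symbols $\tau^{-|\alpha|/2-\beta}\lb\nu,\nabla_\zeta\rb^\beta\partial_\zeta^\alpha\psi_{\w,\sigma}$ still satisfy $\int_E|\cdots|^2\,\ud\w\,\ud\sigma/\sigma\lesssim1$. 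The anisotropy-matching you correctly identify as the crux enters precisely here: the factor $\tau^{-\beta}$ from the weight is compensated because $\lb\nu,\nabla_\zeta\rb=\lb\nu-\w,\nabla_\zeta\rb+\lb\w,\nabla_\zeta\rb$ and $|\nu-\w|\lesssim\sqrt\tau$ on $E$, so one gains $\sigma^{1/2}\cdot\sqrt\tau$ or $\sigma$ from \eqref{eq:boundspsi}, both $\lesssim\tau$.
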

\begin{proof}
By complex interpolation, we may assume that $s\in2\Z_{+}$. Also, since $W$ commutes with translations, we may assume that $y=0$. Set
\[
L:=(1-\tau^{-2}\lb\nu,\nabla_{\zeta}\rb^{2})^{s/2}(1-\tau^{-1}\Delta_{\zeta})^{s}.
\]
Then, for $f\in\Sw(\Rn)$ and $(x,\w,\sigma)\in\Spp$, one has 
\[
m^{\nu,\tau}(x)^{s}\psi_{\w,\sigma}(D)f(x)=(2\pi)^{-n}\int_{\Rn}L(e^{ix\cdot\zeta})\psi_{\w,\sigma}(\zeta)\wh{f}(\zeta)\ud\zeta.
\]
Integration by parts expresses the right-hand side as a linear combination of terms of the form
\[
\int_{\Rn}e^{i x\cdot\zeta}(\tau^{-\frac{|\alpha|}{2}-\beta}\lb\nu,\nabla_{\zeta}\rb^{\beta}\partial_{\zeta}^{\alpha}\psi_{\w,\sigma})(\zeta)\wh{g}(\zeta)\ud\zeta,
\]
for a $g\in L^{2}(\Rn)$ with $\|g\|_{L^{2}(\Rn)}\leq \|f\|_{L^{2}(\Rn,m_{1})}$, and $\alpha\in\Z_{+}^{n}$ and $\beta\in\Z_{+}$ with $|\alpha|\leq 2s$ and $\beta\leq s$. Fix such $\alpha$ and $\beta$ and, for $h\in\Sw(\Rn)$, let $\wt{W}h:\Spp\to\C$ be given by
\[
\wt{W}h(x,\w,\sigma)=\ind_{E}(\w,\sigma)\F^{-1}\big(\tau^{-\frac{|\alpha|}{2}-\beta}(\lb\nu,\nabla_{\zeta}\rb^{\beta}\partial_{\zeta}^{\alpha}\psi_{\w,\sigma})\wh{h}\big)(x).
\]
For the first statement it then suffices to show that $\wt{W}:L^{2}(\Rn)\to L^{2}(\Spp)$ with 
\[
\big\|\wt{W}\big\|_{\La(L^{2}(\Rn),L^{2}(\Spp)}\lesssim 1,
\]
for an implicit constant independent of $\nu$ and $\tau$. In turn, this follows if we show that
\[
\int_{E}\tau^{-|\alpha|-2\beta}|\lb\nu,\nabla_{\zeta}\rb^{\beta}\partial_{\zeta}^{\alpha}\psi_{\w,\sigma}(\zeta)|^{2}\ud\w\frac{\ud\sigma}{\sigma}\lesssim 1
\]
for an implicit constant independent of $\nu$, $\tau$ and $\zeta$. But this inequality holds by the properties of $\psi_{\w,\sigma}$ from Lemma \ref{lem:packetbounds} and the definition of $E$:
\begin{align*}
\int_{E}\tau^{-|\alpha|-2\beta}|\lb\nu,\nabla_{\zeta}\rb^{\beta}\partial_{\zeta}^{\alpha}\psi_{\w,\sigma}(\zeta)|^{2}\ud\w\frac{\ud\sigma}{\sigma}\lesssim \tau^{-\frac{n-1}{2}}\int_{E}\ind_{[|\zeta|^{-1}/2,2|\zeta|^{-1}]}(\sigma)\ud\w\frac{\ud\sigma}{\sigma}\lesssim 1.
\end{align*}

The proof of the second statement is similar. It suffices to show that
\[
\Big|\int_{\Rn}\sqrt{m_{1}(x)}V_{E}F(x)g(x)\ud x\Big|\lesssim \|F\|_{L^{2}(\Spp,m_{2})}\|g\|_{L^{2}(\Rn)}
\]
for $g\in\Sw(\Rn)$ and $F\in L^{2}(\Spp,m_{2})$ with $F=\ind_{E}F$. The left-hand side equals
\begin{equation}\label{eq:W*term}
\Big|\int_{\Spp}F(x,\w,\sigma)\psi_{\w,\sigma}(D)(\sqrt{m_{1}}g)(x)\ud x\ud\w\frac{\ud\sigma}{\sigma}\Big|,
\end{equation}
and a straightforward calculation shows that
\[
\psi_{\w,\sigma}(D)(\sqrt{m_{1}}g)(x)=(2 \pi)^{-n}\int_{\Rn}e^{ix\cdot\zeta}\psi_{\w,\sigma}(\zeta)L(\wh{g})(\zeta)\ud\zeta.
\]
Now one integrates by parts again. Partial derivatives that hit $e^{ix\cdot\zeta}$ produce weights that are bounded by $\sqrt{m_{1}(x)}$, and partial derivatives of $\psi_{\w,\sigma}$ are dealt with in the same way as before. By using the Cauchy-Schwarz inequality, \eqref{eq:W*term} can then be bounded by a sum of terms of the form
\[
\int_{\Spp}m_{2}(x,w,\sigma)|F(x,\w,\sigma)|^{2}|\wt{W}(g)(x,\w,\sigma)|^{2}\ud x\ud\w\frac{\ud\sigma}{\sigma},
\]
for suitable $\wt{W}:L^{2}(\Rn)\to L^{2}(\Spp)$. This concludes the proof.
\end{proof}

We now give a molecular decomposition of $\HT^{1}_{FIO}(\Rn)$. More precisely, we show that any $f\in \HT^{1}_{FIO}(\Rn)$ can be decomposed into coherent molecules, modulo the low frequencies of $f$. One could additionally decompose the low-frequency component of $f$, by including molecules that do not satisfy a cancellation condition, but we will not do so here. 

\begin{theorem}\label{thm:molecular}
The following assertions hold.
\begin{enumerate}
\item\label{it:mol1} For all $C,\tau_{0}>0$ and $s>\frac{n}{2}$, there exists a constant $C_{1}\geq0$ such that, for each sequence $(f_{k})_{k=1}^{\infty}$ of coherent molecules of type $(s,C)$ associated with balls of radius at most $\sqrt{\tau_{0}}$, and each $(\alpha_{k})_{k=1}^{\infty}\in \ell^{1}$, one has $\sum_{k=1}^{\infty}\alpha_{k}f_{k}\in\HT^{1}_{FIO}(\Rn)$ and 
\[
\Big\|\sum_{k=1}^{\infty}\alpha_{k}f_{k}\Big\|_{\HT^{1}_{FIO}(\Rn)}\leq C_{1}\sum_{k=1}^{\infty}|\alpha_{k}|.
\]
\item\label{it:mol2} Let $q\in C^{\infty}_{c}(\Rn)$ be such that $q(\zeta)=1$ if $|\zeta|\leq 2$, and let $s>\frac{n}{2}$. Then there exist $C_{2},\tau_{0}>0$ such that, for each $f\in\HT^{1}_{FIO}(\Rn)$, there exist a sequence $(f_{k})_{k=1}^{\infty}$ of coherent molecules of type $(s,C_{2})$ associated with balls of radius at most $\sqrt{\tau_{0}}$, and an $(\alpha_{k})_{k=1}^{\infty}\in \ell^{1}$, such that 
\[
f=\sum_{k=1}^{\infty}\alpha_{k}f_{k}+q(D)f
\]
and 
\[
\sum_{k=1}^{\infty}|\alpha_{k}|\leq C_{2}\|f\|_{\HT^{1}_{FIO}(\Rn)}.
\]
\end{enumerate}
\end{theorem}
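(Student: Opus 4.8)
\textbf{Proof plan for Theorem \ref{thm:molecular}.}

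The plan is to use the wave packet transform $W$ together with the atomic decomposition of the tent space $T^{1}(\Sp)$ (Lemma \ref{lem:atomictent}) and the weighted-$L^{2}$ mapping properties of $W$ from Lemma \ref{lem:weightedL2}. For part \eqref{it:mol1}, the key observation is that a coherent molecule $f_{k}$ associated with a ball $B_{\sqrt{\tau}}(y,\nu)$ has $Wf_{k}$ supported, in the $(\w,\sigma)$ variables, on a set $E$ of the type appearing in Lemma \ref{lem:weightedL2}: indeed, the support condition \eqref{eq:supportmolecule} on $\wh{f_{k}}$ combined with the support properties of $\psi_{\w,\sigma}$ from Lemma \ref{lem:packetbounds} forces $\psi_{\w,\sigma}(D)f_{k}=0$ unless $\sigma\eqsim\tau$ (up to a fixed constant) and $|\w-\nu|\lesssim\sqrt{\tau}$; since $\tau\leq\tau_{0}$ and the balls have bounded radius, only finitely many dyadic scales $\sigma$ with $\sigma\geq1$ are relevant and for those one argues directly. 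The weighted estimate for $W_{E}$ in Lemma \ref{lem:weightedL2}, applied with the weight $m^{\nu,\tau}(\cdot-y)^{2s}$ from \eqref{eq:decaymolecule}, then shows that $Wf_{k}$ is (a fixed multiple of) a $T^{1}(\Sp)$-atom associated with a ball of radius $\eqsim\sqrt{\tau}$: the weighted $L^{2}$ bound converts into an $L^{2}(\Spp)$ bound on each dyadic annulus away from $B_{C\sqrt{\tau}}(y,\nu)$ with geometric decay, exactly as in the proof of Theorem \ref{thm:tentbounded}, so $Wf_{k}=\sum_{l}2^{-l\veps}(\text{atom}_{l})$. Hence $\|f_{k}\|_{\HT^{1}_{FIO}}=\|Wf_{k}\|_{T^{1}(\Sp)}\lesssim1$ uniformly, and $\ell^{1}$-summability gives the claim by the triangle inequality in $T^{1}(\Sp)$, using that $W^{*}W=I$ is continuous.

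For part \eqref{it:mol2}, I would run the argument in reverse. Given $f\in\HT^{1}_{FIO}(\Rn)$, write $Wf=W_{<1}f+W_{\geq1}f$ where $W_{<1}f=\ind_{(0,1)}(\sigma)Wf$. The high-frequency part $W_{<1}f\in T^{1}(\Sp)$ vanishes for $\sigma\geq1$, so by Lemma \ref{lem:atomictent} and Remark \ref{rem:localatoms} we may write $W_{<1}f=\sum_{k}\alpha_{k}A_{k}$ in $T^{1}(\Sp)$ with $T^{1}(\Sp)$-atoms $A_{k}$ associated with balls $B_{k}$ of radius $\leq2$, and $\sum_{k}|\alpha_{k}|\lesssim\|W_{<1}f\|_{T^{1}(\Sp)}\leq\|f\|_{\HT^{1}_{FIO}}$. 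Setting $f_{k}:=W^{*}(\ind_{(0,1)}A_{k})$ — or more precisely $W^{*}$ applied to $A_{k}$ composed with a suitable smooth cutoff in $(\w,\sigma)$ to a set $E$ as in Lemma \ref{lem:weightedL2} adapted to $B_{k}$ — one checks that $f_{k}$ is a coherent molecule: the support condition \eqref{eq:supportmolecule} comes from the support of the $\psi_{\w,\sigma}$ (after the cutoff, which one must arrange so that $W^{*}$ still reconstructs $W_{<1}f$ up to acceptable error, exploiting that $W$ is concentrated near the diagonal in the $(\w,\sigma)$ variables), and the decay condition \eqref{eq:decaymolecule} is precisely the weighted bound $\|V_{E}\|_{\La(L^{2}(\Spp,m_{2}),L^{2}(\Rn,m_{1}))}\lesssim1$ of Lemma \ref{lem:weightedL2} applied to the atom $A_{k}$, since an atom associated with $B_{k}\subseteq T(B)$ satisfies $\|A_{k}\|_{L^{2}(\Spp)}\leq V(B)^{-1/2}$ and is supported in the tent over $B$, which after the weighted estimate yields $\int m^{\nu,\tau}(x-y)^{2s}|f_{k}(x)|^{2}\ud x\lesssim\tau^{-n}$. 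The low-frequency part satisfies $W^{*}W_{\geq1}f=q_{1}(D)f$ for a suitable $q_{1}\in C^{\infty}_{c}$; replacing $q_{1}$ by a larger cutoff $q$ with $q(\zeta)=1$ for $|\zeta|\leq2$ and absorbing $(q-q_{1})(D)f$ (which is again a finite sum of high-frequency pieces, hence of molecules) into the molecular sum gives the stated decomposition $f=\sum_{k}\alpha_{k}f_{k}+q(D)f$.

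The main obstacle I expect is the reconstruction step in part \eqref{it:mol2}: $W^{*}W=I$ holds for the full transform, but after projecting onto tent-space atoms and cutting off the $(\w,\sigma)$ support to a set $E$ adapted to each atom's ball, one no longer has an exact reproducing formula, so one must either (a) show that the cutoff error is itself a rapidly convergent series of coherent molecules with controlled $\ell^{1}$ norm, or (b) arrange the atomic decomposition and the cutoffs compatibly — for instance, by first decomposing $W_{<1}f$ using atoms whose balls $B_{k}$ already have the product structure $B_{k}\subseteq B^{x}_{k}\times E_{k}$ in the $(x)$ versus $(\w,\sigma)$ variables, which is possible because of the geometry of the metric $d$ (balls of radius $\sqrt{\sigma}$ at scale $\sigma$ have $\w$-extent $\eqsim\sqrt{\sigma}$ and $\sigma$-extent $\eqsim\sigma$, by Lemma \ref{lem:doubling} and Lemma \ref{lem:metric}). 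Controlling this interplay between the tent-space atom geometry and the molecule geometry, and verifying that only finitely many $\sigma\geq1$ scales intervene so that the low-frequency bookkeeping closes, is where the real work lies; the weighted $L^{2}$ estimates of Lemma \ref{lem:weightedL2} do all the analytic heavy lifting once the combinatorial setup is in place.
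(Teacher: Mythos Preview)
Your plan for part \eqref{it:mol1} is essentially the paper's, modulo one slip: the support condition \eqref{eq:supportmolecule} only forces $\sigma\leq 2\tau$, not $\sigma\eqsim\tau$, since $\supp(\psi_{\w,\sigma})\subseteq\{|\zeta|\in[\tfrac{1}{2}\sigma^{-1},2\sigma^{-1}]\}$ must meet $\{|\zeta|\geq\tau^{-1}\}$. The rest --- splitting $Wf_{k}$ into annular pieces over $T(2^{k+1}B)\setminus T(2^{k}B)$ and using the weighted $W_{E}$ bound from Lemma~\ref{lem:weightedL2} for geometric decay --- is exactly what the paper does.

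For part \eqref{it:mol2} your overall strategy (atomic decomposition of $T^{1}(\Sp)$, then push down via $W^{*}$) is again the paper's, but the ``main obstacle'' you identify is not there. A $T^{1}(\Sp)$-atom $A_{k}$ associated with a ball $B_{\sqrt{\tau}}(y,\nu)$ is supported in the \emph{tent} $T(B_{\sqrt{\tau}}(y,\nu))$, and by the definition of the tent together with Lemma~\ref{lem:metric} this already forces $\sigma\leq\tau$ and $|\w-\nu|^{2}\leq c_{2}^{-2}\tau$ for every $(x,\w,\sigma)\in\supp(A_{k})$. In other words, $\ind_{E}A_{k}=A_{k}$ holds automatically for a set $E$ as in Lemma~\ref{lem:weightedL2} with $\kappa=c_{2}^{-1}$; no additional $(\w,\sigma)$-cutoff is needed, so no reconstruction error arises. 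One may set $f_{k}:=W^{*}A_{k}$ directly: the $V_{E}$ bound yields \eqref{eq:decaymolecule}, and the Fourier support condition \eqref{eq:supportmolecule} follows from the support of $\psi_{\w,\sigma}$ combined with the constraints on $(\w,\sigma)$ just noted.

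The paper also handles the low-frequency bookkeeping more cleanly than your route. Rather than decomposing $W_{<1}f$ and then reconciling $W^{*}W_{\geq1}f=r(D)^{2}f$ with the given $q(D)f$, the paper first applies the bounded operator $(1-q)(D)$ (Theorem~\ref{thm:FIObddHardy}) and decomposes $W((1-q)(D)f)$. Since $q\equiv1$ on $\supp(r)$ one has $r(1-q)=0$, so this function already vanishes for $\sigma\geq1$; Remark~\ref{rem:localatoms} then applies and one obtains
\[
f=(1-q)(D)f+q(D)f=\sum_{k}\alpha_{k}W^{*}A_{k}+q(D)f
\]
with no further adjustment. Your proposed absorption of $(q-q_{1})(D)f$ into the molecular sum would work but is unnecessary.
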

\begin{proof}
\eqref{it:mol1}: By Proposition \ref{prop:Banach}, $\HT^{1}_{FIO}(\Rn)$ is a Banach space. Hence it suffices to show that the collection of coherent molecules of type $(s,C)$, associated with balls of radius at most $\sqrt{\tau_{0}}$, is a uniformly bounded subset of $\HT^{1}_{FIO}(\Rn)$. Let $f$ be such a coherent molecule, associated with a ball $B_{\sqrt{\tau}}(y,\nu)$, for $(y,\nu)\in\Sp$ and $\tau\in(0,\tau_{0})$. Write 
\[
F(x,\w,\sigma):=\ind_{(0,1)}(\sigma)Wf(x,\w,\sigma)
\]
for $(x,\w,\sigma)\in\Spp$. We separately bound the low-frequency component $\|Wf-F\|_{T^{1}(\Sp)}$ and the high-frequency component $\|F\|_{T^{1}(\Sp)}$ of $Wf$.

For the low-frequency component, note that $r(D)f=0$, and therefore also $Wf-F=0$, unless $\tau\geq \frac{1}{2}$. In the latter case we use Lemma \ref{lem:lowfrequency}, that $r\in C^{\infty}_{c}(\Rn)$, the Cauchy-Schwarz inequality, a substitution and the defining property of $f$ to obtain
\begin{align*}
&\|Wf-F\|_{T^{1}(\Sp)}\lesssim \|r(D)f\|_{L^{1}(\Rn)}\lesssim \|f\|_{L^{1}(\Rn)}\\
&\leq \|m^{\nu,\tau}(\cdot-y)^{-s}\|_{L^{2}(\Rn)}\|f(\cdot)m^{\nu,\tau}(\cdot-y)^{s}\|_{L^{2}(\Rn)}\lesssim \tau^{-n/2}\lesssim 1,
\end{align*}
where $m^{\nu,\tau}$ is as in \eqref{eq:moleculeweight}. 

Next, to bound $\|F\|_{T^{1}(\Sp)}$, set $B:=B_{\sqrt{\tau}}(y,\nu)$, $A_{1}:=\ind_{T(4B)}F$ and $A_{k}:=\ind_{T(2^{k+1}B)\setminus T(2^{k}B)}F$ for $k\geq2$. It suffices to show that there exist $\veps,C'>0$, independent of $f$, such that $\|A_{k}\|_{L^{2}(\Spp)}\leq C'2^{-k\veps}V(2^{k+1}B)^{-1/2}$ for all $k\in\N$. Indeed, then $\frac{1}{C'}2^{k\veps}A_{k}$ is a $T^{1}(\Sp)$-atom associated with $2^{k+1}B$ for each $k\in\N$, and the required statement follows from the fact that 
\[
\|F\|_{T^{1}(\Sp)}=\Big\|\sum_{k=1}^{\infty}A_{k}\Big\|_{T^{1}(\Sp)}
\]
and that the collection of $T^{1}(\Sp)$-atoms is uniformly bounded in $T^{1}(\Sp)$.

Let $c_{1}>0$ be a constant independent of $(y,\nu)$ such that 
\[
d((x,\w),(y,\nu))\leq c_{1}(|\lb\nu,x-y\rb|+|x-y|^{2}+|\w-\nu|^{2})^{1/2}
\]
for all $(x,\w)\in\Sp$, as in Lemma \ref{lem:metric}. Also, let $k_{0}\geq2$ be such that $\delta:=\frac{1}{4}c^{-2}_{1}-25\cdot 2^{-2k_{0}}>0$. For $k<k_{0}$, one has 
\[
\|A_{k}\|_{L^{2}(\Spp)}\leq \|Wf\|_{L^{2}(\Spp)}=\|f\|_{L^{2}(\Rn)}\lesssim \tau^{-n/2}\lesssim V(2^{k+1}B)^{-1/2}
\]
by Proposition \ref{prop:bddwavetransform}, \eqref{eq:decaymolecule} and Lemma \ref{lem:doubling}. 

For $k\geq k_{0}\geq 2$, first note that if $\psi_{\w,\sigma}(D)f\neq 0$ then $\sigma\leq 2\tau$ and $|\w-\nu|\leq \sqrt{\tau}+2\sqrt{\sigma}\leq 5\sqrt{\tau}$. Moreover, if $(x,\w,\sigma)\in T(2^{k+1}B)\setminus T(2^{k}B)$ and $\sigma\leq 2^{2k-2}\tau$, then $(x,\w)\in 2^{k+1}B\setminus 2^{k-1}B$. In this case,
\[
c_{1}^{2}(|\lb\nu,x-y\rb|+|x-y|^{2}+|\w-\nu|^{2})\geq d((x,\w),(y,\nu))^{2}\geq 2^{2k-2}\tau.
\]
Now let $E$ be as in Lemma \ref{lem:weightedL2} with $\kappa=5$. Note that, if $(x,\w,\sigma)\in T(2^{k+1}B)\setminus T(2^{k}B)$ is such that $\psi_{\w,\sigma}(D)f(x)\neq 0$, then $(\w,\sigma)\in E$ and 
\[
m^{\nu,\tau}(x-y)\gtrsim 1+\tau^{-1}(|\lb\nu,x-y\rb|+|x-y|^{2})\geq c_{1}^{-2}2^{2k-2}-25\geq\delta 2^{2k}.
\]
Hence, by Lemma \ref{lem:weightedL2},
\begin{align*}
&\|A_{k}\|_{L^{2}(\Spp)}^{2}\lesssim 2^{-4ks}\int_{\Spp}\ind_{E}(\w,\sigma)m^{\nu,\tau}(x-y)^{2s}|\psi_{\w,\sigma}(D)f(x)|^{2}\ud x\ud\w\frac{\ud\sigma}{\sigma}\\
&\lesssim 2^{-4ks}\int_{\Rn}m^{\nu,\tau}(x-y)^{2s}|f(x)|^{2}\ud x\leq 2^{-4ks}\tau^{-n}\lesssim 2^{-2k(2s-n)}V(2^{k+1}B)^{-1},
\end{align*}
where we used that $f$ is a coherent molecule associated with $B_{\sqrt{\tau}}(y,\nu)$, as well as Lemma \ref{lem:doubling}. This suffices, since $s>\frac{n}{2}$.

\eqref{it:mol2}: First note that $(1-q)(D)f\in\HT^{1}_{FIO}(\Rn)$ with $\|(1-q)(D)f\|_{\HT^{1}_{FIO}(\Rn)}\lesssim \|f\|_{\HT^{1}_{FIO}(\Rn)}$, by Theorem \ref{thm:FIObddHardy}. Hence Lemma \ref{lem:atomictent} yields a sequence $(A_{k})_{k=1}^{\infty}$ of $T^{1}(\Sp)$-atoms and a sequence $(\alpha_{k})_{k=1}^{\infty}\in\ell^{1}$ such that $W((1-q)(D)f)=\sum_{k=1}^{\infty}\alpha_{k}A_{k}$ and
\begin{equation}\label{eq:l1sum}
\begin{aligned}
\sum_{k=1}^{\infty}|\alpha_{k}|&\lesssim \|W(1-q)(D)f\|_{T^{1}(\Sp)}=\|(1-q)(D)f\|_{\HT^{1}_{FIO}(\Rn)}\\
&\lesssim \|f\|_{\HT^{1}_{FIO}(\Rn)},
\end{aligned}
\end{equation}
for implicit constants independent of $f$. Moreover, since $r(1-q)=0$, one has $W((1-q)(D)f)(x,\w,\sigma)=0$ for all $(x,\w,\sigma)\in\Sp$ with $\sigma\geq1$. Hence we may assume that $A_{k}(x,\w,\sigma)=0$ for all $k\in\N$ and $\sigma\geq1$, and that each $A_{k}$ is associated with a ball of radius at most $2$ (see Remark \ref{rem:localatoms}). Because one also has
\[
f=W^{*}Wf=\sum_{k=1}^{\infty}\alpha_{k}W^{*}A_{k}+q(D)f,
\]
it suffices to show that each $W^{*}A_{k}$ is a coherent molecule of type $(s,C'')$ associated with a ball of radius at most $\sqrt{\tau_{0}}$, for some fixed $C'',\tau_{0}>0$. 

Let $A:=A_{k}$, for some $k\in\N$, be such a $T^{1}(\Sp)$-atom, associated with a ball $B_{\sqrt{\tau}}(y,\nu)$ for $(y,\nu)\in\Sp$ and $\tau\leq 4$. First note that $f:=W^{*}A\in L^{2}(\Rn)$, by Proposition \ref{prop:bddwavetransform}. Next, recall that $A(x,\w,\sigma)=0$ if $(x,\w,\sigma)\in\Spp$ satisfies $d((x,\w),B_{\sqrt{\tau}}(y,\nu)^{c})<\sqrt{\sigma}$, i.e.~if $d((x,\w),(y,\nu))>\sqrt{\tau}-\sqrt{\sigma}$. It follows that
\begin{equation}\label{eq:supportA}
A(x,\w,\sigma)=0 \text{ if }\sigma>\min(\tau,1)\text{ or }|\lb\nu,x-y\rb|+|x-y|^{2}+|\w-\nu|^{2}>c_{2}^{-2}\tau,
\end{equation}
where $c_{2}\in(0,1)$ is a constant independent of $(x,\w)$ and $(y,\nu)$ such that 
\[
d((x,\w),(y,\nu))\geq c_{2}(|\lb\nu,x-y\rb|+|x-y|^{2}+|\w-\nu|^{2})^{1/2},
\]
as in Lemma \ref{lem:metric}. Now Lemma \ref{lem:packetbounds} shows that $\psi_{\w,\sigma}(\zeta) \F(A(\cdot,\w,\sigma))(\zeta)=0$ for all $(\zeta,\w,\sigma)\in\Spp$ such that 
\[
|\zeta|< \tfrac{1}{2}\tau^{-1}\text{ or }|\hat{\zeta}-\nu|>(2+c_{2}^{-1})\sqrt{\tau}.
\]
For such $\zeta$ one has
\[
\wh{f}(\zeta)=\int_{0}^{1}\int_{S^{n-1}}\psi_{\w,\sigma}(\zeta)\F(A(\cdot,\w,\sigma))(\zeta)\ud\w\frac{\ud\sigma}{\sigma}=0.
\]
Hence, if we show that 
\begin{equation}\label{eq:molcondition2}
\int_{\Rn}(1+\tau^{-1}|\lb\nu,x-y\rb|)^{2s}(1+\tau^{-1}|x-y|^{2}))^{2s}|f(x)|^{2}\ud x\lesssim\tau^{-n}
\end{equation}
for an implicit constant independent of $f$, then it follows that $f$ is a coherent molecule of type $(s,C'')$ associated with $B_{(2+1/c_{2})\sqrt{\tau}}(y,\nu)$ for some $C''\geq0$, as required.

To prove \eqref{eq:molcondition2}, let $E$ be as in Lemma \ref{lem:weightedL2} with $\kappa:=c_{2}^{-1}\geq1$. By \eqref{eq:supportA}, one has $\ind_{E}A=A$ and
\[
(1+\tau^{-1}|\lb\nu,x-y\rb|)(1+\tau^{-1}|x-y|^{2})\leq (1+\kappa^{2})^{2}
\]
for all $(x,\w,\sigma)\in\Spp$ with $A(x,\w,\sigma)\neq0$. It now follows from Lemma \ref{lem:weightedL2}, with notation as in that lemma, that
\begin{align*}
&\int_{\Rn}(1+\tau^{-1}|\lb\nu,x-y\rb|)^{2s}(1+\tau^{-1}|x-y|^{2}))^{2s}|f(x)|^{2}\ud x\eqsim\|V_{E}A\|_{L^{2}(\Rn,m_{1})}^{2}\\
&\lesssim \|A\|_{L^{2}(\Spp,m_{2})}^{2}\lesssim \|A\|_{L^{2}(\Spp)}^{2}\leq V(B_{\sqrt{\tau}}(y,\nu))^{-1}\lesssim \tau^{-n}.\qedhere
\end{align*}
\end{proof}

\begin{remark}\label{rem:Hartspace}
Note from the proof of part \eqref{it:mol2} of Theorem \ref{thm:molecular}, and in particular \eqref{eq:l1sum}, that one in fact has the stronger estimate
\[
\sum_{k=1}^{\infty}|\alpha_{k}|\leq \int_{\Sp}\Big(\int_{0}^{1}\fint_{B_{\sqrt{\sigma}}(x,\w)}|\psi_{\nu,\sigma}(D)f(y)|^{2}\ud y\ud\nu\frac{\ud\sigma}{\sigma}\Big)^{1/2}\ud x\ud\w,
\]
corresponding to a bound involving only the high-frequency part of $f$. Also note that the sequences $(f_{k})_{k=1}^{\infty}$ and $(\alpha_{k})_{k=1}^{\infty}$ that we obtained in the proof of part \eqref{it:mol2} are independent of the choice of $s>\frac{n}{2}$. 

It is now easy to show, by combining Theorems \ref{thm:molecular} and \ref{thm:Sobolev}, Remark \ref{rem:molecule} and \cite[Lemma 3.4 and Theorem 3.6]{Smith98a}, that $\lb D\rb^{-\frac{n-1}{4}}(\HT^{1}_{FIO}(\Rn))$ coincides with the Hardy space for Fourier integral operators from \cite{Smith98a}. Of course, one has a choice of smoothness in defining $\Hp$, and the choice in \cite{Smith98a} for $p=1$ yields a subspace of $L^{1}(\Rn)$, cf.~Theorem \ref{thm:Sobolev}. The choice in the present article is motivated by the fact that it allows us to study all $\Hp$ simultaneously using a single wave packet transform $W$, while still ensuring that $\HT^{2}_{FIO}(\Rn)=L^{2}(\Rn)$. 
\end{remark}

\section*{Acknowledgements}

Part of this research was conducted during a visit by the third author to the University of Washington in Seattle. He would like to thank Hart Smith for his hospitality and for various helpful comments. The authors also wish to thank the anonymous referee for  suggestions that have improved the manuscript, and Yi Huang for a useful comment concerning the list of references.

\appendix

\section{Low-frequency component}

In this appendix we prove a lemma to deal with the low-frequency part of the $\HT^{1}_{FIO}(\Rn)$-norm.

\begin{lemma}\label{lem:lowfrequency}
Let $q\in C^{\infty}_{c}(\Rn)$. Then there exists a constant $C=C(n,q)>0$ such that the following holds. Let $f\in\Sw'(\Rn)$ be such that $q(D)f\in L^{1}(\Rn)$. For $(x,\w,\sigma)\in\Spp$, set 
\[
\wt{W}f(x,\w,\sigma):=\ind_{[1,e]}(\sigma)q(D)f(x).
\]
Then $\wt{W}f\in T^{1}(\Sp)$ and
\begin{equation}\label{eq:tentL1}
\frac{1}{C}\|\wt{W}f\|_{T^{1}(\Sp)}\leq \|q(D)f\|_{L^{1}(\Rn)}\leq C\|\wt{W}f\|_{T^{1}(\Sp)}.
\end{equation}
\end{lemma}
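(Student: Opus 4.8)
\textbf{Proof plan for Lemma \ref{lem:lowfrequency}.}

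The plan is to reduce everything to the observation that the function $\wt Wf$ is supported in the single dyadic-type strip $\sigma\in[1,e]$, so that the conical square function $\A(\wt Wf)$ essentially becomes a fixed averaging of $|q(D)f|$ over Euclidean-type balls of bounded radius. First I would write out, for $(x,\w)\in\Sp$,
\[
\A(\wt Wf)(x,\w)=\Big(\int_{1}^{e}\fint_{B_{\sqrt\sigma}(x,\w)}|q(D)f(y)|^{2}\ud y\ud\nu\frac{\ud\sigma}{\sigma}\Big)^{1/2},
\]
and note that for $\sigma\in[1,e]$ the balls $B_{\sqrt\sigma}(x,\w)$ all have volume $\eqsim 1$ by Lemma \ref{lem:doubling}, and are comparable (in the sense of mutual inclusion up to fixed dilation) to $B_{1}(x,\w)$; moreover, by Lemma \ref{lem:metric} the projection of $B_{\sqrt\sigma}(x,\w)$ to $\Rn$ is contained in a Euclidean ball $B_{C_{0}}(x)$ and contains $B_{c_{0}}(x)$ for fixed $c_{0},C_{0}>0$ independent of $(x,\w)$. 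Hence
\[
\A(\wt Wf)(x,\w)\eqsim\Big(\fint_{B_{C_{0}}(x)}|q(D)f(y)|^{2}\ud y\Big)^{1/2}=:G(x)
\]
with implied constants uniform in $(x,\w)$, so $\|\wt Wf\|_{T^{1}(\Sp)}=\|\A(\wt Wf)\|_{L^{1}(\Sp)}\eqsim V(S^{n-1})\,\|G\|_{L^{1}(\Rn)}$.

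It then remains to show $\|G\|_{L^{1}(\Rn)}\eqsim\|q(D)f\|_{L^{1}(\Rn)}$. For the upper bound $\|G\|_{L^{1}}\lesssim\|q(D)f\|_{L^{1}}$ I would use that $q(D)f\in L^{1}(\Rn)$ and $q\in C^{\infty}_{c}(\Rn)$, so $q(D)f$ is smooth with all derivatives controlled (indeed $q(D)f=\check q\ast(q_{1}(D)f)$ for a suitable $q_{1}\in C^{\infty}_{c}$ with $q_{1}q=q$, and $\check q\in\Sw$); a Bernstein-type / Sobolev embedding argument bounds $\sup_{y\in B_{C_{0}}(x)}|q(D)f(y)|$ by a fixed convolution of $|q(D)f|$ with an $L^{1}$-normalised bump, whence $\int_{\Rn}G(x)\ud x\lesssim\int_{\Rn}|q(D)f(y)|\ud y$ by Fubini and Young's inequality. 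For the reverse bound $\|q(D)f\|_{L^{1}}\lesssim\|G\|_{L^{1}}$ I would simply note $|q(D)f(x)|^{2}\lesssim\fint_{B_{c_{0}}(x)}|q(D)f(y)|^{2}\ud y$ again by the same smoothness (the local oscillation of the bandlimited function $q(D)f$ is controlled by its local $L^{2}$ average), so $|q(D)f(x)|\lesssim G'(x)$ where $G'$ is the analogous average over $B_{c_{0}}$, and $\|G'\|_{L^{1}}\lesssim\|G\|_{L^{1}}$ since the $B_{c_{0}}$-average is dominated pointwise by a fixed multiple of a maximal-type average already captured in $G$; alternatively, and more cleanly, both $\|G\|_{L^1}$ and $\|q(D)f\|_{L^1}$ are comparable to $\|M(|q(D)f|^2)^{1/2}\|_{L^1}$-free expressions because $q(D)f$ is smooth and bandlimited, which one can phrase directly via $|q(D)f(x)|\eqsim \big(\fint_{B_r(x)}|q(D)f|^2\big)^{1/2}$ for any fixed $r>0$ up to constants depending on $r,n,q$.

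The main obstacle is the reverse inequality $\|q(D)f\|_{L^{1}(\Rn)}\lesssim\|\wt Wf\|_{T^{1}(\Sp)}$: passing from an $L^{1}$ norm of a local $L^{2}$ average back to the $L^{1}$ norm of the function itself requires genuinely using that $q(D)f$ is frequency-localised (otherwise it is false), so I would be careful to invoke a quantitative statement of the form ``for $g$ with $\supp\hat g\subseteq B_{R}(0)$ one has $|g(x)|\lesssim_{R,n}\big(\fint_{B_{1}(x)}|g|^{2}\big)^{1/2}$'', which follows from writing $g=\eta(D)g$ for $\eta\in C^{\infty}_{c}$ with $\eta\equiv1$ on $B_{R}$, so $g=\check\eta\ast g$ with $\check\eta\in\Sw(\Rn)$, and then $|g(x)|\le\|\check\eta\|_{L^{2}}\|g\|_{L^{2}(x+\supp\text{-tails})}$ after splitting the convolution and using the rapid decay of $\check\eta$ to absorb the contribution from far away into the near $L^{2}$ average (a standard argument; here $R$ is determined by $q$). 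Everything else is bookkeeping with Lemmas \ref{lem:metric} and \ref{lem:doubling} and Fubini, and the constant $C$ depends only on $n$ and $q$ as claimed.
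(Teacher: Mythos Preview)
Your reduction to showing $\|G\|_{L^{1}(\Rn)}\eqsim\|q(D)f\|_{L^{1}(\Rn)}$, where $G(x)=(\fint_{B_{C_{0}}(x)}|q(D)f|^{2})^{1/2}$, is correct and matches the paper. But you have the two directions backwards in difficulty, and your argument for the one you call ``the main obstacle'' has a real gap.

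The direction $\|q(D)f\|_{L^{1}}\lesssim\|G\|_{L^{1}}$ is the \emph{trivial} one and does not need band-limitedness at all. The paper simply uses Fubini and Cauchy--Schwarz: for any $g\in L^{1}(\Rn)$ and any $R>0$,
\[
\|g\|_{L^{1}}=\frac{1}{V(B_{R}(0))}\int_{\Rn}\int_{B_{R}(x)}|g(y)|\,\ud y\,\ud x\le V(B_{R}(0))^{-1/2}\int_{\Rn}\Big(\int_{B_{R}(x)}|g(y)|^{2}\,\ud y\Big)^{1/2}\ud x,
\]
which is already $\lesssim\|G\|_{L^{1}}$. Your proposed pointwise bound $|q(D)f(x)|\lesssim(\fint_{B_{r}(x)}|q(D)f|^{2})^{1/2}$ is \emph{false} for band-limited functions: take any $g$ in the Paley--Wiener space with most of its $L^{2}$ mass far from $x$ (e.g.\ translates of a fixed bump, or prolate spheroidal eigenfunctions with small time-concentration eigenvalue). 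The ``rapid decay of $\check\eta$ absorbs the tails into the near $L^{2}$ average'' step cannot work, because the far contribution $\int_{|x-y|>1}|\check\eta(x-y)||g(y)|\,\ud y$ involves $\|g\|_{L^{2}(B_{2^{k}}(x))}$ for all $k$, and there is no control of these by $\|g\|_{L^{2}(B_{1}(x))}$.

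The direction that genuinely needs frequency localisation is $\|G\|_{L^{1}}\lesssim\|q(D)f\|_{L^{1}}$ (bounding the $L^{1}$ norm of a local $L^{2}$ average by an $L^{1}$ norm). Your convolution/Bernstein sketch there is essentially fine. The paper does it slightly differently: it bounds $G(x)\lesssim\|\rho_{x}\cdot q(D)f\|_{L^{2}}$ for a fixed cutoff $\rho$, applies the Sobolev embedding $W^{m,1}\hookrightarrow L^{2}$ with $m\ge n/2$, expands the derivatives by Leibniz, and then uses Fubini together with $\|\partial^{\beta}(q(D)f)\|_{L^{1}}\lesssim\|q(D)f\|_{L^{1}}$ (which holds because $q$ has compact support). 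Both routes work; yours is a bit more direct once the sup bound is made precise.
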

\begin{proof}
First note that, for all $R>0$ and $g\in L^{1}(\Rn)$, Fubini's theorem yields 
\begin{equation}\label{eq:doubleintegral}
\int_{\Rn}\int_{B_{R}(x)}|g(y)|\ud y\ud x= V(B_{R}(0))\|g\|_{L^{1}(\Rn)}.
\end{equation}
Using this for sufficiently small $R$, we estimate 
\begin{align*}
\|q(D)f\|_{L^{1}(\Rn)} &\lesssim \int_{\Rn}\int_{B_{R}(x)}|q(D)f(y)|\ud y\ud x\lesssim \int_{\Rn}\Big(\int_{B_{R}(x)}|q(D)f(y)|^{2}\ud y\Big)^{1/2}\ud x  \\
&\lesssim  \int_{\Rn}\int_{S^{n-1}}\Big(\int_{1}^{e}\int_{B_{R}(\w)}\int_{B_{R}(x)}|q(D)f(y)|^{2}\ud y\ud\nu\ud\sigma\Big)^{1/2}\ud \w\ud x\\
& \leq\|\wt{W}f\|_{T^{1}(\Sp)}.
\end{align*}
Here we used the fact that $B_R(x) \times B_R(\omega) $ is contained in $B_{\sqrt{\sigma}}(x, \omega)$ for sufficiently small $R$ and for $\sigma \geq 1$, as well as the fact that powers of $\sigma$ are uniformly bounded for $\sigma \in [1, e]$. This proves the right-hand inequality in \eqref{eq:tentL1}.

The proof of the left-hand inequality in \eqref{eq:tentL1} is similar, except that we use a Sobolev embedding. Let $R>0$ be large enough that $B_{\sqrt{\sigma}}(x, \omega) \subset B_{R/2}(x) \times S^{n-1}$ for all $\sigma\leq e$. Let $\rho\in C_{c}^{\infty}(\Rn)$ be such that $\rho\equiv 1$ on $B_{R/2}(0)$, and $\supp(\rho)\subseteq B_{R}(0)$. Set $\rho_{x}(y):=\rho(x-y)$ for $x,y\in\Rn$ and let $m\in\Z_{+}$ be such that $m\geq n/2$. Then
\begin{align*}
\|\wt{W}f\|_{T^{1}(\Sp)}&\leq \int_{\Rn}\int_{S^{n-1}}\Big(\int_{1}^{e}\int_{S^{n-1}}\int_{B_{R/2}(x)}|q(D)f(y)|^{2}\ud y\ud\nu\ud\sigma\Big)^{1/2}\ud\w\ud x\\
&\lesssim \int_{\Rn}\Big(\int_{B_{R/2}(x)}|q(D)f(y)|^{2}\ud y\Big)^{1/2}\ud x\leq \!\int_{\Rn}\|\rho_{x}\cdot q(D)f\|_{L^{2}(\Rn)}\ud x .
\end{align*}
Now a Sobolev embedding and \eqref{eq:doubleintegral} yield, using the compact support of $q$ in the last step, 
\begin{align*}
&\int_{\Rn}\|\rho_{x}\cdot q(D)f\|_{L^{2}(\Rn)}\ud x\lesssim \int_{\Rn}\|\rho_{x}\cdot q(D)f\|_{W^{m,1}(\Rn)}\ud x\\
&=\sum_{|\alpha|\leq m}\int_{\Rn}\int_{\Rn}|\partial^{\alpha}_{y}(\rho_{x}\cdot q(D)f)|(y)\ud y\ud x\\
&\leq \sum_{|\alpha|\leq m}\sum_{\beta\leq \alpha}\int_{\Rn}\int_{\Rn}|\partial_{y}^{\alpha-\beta}\rho_{x}(y)\partial_{y}^{\beta}(q(D)f)(y)|\ud y\ud x\\
&\lesssim \sum_{|\beta|\leq m}\int_{\Rn}\int_{B_{R}(x)}|\partial_{y}^{\beta}(q(D)f)(y)|\ud y\ud x\eqsim \sum_{|\beta|\leq m}\|\partial^{\beta}(q(D)f)\|_{L^{1}(\Rn)}\\
&=\|q(D)f\|_{W^{m,1}(\Rn)}\lesssim \|q(D)f\|_{L^{1}(\Rn)}
\end{align*}
for implicit constants which depend only on $n$ and $q$. 
\end{proof}

\bibliographystyle{amsplain}
\bibliography{Bibliography}

\end{document}